\documentclass[a4paper, 11pt, dvipsnames, svgnames]{article}


\usepackage[utf8]{inputenc}
\usepackage[top=1.0in, bottom=1.0in, left=1.1in, right=1.0in]{geometry}
\usepackage[all]{xy}
\usepackage{amsfonts}
\usepackage{amsmath}
\usepackage{amssymb}
\usepackage{amsthm}
\usepackage{calrsfs}
\usepackage{enumerate}
\usepackage{enumitem}
\usepackage{xparse, etoolbox}
\usepackage{ifthen}
\usepackage{mathtools, nccmath, textcomp}
\usepackage[new]{old-arrows}
\usepackage{relsize}
\usepackage{rsfso}
\usepackage{stmaryrd}
\usepackage{sectsty}
\usepackage{setspace}
\usepackage[nottoc]{tocbibind}
\usepackage{tensor}
\usepackage{tikz}
\usepackage{tikz-cd}
\usepackage[sf, bf]{titlesec}	
\usepackage[titles]{tocloft}
\usepackage{xcolor}
\usepackage[backend=biber, style=alphabetic, sorting=nyt, doi=false, url=false, maxbibnames=99, maxalphanames=99]{biblatex}
\usepackage[bbgreekl]{mathbbol}

\addbibresource{references.bib}


\usepackage[T1]{fontenc}
\usepackage{lmodern}
\AtBeginBibliography{\small}


\definecolor{bondiblue}{rgb}{0.1, 0.58, 0.71}
\usepackage{hyperref}
\hypersetup{
	colorlinks=true,
	linkcolor=WildStrawberry,
	citecolor=bondiblue,
	urlcolor=black,
}



\usepackage{imakeidx}
\makeindex

\usepackage{fancyhdr}

\pagestyle{fancy}
\fancyhf{}
\fancyhead[C]{Crystalline representations and Wach modules II}
\fancyhead[R]{\thepage}
\pagestyle{fancy}

\setlength{\headheight}{20pt}


\setcounter{tocdepth}{2}

\newcommand{\addperiod}[1]{#1.}
\titleformat{\section}[block]{\scshape\Large\filcenter}{\thesection.}{1em}{}
\titleformat{\subsection}[runin]{\normalfont\large\bfseries}{\thesubsection.}{1em}{\addperiod}
\titleformat{\subsubsection}[runin]{\normalfont\bfseries}{\thesubsubsection.}{1em}{\addperiod}


\numberwithin{equation}{section}

\DeclareSymbolFontAlphabet{\mathbbl}{bbold}
\newcommand{\Prism}{{\mathlarger{\mathbbl{\Delta}}}}

\makeatletter
\def\keywords{\xdef\@thefnmark{}\@footnotetext}
\makeatother



\theoremstyle{plain}
\newtheorem{thm}{Theorem}[section]
\newtheorem{cor}[thm]{Corollary}
\newtheorem{lem}[thm]{Lemma}
\newtheorem{prop}[thm]{Proposition}
\newtheorem{claim}[thm]{Claim}

\theoremstyle{definition}
\newtheorem{defi}[thm]{Definition}

\newtheorem{rem}[thm]{Remark}

\AtEndEnvironment{const}{\qed}

\theoremstyle{remark}

\newtheorem*{nota}{Notation}

\raggedbottom

\newenvironment{enumarabicup}
{\begin{enumerate}[font=\upshape, labelindent=\parindent, label=(\arabic*)]}
{\end{enumerate}}

\DeclareMathOperator*{\colim}{\text{\scalebox{0.95}{$\textup{colim}$}}}

\DeclareMathAlphabet{\pazocal}{OMS}{zplm}{m}{n}
\DeclareMathAlphabet{\dutchcal}{U}{dutchcal}{m}{n}

\newcommand{\isomorphic}{\xrightarrow{\hspace{0.5mm} \sim \hspace{0.5mm}}}
\newcommand{\lisomorphic}{\xleftarrow{\hspace{0.5mm} \sim \hspace{0.5mm}}}

\newcommand{\calc}{\mathcal{C}}

\newcommand{\pazn}{\pazocal{N}}
\newcommand{\pazo}{\pazocal{O}}

\newcommand{\pazs}{\pazocal{S}}

\def\CC{\mathbb{C}}

\def\NN{\mathbb{N}}
\def\QQ{\mathbb{Q}}
\def\RR{\mathbb{R}}
\def\ZZ{\mathbb{Z}}

\newcommand{\mbfd}{\mathbf{D}}

\newcommand{\mbfn}{\mathbf{N}}

\newcommand{\mbft}{\mathbf{T}}
\newcommand{\mbfv}{\mathbf{V}}

\newcommand{\smbfe}{\mathbf{e}}
\newcommand{\smbff}{\mathbf{f}}

\newcommand{\smbfi}{\mathbf{i}}
\newcommand{\smbfj}{\mathbf{j}}
\newcommand{\smbfk}{\mathbf{k}}

\newcommand{\frakm}{\mathfrak{m}}
\newcommand{\frakp}{\mathfrak{p}}
\newcommand{\frakq}{\mathfrak{q}}

\newcommand{\ad}{^{\textup{ad}}}
\newcommand{\algebra}{\textrm{-algebra}}

\newcommand{\Ainf}{A_{\inf}}

\newcommand{\Acrys}{A_{\textup{cris}}}
\newcommand{\OAcrys}{\pazo A_{\textup{cris}}}
\newcommand{\Atilde}{\tilde{A}}
\newcommand{\Btilde}{\tilde{B}}
\newcommand{\Dtilde}{\tilde{D}}
\newcommand{\AFpi}{A_{F,\varpi}}

\newcommand{\AF}{A_F}

\newcommand{\Aframe}{A_{\square}}
\newcommand{\AR}{A_R}
\newcommand{\BR}{B_R}
\newcommand{\DR}{\mbfd_R}
\newcommand{\DRn}{D_{R,n}}
\newcommand{\ER}{E_R}
\newcommand{\NR}{\mbfn_R}
\newcommand{\NRn}{N_{R,n}}
\newcommand{\DRbar}{\overline{D}_R}
\newcommand{\DLbar}{\overline{D}_L}
\newcommand{\ALpi}{A_{L,\varpi}}
\newcommand{\OALpi}{\pazo A_{L,\varpi}}
\newcommand{\ARpi}{A_{R,\varpi}}
\newcommand{\OARpi}{\pazo A_{R,\varpi}}

\newcommand{\BdR}{B_{\textup{dR}}}
\newcommand{\OBdR}{\pazo B_{\textup{dR}}}
\newcommand{\Bcrys}{B_{\textup{cris}}}
\newcommand{\OBcrys}{\pazo B_{\textup{cris}}}
\newcommand{\action}{\textrm{-action}}

\newcommand{\Cplusp}{\CC^+(\frakp)}

\newcommand{\CRbar}{\CC(\overline{R})}
\newcommand{\crys}{\textup{cris}}

\newcommand{\Dcrys}{\mbfd_{\textup{cris}}}
\newcommand{\ODcrys}{\pazo\mbfd_{\textup{cris}}}
\newcommand{\ODcrysL}{\pazo\mbfd_{\textup{cris}, L}}
\newcommand{\ODL}{\pazo D_L}

\newcommand{\ODcrysR}{\pazo\mbfd_{\textup{cris}, R}}
\newcommand{\ODR}{\pazo D_R}
\newcommand{\OVcrysR}{\pazo\mbfv_{\textup{cris}, R}}
\newcommand{\DcrysLbreve}{\mbfd_{\textup{cris}, \breve{L}}}
\newcommand{\dlog}{\hspace{0.2mm}d\textup{\hspace{0.3mm}log\hspace{0.3mm}}}

\newcommand{\equivariant}{\textrm{-equivariant}}
\newcommand{\etale}{\textup{\'et}}

\newcommand{\Fil}{\textup{Fil}}

\newcommand{\Fr}{\textup{Frac}}
\newcommand{\GL}{\textup{GL}}
\newcommand{\Gal}{\textup{Gal}}
\newcommand{\GammaLbreve}{\Gamma_{\Lbreve}}
\newcommand{\gr}{\textup{gr}}
\newcommand{\GRp}{G_R(\frakp)}
\newcommand{\GRhatp}{\widehat{G}_R(\frakp)}
\newcommand{\Hom}{\textup{Hom}}
\newcommand{\Id}{\textup{Id}}

\newcommand{\Fbar}{\overline{F}}
\newcommand{\Abar}{\overline{A}}
\newcommand{\Jbar}{\overline{J}}
\newcommand{\Kbar}{\overline{K}}

\newcommand{\kert}{\textup{Ker }}

\newcommand{\Lbreve}{\breve{L}}
\newcommand{\Lbreveinfty}{\breve{L}_{\infty}}
\newcommand{\Lbrevebar}{\overline{\breve{L}}}

\newcommand{\linear}{\textrm{-linear}}

\newcommand{\lattice}{\textrm{-lattice}}
\newcommand{\module}{\textrm{-module}}
\newcommand{\modules}{\textrm{-modules}}
\newcommand{\mubar}{\overline{\mu}}

\newcommand{\MF}{\textup{MF}}

\newcommand{\MLbreve}{M_{\Lbreve}}

\newcommand{\Mcirc}{M^{\circ}}

\newcommand{\Mod}{\textup{-Mod}}

\newcommand{\OFinfty}{O_{F_{\infty}}}
\newcommand{\Linfty}{L_{\infty}}

\newcommand{\Lbar}{\overline{L}}
\newcommand{\Lbarp}{\overline{L}(\frakp)}
\newcommand{\Cp}{\CC_{\frakp}}
\newcommand{\OLbar}{O_{\overline{L}}}
\newcommand{\OLbarp}{O_{\overline{L}(\frakp)}}
\newcommand{\Cpplus}{\CC^+_{\frakp}}
\newcommand{\Cpplusflat}{\CC^{+, \flat}_{\frakp}}
\newcommand{\OLbreve}{O_{\breve{L}}}
\newcommand{\AL}{A_L}
\newcommand{\ALplusp}{A_L^+(\frakp)}
\newcommand{\BL}{B_L}
\newcommand{\DL}{\mbfd_L}
\newcommand{\DLn}{D_{L,n}}
\newcommand{\EL}{E_L}
\newcommand{\NL}{\mbfn_L}
\newcommand{\NLp}{N_L(\frakp)}
\newcommand{\NLbar}{\overline{N}_L}
\newcommand{\NLn}{N_{L,n}}

\newcommand{\ALbreve}{A_{\breve{L}}}

\newcommand{\NLbreve}{N_{\breve{L}}}

\newcommand{\OSmPD}{\pazo S_m^{\PD}}
\newcommand{\SnPD}{S_n^{\PD}}
\newcommand{\OSnPD}{\pazo S_n^{\PD}}

\newcommand{\SnhatPD}{\widehat{S}_n^{\PD}}

\newcommand{\adic}{\textrm{-adic}}
\newcommand{\padic}{p\textrm{-adic}}
\newcommand{\pqheight}{[p]_q\textrm{-height}}

\newcommand{\PD}{\textup{PD}}

\newcommand{\pins}{\frakp \in \pazs}

\newcommand{\qconnection}{q\textrm{-connection}}
\newcommand{\rank}{\textup{rk}}
\newcommand{\Rbar}{\overline{R}}
\newcommand{\Rbarp}{(\Rbar)_{\frakp}}
\newcommand{\Rep}{\textup{Rep}}

\newcommand{\Rframe}{R^{\square}}
\newcommand{\Rinfty}{R_{\infty}}

\newcommand{\regular}{\textrm{-regular}}
\newcommand{\representation}{\textrm{-representation}}

\newcommand{\Spec}{\textup{Spec}\hspace{0.5mm}}
\newcommand{\Spf}{\textup{Spf }}

\newcommand{\submodule}{\textrm{-submodule}}

\newcommand{\TR}{\mbft_R}

\newcommand{\VR}{\mbfv_R}

\newcommand{\textmod}{\textup{ mod }}

\newcommand{\torsion}{\textrm{-torsion}}


\title{\textsc{Crystalline representations and Wach modules in the relative case II}}
\author{\textsc{Abhinandan}}

\newcommand{\Addresses}{{
  \footnotesize

  \rule{2cm}{0.4pt}\vspace{2mm}

  \textsc{Abhinandan}\par\nopagebreak
  \textsc{IMJ-PRG, Sorbonne Universit\'e, 4 Place Jussieu, Paris, France}\par\nopagebreak\vspace{-0.7mm}
  \textit{E-mail}: \footnotesize{\href{abhinandan@imj-prg.fr}{abhinandan@imj-prg.fr}}, \textit{Web}: \footnotesize{\href{https://abhinandan.perso.math.cnrs.fr/}{https://abhinandan.perso.math.cnrs.fr/}}
}}

\date{}

\begin{document}

\fontdimen2\font=0.3em
\pagenumbering{arabic}

\keywords{\textit{Keywords}: $\padic$ Hodge theory, crystalline representations, $(\varphi, \Gamma)\textrm{-modules}$}
\keywords{\textit{2020 Mathematics Subject Classification}: 14F20, 14F30, 14F40, 11S23.}

\maketitle
{
	\textsc{Abstract.} We study relative Wach modules, generalising our previous works on this subject.
	Our main result shows a categorical equivalence between relative Wach modules and lattices inside relative crystalline representations.
	Using this result, we deduce a purity statement for relative crystalline representations, and provide a criteria for checking the crystallinity of relative $\padic$ representations.
	Furthermore, we interpret relative Wach modules as modules with $q\textrm{-connections}$, and show that for a crystalline representation, its associated Wach module together with the Nygaard filtration is the canonical $q\textrm{-deformation}$ (after inverting $p$) of the filtered $(\varphi,\partial)\module$ associated to the representation.
}


\section{Introduction}

The study of arithmetic Wach modules and their relationship to crystalline representations is of classical nature, having been taken up in the works of Fontaine \cite{fontaine-phigamma}, Wach \cite{wach-free, wach-torsion}, Colmez \cite{colmez-hauteur} and Berger \cite{berger-limites}.
More precisely, in op.\ cit.\ the authors studied the situation of an absolutely unramified extension of $\QQ_p$ with perfect residue field.
In \cite{abhinandan-relative-wach-i} we defined a similar concept in the relative case, i.e.\ for certain étale algebras over a formal torus (see Section \ref{subsec:setup_nota} for precise setup) and showed that such objects give rise to crystalline representations of the fundamental group of the generic fibre.
On the other hand, in \cite{abhinandan-imperfect-wach}, we generalised the theory of Wach modules and their relationship to crystalline representations, to the imperfect residue field case.
In this article, we combine these two generalisations of the classical theory, to discuss the equivalence between Wach modules and crystalline representations in its most natural generality.
In addition, we provide some applications of the preceding result and also show that Wach modules are $q\textrm{-deformations}$ of lattices inside the filtered $(\varphi, \partial)\module$ attached to crystalline representations.

Before providing further motivations for our results, let us remark that recent developments in the theory of prismatic $F\textrm{-crystals}$ \cite{bhatt-scholze-crystals, du-liu-moon-shimizu, guo-reinecke} provide a new approach to the classification of lattices inside crystalline representations.
These exciting new developments have motivated us in seeking the results of the current paper.
However, instead of using the tools from the prismatic theory, we employ techniques from the classical theory of $(\varphi, \Gamma)\modules$ to obtain our results due to the very nature of the objects studied in this article, i.e.\ relative Wach modules.
Additionally, our proof enables us to provide interesting applications as well, for example, using \cite[Theorem 1.5]{abhinandan-imperfect-wach} and Theorem \ref{intro_thm:crystalline_wach_equivalence}, we provide a new criteria for checking the crystallinity of a $\padic$ representation in the relative case (see Theorem \ref{intro_thm:purity_crystalline} and Corollary \ref{intro_cor:purity_crystalline}).
We refer the reader to Section \ref{subsubsec:main_results} for precise statements of these results, to Section \ref{subsubsec:proof_strategy} for a sketch of our proof strategy and to Section \ref{subsec:relate_other_works} for more details on relation of our results to the prismatic theory.

Our motivation for studying relative Wach modules is twofold, largely stemming from geometry.
In \cite{abhinandan-syntomic}, for smooth ($\padic$ formal) schemes, we defined the notion of crystalline syntomic complex with coefficients in global relative Fontaine-Laffaille modules.
Moreover, \cite[Theorem 1.11]{abhinandan-syntomic} showed that such a complex is naturally comparable to the complex of $\padic$ nearby cycles of the associated crystalline $\ZZ_p\textrm{-local system}$ on the (rigid analytic) generic fibre of the (formal) scheme.
The work in loc.\ cit.\ was motivated by the results of \cite{fontaine-messing}, \cite{tsuji-syntomic}, \cite{tsuji-cst} and \cite{colmez-niziol}, and the proof of \cite[Theorem 1.11]{abhinandan-syntomic} follows via careful computations in the local setting in which relative Wach modules play a pivotal role (see \cite[Corollary 1.6]{abhinandan-syntomic}).
To generalise these results beyond the Fontaine-Laffaille case, it is therefore necessary to understand the relationship between crystalline representations of the fundamental group and general relative Wach modules (see Theorem \ref{intro_thm:crystalline_wach_equivalence}).

On the other hand, in \cite{bhatt-morrow-scholze-2}, for smooth $\padic$ formal schemes, the authors defined a prismatic syntomic complex and compared it to the complex of $\padic$ nearby cycles integrally.
In the same vein, comparison results beyond the smooth case, have also been obtained in \cite{antieau-mathew-morrow-nikolaus} and \cite{bhatt-mathew}, where the latter uses the theory of prismatic cohomology from \cite{bhatt-scholze-prisms}.
The aforementioned results were obtained in the case of constant coefficients and it is natural to ask if \cite[Theorem 10.1]{bhatt-morrow-scholze-2} could be generalised to non-constant coefficients, i.e.\ prismatic $F\textrm{-crystals}$.
In our approach to resolving this question, results pertaining to Wach modules from the current paper will play a critical role.

Another motivation for considering Wach modules is to construct a deformation of crystalline cohomology, i.e.\ the functor $\Dcrys$ from classical $\padic$ Hodge theory, to better capture mixed characteristic information.
In \cite[Section B.2.3]{fontaine-phigamma} Fontaine expressed similar expectations which were verified by Berger in \cite[Th\'eor\`eme III.4.4]{berger-limites} and generalised to finer integral conjectures in \cite[Section 6]{scholze-q-deformations}.
Some conjectures of \cite{scholze-q-deformations} were resolved by the introduction of prismatic cohomology \cite{bhatt-scholze-prisms}.
Furthermore, it is also worth mentioning that the proof of the result comparing prismatic syntomic complex to $\padic$ nearby cycles, i.e.\ \cite[Theorem 10.1]{bhatt-morrow-scholze-2}, relies on a local computation of prismatic cohomology using the $q\textrm{-de Rham}$ complex, i.e.\ a $q\textrm{-deformation}$ of the usual de Rham complex.
Additionally, the importance of $q\textrm{-de Rham}$ cohomology in the computation of prismatic cohomology has also been emphasised in \cite[Section 3]{bhatt-lurie}.

In this paper, we interpret Wach modules as $q\textrm{-de Rham}$ complexes (see Theorem \ref{intro_thm:qdeformation_dcrys}).
Moreover, we show that such an object is the $q\textrm{-deformation}$ of a lattice inside the filtered $(\varphi, \partial)\module$ attached to a crystalline representation.
In a subsequent work \cite{abhinandan-prismatic-wach}, we show that in our setting, a relative Wach module may be regarded as the evaluation of a prismatic $F\textrm{-crystal}$ over a covering (by a suitable $q\textrm{-de Rham}$ prism) of the final object of a certain prismatic topos.
Hence, from these apparent tight connections between Wach modules and prismatic $F\textrm{-crystals}$ and $\padic$ crystalline representations, we expect these objects to play a pivotal role in the study of $\padic$ nearby cycles of crystalline $\ZZ_p\textrm{-local systems}$ (for smooth formal schemes) and its comparison to prismatic syntomic complex with coefficients.

In summary, within the overarching program sketched above, this paper realises two of our goals (see Theorem \ref{intro_thm:crystalline_wach_equivalence} and Theorem \ref{intro_thm:qdeformation_dcrys}).
Additionally, we provide interesting applications of our results to purity statements in $\padic$ Hodge theory (see Theorem \ref{intro_thm:purity_crystalline} and Corollary \ref{intro_cor:purity_crystalline}).

\subsection{Crystalline representations and Wach modules}\label{subsec:arithmetic_case}

Let $p$ be a fixed prime number and $\kappa$ a perfect field of characteristic $p$; set $O_F := W(\kappa)$ to be the ring of $p\textrm{-typical}$ Witt vectors with coefficients in $\kappa$ and $F := O_F[1/p]$.
Let $d \in \NN$ and take $X_1, X_2, \ldots, X_d$ to be some indeterminates.
We set $O_F\langle X_1^{\pm 1}, \ldots, X_d^{\pm 1}\rangle$ to be the $\padic$ completion of Laurent polynomial ring $O_F[X_1^{\pm 1}, \ldots, X_d^{\pm 1}]$.
Let $R$ denote the $\padic$ completion of an \'etale algebra over $O_F\langle X_1^{\pm 1}, \ldots, X_d^{\pm 1}\rangle$ with non-empty and geometrically integral special fibre.
Denote by $G_R$ the \'etale fundamental group of $R[1/p]$ and by $\Gamma_R$ the Galois group of $\Rinfty[1/p]$ over $R[1/p]$, where $\Rinfty$ is obtained from $R$ by adjoining to it all $p\textrm{-power}$ roots of unity and all $p\textrm{-power}$ roots of $X_i$, for each $1 \leqslant i \leqslant d$.
Then, we have $\Gamma_R \isomorphic \ZZ_p(1)^d \rtimes \ZZ_p^{\times}$ (see Section \ref{sec:period_rings_padic_reps} for precise definitions).
Set $O_L := (R_{(p)})^{\wedge}$ as a complete discrete valuation ring with uniformiser $p$, residue field a finite \'etale extension of $\kappa(X_1, \ldots, X_d)$ and set $L := O_L[1/p]$.
Let $G_L$ denote the absolute Galois group of $L$ such that we have a continuous homomorphism $G_L \rightarrow G_R$; let $\Gamma_L$ denote the Galois group of $\Linfty$ over $L$, where $\Linfty$ is obtained from $L$ by adjoining to it all $p\textrm{-power}$ roots of unity and all $p\textrm{-power}$ roots of $X_i$, for each $1 \leqslant i \leqslant d$.
The continuous homomorphism $G_L \rightarrow G_R$ induces a continuous isomorphism $\Gamma_L \isomorphic \Gamma_R$.
In this setting, we have the theory of crystalline representations of $G_R$ from \cite{brinon-relatif} and the theory of \'etale $(\varphi, \Gamma)\textrm{-modules}$ from \cite{andreatta-phigamma, andreatta-brinon}.

\subsubsection{Relative Wach modules}

Set $\varepsilon := (1, \zeta_p, \zeta_{p^2}, \ldots)$ in $\Rinfty^{\flat}$ (the tilt of $R_{\infty}$) and its Teichm\"uller lift $[\varepsilon]$ in $\Ainf(\Rinfty) := W(\Rinfty^{\flat})$, the ring of $p\textrm{-typical}$ Witt vectors with coefficients in $\Rinfty^{\flat}$.
Additionally, set $\mu := [\varepsilon]-1$ and $[p]_q := \varphi(\mu)/\mu$, as elements of $\Ainf(\Rinfty)$.
Moreover, for $1 \leqslant i \leqslant d$, fix $X_i^{\flat} := (X_i, X_i^{1/p}, \ldots)$ in $R_{\infty}^{\flat}$ and their Teichm\"uller lifts $[X_i^{\flat}]$ in $\Ainf(\Rinfty)$.
Let $\AR^+$ denote the $(p, \mu)\textrm{-adic}$ completion of the unique extension of the $(p, \mu)\textrm{-adic}$ completion of $O_F\llbracket \mu \rrbracket [X_1^{\flat}]^{\pm 1}, \ldots, [X_d^{\flat}]^{\pm 1}]$ along the $p\textrm{-adically}$ completed \'etale map $O_F\langle X_1^{\pm 1}, \ldots, X_d^{\pm 1}\rangle \rightarrow R$ (see Section \ref{subsec:setup_nota} and Section \ref{subsec:ainf_relative}).
The ring $\AR^+$ is equipped with a Frobenius endomorphism $\varphi$ and a continuous action of $\Gamma_R$; set $\AL^+$ to be the $(p, \mu)\textrm{-adic}$ completion of the localisation $(\AR^+)_{(p, \mu)}$ equipped with an induced Frobenius endomorphism $\varphi$ and a continuous action of $\Gamma_L \isomorphic \Gamma_R$.
With this setup, we define the following:

\begin{defi}\label{intro_defi:wach_mods_relative}
	A \textit{Wach module} over $\AR^+$ with weights in the interval $[a, b]$, for some $a, b \in \ZZ$ with $b \geqslant a$, is a finitely generated $\AR^+\module$ $N$ satisfying the following assumptions:
	\begin{enumarabicup}
		\item The sequences $\{p, \mu\}$ and $\{\mu, p\}$ are regular on $N$.

		\item $N$ is equipped with a semilinear action of $\Gamma_R$ such that the induced action of $\Gamma_R$ on $N/\mu N$ is trivial.

		\item $N$ admits a Frobenius-semilinear operator $\varphi: N[1/\mu] \rightarrow N[1/\varphi(\mu)]$, compatible with the respective actions of $\Gamma_R$, and such that $\varphi(\mu^b N) \subset \mu^b N$ and the cokernel of the $\AR^+\linear$ map $(1 \otimes \varphi) : \varphi^{\ast}(\mu^b N) \rightarrow \mu^b N$ is killed by $[p]_q^{b-a}$.
	\end{enumarabicup}
	Denote by $(\varphi, \Gamma_R)\Mod_{\AR^+}^{[p]_q}$ the category of Wach modules over $\AR^+$, with morphisms between objects being $\AR^+\linear$, $\varphi\equivariant$ (after inverting $\mu$) and $\Gamma_R\equivariant$ morphisms.
\end{defi}

\begin{rem}
	The condition (1) in Definition \ref{intro_defi:wach_mods_relative} is new and relaxes finite projectivity assumption of relative Wach modules in \cite[Definition 4.8]{abhinandan-relative-wach-i}.
	Moreover, condition (1) above is equivalent to the vanishing of local cohomology of $N$ with respect to the ideal $(p, \mu) \subset \AR^+$ in degree 1 (see Lemma \ref{lem:strictreg_koszulcomp} and Remark \ref{rem:strictreg_localcoh}), in particular, it is equivalent to having $\{p, [p]_q\}$ and $\{[p]_q, p\}$ as regular sequences on $N$ (see Lemma \ref{lem:p_pq_regsec}).
	Furthermore, one can also show that $N[1/p]$ is a finite projective $\AR^+[1/p]\module$ (see Proposition \ref{prop:finiteproj_torus}, where we use some ideas from \cite{bhatt-morrow-scholze-1, du-liu-moon-shimizu}) and $N[1/\mu]$ is a finite projective $\AR^+[1/\mu]\module$ (see Proposition \ref{prop:wachmod_proj_pmu}) satisfying $N = N[1/p] \cap N[1/\mu] \subset N[1/p, 1/\mu]$ (see Lemma \ref{lem:wach_saturated}).
\end{rem}

\begin{rem}
	In Definition \ref{intro_defi:wach_mods_relative}, note that in contrast to the definition of Wach modules in the arithmetic case (see \cite[Definition III.4.1]{berger-limites}), we have dropped the assumption on the continuity of the action of $\Gamma_R$ on $N$.
	However, in Lemma \ref{lem:automatic_continuity} we show that the condition (2) in Definition \ref{intro_defi:wach_mods_relative}, i.e.\ triviality of the action of $\Gamma_R$ on $N/\mu N$, automatically implies that the action of $\Gamma_R$ on $N$ is continuous.
\end{rem}

\begin{rem}\label{inro_rem:wach_mods_field}
	Definition \ref{intro_defi:wach_mods_relative} may be adapted to the case when $R[1/p]$ is a field, i.e.\ over the ring $\AF^+ := O_F\llbracket \mu \rrbracket$ (resp.\ $\AL^+$).
	In these cases, from the assumptions of Definition \ref{intro_defi:wach_mods_relative} it follows that a Wach module over $\AF^+$ (resp.\ $\AL^+$) is necessarily finite free.
	Indeed, if $N$ is a Wach module over $\AF^+$ (resp.\ $\AL^+$), in the sense of Definition \ref{intro_defi:wach_mods_relative}, then one first observes that $N$ is torsion-free since $N \subset N[1/p]$ and the latter is finite free over $\AF^+[1/p]$ (resp.\ $\AL^+[1/p]$) by \cite[Lemma 2.14]{abhinandan-imperfect-wach}.
	Then, using \cite[Proposition B.1.2.4]{fontaine-phigamma} (resp.\ Lemma \ref{lem:wach_saturated} and \cite[Remark 2.15]{abhinandan-imperfect-wach}) it follows that $N$ is finite free.
	In particular, Definition \ref{intro_defi:wach_mods_relative} is equivalent to \cite[Definition III.4.1]{berger-limites} over $\AF^+$ (resp.\ \cite[Definition 1.3]{abhinandan-imperfect-wach} over $\AL^+$).
\end{rem}

Set $\AR := \AR^+[1/\mu]^{\wedge}$ as the $\padic$ completion, and note that Frobenius endomorphism $\varphi$ and the continuous action of $\Gamma_R$ on $\AR^+$ extend to $\AR$, and similarly, set $\AL := \AL^+[1/\mu]^{\wedge}$ and note that the Frobenius endomorphism $\varphi$ and the continuous action of $\Gamma_L$ on $\AL^+$ extend to $\AL$.
Let $T$ be a finite free $\ZZ_p\textrm{-representation}$ of $G_R$ and recall that one can functorially attach to $T$ a finite projective \'etale $(\varphi, \Gamma_R)\module$ $\DR(T)$ over $\AR$ of rank $=\rank_{\ZZ_p} T$, equipped with a semilinear and continuous action of $\Gamma_R$ and a Frobenius-semilinear operator $\varphi$ commuting with the action of $\Gamma_R$.
In fact, the preceding functor induces a categorical equivalence between the category of finite free $\ZZ_p\textrm{-representations}$ of $G_R$ and the category of finite projective \'etale $(\varphi, \Gamma_R)\modules$ over $\AR$ (see \cite[Theorem 7.11]{andreatta-phigamma}).
Additionally, the category of Wach modules over $\AR^+$ fully faithfully embeds into the latter category, i.e.\ the category of \'etale $(\varphi, \Gamma_R)\modules$ over $\AR$ (see Proposition \ref{prop:wach_etale_ff_relative}).

\subsubsection{Main results}\label{subsubsec:main_results}

Let $\Rep_{\ZZ_p}^{\crys}(G_R)$ denote the category of $\ZZ_p\textrm{-lattices}$ inside $\padic$ crystalline representations of $G_R$.
For a $\ZZ_p\textrm{-representation}$ $T$ of $G_R$ such that $T[1/p]$ is crystalline, we construct a Wach module $\NR(T)$ over $\AR^+$, functorial in $T$, and contained in $\DR(T)$ (see Theorem \ref{thm:crys_fh_relative}).
Our first main result is as follows:
\begin{thm}[Corollary \ref{cor:crystalline_wach_equivalence_relative}]\label{intro_thm:crystalline_wach_equivalence}
	The Wach module functor induces an equivalence of categories
	\begin{align*}
		\Rep_{\ZZ_p}^{\crys}(G_R) &\isomorphic (\varphi, \Gamma_R)\Mod_{\AR^+}^{[p]_q}\\
		T &\longmapsto \NR(T),
	\end{align*}
	with a quasi-inverse given as $N \mapsto \TR(N) := \big(W\big(\Rbar^{\flat}[1/p^{\flat}]\big) \otimes_{\AR^+} N\big)^{\varphi=1}$.
\end{thm}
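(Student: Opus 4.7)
By Theorem \ref{thm:crys_fh_relative}, the functor $T \mapsto \NR(T)$ is already defined and produces an inclusion $\NR(T) \subset \DR(T)$ with $\NR(T)[1/\mu]^{\wedge} \cong \DR(T)$ as étale $(\varphi, \Gamma_R)\module$ over $\AR$. Since Wach modules form a full subcategory of étale $(\varphi, \Gamma_R)\modules$ over $\AR$ by Proposition \ref{prop:wach_etale_ff_relative}, and since $T \mapsto \DR(T)$ is an equivalence between finite free $\ZZ_p\textrm{-representations}$ of $G_R$ and finite projective étale $(\varphi, \Gamma_R)\modules$ over $\AR$ by \cite[Theorem 7.11]{andreatta-phigamma}, the composite $\NR$ is fully faithful. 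The identification $\TR(\NR(T)) \cong T$ follows immediately, since $W(\Rbar^{\flat}[1/p^{\flat}]) \otimes_{\AR^+} \NR(T) = W(\Rbar^{\flat}[1/p^{\flat}]) \otimes_{\AR} \DR(T)$, and the latter recovers $T$ after taking $\varphi\textrm{-invariants}$ by Andreatta's equivalence.

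\textbf{Essential surjectivity.} Starting from a Wach module $N$ over $\AR^+$, its $\mu\textrm{-adic}$ completion after inverting $\mu$ is an étale $(\varphi, \Gamma_R)\module$; Andreatta's equivalence therefore produces a finite free $\ZZ_p\textrm{-representation}$ $T := \TR(N)$ of $G_R$. The core task is to show that $V = T[1/p]$ is crystalline and that $N \cong \NR(T)$. The strategy, modelled on the classical and imperfect residue field cases, is to construct an $\OBcrys(\Rbar)\linear$, $(\varphi, G_R)\equivariant$ comparison isomorphism
\begin{equation*}
  \OBcrys(\Rbar) \otimes_{\AR^+} N \isomorphic \OBcrys(\Rbar) \otimes_{\ZZ_p} T,
\end{equation*}
from which $\ODcrysR(V)$ is extracted as $\Gamma_R\textrm{-invariants}$ (after a suitable reduction), showing that its rank matches that of $V$ and that the Hodge-Tate weights lie in $[a,b]$. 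Once $V$ is known to be crystalline, both $N$ and $\NR(T)$ are Wach modules sitting inside the same étale $(\varphi, \Gamma_R)\module$ $\DR(T)$; the saturation identity $N = N[1/p] \cap N[1/\mu]$ (Lemma \ref{lem:wach_saturated}) together with the characterising properties of $\NR(T)$ (trivial $\Gamma_R\textrm{-action}$ modulo $\mu$ and prescribed $[p]_q\textrm{-height}$) then forces $N = \NR(T)$.

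\textbf{Main obstacle.} The principal difficulty is the construction of the comparison isomorphism above. Unlike in the arithmetic and imperfect residue field settings, $N$ is no longer assumed finite projective over $\AR^+$: only the weaker regularity of the sequences $\{p, \mu\}$ and $\{\mu, p\}$ (condition (1) of Definition \ref{intro_defi:wach_mods_relative}) is available, which obstructs naive base-change along $\AR^+ \to \OBcrys(\Rbar)$. The plan is to leverage the two open loci where finite projectivity does hold: on $\AR^+[1/p]$ (Proposition \ref{prop:finiteproj_torus}), one generalises the arithmetic argument using the explicit torus structure and the decomposition $\Gamma_R \cong \ZZ_p(1)^d \rtimes \ZZ_p^{\times}$; on $\AR^+[1/\mu]$ (Proposition \ref{prop:wachmod_proj_pmu}), the comparison is purely a matter of étale $(\varphi, \Gamma_R)\module$ theory. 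The two local comparisons must then be glued to a global one over $\AR^+$ via the saturation property. A crucial compatibility is obtained by restricting to the generic point along $\AR^+ \to \AL^+$, where the restriction of $V$ to $G_L$ is handled by the imperfect residue field theorem \cite[Theorem 1.5]{abhinandan-imperfect-wach}, providing a base case against which the relative comparison can be tested.
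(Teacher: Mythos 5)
Your overall architecture coincides with the paper's: the equivalence is assembled from the construction $\NR(T) := \NL(T)\cap\DR(T)$ for crystalline lattices (Theorem \ref{thm:crys_fh_relative}), full faithfulness via Proposition \ref{prop:wach_etale_ff_relative} composed with the \'etale equivalence \eqref{eq:rep_phigamma_relative}, and the converse statement that $\TR(N)[1/p]$ is crystalline (Theorem \ref{thm:fh_crys_relative}), with uniqueness of the Wach module inside $\DR(T)$ closing the loop. Those parts of your proposal are sound.

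The genuine gap is in the crystallinity step. A $(\varphi, G_R)\equivariant$ isomorphism $\OBcrys(\Rbar)\otimes_{\AR^+}N \isomorphic \OBcrys(\Rbar)\otimes_{\ZZ_p}T$ does \emph{not} by itself imply that $V=T[1/p]$ is crystalline: crystallinity requires that the invariants $\ODcrysR(V)=(\OBcrys(\Rbar)\otimes_{\QQ_p}V)^{G_R}$ have rank $\dim_{\QQ_p}V$, and taking $G_R\textrm{-invariants}$ of a large isomorphism can a priori produce something of strictly smaller rank. Producing enough invariant (equivalently, horizontal) sections is the hardest step of the paper and is the content of Proposition \ref{prop:oarpd_comparison}: one works over the auxiliary PD rings $\OSnPD$, proves convergence of the operators $\log\gamma_i$ on $\OSnPD\otimes_{\AR^+}N$, and uses the convergent series \eqref{eq:horizontal_elems} together with a unit-determinant argument to exhibit an $R[1/p]\textrm{-lattice}$ $D_R=(\OARpi^{\PD}\otimes_{\AR^+}N[1/p])^{\Gamma_R}$ of full rank satisfying $\OARpi^{\PD}\otimes_R D_R\isomorphic\OARpi^{\PD}\otimes_{\AR^+}N[1/p]$. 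Your phrase ``after a suitable reduction'' conceals all of this, and neither of your two local comparisons supplies it. Separately, and as a matter of route rather than correctness: the paper does not glue the comparison isomorphism over $\Spec\AR^+[1/p]\cup\Spec\AR^+[1/\mu]$. It localises $\Rbar$ at the minimal primes $\frakp$ above $p$, imports the imperfect-residue-field comparison over $\Ainf(\Cpplus)$ at each $\frakp$, intersects with the \'etale comparison inside $W(\Cp^{\flat})$ to descend to $\Ainf(\Cplusp)$, and then intersects the product over all $\pins$ with the \'etale comparison over $W(\CC(\Rbar)^{\flat})$ via Lemma \ref{lem:ainf_intersection}. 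Testing against the single localisation $\AL^+$, as you propose, does not see all of $\Spec(R/p)$; the whole $G_R\textrm{-orbit}$ of primes is needed for the descent to $\Ainf(\Rbar)[1/\mu]$.
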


\begin{rem}\label{intro_rem:crystalline_wach_equivalence_rational}
	In Theorem \ref{intro_thm:crystalline_wach_equivalence}, we do not expect the functor $\NR$ to be an exact equivalence.
	However, note that after inverting $p$, the Wach module functor induces an exact equivalence of $\otimes\textrm{-categories}$: $\Rep_{\QQ_p}^{\crys}(G_R) \isomorphic (\varphi, \Gamma_R)\Mod_{\BR^+}^{[p]_q}$, via $V \mapsto \NR(V)$, where $\BR^+ := \AR^+[1/p]$, with an exact $\otimes\textrm{-compatible}$ quasi-inverse functor given as $M \mapsto \VR(M) := \big(W(\Rbar^{\flat}[1/p^{\flat}]) \otimes_{\AR^+} M\big)^{\varphi=1}$ (see Corollary \ref{cor:crystalline_wach_rat_equivalence_relative}).
\end{rem}

As an application of Theorem \ref{intro_thm:crystalline_wach_equivalence}, we obtain the following purity statement:
\begin{thm}[Theorem \ref{thm:purity_crystalline}]\label{intro_thm:purity_crystalline}
	Let $V$ be a $\padic$ representation of $G_R$.
	Then, $V$ is crystalline as a representation of $G_R$ if and only if it is crystalline as a representation of $G_L$.
\end{thm}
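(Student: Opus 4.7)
The ``only if'' direction is immediate from base change. Given $V$ crystalline over $G_R$ with a $G_R$-stable lattice $T \subset V$ and associated Wach module $\NR(T)$ over $\AR^+$ by Theorem \ref{intro_thm:crystalline_wach_equivalence}, the module $\AL^+ \otimes_{\AR^+} \NR(T)$ inherits the structure of a Wach module over $\AL^+$ and, by \cite[Theorem 1.5]{abhinandan-imperfect-wach}, corresponds to $T$ regarded as a $G_L$-representation, so $V$ is crystalline as a $G_L$-representation.

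For the converse, fix a $G_R$-stable lattice $T \subset V$, and write $\DR(T)$ for its \'etale $(\varphi, \Gamma_R)$-module over $\AR$, with base change $\DL(T) := \AL \otimes_{\AR} \DR(T)$. By the crystallinity hypothesis on $V$ as a $G_L$-representation and the imperfect residue field equivalence \cite[Theorem 1.5]{abhinandan-imperfect-wach}, there is a Wach module $\NL(T) \subset \DL(T)$ over $\AL^+$. The strategy is to descend $\NL(T)$ to a Wach module $\NR(T)$ over $\AR^+$ inside $\DR(T)$; Theorem \ref{intro_thm:crystalline_wach_equivalence} will then give crystallinity of $T$, and hence of $V$, as a $G_R$-representation. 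The natural candidate is
\[
\NR(T) := \NL(T) \cap \DR(T) \subset \DL(T).
\]
The $(\varphi, \Gamma_R)$-action restricts to $\NR(T)$ since $\Gamma_L \isomorphic \Gamma_R$ preserves both intersectands and Frobenius extends compatibly. The triviality of $\Gamma_R$ modulo $\mu$ and the Frobenius height conditions (injectivity of $1 \otimes \varphi$ on $\mu^b \NR(T)$ and $[p]_q^{b-a}$-annihilation of its cokernel) should be inherited from the corresponding properties of $\NL(T)$, using that the reduction maps $\AR^+/\mu \to \AL^+/\mu$ and $\AR^+/[p]_q \to \AL^+/[p]_q$ are injective.

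The main obstacle is proving that $\NR(T)$ is finitely generated over $\AR^+$ with $\{p, \mu\}$ and $\{\mu, p\}$ both regular on it --- equivalently, by Lemma \ref{lem:local_coh} and Remark \ref{lem:compsupp_coh}, with vanishing local cohomology at $(p, \mu)$ in degree one. My approach is to analyse $\NR(T)$ on the two distinguished opens of $\Spec \AR^+$ cut out by $(p)$ and $(\mu)$: after inverting $\mu$, one has $\NR(T)[1/\mu] = \DR(T)$, finite projective over $\AR$ by \'etale $(\varphi, \Gamma_R)$-module theory \cite{andreatta-phigamma}; after inverting $p$, I would produce a finite projective rational Wach module $\NR(T)[1/p] \subset \DR(T)[1/p]$ over $\BR^+ := \AR^+[1/p]$ by the rational variant of the same intersection argument, combined with the $\otimes$-equivalence over $\BR^+$ of Remark \ref{intro_rem:crystalline_wach_equivalence_rational}. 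A Beauville--Laszlo-type gluing on $\Spec \AR^+ \setminus V(p, \mu)$, together with the depth property descending from $\NL(T) = \AL^+ \otimes_{\AR^+} \NR(T)$, should then package these local pieces into the desired finite generation and regularity, completing the verification of Definition \ref{intro_defi:wach_mods_relative}.
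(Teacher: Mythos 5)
Your skeleton matches the paper's: the forward direction is base change, and for the converse the paper also sets $\NR(T) := \NL(T) \cap \DR(T) \subset \DL(T)$ and verifies that this intersection is a Wach module over $\AR^+$ (Proposition \ref{prop:wach_module_constr}), after which crystallinity over $G_R$ follows from Theorem \ref{thm:fh_crys_relative}. The gap lies entirely in how you propose to verify the Wach module axioms for this intersection, which is the actual content of the proof.

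First, ``$\NR(T)[1/\mu] = \DR(T)$'' is not available from \'etale $(\varphi,\Gamma_R)$-module theory. Since $\AL$ is the $p$-adic completion of $\AL^+[1/\mu]$, an element of $\DL(T) = \AL \otimes_{\AL^+} \NL(T)$ need not land in $\NL(T)$ after multiplication by any power of $\mu$ (already $\sum_k p^k \mu^{-k}$ lies in $\AL$ but not in $\AL^+[1/\mu]$), so the literal localisation $\NR(T)[1/\mu]$ may be strictly smaller than $\DR(T)$. The statement one actually needs is the completed one, $\AR \otimes_{\AR^+} \NR(T) \isomorphic \DR(T)$, and establishing it is one of the two hard points of the paper's argument: Proposition \ref{prop:wach_phigamm_comp}, which requires the auxiliary module $\Mcirc$ and the height-bounding argument of Lemma \ref{lem:mcirc_props} (adapted from \cite{du-liu-moon-shimizu}). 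Second, your gluing inputs are circular: the identity $\NL(T) = \AL^+ \otimes_{\AR^+} \NR(T)$ (used to descend the depth/regularity condition) and the rational equivalence of Remark \ref{intro_rem:crystalline_wach_equivalence_rational} (used to get projectivity of $\NR(T)[1/p]$ over $\BR^+$) are both consequences of Theorem \ref{thm:crys_fh_relative} --- see Remark \ref{rem:wachmod_ar_al} and Corollary \ref{cor:crystalline_wach_rat_equivalence_relative} --- whose proof is precisely the construction you are carrying out; moreover, that rational equivalence applies only to $G_R$-crystalline representations, which $V$ is not yet known to be at this stage. The paper avoids all of this by proving finite generation directly modulo $p^n$ (Lemma \ref{lem:modp_fingen}): a change-of-basis matrix argument between a basis of $\NL(T)/p$ over $\AL^+/p$ and a presentation of $\DR(T)/p$ traps the intersection between a lattice $M$ and $\mu^{-k}M$, and a Mittag--Leffler argument then gives $p$-adic completeness; the finite-height condition is obtained from faithful flatness of $\varphi$ on $\AR^+$, so that $\varphi^*$ commutes with the intersection (Lemma \ref{lem:finite_height}), rather than from injectivity of reduction maps. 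Without these inputs your Beauville--Laszlo step has nothing to glue.
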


For a $\padic$ representation $V$ of $G_R$, let $\ODcrysR(V)$ denote the associated filtered $(\varphi, \partial)\module$ over $R[1/p]$ (see \cite[Section 8.2]{brinon-relatif}).
We show the following criterion for checking the crystallinity of $V$:
\begin{cor}[Theorem \ref{thm:purity_crystalline} \& Corollary \ref{cor:odcrys_functoriality}]\label{intro_cor:purity_crystalline}
	Let $V$ be a $\padic$ representation of $G_R$.
	Then, $V$ is crystalline if and only if $\rank_{R[1/p]} \ODcrysR(V) = \dim_{\QQ_p} V$.
	Moreover, under these equivalent conditions, we have a natural isomorphism $L \otimes_{R[1/p]} \ODcrysR(V) \isomorphic \ODcrysL(V)$ of filtered $(\varphi, \partial)\modules$ over $L$.
\end{cor}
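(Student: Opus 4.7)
The forward implication is immediate: if $V$ is crystalline as a $G_R\textrm{-representation}$, then by the conventions of \cite{brinon-relatif} one has $\rank_{R[1/p]} \ODcrysR(V) = \dim_{\QQ_p} V$.

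For the converse, my plan is to construct a natural, functorial comparison morphism
\[
	\alpha_V \colon L \otimes_{R[1/p]} \ODcrysR(V) \lnra \ODcrysL(V)
\]
of filtered $(\varphi, \partial)\modules$ over $L$, arising from the inclusion $R \hookrightarrow O_L$, the corresponding restriction $G_L \to G_R$ on Galois groups (which identifies $\Gamma_L$ with $\Gamma_R$), and the resulting $G_L\textrm{-equivariant}$ morphism between the versions of $\OBcrys$ used to define $\ODcrysR$ and $\ODcrysL$. Concretely, $\alpha_V$ is obtained by taking $V \otimes_{\QQ_p} (-)$, then $G_L\textrm{-invariants}$, and finally extending scalars from $R[1/p]$ to $L$.

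The key step is then to prove that $\alpha_V$ is injective. Granting this, the standard inequality $\dim_L \ODcrysL(V) \leq \dim_{\QQ_p} V$, valid for any $\padic$ representation of $G_L$, together with the hypothesis $\rank_{R[1/p]} \ODcrysR(V) = \dim_{\QQ_p} V$, forces
\[
	\dim_{\QQ_p} V = \dim_L \bigl(L \otimes_{R[1/p]} \ODcrysR(V)\bigr) \leq \dim_L \ODcrysL(V) \leq \dim_{\QQ_p} V,
\]
so equality holds throughout. Thus $V$ is crystalline as a $G_L\textrm{-representation}$, and Theorem \ref{intro_thm:purity_crystalline} upgrades this to $V$ being crystalline as a $G_R\textrm{-representation}$, completing the equivalence. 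For the ``moreover'' statement, $\alpha_V$ is then an injective morphism between $L\textrm{-vector spaces}$ of equal finite dimension, hence bijective; compatibility with Frobenius, filtration, and connection is automatic from the naturality of the construction.

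The main obstacle I anticipate is establishing the injectivity of $\alpha_V$, which will require a careful analysis of how the period ring used for $R$ embeds into its analogue used for $L$, as well as a bookkeeping verification of how the connection $\partial$ transfers. The extra derivations present on $L$ due to its imperfect residue field should pose no real difficulty, since the continuous derivations of $L/F$ are the $L\textrm{-linear}$ extension of those of $R[1/p]/F$. Up to these technicalities, the substantive content of the corollary is already contained in the purity statement of Theorem \ref{intro_thm:purity_crystalline}.
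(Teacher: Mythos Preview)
Your overall strategy matches the paper's: build a natural $L$-linear map $\alpha_V \colon L \otimes_{R[1/p]} \ODcrysR(V) \to \ODcrysL(V)$, prove it is injective, and then combine the dimension inequality for $\ODcrysL$ with Theorem~\ref{intro_thm:purity_crystalline}. The paper carries this out in Theorem~\ref{thm:purity_crystalline} and Corollary~\ref{cor:odcrys_functoriality}.

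The real gap is in the step you flag as ``the main obstacle'' and then defer. You expect injectivity to follow from an embedding of the period ring for $R$ into that for $L$, but the paper explicitly notes (just after \eqref{eq:cplus_gequiv}) that for a single choice of embedding $\Rbar \hookrightarrow \OLbar$ the induced map $\CC^+(\Rbar) \to \Cplusp$ \emph{need not be injective}: there may be several minimal primes of $\Rbar$ above $p$, and passing to the $p$-adic completion at only one of them can lose information. The paper's remedy is to work with the entire set $\pazs$ of such primes: one has a genuine embedding $L \otimes_{R[1/p]} \OBcrys(\Rbar) \hookrightarrow \prod_{\pins} \OBcrys(\Cplusp)$ (Remark~\ref{lem:bcrys_embedding_l}), and then the transitive action of $G_R$ on $\pazs$ forces any nonzero $G_R$-invariant element to have nonzero image in \emph{every} component, so that projection to a single $\frakp$ is still injective on $L \otimes_{R[1/p]} \ODcrysR(V)$. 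This localisation-and-Galois-transitivity argument (developed in \S\ref{subsec:localisation}--\S\ref{subsec:crystalline_relative}) is the technical heart of the corollary and is not something a ``careful analysis of embeddings'' will produce without this idea.

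A second, smaller point: for the ``moreover'' clause you say compatibility with filtration is ``automatic from naturality''. A filtration-preserving bijection need not have filtration-preserving inverse. The paper instead argues (Corollary~\ref{cor:odcrys_functoriality}) by first using that $V$ is now known to be crystalline for $G_R$, base-changing the full crystalline comparison isomorphism $\OBcrys(\Rbar) \otimes_{R[1/p]} \ODcrysR(V) \isomorphic \OBcrys(\Rbar) \otimes_{\QQ_p} V$ along $\OBcrys(\Rbar) \to \OBcrys(\Cpplus)$, and taking $\GRhatp$-invariants; this produces $\alpha_V$ directly as an isomorphism of filtered $(\varphi,\partial)$-modules rather than merely a filtered bijection.
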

Important inputs for the proof of Corollary \ref{intro_cor:purity_crystalline} are Theorem \ref{intro_thm:purity_crystalline} and a careful study of the period rings for the localisation of $\Rbar$ at its minimal primes above $(p) \subset R$ (see Section \ref{subsec:localisation}).

\subsubsection{Strategy for the proof of Theorem \ref{intro_thm:crystalline_wach_equivalence}}\label{subsubsec:proof_strategy}

The motivation for seeking Theorem \ref{intro_thm:crystalline_wach_equivalence} comes from \cite{tsuji-hodge-tate-purity, liu-zhu} and some unpublished works of Tsuji, and it is based on the idea that the crystallinity of a $\padic$ representation of $G_R$ may be deduced from its crystallinity as a $\padic$ representation of $G_L$ (a posteriori, Theorem \ref{intro_thm:purity_crystalline} verifies these expectations).
In a similar vein, \cite{du-liu-moon-shimizu} showed that lattices inside crystalline representations of $G_R$ are equivalent to Breuil--Kisin modules equipped with the prismatic descent data, where the key observation is that for a crystalline $\ZZ_p\textrm{-representation}$ of $G_R$ one may construct its Breuil--Kisin module with descent data from the associated \'etale $\varphi\module$, the Breuil--Kisin module for $L$ and their compatible descent data.
Subsequently, putting these ideas together and a search for analogous results in the realm of $(\varphi, \Gamma)\modules$ led us to Theorem \ref{intro_thm:crystalline_wach_equivalence} (see Section \ref{subsec:relate_other_works} for more on relation to previous works).
Let us make this idea a bit more precise.

The proof of Theorem \ref{intro_thm:crystalline_wach_equivalence} crucially uses the analogous results in the imperfect residue field case (see \cite[Theorem 1.5]{abhinandan-imperfect-wach}) and the results of \cite{andreatta-phigamma, abhinandan-relative-wach-i}.
More concretely, starting with a Wach module $N$ over $\AR^+$, we use ideas from \cite[Theorem 4.25 \& Proposition 4.28]{abhinandan-relative-wach-i}, the observation that $\AL^+ \otimes_{\AR^+} N$ is a Wach module over $\AL^+$ and \cite[Lemma 3.6 \& Theorem 3.12]{abhinandan-imperfect-wach} to establish that $\TR(N)[1/p]$ is a $\padic$ crystalline representation of $G_R$ (see Theorem \ref{thm:fh_crys_relative}).
Conversely, starting with a $\ZZ_p\lattice$ $T$ inside a $\padic$ crystalline representation of $G_R$, we observe that $T[1/p]$ is a $\padic$ crystalline representation of $G_L$, and we use \cite[Theorem 4.1]{abhinandan-imperfect-wach} to obtain a unique Wach module $\NL(T)$ over $\AL^+$.
Moreover, from the theory of $(\varphi, \Gamma)\modules$ we have an \'etale $(\varphi, \Gamma_R)\module$ $\DR(T)$ over $\AR$ (see \cite{andreatta-phigamma}).
So, we set $\NR(T) := \NL(T) \cap \DR(T) \subset \DL(T)$ as an $\AR^+\module$, where $\DL(T)$ is the $(\varphi, \Gamma_L)\module$ over $\AL$, associated to $T$.
Then, using the compatible Frobenius-semilinear endomorphism $\varphi$ and the continuous action of $\Gamma_L \isomorphic \Gamma_R$ on $\NL(T)$ and $\DR(T)$, we equip the $\AR^+\module$ $\NR(T)$ with a natural $(\varphi, \Gamma_R)\action$.

As mentioned earlier, the construction of $\NR(T)$ is parallel to the Breuil--Kisin setting studied in \cite{du-liu-moon-shimizu}, and we employ some (modified) ideas from op.\ cit.\ to show that $\NR(T)$ has ``good'' properties as a $\varphi\module$ over $\AR^+$.
However, there are two key differences: first, op.\ cit.\ uses \cite{brinon-trihan} as an important ingredient but our constructions crucially use \cite{abhinandan-imperfect-wach} instead; second, note that in op.\ cit.\ Breuil--Kisin modules are equipped with prismatic descent data whereas Wach modules admit an action of $\Gamma_R$.
Equipping $\NR(T)$ with a natural action of $\Gamma_R$ is a non-trivial problem and we resolve it by using the theory of Wach modules in the imperfect residue field case from \cite{abhinandan-imperfect-wach} and the theory of \'etale $(\varphi, \Gamma)\modules$ from \cite{andreatta-phigamma} as important inputs.
Finally, we utilise the properties of $\NL(T)$ and $\DR(T)$ to show that $\NR(T)$ is the unique Wach module associated to $T$.

\subsection{Wach modules as \texorpdfstring{$q\textrm{-deformations}$}{-}}\label{subsec:q_deformation}

In Section \ref{sec:wachmod_qconnection} we recall the definition of a $\qconnection$ axiomatically, following \cite{morrow-tsuji}.
Moreover, we show that a Wach module $N$ over $\AR^+$ may also be seen as a $\varphi\module$ equipped with a $\qconnection$.
More precisely, let $D := O_F\llbracket \mu \rrbracket$, and let $\{\gamma_1, \ldots, \gamma_d\}$ be topological generators of the geometric part of $\Gamma_R$, i.e.\ $\Gamma_R'$ (see Section \ref{sec:period_rings_padic_reps}).
Then in Proposition \ref{defi:wachmod_qconnection} we show that the $\qconnection$ defined as
\begin{equation*}
	\nabla_q : N \longrightarrow N \otimes_{\AR^+} \Omega^1_{\AR^+/D}, \hspace{5mm} x \longmapsto \textstyle\sum_{i=1}^d \tfrac{\gamma_i(x)-x}{\mu} \dlog([X_i^{\flat}]),
\end{equation*}
describes $(N, \nabla_q)$ as a $\varphi\module$ with $(p, [p]_q)\textrm{-adically}$ quasi-nilpotent $D\textrm{-linear}$ flat $\qconnection$ over $\AR^+$.
We equip $N$ with the Nygaard filtration $\{\Fil^k N\}_{n \in \ZZ}$ as in Definition \ref{defi:nygaard_fil}.
Then, it follows that $N/\mu N$ is a $\varphi\module$ over $R$ equipped with a $p\textrm{-adically}$ quasi-nilpotent flat connection and we further equip it with a filtration $\Fil^k(N/\mu N)$ given as the image of $\Fil^k N$ under the surjection $N \twoheadrightarrow N/\mu N$.
We equip $(N/\mu N)[1/p]$ with induced structures, in particular, it is a filtered $(\varphi, \partial)\module$ over $R[1/p]$.
\begin{thm}[Theorem \ref{thm:qdeformation_dcrys}]\label{intro_thm:qdeformation_dcrys}
	Let $N$ be a Wach module over $\AR^+$ and $V := \TR(N)[1/p]$, the associated crystalline representation from Theorem \ref{intro_thm:crystalline_wach_equivalence}.
	Then, we have a natural isomorphism $(N/\mu N)[1/p] \isomorphic \ODcrysR(V)$ of filtered $(\varphi, \partial)\modules$ over $R[1/p]$.
\end{thm}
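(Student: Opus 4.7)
The plan is to construct a natural morphism $\alpha \colon (N/\mu N)[1/p] \to \ODcrysR(V)$ of $R[1/p]$-modules, verify that it respects the Frobenius, the connection, and the Nygaard/Hodge filtration, and then show it is an isomorphism of filtered $(\varphi, \partial)\modules$. To construct $\alpha$, I would use the canonical embedding $\AR^+ \hookrightarrow \OAcrys(\Rbar)$ of period rings, composed with the comparison isomorphism
\[
	\OBcrys(\Rbar) \otimes_{\AR^+[1/p]} N[1/p] \isomorphic \OBcrys(\Rbar) \otimes_{\QQ_p} V
\]
obtained by base change from the rational form of Theorem \ref{intro_thm:crystalline_wach_equivalence} (Remark \ref{intro_rem:crystalline_wach_equivalence_rational}). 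Concretely, for $\bar n \in N/\mu N$, lift to $n \in N$, view $n$ as an element of $\OBcrys(\Rbar) \otimes_{\QQ_p} V$ via the comparison, and take its image under the natural projection capturing the $G_R$-invariants; the fact that the $\Gamma_R$-action on $N$ is trivial modulo $\mu$ ensures that the resulting map is well-defined on $N/\mu N$ and lands in $\ODcrysR(V) = (\OBcrys(\Rbar) \otimes V)^{G_R}$.

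The compatibilities with $\varphi$, the connection, and the filtration I would check directly. Frobenius compatibility is immediate from the Frobenius-equivariance of the comparison. For the connection, the formula of Proposition \ref{defi:wachmod_qconnection}, i.e.\ $\nabla_q(x) = \sum_i \tfrac{\gamma_i(x)-x}{\mu}\dlog([X_i^\flat])$, reduces modulo $\mu$ to the derivation $\partial$ on $N/\mu N$ given by $(\gamma_i-1)/\mu \pmod \mu$, and this matches the connection on $\ODcrysR(V)$ induced from $\OBcrys(\Rbar)$. For the filtration, under the embedding $\AR^+ \hookrightarrow \OAcrys(\Rbar)$ the element $[p]_q$ maps (after Frobenius twist) into the principal ideal generating Brinon's de Rham filtration, so the Nygaard filtration of Definition \ref{defi:nygaard_fil} on $N$ is carried to the Hodge filtration on $\ODcrysR(V)$.

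To prove that $\alpha$ is an isomorphism, I would reduce to the imperfect-residue-field case. The module $N_L := \AL^+ \otimes_{\AR^+} N$ is a Wach module over $\AL^+$ associated to $V\rvert_{G_L}$, which is crystalline by Theorem \ref{intro_thm:purity_crystalline}. By the imperfect-residue-field analogue (established in \cite{abhinandan-imperfect-wach} together with its enhancement to filtered $(\varphi, \partial)\modules$), there is a canonical isomorphism $(N_L/\mu N_L)[1/p] \isomorphic \ODcrysL(V)$. Combining with the identifications $L \otimes_{R[1/p]}(N/\mu N)[1/p] = (N_L/\mu N_L)[1/p]$ and $L \otimes_{R[1/p]}\ODcrysR(V) \isomorphic \ODcrysL(V)$ from Corollary \ref{intro_cor:purity_crystalline}, the naturality of the constructions forces $\alpha \otimes_{R[1/p]} L$ to be the identified isomorphism. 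Since both $(N/\mu N)[1/p]$ and $\ODcrysR(V)$ are finite projective $R[1/p]\modules$ of rank $\dim_{\QQ_p} V$ and $R[1/p] \hookrightarrow L$ factors through the generic point $\Fr R$, injectivity of $\alpha$ follows.

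The main obstacle is upgrading this to surjectivity, since $R[1/p] \to L$ is not faithfully flat: a map of finite projective $R[1/p]\modules$ of equal rank which is an isomorphism after base change to $L$ need not be an isomorphism in general. I expect to handle this by exploiting the Frobenius action: the cokernel of $\alpha$ is a finitely presented $\varphi\equivariant$ $R[1/p]\module$ annihilated by base change to $L$ (hence $R$-torsion), and the $\varphi$-purity of both source and target together with $\varphi$-equivariance of $\alpha$ forces the cokernel to vanish. A complementary, more direct route is to observe that both sides sit canonically inside $\OBcrys(\Rbar) \otimes_{\QQ_p} V$, and to identify them as equal subsets by a period-ring computation using $\OBcrys(\Rbar)^{G_R} = R[1/p]$ together with the saturation property $N = N[1/p] \cap N[1/\mu]$ mentioned after Definition \ref{intro_defi:wach_mods_relative}.
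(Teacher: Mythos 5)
Your overall shape (compare $N/\mu N$ with $\ODcrysR(V)$ inside a period ring, check $\varphi$, connection and filtration separately, and use the $L\textrm{-localisation}$ for part of the argument) is reasonable, but there are two concrete gaps. First, the map $\alpha$ is not well-defined as you describe it. There is no ``natural projection'' from $\OBcrys(\Rbar)\otimes_{\QQ_p}V$ onto its $G_R\textrm{-invariants}$, and triviality of the $\Gamma_R\action$ on $N/\mu N$ does not make a lift $n\in N$ into a $G_R\textrm{-invariant}$ element: one must correct $n$ by a series of terms lying in $\Fil^1$, and proving that this series converges to a horizontal element is the real work. This is exactly what the paper does over the smaller PD ring $\OARpi^{\PD}$ and its variants $\OSmPD$: the series \eqref{eq:horizontal_elems}, built from the operators $\partial_i$, converges precisely because the connection is quasi-nilpotent and the target has divided powers, and the resulting isomorphism $f:\OARpi^{\PD}\otimes_R D_R \isomorphic \OARpi^{\PD}\otimes_{\AR^+}N[1/p]$ of Proposition \ref{prop:oarpd_comparison}, reduced modulo $\Fil^1$ and followed by $\Gal(F(\zeta_p)/F)\textrm{-descent}$ from $R[\varpi]$ to $R$, produces the isomorphism $(N/\mu N)[1/p]\isomorphic D_R\isomorphic\ODcrysR(V)$ in one stroke. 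Without some version of this convergence argument your $\alpha$ does not exist.

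Second, your surjectivity argument is not yet a proof. An injective $\varphi\equivariant$ map of finite projective $R[1/p]\modules$ of equal rank can a priori have nonzero cokernel supported on a $\varphi\textrm{-stable}$ proper closed subset: for instance $V(X_1-1)\subset\Spec R[1/p]$ satisfies $\varphi(X_1-1)\in(X_1-1)$, so ``$\varphi$-purity'' alone does not force the cokernel to vanish. What would rescue this route is the connection: the cokernel of a horizontal injection inherits an integrable connection, hence is finite projective over $R[1/p]$ by \cite[Proposition 7.1.2]{brinon-relatif}, hence has open and closed support, which must be empty since it misses the generic point and $\Spec R[1/p]$ is connected; but you would first need to establish that $\alpha$ is horizontal, which loops back to the first gap. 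The paper sidesteps both issues because bijectivity is inherited directly from the already-established bijectivity of $f$. Finally, your treatment of the filtration is only a heuristic: the compatibility of the Nygaard filtration with the Hodge filtration is the content of Lemma \ref{lem:nygaardfil_induced} together with the arguments of \cite[\S 4.5.1]{abhinandan-relative-wach-i}, not a one-line consequence of where $[p]_q$ lands in $\OAcrys(\Rbar)$.
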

Note that $\ODcrysR(V)$ denotes the filtered $(\varphi, \partial)\module$ over $R[1/p]$ associated to $V$ (see \cite[Section 8.2]{brinon-relatif}).
Our proof of the theorem follows from computations done for the proof of Theorem \ref{thm:fh_crys_relative} (building upon ideas developed in \cite[Theorem 4.25 \& Proposition 4.28]{abhinandan-relative-wach-i} and \cite[Theorem 1.7]{abhinandan-imperfect-wach}).

Finally, let us summarise the relationship between various categories considered in Theorem \ref{intro_thm:crystalline_wach_equivalence} and Theorem \ref{intro_thm:qdeformation_dcrys}.
Recall that $\Rep_{\QQ_p}^{\crys}(G_R)$ is the category of $\padic$ crystalline representations of $G_R$, and let $\MF_R(\varphi, \partial)$ denote the category of filtered $(\varphi, \partial)\modules$ over $R[1/p]$.
From \cite[Section 8.2]{brinon-relatif} we have a natural $\otimes\textrm{-compatible}$ functor $\ODcrysR : \Rep_{\QQ_p}^{\crys}(G_R) \rightarrow \MF_R(\varphi, \partial)$, and let $\MF_R\ad(\varphi, \partial)$ denote its essential image.
Then, from \cite[Th\'eor\`eme 8.5.1]{brinon-relatif}, we have an exact equivalence of $\otimes\textrm{-categories}$ $\ODcrysR : \Rep_{\QQ_p}^{\crys}(G_R) \isomorphic \MF_R\ad(\varphi, \partial)$, with an exact $\otimes\textrm{-compatible}$ quasi-inverse $\OVcrysR$ (see Section \ref{subsec:relative_padicreps}).
So, Remark \ref{intro_rem:crystalline_wach_equivalence_rational} and Theorem \ref{intro_thm:qdeformation_dcrys} may be summarised as follows:
\begin{cor}[Corollary \ref{cor:cat_equiv_diagram}]
	Functors in the following diagram induce exact equivalence of $\otimes\textrm{-categories}$:
	\begin{center}
		\begin{tikzcd}[row sep=36pt]
			\Rep_{\QQ_p}^{\crys}(G_R) \arrow[rr, "\NR", shift left=1.1mm] \arrow[rd, "\ODcrysR", shift left=1.1mm] & & (\varphi, \Gamma_R)\Mod_{\BR^+}^{[p]_q} \arrow[ll, "\VR", shift left=1.1mm] \arrow[ld, "q \mapsto 1"]\\
			& \MF_R\ad(\varphi, \partial) \arrow[ul, "\OVcrysR", shift left=1.1mm].
		\end{tikzcd}
	\end{center}
\end{cor}

\subsection{Relation to previous works}\label{subsec:relate_other_works}

Our first main result, Theorem \ref{intro_thm:crystalline_wach_equivalence}, is a generalisation of arithmetic Wach modules from \cite{wach-free, colmez-hauteur, berger-limites} and \cite[Theorem 1.5]{abhinandan-imperfect-wach}.
That said, the methods of op.\ cit.\ do not directly apply to our current situtation.
In fact, the proof of Theorem \ref{intro_thm:crystalline_wach_equivalence} uses crucial inputs of results and ideas from \cite{abhinandan-relative-wach-i} and \cite{abhinandan-imperfect-wach}.

Recent developments in the theory of prismatic $F\textrm{-crystals}$ in \cite{bhatt-scholze-crystals, du-liu-moon-shimizu, guo-reinecke} would suggest that there is a categorical equivalence between the category of Wach modules over $\AR^+$ and (completed/analytic) prismatic $F\textrm{-crystals}$ on the absolute prismatic site $(\Spf R)_{\Prism}$.
From that perspective, Theorem \ref{intro_thm:crystalline_wach_equivalence} could be seen as an analogue of \cite[Theorem 1.2 \& Proposition 1.4]{du-liu-moon-shimizu}.
In our constructions, for a lattice $T$ inside a crystalline representation of $G_R$, the definition of $\NR(T)$ is parallel to the Breuil--Kisin case studied in op.\ cit.
However, there are two key differences: first, op.\ cit.\ uses \cite{brinon-trihan} as an important ingredient but our constructions use \cite{abhinandan-imperfect-wach} instead; second, note that Wach modules admit a natural action of $\Gamma_R$ whereas relative Breuil--Kisin modules are equipped with the prismatic descent data.
Equipping $\NR(T)$ with a natural action of $\Gamma_R$ is a non-trivial problem and we resolve it by using the theory of Wach modules in the imperfect residue field case from \cite{abhinandan-imperfect-wach} and the theory of \'etale $(\varphi, \Gamma)\modules$ from \cite{andreatta-phigamma} as important inputs.
Furthermore, as our base ring $R$ is absolutely unramified (at $p$), the action of $\Gamma_R$ is rich enough to establish the categorical equivalence claimed in Theorem \ref{intro_thm:crystalline_wach_equivalence}.

In the current paper, we provide two applications of Theorem \ref{intro_thm:crystalline_wach_equivalence}.
The first application, i.e.\ Theorem \ref{intro_thm:purity_crystalline} establishes a certain purity statement for crystalline representations.
Our result is similar to the purity statement for Hodge-Tate representations in \cite[Theorem 9.1]{tsuji-hodge-tate-purity} and rigidity of de Rham local systems in \cite[Theorem 1.3]{liu-zhu}.
It should be noted that the purity result in Theorem \ref{intro_thm:purity_crystalline} may also be obtained by combining \cite[Theorem 1.3]{liu-zhu} and some unpublished works of Tsuji.
Moreover, the result of loc.\ cit.\  works for general ramified (at $p$) small base.
A similar statement has been obtained in \cite[Theorem 1.4]{moon} using the results of \cite{du-liu-moon-shimizu}.

The second application of Theorem \ref{intro_thm:crystalline_wach_equivalence} is given in Corollary \ref{intro_cor:purity_crystalline}.
Our result provides a new criterion for checking the crystallinity of a $\padic$ representation of $G_R$.
Note that the analogous statement for de Rham representations is true from the results of \cite{liu-zhu}.
However, our result in the crystalline case is entirely new and uses Theorem \ref{intro_thm:purity_crystalline} as an important input.
At this point, it is worth mentioning that for general ramified (at $p$) small base, a statement analogous to Corollary \ref{intro_cor:purity_crystalline} appears to be true.
In particular, we expect that one may deduce the statement using \cite[Theorem 1.3]{liu-zhu}, the unpublished results of Tsuji mentioned above and employing arguments similar to our proof of Theorem \ref{thm:purity_crystalline}.

For our second main result, Theorem \ref{intro_thm:qdeformation_dcrys}, the motivation for interpreting a Wach module as a $q\textrm{-de Rham}$ complex and as the $q\textrm{-deformation}$ of crystalline cohomology, i.e.\ $\ODcrys$, comes from \cite[Section B.2.3]{fontaine-phigamma}, \cite[Th\'eor\`eme III.4.4]{berger-limites} and \cite[Section 6]{scholze-q-deformations}.
In particular, we provide a direct generalisation of \cite[Th\'eor\`eme III.4.4]{berger-limites}, as well as verify expectations put forth in \cite[Remark 4.48]{abhinandan-relative-wach-i} and \cite[Remark 1.8]{abhinandan-imperfect-wach} (see Remark \ref{rem:qdeformation_dcrys_imperfect} for the latter).

\subsection{Setup and notations}\label{subsec:setup_nota}

In this section, we will describe our setup and fix some notations, which are essentially the same as in \cite[Section 1.4]{abhinandan-relative-wach-i}.
We will work under the convention that $0 \in \NN$, the set of natural numbers.

Let $p$ be a fixed prime number, $\kappa$ a perfect field of characteristic $p$, $O_F := W(\kappa)$ the ring of $p\textrm{-typical}$ Witt vectors with coefficients in $\kappa$.
Then, $O_F$ is a complete discrete valuation ring with uniformiser $p$ and set $F := O_F[1/p]$ to be the fraction field of $O_F$.
Let $\overline{F}$ denote a fixed algebraic closure of $F$ so that its residue field, denoted as $\overline{\kappa}$, is an algebraic closure of $\kappa$.
Furthermore, denote the absolute Galois group of $F$ to be $G_F := \Gal(\overline{F}/F)$.

\begin{nota}
	Let $\Lambda$ be an $I\textrm{-adically}$ complete algebra for a finitely generated ideal $I \subset \Lambda$.
	Let $Z := (Z_1, \ldots, Z_s)$ denote a set of indeterminates and $\smbfk := (k_1, \ldots, k_s) \in \NN^s$ be a multi-index, and we write $Z^{\smbfk} := Z_1^{k_1} \cdots Z_s^{k_s}$.
	For $\smbfk \rightarrow +\infty$ we will mean that $\sum k_i \rightarrow +\infty$ and we define 
	\begin{equation*}
		\Lambda\langle Z\rangle := \big\{\textstyle\sum_{\smbfk \in \NN^s} a_{\smbfk} Z^{\smbfk}, \hspace{1mm} \textrm{where} \hspace{1mm} a_{\smbfk} \in \Lambda \hspace{1mm} \textrm{and} \hspace{1mm} a_{\smbfk} \rightarrow 0 \hspace{1mm} I\textrm{-adically} \hspace{1mm} \textrm{as} \hspace{1mm} \smbfk \rightarrow +\infty\big\}.
	\end{equation*}
\end{nota}
We fix $d \in \NN$ and let $X := (X_1, X_2, \ldots, X_d)$ be some indeterminates.
Let $R$ be the $\padic$ completion of an \'etale algebra over $\Rframe := O_F\langle X, X^{-1}\rangle$, with non-empty geometrically integral special fibre. 
We fix an algebraic closure $\overline{\Fr(R)}$ of $\Fr(R)$ containing $\overline{F}$.
Let $\overline{R}$ denote the union of finite $R\textrm{-subalgebras}$ $S \subset \overline{\Fr(R)}$, such that $S[1/p]$ is \'etale over $R[1/p]$.
Let $\overline{\eta}$ denote the fixed geometric point of the generic fibre $\Spec R[1/p]$ (defined by $\overline{\Fr(R)}$), and let $G_R := \pi_1^{\etale}\big(\Spec R[1/p], \overline{\eta}\big) = \Gal(\overline{R}[1/p] / R[1/p])$ denote the \'etale fundamental group.
For $k \in \NN$, let $\Omega^k_R$ denote the $\padic$ completion of module of $k\textrm{-differentials}$ of $R$ relative to $\ZZ$.
Then, we have $\Omega^1_R = \oplus_{i=1}^d R \dlog X_i$, and $\Omega^k_R = \wedge_R^k \Omega^1_R$.

Let $\varphi$ denote an endomorphism of $\Rframe$ which extends the natural Frobenius on $O_F$ by setting $\varphi(X_i) = X_i^p$, for all $1 \leqslant i \leqslant d$.
The morphism $\varphi : \Rframe \rightarrow \Rframe$ is flat by \cite[Lemma 7.1.5]{brinon-relatif}, and it is faithfully flat since $\varphi(\frakm) \subset \frakm$ for any maximal ideal $\frakm \subset \Rframe$.
Moreover, using Nakayama Lemma and the fact that the absolute Frobenius on $\Rframe/p$ is evidently of degree $p^d$, it easily follows that $\varphi$ on $\Rframe$ is finite of degree $p^d$.
Recall that the $O_F\algebra$ $R$ is given as the $\padic$ completion of an \'etale algebra $\Rframe$, therefore, the Frobenius endomorphism $\varphi$ on $\Rframe$ admits a unique extension $\varphi : R \rightarrow R$ such that the induced map $\varphi : R/p \rightarrow R/p$ is the absolute Frobenius $x \mapsto x^p$ (see \cite[Proposition 2.1]{colmez-niziol}).
Similar to above, again note that the endomorphism $\varphi : R \rightarrow R$ is faithfully flat and finite of degree $p^d$.

Let $O_L := (R_{(p)})^{\wedge}$, where ${}^{\wedge}$ denotes the $\padic$ completion.
Let $\Lbar$ denote a fixed algebraic closure of $L$ with ring of integers $\OLbar$ such that we have an injective homomorphism $\Rbar \rightarrow \OLbar$.
Then, we get a continuous homomorphism $G_L := \Gal(\Lbar/L) \rightarrow G_R$, inducing an isomorphism $\Gamma_L \isomorphic \Gamma_R$.
The Frobenius on $R$ extends to a unique Frobenius endomorphism $\varphi : O_L \rightarrow O_L$, lifting the absolute Frobenius on $O_L/pO_L$ (see \cite[Proposition 2.1]{colmez-niziol}).
Similar to above, $\varphi$ on $O_L$ is faithfully flat and finite of degree $p^d$.

Let $S$ be a commutative ring with an element $\pi \in S$ such that $\pi^p$ divides $p$ and it admits a compatible system of $p\textrm{-power}$ roots, and the $p\textrm{-power}$ map $S/\pi \rightarrow S/\pi^p$ is bijective.
Then, the tilt of $S$ is defined as $S^{\flat} := \lim_{\varphi} S/p$ and the tilt of $S[1/p]$ is defined as $S[1/p]^{\flat} := S^{\flat}[1/\pi^{\flat}]$, where $\pi^{\flat} := (\pi, \pi^{1/p}, \ldots) \in S^{\flat}$ (see \cite[Chapitre V, Section 1.4]{fontaine-pdivisibles} and \cite[Section 3]{bhatt-morrow-scholze-1}).
Finally, consider a $\ZZ_p\algebra$ $A$ equipped with a lift of the absolute Frobenius on $A/p$, i.e.\ an endomorphism $\varphi : A \rightarrow A$ such that $\varphi$ modulo $p$ is the absolute Frobenius.
Then, for any $A\module$ $M$ we write $\varphi^*(M) := A \otimes_{\varphi, A} M$.

\vspace{2mm}
\noindent \textbf{Outline of the paper.}
This article consists of four main sections.
In Section \ref{sec:period_rings_padic_reps}, we collect relevant results from relative $\padic$ Hodge theory.
In Section \ref{subsec:localisation}, we consider localisations of $\Rbar$ at minimal primes above $(p) \subset R$ and study their properties.
Then, in Section \ref{subsec:ainf_relative}, Section \ref{subsec:derham_relative} \& Section \ref{subsec:crystalline_relative}, we define relative period rings and study their localisations at primes of $\Rbar$ above $(p) \subset R$.
In Section \ref{subsec:phigamma_mod_rings}, we quickly recall important rings from the theory of relative $(\varphi, \Gamma)\modules$, and in Section \ref{subsec:relative_padicreps}, we recall the relation between $(\varphi, \Gamma)\module$ theory and $\padic$ representations, as well as, definition and properties of crystalline representations.
The aim of Section \ref{sec:wach_modules} is to define and study properties of a Wach module in the relative case and the associated representation of $G_R$.
In Section \ref{subsec:technical_results}, we first note some technical lemmas, and then in Section \ref{subsec:wach_mod_props}, we define relative Wach modules, study its properties and relate these objects to \'etale $(\varphi, \Gamma)\modules$ (see Proposition \ref{prop:wach_etale_ff_relative}).
Furthermore, in Section \ref{subsec:wach_mod_rep}, we functorially attach a $\ZZ_p\representation$ of $G_R$ to a relative Wach module, and in Section \ref{subsec:finite_pqheight_reps}, we show that such representations are closely related to finite $\pqheight$ representations studied in \cite{abhinandan-relative-wach-i}.
In Section \ref{subsec:nygaard_fil_wach_mod}, we study the Nygaard filtration on relative Wach modules.
Finally, in Section \ref{subsec:wachmod_crystalline}, we show that the $\ZZ_p\representation$ of $G_R$ associated to a relative Wach module, as in Section \ref{subsec:wach_mod_rep}, is a lattice inside a $\padic$ crystalline representation of $G_R$ (see Theorem \ref{thm:fh_crys_relative}).
In Section \ref{sec:crystalline_finite_height}, we prove our first main result, i.e.\ Theorem \ref{intro_thm:crystalline_wach_equivalence} which is entriely contained in Section \ref{subsec:crys_wach_equiv_proof}.
In Section \ref{subsec:crys_wach_equiv_consequence}, we draw some important conclusions of Theorem \ref{intro_thm:crystalline_wach_equivalence}, in particular, we prove Theorem \ref{intro_thm:purity_crystalline} and Corollary \ref{intro_cor:purity_crystalline}.
In Section \ref{sec:wachmod_qconnection}, we state and prove our second main result, i.e.\ Theorem \ref{intro_thm:qdeformation_dcrys}.
In Section \ref{subsec:qconnection_formalism}, we recall the formalism on $q\textrm{-connections}$.
Then, in Section \ref{subsec:wachmod_qdeformation}, we show that a Wach module may be interpreted as a $\varphi\module$ equipped with a $\qconnection$ (see Proposition \ref{defi:wachmod_qconnection}).
Finally, using the computations done in the proof of Theorem \ref{thm:fh_crys_relative}, we prove Theorem \ref{intro_thm:qdeformation_dcrys}.

\vspace{2mm}

\noindent \textbf{Acknowledgements:} 
I would like to sincerely thank Takeshi Tsuji for discussing many ideas during the course of this project, reading an earlier version of the article and suggesting improvements.
Additionally, I would like to thank Denis Benois for pointing out some inaccuracies in a previous version of the article, and Yong Suk Moon, Koji Shimizu and Alex Youcis for their helpful remarks.
Finally, I would like to thank the referee for carefully reading the article and pointing out issues in the previous version, which greatly helped in rectifying the errors and improving the writing.
This research was supported by JSPS KAKENHI grant numbers 22F22711 and 22KF0094.

\section{Period rings and \texorpdfstring{$\padic$}{--} representations}\label{sec:period_rings_padic_reps}

We shall use the setup and notations from Section \ref{subsec:setup_nota}.
Recall that $R$ is the $\padic$ completion of an \'etale algebra over $O_F\langle X_1^{\pm 1}, \ldots, X_d^{\pm 1} \rangle$ and $O_L \coloneq (R_{(p)})^{\wedge}$.
Set $\Rinfty \coloneq \cup_{i=1}^d R[\mu_{p^{\infty}}, X_i^{1/p^{\infty}}]$, i.e.\ $\Rinfty$ is obtained by adjoining to $R$ all $p\textrm{-power}$ roots of unity and all $p\textrm{-power}$ roots of $X_i$, for $1 \leqslant i \leqslant d$.
Recall that $\Rbar$ is the union of finite $R\textrm{-subalgebras}$ $S$ in a fixed algebraic closure $\overline{\Fr(R)} \supset \Fbar$, such that $S[1/p]$ is \'etale over $R[1/p]$.
Then, we set (see \cite[Section 2 \& Section 3]{abhinandan-relative-wach-i}),
\begin{align*}
	G_R &\coloneq \Gal(\Rbar[1/p]/R[1/p]), \hspace{1mm} H_R \coloneq \Gal(\Rbar[1/p]/\Rinfty[1/p]),\\
	\Gamma_R &\coloneq G_R/H_R = \Gal(\Rinfty[1/p]/R[1/p]) \isomorphic \ZZ_p(1)^d \rtimes \ZZ_p^{\times}, \\
	\Gamma'_R &\coloneq \Gal(R_{\infty}[1/p]/R(\mu_{p^{\infty}})[1/p]) \isomorphic \ZZ_p(1)^d, \hspace{1mm} \Gal\big(R(\mu_{p^{\infty}})[1/p]/R[1/p]) = \Gamma_R/\Gamma'_R \isomorphic \ZZ_p^{\times}.
\end{align*}
We fixed $\Lbar$ as an algebraic closure of $L \coloneq O_L[1/p]$ with the ring of integers $\OLbar$ such that we have an injective homomorphism $\Rbar \rightarrow \OLbar$.
So, we have a continuous homomorphism of groups $G_L \coloneq \Gal(\Lbar/L) \rightarrow G_R$, which induces an isomorphism of Galois groups $\Gamma_L \isomorphic \Gamma_R$.
For $1 \leqslant i \leqslant d$, we fix $X_i^{\flat} \coloneq (X_i, X_i^{1/p}, X_i^{1/p^2}, \ldots)$ in $\Rinfty^{\flat}$ and take $\{\gamma_0, \gamma_1, \ldots, \gamma_d\}$ in $\Gamma_R$ such that $\{\gamma_1, \ldots, \gamma_d\}$ are topological generators of $\Gamma'_R$ satisfying $\gamma_j(X_i^{\flat}) = \varepsilon X_i^{\flat}$, if $i=j$, and $X_i^{\flat}$ otherwise, and $\gamma_0$ is a lift of a topological generator of $\Gamma_R/\Gamma'_R$.

\subsection{Localisation}\label{subsec:localisation}

Let $\pazs$ denote the set of minimal primes of $\Rbar$ above $pR \subset R$.
The set $\pazs$ is equipped with a transitive action of $G_R$ (see \cite[Theorem 9.3]{matsumura}).
For each prime $\pins$, set $\GRp \coloneq \{g \in G_R \textrm{ such that } g(\frakp) = \frakp\}$, i.e.\ the decomposition group of $G$ at $\frakp$.
Recall that $O_L = (R_{(p)})^{\wedge}$ and $L = O_L[1/p]$.
For each $\pins$, let $\Lbarp$ denote an algebraic closure of $L$ with ring of integers $\OLbarp$ containing $\Rbarp$.
Set $\GRhatp \coloneq \Gal(\Lbarp/L)$, so that we have a natural homomorphism $\GRhatp \rightarrow G_R$ which factors as $\GRhatp \twoheadrightarrow \GRp \subset G_R$ (see \cite[Lemme 3.3.1]{brinon-relatif}).
Note that for each $\pins$, we have a natural inclusion $\Rbar \subset \OLbarp$, and hence we have a (non-canonical) isomorphism of Galois groups $\GRhatp \isomorphic G_L$.

Now, for each $\pins$, let $\Cpplus$ denote the $\padic$ completion of $\OLbarp$ and let $\Cp \coloneq \Fr(\Cpplus)$.
Then, $\Cp$ is a complete algebraically closed valuation field equipped with a continuous action of $\GRhatp$ and $(\Cpplus)^{\GRhatp} = O_L$ (see \cite[Theorem 1]{hyodo}).
Furthermore, let $\Cplusp$ denote the $\padic$ completion of $\Rbarp$ and let $\CC(\frakp) \coloneq \Cplusp[1/p]$ equipped with a continuous action of $\GRp$.

\begin{lem}\label{lem:prbarp}
	For each $\pins$, the ring $\CC^+(\frakp)$ is $p\textrm{-torsion}$ free.
	Moreover, we have that $\Rbarp \hookrightarrow \Cplusp$ and the following two equalities hold, for each $n \geqslant 1$:
	\begin{equation*}
		\Rbarp \cap p^n\Cplusp = p^n\Rbarp \subset \Cplusp, \qquad \Rbarp \cap p^n\OLbarp = p^n\Rbarp \subset \OLbarp.
	\end{equation*}
	In particular, we have a natural injective homomorphism of rings $\Cplusp \rightarrow \Cpplus$, and inside $\Cp$, we have that $\Cplusp = \CC(\frakp) \cap \Cpplus$.
\end{lem}
\begin{proof}
	The first claim follows because $\Rbarp$ is $p\textrm{-torsion}$ free.
	To show the other claims, we first make an observation.
	Note that $R_{(p)}$ is a one-dimensional normal noetherian domain, therefore, it is a discrete valuation ring (see \cite[Theorem 11.2]{matsumura}) with field of fractions $\Fr(R)$.
	Let $\upsilon$ denote the normalised valuation on $R_{(p)}$ such that $\upsilon(p) = 1$.
	As $O_L$ is the $\padic$ completion of $R_{(p)}$, the valuation $\upsilon$ extends uniquely to $O_L$ and its fraction field is $L$.
	Moreover, note that $\Lbarp$ is an algebraic closure of $L$, so the valuation $\upsilon$ further uniquely extends to $\Lbarp$ and its ring of integers is denoted as $\OLbarp$.

	Now, to show that $\Rbarp \hookrightarrow \Cplusp$, it is enough to show that $\Rbarp$ is $p\textrm{-adically}$ separated, i.e.\ $\cap_{n \in \NN} p^n \Rbarp = 0$.
	Note that we have a natural injective homomorphism $\Rbarp \hookrightarrow \OLbarp$ and the latter is $p\textrm{-adically}$ separated because it is the ring of integers of the valued field $\Lbarp$.
	Therefore, we get that $\cap_{n \in \NN} p^n \Rbarp \subset \cap_{n \in \NN} p^n \OLbarp = 0$, hence, $\Rbarp \hookrightarrow \Cplusp$.
	Moreover, using the preceding observations, the fact that $\Rbarp/p^n \isomorphic \Cplusp/p^n$, for each $n \geqslant 1$ (see \cite[\href{https://stacks.math.columbia.edu/tag/05GG}{Tag 05GG}]{stacks-project}), and Lemma \ref{lem:injectivity_modulo}, it follows that $\Rbarp \cap p^n\Cplusp = p^n\Rbarp$.

	For the next claim, observe that we have $p^n\Rbarp \subset \Rbarp \cap p^n\OLbarp$.
	To show the converse, let $x$ be an element of $\Rbarp \cap p^n\OLbarp$, i.e.\ $x = p^ny$ for some $y$ in $\OLbarp$, so we have that $\upsilon(y) \geqslant 0$.
	Moreover, note that there exists a finite normal $R\textrm{-subalgebra}$ $S \subset \Rbar$ such that $S[1/p]$ is \'etale over $R[1/p]$, the ideal $\frakq \coloneq \frakp \cap S$ is a height 1 prime ideal of $S$ with $p \in \frakq$ (since $\Rbar$ is integral over $S$) and $x$ is in $S_{\frakq} \subset \Rbarp$.
	So, we see that $y$ is in $S_{\frakq}[1/p]$ and it suffices to show that $y$ is in $S_{\frakq}$.
	To show this, observe that the extension $\Fr(R) \rightarrow \Fr(S)$ is finite and algebraic and the inclusion $S_{\frakq} \subset \Rbarp \subset \OLbarp$ induces an embedding of fields $\Fr(S) \hookrightarrow \OLbarp$.
	Then, by the theory of extension of valuations, we see that this embedding induces a valuation on $\Fr(S)$ extending the valuation $\upsilon$ on $\Fr(R)$; we again denote this valuation on $\Fr(S)$ by $\upsilon$, and note that $S_{\frakq}$ is the ring of integers of $\Fr(S)$.
	As we have that $\upsilon(y) \geqslant 0$, therefore, we conclude that $y$ must be in $S_{\frakq}$, i.e.\ $x$ is in $p^n S_{\frakq} \subset p^n \Rbarp$.
	Hence, it follows that $\Rbarp \cap p^n\OLbarp = p^n\Rbarp \subset \OLbarp$.

	Finally, using Lemma \ref{lem:injectivity_modulo} and the observations from above, we conclude that the natural homomorphism of rings $\Rbarp/p^n \rightarrow \OLbarp/p^n$ is injective, for each $n \geqslant 1$.
	Taking the limit over $n$ and noting that $\lim_n$ is left exact, we get that the natural homomorphism of rings $\Cplusp \rightarrow \Cpplus$ is injective.
	As $\CC(\frakp) = \Cplusp[1/p]$ and $\Cp = \Cpplus[1/p]$, the last claim again follows from the injectivity of $\Rbarp/p^n \rightarrow \OLbarp/p^n$ and Lemma \ref{lem:injectivity_modulo}.
	This completes our proof.
\end{proof}

All of the rings appearing in Lemma \ref{lem:prbarp} are $p\textrm{-torsion free}$.
Moreover, we have a natural injective homomorphism of rings $\Cplusp \rightarrow \Cpplus$, and note that it is further compatible with the respective actions of $\GRhatp$, where the action of $\GRhatp$ on the left-hand term factors through $\GRhatp \twoheadrightarrow \GRp$.
In particular, we get that $\Cplusp^{\GRp} = O_L$ (see \cite[p.\ 24]{brinon-relatif}).

Now, note that we have natural injective homomorphisms of rings $\Rbar \rightarrow \Rbarp \rightarrow O_{\Lbar(\frakp)}$.
Upon passing to the $\padic$ completions and setting $\CC^+(\Rbar) \coloneq \widehat{\Rbar}$, we obtain natural homomorphisms of rings $\CC^+(\Rbar) \rightarrow \Cplusp \rightarrow \Cpplus$, where the first map need not be injective.
However, taking the direct summation over all $\pins$ gives us the following:
\begin{lem}\label{lem:cplus_gequiv}
	For each $n \geqslant 1$, the natural $R\linear$ homomorphism $\Rbar/p^n \rightarrow \oplus_{\pins} \Rbarp/p^n$ is injective.
	Taking the limit over $n$, yields the following natural $R\linear$ injective homomorphisms:
	\begin{equation}\label{eq:cplus_gequiv}
		\CC^+(\Rbar) \longrightarrow \textstyle\prod_{\pins} \Cplusp \longrightarrow \textstyle\prod_{\pins} \Cpplus.
	\end{equation}
	In particular, we have that $\CC^+(\Rbar) = \CC(\Rbar) \cap \textstyle\prod_{\pins} \Cplusp \subset \big(\textstyle\prod_{\pins} \Cplusp\big)[1/p]$.
\end{lem}
\begin{proof}
	To prove the first claim, let $(x_{\frakp})_{\pins}$ be an element of $\oplus_{\pins} \Rbarp$ such that $(p^nx_{\frakp})_{\pins}$ is in the image of the injective homomorphism $\Rbar \rightarrow \oplus_{\pins} \Rbarp$.
	In particular, there exists some $y$ in $\Rbar$ such that $p^nx_{\frakp} = y$, for each $\pins$, and it suffices to show that $(x_{\frakp})_{\pins}$ is in the image of the map $\Rbar \rightarrow \oplus_{\pins} \Rbarp$.
	Viewing the equality $p^nx_{\frakp} = y$ inside $\Fr(\Rbar)$, we conclude that $x_{\frakp} = x_{\frakp'} = y/p^n$, for all $\frakp, \frakp' \in \pazs$.
	Next, recall that $\Rbar$ is a direct limit of finite and normal $R\textrm{-algebras}$, therefore, there must exist some $R\textrm{-subalgebra}$ $S \subset \Rbar$ which is finite and normal over $R$ with $S[1/p]$ \'etale over $R[1/p]$, and such that $y$ is in $S$ and $x_{\frakp}$ is in $S_{\frakq}$, for each $\pins$ and $\frakq \coloneq \frakp \cap S$ a height 1 prime ideal $S$ containing $p$.
	As $x_{\frakp} = x_{\frakp'}$, for all $\frakp, \frakp' \in \pazs$, therefore, we also get that $x_{\frakp}$ is in $S_{\frakq'}$ for each height 1 prime ideal $\frakq' \coloneq \frakp' \cap S$ of $S$ containing $p$.
	Moreover, we have that $x_{\frakp} = y/p^n$, i.e.\ $x_{\frakp}$ is in $S[1/p]$.
	Now, let $I$ denote the set of height 1 prime ideals of $S$ containing $p$ and let $J$ denote the set of height 1 prime ideals of $S$ not containing $p$.
	Then, we see that inside $\Fr(S)$, we have,
	\begin{equation*}
		S = \cap_{\frakq \in I \cup J} S_{\frakq} = \big(\cap_{\frakq \in I} S_{\frakq}\Big) \cap S[1/p],
	\end{equation*}
	where the first equality follows from \cite[Theorem 11.5]{matsumura} and the second equality follows because $S[1/p] \subset S_{\frakq}$, for each $\frakq$ in $J$.
	Hence, from the preceding discussion, we conclude that $x_{\frakp}$ is an element of $S \subset \Rbar$ and $x_{\frakp} = x_{\frakp'}$, for all $\frakp, \frakp' \in \pazs$, in particular, $(x_{\frakp})_{\pins}$ is the image of $y$ under the map $\Rbar \rightarrow \oplus_{\pins} \Rbarp$.
	This proves the first claim.

	For the second claim, we use the conclusion of the first claim and take the limit over $n$ to obtain the first homomorphism in \eqref{eq:cplus_gequiv}, which is injective because $\lim_n$ is left exact.
	For the second map in \eqref{eq:cplus_gequiv}, we take the product of the injective homomorphisms $\Cplusp \rightarrow \Cpplus$, for each $\pins$ (see Lemma \ref{lem:prbarp}).
	Finally, the last claim follows from the injectivity of $\Rbar/p^n \rightarrow \oplus_{\pins} \Rbarp/p^n$ and Lemma \ref{lem:injectivity_modulo}.
	This finishes our proof.
\end{proof}

Note that in \eqref{eq:cplus_gequiv} the leftmost term admits a natural action of $G_R$, the middle term admits a natural action of $\prod_{\pins}\GRp$ and the rightmost term admits a natural action of $\prod_{\pins} \GRhatp$.
The two homomorphisms in \eqref{eq:cplus_gequiv} are compatible with these respective actions.
Moreover, from \cite[Remarque 3.3.2]{brinon-relatif} the middle term of \eqref{eq:cplus_gequiv} may be equipped with an action of $G_R$ and the left homomorphism in \eqref{eq:cplus_gequiv} is equivariant with respect to this action of $G_R$.

\begin{rem}\label{rem:ollin_embed}
	Note that $\Cplusp$ is an $O_L\algebra$ for each $\pins$, so the maps in \eqref{eq:cplus_gequiv} extend to natural injective homomorphisms $O_L \otimes_R \CC^+(\Rbar) \rightarrow \prod_{\pins} \Cplusp \rightarrow \prod_{\pins} \Cpplus$ (see \cite[Proposition 3.3.3]{brinon-relatif}).
\end{rem}

\begin{lem}\label{lem:rbarphat_perfectoid}
	The $R\textrm{-algebras}$ $\widehat{R}_{\infty}$ and $\CC^+(\Rbar)$ are perfectoid in the sense of \cite[Definition 3.5]{bhatt-morrow-scholze-1}.
	Moreover, the $O_L\textrm{-algebras}$ $\Cplusp$ and $\Cpplus$ are also perfectoid.
\end{lem}
\begin{proof}
	The fact that $\CC^+(\Rbar)$ is perfectoid follows from \cite[Propositions 2.0.1 \& 5.1.2]{brinon-relatif}, and that $\widehat{R}_{\infty}$ is perfectoid follows from loc.\ cit.\ and \cite[Corollary 3.7]{andreatta-phigamma}.
	Moreover, from the discussion in \cite[Section 1]{brinon-imparfait}, \cite[Section 3]{scholze-perfectoid} and \cite[Lemma 3.20]{bhatt-morrow-scholze-1}, we see that $\Cpplus$ is also a perfectoid algebra.
	Next, for $\Cplusp$, note that we have $\pi \coloneq p^{1/p}$ in $\Rbar \subset \Rbarp \subset \Cplusp$ and $\pi^p = p$ divides $p$.
	Moreover, it is clear that $\Cplusp$ is $\pi\textrm{-adically}$ complete.
	Now, consider the following commutative diagram:
	\begin{center}
		\begin{tikzcd}[row sep=16pt]
			\Cplusp/\pi^p \arrow[r, twoheadrightarrow] \arrow[d] & \Cplusp/\pi \arrow[r, "\varphi"'] \arrow[d] & \Cplusp/\pi^p \arrow[d]\\
			\Cpplus/\pi^p \arrow[r, twoheadrightarrow] & \Cpplus/\pi \arrow[r, "\sim", "\varphi"'] & \Cpplus/\pi^p,
		\end{tikzcd}
	\end{center}
	where the left and the right vertical arrows are injective by Lemma \ref{lem:prbarp}, the middle vertical arrow is injective by an argument similar to the proof of Lemma \ref{lem:prbarp}, and the bottom right horizontal arrow is bijective because $\Cpplus$ is perfectoid.
	So, it follows that the top right horizontal arrow is injective as well.
	Then, using \cite[Lemma 3.9 and Lemma 3.10]{bhatt-morrow-scholze-1}, we are left to show that $\varphi \colon \Cplusp/p = \Rbarp/p \rightarrow \Rbarp/p = \Cplusp/p$ is surjective.
	Let $x$ be an element of $\Rbarp/p$ and take a lift $y$ in $\Rbarp$ of $x$.
	Then, there exists an $a$ in $\Rbar \setminus \frakp$ such that $ay$ is in $\Rbar$.
	Now, from \cite[Proposition 2.0.1]{brinon-relatif}, there exist some $z$ and $w$ in $\Rbar$ such that $ay = z^p + pw$.
	Moreover, there exist some $b$ in $\Rbar \setminus \frakp$ and $c$ in $\Rbar$ such that $a = b^p + pc$.
	So, we may write $b^py + pcy = z^p + pw$, or equivalently, $y = (z/b)^p + p(cy+w)/b^p$, with $(z/b)^p$ in $\Rbarp$ and $p(cy+w)/b^p$ in $p\Rbarp$.
	Hence, $x = (z/b)^p \mod p\Rbarp$, proving that $\varphi \colon \Rbarp/p \rightarrow \Rbarp/p$ is surjective.
\end{proof}

\subsection{The period ring \texorpdfstring{$\Ainf$}{-}}\label{subsec:ainf_relative}

In this section, we will study the relative version of Fontaine's infinitesimal period ring $\Ainf$ which will be useful in later sections (see \cite[Section 5.1]{brinon-relatif} and \cite[Sections 2 \& 3]{abhinandan-relative-wach-i} for more details on these rings).
Let $\Ainf(\Rinfty) \coloneq W(\Rinfty^{\flat})$ and $\Ainf(\Rbar) \coloneq W(\Rbar^{\flat})$ admitting the Frobenius on Witt vectors and continuous $G_R\textrm{-action}$ (for the weak topology).
Moreover, we have $\Ainf(\Rinfty) = \Ainf(\Rbar)^{H_R}$ (see \cite[Proposition 7.2]{andreatta-phigamma}).
Let $\varepsilon \coloneq (1, \zeta_p, \zeta_{p^2}, \ldots)$ and $\mubar \coloneq \varepsilon-1$ in $\OFinfty^{\flat}$, and set $\mu \coloneq [\varepsilon] - 1$ and $\xi \coloneq \mu/\varphi^{-1}(\mu)$ in $\Ainf(\OFinfty)$.
Let $\chi$ denote the $\padic$ cyclotomic character, and note that for any $g$ in $G_R$, we have that $g(1+\mu) = (1+\mu)^{\chi(g)}$.
Additionally, we have a $G_R\equivariant$ surjection $\theta \colon \Ainf(\Rbar) \rightarrow \CC^+(\Rbar)$ and $\kert \theta = \xi \Ainf(\Rbar)$.
The map $\theta$ further induces a $\Gamma_R\equivariant$ surjection $\theta \colon \Ainf(\Rinfty) \rightarrow \widehat{R}_{\infty}$.

Let $\pazs$ denote the set of minimal primes of $\Rbar$ above $pR \subset R$, and for each prime $\pins$, let $\Cp$ denote the complete valuation field described in Section \ref{subsec:localisation} with its ring of integers being the perfectoid algebra $\Cpplus$.
Moreover, from Lemma \ref{lem:rbarphat_perfectoid}, recall that we also have the perfectoid algebra $\Cplusp$.
So, we set $\Ainf(\Cpplus) \coloneq W(\Cpplusflat)$ (resp.\ $\Ainf(\Cplusp) \coloneq W(\Cplusp^{\flat})$) admitting the Frobenius on Witt vectors and continuous $\GRhatp\action$ (resp.\ $\GRp\action$).
Similar to above, we have a $\GRhatp\equivariant$ surjection $\theta \colon \Ainf(\Cpplus) \rightarrow \Cpplus$ with $\kert \theta = \xi \Ainf(\Cpplus)$ (resp.\ a $\GRp\equivariant$ surjection $\theta \colon \Ainf(\Cplusp) \rightarrow \Cplusp$ with $\kert \theta = \xi \Ainf(\Cplusp)$).

\begin{lem}\label{lem:cplusp_in_cpplus_ainf}
	For each $\pins$ we have natural $(\varphi, \GRhatp)\equivariant$ injective ring homomorphisms $\Ainf(\Cplusp) \rightarrow \Ainf(\Cpplus)$ and $W(\CC(\frakp)^{\flat}) \rightarrow W(\Cp^{\flat})$, where the action of $\GRhatp$ on left-hand terms factor through $\GRhatp \twoheadrightarrow \GRp$.
	Moreover, we have a natural $(\varphi, \GRhatp)\equivariant$ identification $\Ainf(\Cplusp) = \Ainf(\Cpplus) \cap W(\CC(\frakp)^{\flat})$ as subrings of $W(\Cp^{\flat})$.
\end{lem}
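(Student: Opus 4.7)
The plan is to first establish the embeddings, and then prove the intersection formula via Teichmüller coordinates, with the crux being a reduction to a mod-$p$ statement already contained in Lemma \ref{lem:prbarp}.

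First I would produce the embeddings on tilts. The discussion following Lemma \ref{lem:prbarp} already gives a $\GRhatp$-equivariant injection $\Cplusp \hookrightarrow \Cpplus$ (with the action on the left factoring through $\GRp$). To pass this to tilts, I would use the identifications $\Cplusp/p^n = \Rbarp/p^n$ and $\Cpplus/p^n = \OLbarp/p^n$ coming from $p$-adic completion, so that the reduction $\Cplusp/p \hookrightarrow \Cpplus/p$ is just $\Rbarp/p \hookrightarrow \OLbarp/p$, which is injective by the third assertion of Lemma \ref{lem:prbarp}. Since $\Cplusp$ and $\Cpplus$ are perfectoid (Lemma \ref{lem:rbarphat_perfectoid}), their tilts satisfy $\Cplusp^{\flat}/p^{\flat} = \Cplusp/p$ and $\Cpplusflat/p^{\flat} = \Cpplus/p$; injectivity mod $p^{\flat}$ together with $p^{\flat}$-adic separatedness and $p^{\flat}$-torsion freeness (both tilts are valuation/integral domains) upgrades this to an injection $\Cplusp^{\flat} \hookrightarrow \Cpplusflat$ compatible with Frobenius and the $\GRhatp$-action. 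Applying Witt vectors functorially yields the required $(\varphi, \GRhatp)$-equivariant embedding $\Ainf(\Cplusp) \hookrightarrow \Ainf(\Cpplus)$; inverting $[p^{\flat}]$ (equivalently, $p^{\flat}$) gives the corresponding embedding $W(\CC(\frakp)^{\flat}) \hookrightarrow W(\Cp^{\flat})$, since $\CC(\frakp)^{\flat} = \Cplusp^{\flat}[1/p^{\flat}]$ and $\Cp^{\flat} = \Cpplusflat[1/p^{\flat}]$ by the conventions of \S\ref{subsec:setup_nota}.

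For the intersection formula, one inclusion is immediate from the embeddings just constructed. For the reverse, I would exploit Teichmüller expansions inside $W(\Cp^{\flat})$: any element $z \in \Ainf(\Cpplus) \cap W(\CC(\frakp)^{\flat})$ admits a unique expansion $z = \sum_{n \geq 0} [x_n] p^n$ with $x_n \in \Cpplusflat$ and simultaneously $z = \sum_{n \geq 0} [y_n] p^n$ with $y_n \in \CC(\frakp)^{\flat}$, so by uniqueness in $W(\Cp^{\flat})$ we get $x_n = y_n \in \Cpplusflat \cap \CC(\frakp)^{\flat}$. Thus the problem reduces to showing the identity
\begin{equation*}
	\Cpplusflat \cap \CC(\frakp)^{\flat} = \Cplusp^{\flat} \quad \text{inside} \quad \Cp^{\flat}.
\end{equation*}
Since $\CC(\frakp)^{\flat} = \Cplusp^{\flat}[1/p^{\flat}]$, a typical element of the intersection satisfies $(p^{\flat})^n x \in \Cplusp^{\flat}$ for some $n \geq 0$, and by induction on $n$ it suffices to prove the case $n = 1$, that is, $\Cplusp^{\flat} \cap p^{\flat}\Cpplusflat = p^{\flat}\Cplusp^{\flat}$.

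The heart of the argument, which I expect to be the only nontrivial point, is precisely this last statement: it is equivalent to the injectivity of $\Cplusp^{\flat}/p^{\flat} \hookrightarrow \Cpplusflat/p^{\flat}$, which by the perfectoid identifications above is the same as $\Rbarp/p \hookrightarrow \OLbarp/p$, already recorded in Lemma \ref{lem:prbarp}. Combined with $p^{\flat}$-torsion freeness of $\Cpplusflat$ to cancel the common factor, this finishes the intersection identity. Finally, equivariance of the identification under $\varphi$ and $\GRhatp$ is automatic, since all embeddings and Teichmüller expansions involved are constructed functorially from equivariant maps of perfectoid rings; hence the proof will consist essentially in stringing together these ingredients with Lemma \ref{lem:prbarp} playing the decisive role.
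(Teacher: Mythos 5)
Your proposal is correct and follows essentially the same route as the paper: tilt the injection $\Cplusp \hookrightarrow \Cpplus$, reduce the Witt-vector identification to the tilt-level identity $\Cplusp^{\flat} = \CC(\frakp)^{\flat} \cap \Cpplusflat \subset \Cp^{\flat}$, and deduce that from the mod-$p$ injectivity supplied by Lemma \ref{lem:prbarp}. The only (harmless) variations are that you use uniqueness of Teichm\"uller expansions where the paper invokes left exactness of the Witt vector functor, and you handle the tilt-level intersection via injectivity modulo $p^{\flat}$ where the paper uses the valuation $\upsilon_{\frakp}^{\flat}$ on $\Cp^{\flat}$ --- if anything, your reduction to the statement $\Cplusp^{\flat} \cap p^{\flat}\Cpplusflat = p^{\flat}\Cplusp^{\flat}$ makes that step more explicit than the paper does.
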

\begin{proof}
	From the discussion before \eqref{eq:cplus_gequiv}, we have a $\GRhatp\equivariant$ injective ring homorphism $\Cplusp \rightarrow \Cpplus$.
	By applying the tilting functor, we further obtain a natural  $(\varphi, \GRhatp)\equivariant$ commutative diagram of rings
	\begin{equation}\label{eq:cpplus_cplusp_flat}
		\begin{tikzcd}[row sep=15pt]
			\Cplusp^{\flat} \arrow[r] \arrow[d] & \Cpplusflat \arrow[d]\\
			\CC(\frakp)^{\flat} \arrow[r] & \Cp^{\flat},
		\end{tikzcd}
	\end{equation}
	where the vertical arrows are injective.
	Note that the natural homomorphism of rings $\Cplusp/p = \Rbarp/p \rightarrow \OLbarp/p = \Cpplus/p$ is injective by Lemma \ref{lem:prbarp}, so by the left exactness of $\lim_{\varphi}$, we obtain that in \eqref{eq:cpplus_cplusp_flat} the top horizontal arrow is injective.
	Now, let $p^{\flat} \coloneq (p, p^{1/p}, \cdots)$ in $C^{\flat} \coloneq \overline{F}^{\flat}$ be a pseudo-uniformiser, and note that $\CC(\frakp)^{\flat}$ and $\Cp^{\flat}$ are perfectoid algebras over $C^{\flat}$.
	So, it follows that $\Cplusp^{\flat}$ and $\Cplusp^{\flat}$ are $p^{\flat}\textrm{-torsion}$ free and we have that $\CC(\frakp)^{\flat} = \Cplusp^{\flat}[1/p^{\flat}]$ and $\Cp^{\flat} = \Cpplusflat[1/p^{\flat}]$.
	In particular, from the injectivity of the top horizontal arrow in \eqref{eq:cpplus_cplusp_flat} we obtain that the bottom horizontal arrow is also injective.

	Next, we will show that the following equality holds:
	\begin{equation}\label{eq:cplusp_intersect}
		\Cplusp^{\flat} = \CC(\frakp)^{\flat} \cap \Cpplusflat \subset \Cp^{\flat}.
	\end{equation}
	To obtain the equality in \eqref{eq:cplusp_intersect}, we will use valuations.
	So, we start by observing that $\Cp$ is a complete valuation field, and let $\upsilon_{\frakp}$ denote the normalised valuation on it such that $\upsilon_{\frakp}(p) = 1$ (one may take this to be the natural extension of the valuation on $\Lbarp$ described in the proof of Lemma \ref{lem:cplus_gequiv}).
	For the valuation $\upsilon_{\frakp}$ on $\Cp$, note that if $x$ is an element of $\Cp$ such that $\upsilon_{\frakp}(x) \geqslant 0$, then $x$ is in $\Cpplus$.
	As we have that $\Cplusp = \CC(\frakp) \cap \Cpplus \subset \Cp$ (see Lemma \ref{lem:prbarp}), therefore, we see that if $x$ is an element of $\CC(\frakp)$ such that $\upsilon_{\frakp}(x) \geqslant 0$, then $x$ must be in $\Cplusp$.
	Now, note that we have isomorphisms of multiplicative monoids $\Cp^{\flat} \isomorphic \lim_{x \mapsto x^p} \Cp$ (resp.\ $\Cpplusflat \isomorphic \lim_{x \mapsto x^p} \Cpplus$) and $\CC(\frakp)^{\flat} \isomorphic \lim_{x \mapsto x^p} \CC(\frakp)$ (resp.\ $\Cplusp^{\flat} \isomorphic \lim_{x \mapsto x^p} \Cplusp$).
	Using the preceding identification, let $\sharp \colon \Cp^{\flat} \rightarrow \Cp$ denote the projection to the first component, denoted as $y \mapsto y^{\sharp}$, and note that we also have compatible projection maps for $\Cpplusflat$, $\CC(\frakp)^{\flat}$ and $\Cplusp^{\flat}$.
	The map $\sharp$ is a multiplicative map of monoids and we define a valuation on $\Cp^{\flat}$ given as $\upsilon_{\frakp}^{\flat}(y) = \upsilon_{\frakp}(y^{\sharp})$, for any $y$ in $\Cp^{\flat}$ (see \cite[Lemma 3.4]{scholze-perfectoid}).
	For this valuation, if $\upsilon_{\frakp}(y) \geqslant 0$, then $y$ is in $\Cpplusflat$.
	As $\CC(\frakp)^{\flat} \hookrightarrow \Cp^{\flat}$, so we see that if $y$ is an element of $\CC(\frakp)^{\flat}$ such that $\upsilon_{\frakp}(y) \geqslant 0$, then from the discussion above we get that $y^{\sharp}$ is in $\Cplusp$.
	Write $y = (y_n)_{n \in \NN}$ in $\lim_{x \mapsto x^p} \CC(\frakp)$, with $y_0 = y^{\sharp}$ in $\Cplusp$ and $y_{n+1}^p = y_n$, for $n \geqslant 1$.
	Then, using the identification $\Cplusp = \CC(\frakp) \cap \Cpplus \subset \Cp$ (see Lemma \ref{lem:prbarp}), and induction on $n \geqslant 0$, it follows that $y_n$ must be in $\Cplusp$, for all $n \in \NN$.
	Thus, $y$ is an element of $\Cplusp^{\flat}$.
	Conversely, it is clear that for any $y$ in $\Cplusp^{\flat}$, we have that $\upsilon_{\frakp}^{\flat}(y) \geqslant 0$.
	Hence, we conclude that the equality in \eqref{eq:cplusp_intersect} holds.

	Now, recall that the $p\textrm{-typical}$ Witt vector functor is left exact since it is the right adjoint to the forgetful functor from the category of $\delta\textrm{-rings}$ to the category of rings (see \cite{joyal}).
	Therefore, all the arrows in the following natural $(\varphi, \GRhatp)\equivariant$ commutative diagram are injective:
	\begin{center}
		\begin{tikzcd}[row sep=15pt]
			\Ainf(\Cplusp) \arrow[r] \arrow[d] & \Ainf(\Cpplus) \arrow[d]\\
			W(\CC(\frakp)^{\flat}) \arrow[r] & W(\Cp^{\flat}).
		\end{tikzcd}
	\end{center}
	So, from \eqref{eq:cplusp_intersect} it follows that $\Ainf(\Cplusp) = W(\CC(\frakp)^{\flat}) \cap \Ainf(\Cpplus) \subset W(\Cp^{\flat})$.
\end{proof}

\begin{rem}\label{rem:xiainf_intersect}
	Using the injective ring homomorphism $\Ainf(\Cpplus) \rightarrow \Ainf(\Cpplus)$ from Lemma \ref{lem:cplusp_in_cpplus_ainf}, the fact that $\Ainf(\Cpplus)$ and $\Ainf(\Cpplus)$ are $\xi\textrm{-torsion}$ free (see \cite[Lemma 3.10]{bhatt-morrow-scholze-1}), the injective homomorphism $\Ainf(\Cplusp)/\xi \isomorphic \Cplusp \hookrightarrow \Cpplus \lisomorphic \Ainf(\Cpplus)/\xi$, and Lemma \ref{lem:injectivity_modulo}, it follows that we have $\xi \Ainf(\Cplusp) = \Ainf(\Cplusp) \cap \xi \Ainf(\Cpplus) \subset \Ainf(\Cpplus)$.
\end{rem}

\begin{rem}\label{rem:gr_act_prodainf}
	By the functoriality of the tilting construction and the Witt vector construction, we note that the action of $G_R$ on $\prod_{\pins} \Cplusp$ described after \eqref{eq:cplus_gequiv} (see \cite[Remarque 3.3.2]{brinon-relatif}), extends to respective natural actions of $G_R$ on $\prod_{\pins} \Ainf(\Cplusp)$ and $\prod_{\pins} W(\CC(\frakp)^{\flat})$.
\end{rem}

\begin{lem}\label{lem:ainf_intersection}
	In the notations described above, we have natural $(\varphi, G_R)\equivariant$ injective homomorphisms $\Ainf(\Rbar) \rightarrow \prod_{\pins} \Ainf(\Cplusp)$ and $W(\CC(\Rbar)^{\flat}) \rightarrow \prod_{\pins} W(\CC(\frakp)^{\flat})$, where the right-hand terms are equipped with a $G_R\action$ as described in Remark \ref{rem:gr_act_prodainf}.
	Moreover, we have a natural $(\varphi, G_R)\equivariant$ identification $\Ainf(\Rbar) = W(\CC(\Rbar)^{\flat}) \cap \prod_{\pins} \Ainf(\Cplusp)$ as subrings of $\prod_{\pins} W(\CC(\frakp)^{\flat})$.
\end{lem}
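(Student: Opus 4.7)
The plan is to deduce both assertions directly from the injection in \eqref{eq:cplus_gequiv} by applying the tilting functor $A \mapsto A^{\flat} = \lim_{\varphi} A/p$ followed by the $p$-typical Witt vector functor $W$. Both functors are right adjoints (in the appropriate categories) and hence preserve limits; in particular, they preserve injections, products, and intersections of subrings of a common ring. This is essentially the same pattern used in Lemma \ref{lem:cplusp_in_cpplus_ainf}.

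First, tilting the $(\varphi, G_R)$-equivariant injection $\CC^+(\Rbar) \hookrightarrow \prod_{\pins} \Cplusp$ of \eqref{eq:cplus_gequiv} yields an injection $\CC^+(\Rbar)^{\flat} \hookrightarrow \prod_{\pins} \Cplusp^{\flat}$, and applying $W$ then produces the first claimed embedding
\[
\Ainf(\Rbar) = W(\CC^+(\Rbar)^{\flat}) \hookrightarrow \prod_{\pins} W(\Cplusp^{\flat}) = \prod_{\pins} \Ainf(\Cplusp).
\]
Inverting $p^{\flat}$ at the tilt level before applying $W$ produces the second embedding $W(\CC(\Rbar)^{\flat}) \hookrightarrow \prod_{\pins} W(\CC(\frakp)^{\flat})$. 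Compatibility with $\varphi$ and with $G_R$ (acting on the right-hand side as in Remark \ref{rem:gr_act_prodainf}) is automatic by functoriality of tilt and $W$.

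Next, for the identification $\Ainf(\Rbar) = W(\CC(\Rbar)^{\flat}) \cap \prod_{\pins} \Ainf(\Cplusp)$, the idea is to first establish the analogous perfectoid-level identity
\[
\CC^+(\Rbar)^{\flat} = \CC(\Rbar)^{\flat} \cap \prod_{\pins} \Cplusp^{\flat} \quad \textrm{inside} \quad \prod_{\pins} \CC(\frakp)^{\flat},
\]
and then lift it through $W$ using that $W$ preserves intersections. For the perfectoid identity, writing any element of $\CC(\Rbar)^{\flat}$ as $y/(p^{\flat})^n$ with $y \in \CC^+(\Rbar)^{\flat}$ and inducting on $n$ reduces the claim to the injectivity of the reduction map $\CC^+(\Rbar)^{\flat}/p^{\flat} \to \prod_{\pins} \Cplusp^{\flat}/p^{\flat}$. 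Under the identifications $\CC^+(\Rbar)^{\flat}/p^{\flat} = \Rbar/p$ and $\Cplusp^{\flat}/p^{\flat} = \Rbarp/p$, this is precisely the injection $\Rbar/p \to \prod_{\pins} \Rbarp/p$ already established in the paragraph preceding \eqref{eq:cplus_gequiv}.

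The step I expect to require the most care is verifying that both tilt and $W$ commute with the intersection in question, especially when $\pazs$ is infinite. This is formal once one realises that the intersection is the equalizer of the two natural maps $\CC(\Rbar)^{\flat} \oplus \prod_{\pins} \Cplusp^{\flat} \rightrightarrows \prod_{\pins} \CC(\frakp)^{\flat}$, and invokes left exactness of $W$ (and of tilting) together with their commutation with arbitrary products. Putting these ingredients together then yields $\Ainf(\Rbar) = W(\CC(\Rbar)^{\flat}) \cap \prod_{\pins} \Ainf(\Cplusp)$, as required, with the $(\varphi, G_R)$-equivariance of the identification inherited from that of \eqref{eq:cplus_gequiv}.
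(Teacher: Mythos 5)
Your argument is correct and follows the same architecture as the paper's proof: tilt the injection \eqref{eq:cplus_gequiv}, establish the intersection identity $\CC^+(\Rbar)^{\flat} = \CC(\Rbar)^{\flat} \cap \prod_{\pins} \Cplusp^{\flat}$ (the paper's \eqref{eq:tilt_intersect_injective}), and then lift through $W$ using its left exactness (as a right adjoint) and compatibility with products. The only divergence is in the middle step, where the paper argues via the valuations $\upsilon_{\frakp}^{\flat}$ on the fields $\CC(\frakp)^{\flat}$ whereas you induct on the power of $p^{\flat}$ in the denominator and reduce to the injectivity of $\Rbar/p \rightarrow \prod_{\pins}\Rbarp/p$ via the standard perfectoid identification $S^{\flat}/p^{\flat} \cong S/p$ — an equally valid, if anything more explicit, way to reach the same identity.
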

\begin{proof}
	From \eqref{eq:cplus_gequiv} recall that we have an injective homomorphism $\CC^+(\Rbar) \rightarrow \prod_{\pins} \Cplusp$.
	By applying the tilting functor, we further obtain a natural $(\varphi, G_R)\equivariant$ commutative diagram:
	\begin{equation}\label{eq:cc_in_prod_cp}
		\begin{tikzcd}[row sep=15pt]
			\CC^+(\Rbar)^{\flat} \arrow[r] \arrow[d] & \prod_{\pins} \Cplusp^{\flat} \arrow[d]\\
			\CC(\Rbar)^{\flat} \arrow[r] & \big(\prod_{\pins} \Cplusp^{\flat}\big)\big[\tfrac{1}{p^{\flat}}\big] \arrow[r] & \prod_{\pins} \CC(\frakp)^{\flat},
		\end{tikzcd}
	\end{equation}
	where the bottom right horizontal arrow and the vertical arrows are injective.
	From the injectivity of $\Rbar/p \rightarrow \prod_{\pins} \Cplusp/p$ (see Lemma \ref{lem:cplus_gequiv}) and the left exactness of $\lim_{\varphi}$, we obtain that in \eqref{eq:cc_in_prod_cp} the top horizontal arrow is injective and since we have $\CC(\Rbar)^{\flat} = \CC^+(\Rbar)^{\flat}[1/p^{\flat}]$, it also follows that the bottom left horizontal arrow is injective.
	Now, note that we have isomorphisms of multiplicative monoids $\CC(\Rbar)^{\flat} \isomorphic \lim_{x \mapsto x^p} \CC(\Rbar)$ (resp.\ $\CC^+(\Rbar)^{\flat} \isomorphic \lim_{x \mapsto x^p} \CC^+(\Rbar)$).
	Let $\sharp \colon \CC(\Rbar)^{\flat} \rightarrow \CC(\Rbar)$ denote the projection to the first component, denoted as $y \mapsto y^{\sharp}$, and note that we also have a compatible projection map for $\CC^+(\Rbar)^{\flat}$.
	This map is compatible with the $\sharp\textrm{-map}$ defined over $\Cp^{\flat}$ in the proof of Lemma \ref{lem:cplusp_in_cpplus_ainf}.
	Moreover, let $\upsilon_{\frakp}^{\flat}$ denote the valuation on $\Cp^{\flat}$ introduced in the proof of Lemma \ref{lem:cplusp_in_cpplus_ainf}.
	Then, under the composition of the left vertical and the bottom horizontal arrows of \eqref{eq:cc_in_prod_cp}, we see that for any $y$ in $\CC(\Rbar)^{\flat}$ we have that if $\upsilon_{\frakp}^{\flat}(y) \geqslant 0$ for each $\pins$, then $y^{\sharp}$ is an element of $\CC^+(\frakp)$ for each $\pins$.
	Using the following identification from Lemma \ref{lem:cplus_gequiv},
	\begin{equation*}
		\CC^+(\Rbar) = \CC(\Rbar) \cap \textstyle\prod_{\pins} \Cplusp \subset \big(\textstyle\prod_{\pins} \Cplusp\big)[1/p],
	\end{equation*}
	it follows that $y^{\sharp}$ is in $\CC^+(\Rbar)$.
	Write $y = (y_n)_{n \in \NN}$ in $\lim_{x \mapsto x^p} \CC(\Rbar)$, with $y_0 = y^{\sharp}$ in $\CC^+(\Rbar)$ and $y_{n+1}^p = y_n$, for $n \geqslant 1$.
	Then, using the identification $\CC^+(\Rbar) = \CC(\Rbar) \cap \prod_{\pins} \Cplusp$ and induction on $n \geqslant 0$, it follows that $y_n$ must be in $\CC^+(\Rbar)$, for all $n \in \NN$.
	Thus, $y$ is an element of $\CC^+(\Rbar)^{\flat}$.
	Conversely, it is clear that for any $y$ in $\CC^+(\Rbar)^{\flat}$, we have that $\upsilon_{\frakp}^{\flat}(y) \geqslant 0$.
	In particular, we conclude that the following equality holds:
	\begin{equation}\label{eq:tilt_intersect_injective}
		\CC^+(\Rbar)^{\flat} = \CC(\Rbar)^{\flat} \cap \textstyle\prod_{\pins} \Cplusp^{\flat} \subset \prod_{\pins} \CC(\frakp)^{\flat}.
	\end{equation}
	Furthermore, recall that the $p\textrm{-typical}$ Witt vector functor is left exact since it is right adjoint to the forgetful functor from the category of $\delta\textrm{-rings}$ to the category of rings (see \cite{joyal}).
	Therefore, all the arrows in the following natural $(\varphi, G_R)\equivariant$ commutative diagram are injective:
	\begin{center}
		\begin{tikzcd}[row sep=16pt]
			\Ainf(\Rbar) \arrow[r] \arrow[d] & \prod_{\pins} \Ainf(\Cplusp) \arrow[d]\\
			W(\CC(\Rbar)^{\flat}) \arrow[r] & \prod_{\pins} W(\CC(\frakp)^{\flat}).
		\end{tikzcd}
	\end{center}
	So, from \eqref{eq:tilt_intersect_injective} we obtain that $\Ainf(\Rbar) = W(\CC(\Rbar)^{\flat}) \cap \prod_{\pins} \Ainf(\Cplusp)$ as subrings inside $\prod_{\pins} W(\CC(\frakp)^{\flat})$.
\end{proof}

\subsection{de Rham period rings}\label{subsec:derham_relative}

In this section, we will recall the de Rham period rings from \cite[Chapitre 5]{brinon-relatif}, \cite[Section 2.1]{abhinandan-relative-wach-i} and \cite[Section 2]{brinon-imparfait}.
We start by noting that the $\Gamma_R\equivariant$ surjective homomorphism $\theta \colon \Ainf(\Rinfty) \twoheadrightarrow \widehat{R}_{\infty}$ described in Section \ref{subsec:ainf_relative} extends to a $\Gamma_R\equivariant$ surjective homomorphism $\theta \colon \Ainf(\Rinfty)[1/p] \twoheadrightarrow \widehat{R}_{\infty}[1/p]$, and its kernel is principal and generated by $\xi$.
We set
\begin{equation*}
	\BdR^+(\Rinfty) \coloneq \lim_n (\Ainf(\Rinfty)[1/p])/(\kert \theta)^n.
\end{equation*}
Note that $t \coloneq \log(1+\mu)$ converges in $\BdR^+(\Rinfty)$ and the latter is $t\textrm{-torsion free}$; we set $\BdR(\Rinfty) \coloneq \BdR^+(\Rinfty)[1/t]$.
Furthermore, we have big period rings $\OBdR^+(\Rinfty)$ and $\OBdR(\Rinfty)$.
These rings are equipped with a $\Gamma_R\action$, a natural extension of the map $\theta$ (before inverting $t$) and a $\Gamma_R\textrm{-stable}$ decreasing, separated and exhaustive filtration (see \cite[p.\ 52--54]{brinon-relatif}).
Moreover, the ring $\OBdR(\Rinfty)$ is further equipped with a $\BdR(\Rinfty)\linear$ and $\Gamma_R\equivariant$ integrable connection $\partial$ satisfying Griffiths transversality with respect to the filtration; we equip $\OBdR^+(\Rinfty)$ with the induced connection (see \cite[Section 5.3]{brinon-relatif}).
From loc.\ cit., we have that $\OBdR^+(\Rinfty)^{\partial=0} = \OBdR(\Rinfty)^{\partial=0} = R[1/p]$.
Note that we have natural variations of these constructions over $\Rbar$ as well.

Next, let $\pazs$ denote the set of minimal primes of $\Rbar$ above $pR \subset R$ as in Section \ref{subsec:localisation}.
Similar to above, for each $\frakp \in \pazs$, we set 
\begin{equation*}
	\begin{aligned}
		\BdR^+(\Cpplus) &\coloneq \lim_n (\Ainf(\Cpplus)[1/p])/(\kert \theta)^n\\
		(\textrm{resp. } \BdR^+(\Cplusp) &\coloneq \lim_n (\Ainf(\Cplusp)[1/p])/(\kert \theta)^n),
	\end{aligned}
\end{equation*}
and set $\BdR(\Cpplus) \coloneq \BdR^+(\Cpplus)[1/t]$ (resp.\ $\BdR(\Cplusp) \coloneq \BdR^+(\Cplusp)[1/t]$) equipped with a $\GRhatp\action$ (resp.\ $\GRp\action$), a natural extension of the map $\theta$ (before inverting $t$) and a $\GRhatp\textrm{-stable}$ (resp.\ $\GRp\textrm{-stable}$) decreasing, exhaustive and separated filtration given as $\Fil^k \BdR(\Cpplus) \coloneq t^k \BdR(\Cpplus)$ (resp.\ $\Fil^k \BdR(\Cplusp) \coloneq t^k \BdR(\Cplusp)$), for each $k \in \ZZ$.

Moreover, for each $\frakp \in \pazs$, note that we have big de Rham period rings $\OBdR^+(\Cpplus)$ and $\OBdR(\Cpplus)$ equipped with an $L\linear$ $\GRhatp\action$, a natural extension of the map $\theta$ (before inverting $t$), a $\GRhatp\textrm{-stable}$ decreasing, separated and exhaustive filtration and a $\GRhatp\equivariant$ integrable connection satisfying Griffiths transversality with respect to the filtration (see \cite[Section 2]{brinon-imparfait}).
Additionally, from \cite[Proposition 2.9]{brinon-imparfait}, we see that the natural $\GRhatp\equivariant$ injective homomorphism of rings $\BdR^+(\Cpplus) \rightarrow \OBdR^+(\Cpplus)$ extends to a $\BdR^+(\Cpplus)\linear$ isomorphism
\begin{equation}\label{eq:bdrcp+_powerseries}
	\BdR^+(\Cpplus)\llbracket T_1, \ldots, T_d \rrbracket \isomorphic \OBdR^+(\Cpplus), \qquad T_i \longmapsto X_i - [X_i^{\flat}].
\end{equation}

Analogously, following the construction of big de Rham period rings in \cite[Chapitre 5]{brinon-relatif}, for each $\pins$, we can construct period rings for the perfectoid $L\algebra$ $\CC(\frakp)$.
In particular, we have big de Rham period rings $\OBdR^+(\Cplusp)$ and $\OBdR(\Cplusp)$ equipped with an $L\linear$ $\GRp\action$, a natural extension of the map $\theta$ (before inverting $t$ and denoted $\theta_L$), a $\GRp\textrm{-stable}$ decreasing, separated and exhaustive filtration and a $\GRp\equivariant$ integrable connection satisfying Griffiths transversality with respect to the filtration.
Additionally, note that we have the natural $\GRp\equivariant$ injective homomorphism of rings $\BdR^+(\Cplusp) \rightarrow \OBdR^+(\Cplusp)$ and by employing arguments similar to \cite[Proposition 2.9]{brinon-imparfait} and \cite[Proposition 5.2.2]{brinon-relatif} (also see \cite[Proposition 6.10]{scholze-rigid} and \cite[p.\ 2]{scholze-rigid-erratum}), we see that the preceding homomorphism extends to a $\BdR^+(\Cplusp)\linear$ isomorphism
\begin{equation}\label{eq:bdrc+p_powerseries}
	\BdR^+(\Cplusp)\llbracket T_1, \ldots, T_d \rrbracket \isomorphic \OBdR^+(\Cplusp), \qquad T_i \longmapsto X_i - [X_i^{\flat}].
\end{equation}

Using the isomorphism \eqref{eq:bdrc+p_powerseries}, we may describe the supplementary structures on the ring $\OBdR^+(\Cplusp)$ from the previous paragraph, more explicitly.
Indeed, let us first note that from the isomorphism \eqref{eq:bdrc+p_powerseries} (see \cite[Proposition 2.9]{brinon-imparfait} and \cite[Proposition 5.2.2]{brinon-relatif}), it follows that $\kert \theta_L = (t, X_1 - [X_1^{\flat}], \ldots, X_d - [X_d^{\flat}]) \subset \OBdR(\Cplusp)$.
Then, the filtration on $\OBdR^+(\Cplusp)$ is given by powers of $\kert \theta_L$, i.e.\ for each $k \in \NN$, we have
\begin{equation*}
	\Fil^k \OBdR^+(\Cplusp) = (\kert \theta_L)^k = \big(t, X_1 - [X_1^{\flat}], \ldots, X_d - [X_d^{\flat}]\big)^k \subset \OBdR^+(\Cplusp),
\end{equation*}
which is clearly decreasing, separated and exhaustive, and by employing arguments similar to \cite[Proposition 5.2.5]{brinon-relatif}, we see that $\gr^{\bullet}\OBdR^+(\Cplusp) \isomorphic \Cplusp[u_1, \ldots, u_d, t]$, where the grading is given by the degree of $t$ and $u_i$ denotes the image of $X_i-[X_i^{\flat}]$ in $\gr^1 \OBdR^+(\Cplusp)$.
Next, note that we have $\OBdR(\Cplusp) \coloneq \OBdR^+(\Cplusp)[1/t]$, and the filtration on it is given by setting
\begin{equation*}
	\Fil^0 \OBdR(\Cplusp) \coloneq \textstyle\sum_{n \in \NN} t^{-n} \Fil^n \OBdR^+(\Cplusp),
\end{equation*}
and $\Fil^k \OBdR(\Cplusp) \coloneq t^k \Fil^0 \OBdR(\Cplusp)$, for each $k \in \ZZ$ (see \cite[p.\ 52]{brinon-relatif}).
Then, by employing arguments similar to \cite[Proposition 5.2.6]{brinon-relatif}, we see that $\gr^{\bullet}\OBdR(\Cplusp) \isomorphic \Cplusp[z_1, \ldots, z_d, t^{\pm 1}]$, where the grading is given by the degree of $t$, and $z_i$ denotes the image of $(X_i-[X_i^{\flat}])/t$ in $\gr^0 \OBdR(\Cplusp)$.
Additionally, using arguments similar to \cite[Proposition 5.2.8 \& Corollaire 5.2.9]{brinon-relatif}, we see that the filtration on $\OBdR(\Cplusp)$ is separated and exhaustive.

Furthermore, from \cite[Section 5.3]{brinon-relatif}, note that using the isomorphism \eqref{eq:bdrc+p_powerseries}, there exists a unique and $(\kert \theta_L)\adic$ continuous $\BdR^+(\Cplusp)\linear$ derivation $N_i \colon \OBdR^+(\Cplusp) \rightarrow \OBdR^+(\Cplusp)$, for each $1 \leqslant i \leqslant d$, and such that $N_i(X_j - [X_j^{\flat}]) = \delta_{i,j}X_j$, where $\delta_{i,j}$ is the Kronecker delta symbol.
As $N_i(t) = 0$, for each $1 \leqslant i \leqslant d$, the derivation $N_i$ extends to $\OBdR(\Cplusp)$.
Using this, $\OBdR(\Cplusp)$ is equipped with a $\BdR(\Cplusp)\linear$ connection as follows (see \cite[p.\ 56]{brinon-relatif}):
\begin{equation*}
	\partial \colon \OBdR(\Cplusp) \longrightarrow \OBdR(\Cplusp) \otimes_{O_L} \Omega^1_{O_L/O_F}, \qquad x \longmapsto \textstyle\sum_{i=1}^d N_i(x) \otimes \tfrac{dX_i}{X_i}.
\end{equation*}
The connection $\partial$ is integrable, $\GRp\equivariant$ (see \cite[Proposition 5.3.1]{brinon-relatif}) and satisfies Griffiths transversality with respect to the filtration (see \cite[Proposition 5.3.9]{brinon-relatif}).

\begin{lem}\label{lem:cplusp_in_cpplus_obdr}
	The natural $\GRhatp\equivariant$ injective homomorphism of rings $\Ainf(\Cplusp) \rightarrow \Ainf(\Cpplus)$ from Lemma \ref{lem:cplusp_in_cpplus_ainf} extends to a natural $\GRhatp\equivariant$ injective homomorphism of rings $\BdR(\Cplusp) \rightarrow \BdR(\Cpplus)$ compatible with the respective filtrations.
	Moreover, the preceding natural $\GRhatp\equivariant$ homomorphism further extends to a natural $L\linear$ and $\GRhatp\equivariant$ injective homomorphism of rings $\OBdR(\Cplusp) \rightarrow \OBdR(\Cpplus)$ compatible with the respective filtrations and connections.
\end{lem}
\begin{proof}
	To prove the claim, let us consider the following diagram:
	\begin{equation}\label{eq:cplusp_in_cpplus_obdr}
		\begin{tikzcd}[row sep=15pt]
			\Ainf(\Cplusp) & \BdR(\Cplusp) & \BdR^+(\Cplusp)\llbracket T_1, \ldots, T_d \rrbracket[1/t] & \OBdR(\Cplusp) \\
			\Ainf(\Cpplus) & \BdR(\Cpplus) & \BdR^+(\Cpplus)\llbracket T_1, \ldots, T_d \rrbracket[1/t] & \OBdR(\Cpplus).
			\arrow[from=1-1, to=1-2]
			\arrow[from=1-1, to=2-1]
			\arrow[from=1-2, to=1-3]
			\arrow[from=1-2, to=2-2]
			\arrow["\eqref{eq:bdrc+p_powerseries}", "\sim"', from=1-3, to=1-4]
			\arrow[from=1-3, to=2-3]
			\arrow[from=1-4, to=2-4]
			\arrow[from=2-1, to=2-2]
			\arrow[from=2-2, to=2-3]
			\arrow["\eqref{eq:bdrcp+_powerseries}", "\sim"', from=2-3, to=2-4]
		\end{tikzcd}
	\end{equation}
	where the unlabelled horizontal arrows are the natural homomorphisms.
	By definition, we see that the middle horizontal arrow in the top (resp.\ bottom) row is injective.
	Also, the top (resp.\ bottom) left horizontal arrow is injective because we have
	\begin{equation*}
		\begin{aligned}
			\Ainf(\Cplusp) &\longhookrightarrow \Ainf(\Cplusp)[1/p] \longhookrightarrow \BdR^+(\Cplusp)\\
			\textrm{ (resp.\ } \Ainf(\Cpplus) &\longhookrightarrow \Ainf(\Cpplus)[1/p] \longhookrightarrow \BdR^+(\Cpplus)),
		\end{aligned}
	\end{equation*}
	where the last inclusion follows because if we take $x$ to be an element of $\cap_{n \in \NN} \xi^n\Ainf(\Cplusp)[1/p]$ (resp.\ $\cap_{n \in \NN} \xi^n\Ainf(\Cpplus)[1/p]$) and $i \geqslant 0$ such that $p^ix$ is in $\Ainf(\Cplusp)$ (resp.\ $\Ainf(\Cpplus)$), then using that the sequence $\{\xi, p\}$ is regular on $\Ainf(\Cplusp)$ (resp.\ $\Ainf(\Cpplus)$), we get that $p^ix$ must be in $\cap_{n \in \NN} \xi^n\Ainf(\Cplusp) = 0$ (resp.\ $\cap_{n \in \NN} \xi^n\Ainf(\Cpplus) = 0$), i.e.\ $x = 0$.

	Next, the vertical arrows in the first, second and fourth columns of \eqref{eq:cplusp_in_cpplus_obdr} are obtained by using the functoriality of the construction of $\Ainf$, $\BdR$ and $\OBdR$ (for the homomorphism $\Cplusp \rightarrow \Cpplus$), in particular, they are $\GRhatp\equivariant$.
	Note that by definition, the $\GRhatp\equivariant$ vertical arrow in the first column induces a $\GRhatp\equivariant$ homomorphism $\BdR^+(\Cplusp) \rightarrow \BdR^+(\Cpplus)$.
	Then, from Remark \ref{rem:xiainf_intersect} and the fact that $\lim_n$ is left exact, we get that the map $\BdR^+(\Cplusp) \rightarrow \BdR^+(\Cpplus)$ is injective.
	As $\BdR^+(\Cpplus)$ and $\BdR^+(\Cplusp)$ are $t\textrm{-torsion}$ free (see \cite[Proposition 5.1.4]{brinon-relatif}), it follows that the vertical arrows in the second and the third column above are obtained from the homomorphism $\BdR^+(\Cplusp) \rightarrow \BdR^+(\Cpplus)$ in an obvious manner, and thus both the arrows are injective.
	By the description of the filtration on the objects of the second column, it is also clear that the vertical arrow in the second column is compatible with filtrations.
	This shows the first claim.
	As the diagram in \eqref{eq:cplusp_in_cpplus_obdr} commutes by definition, therefore, from the preceding discussion we conclude that the right vertical arrow is also injective.
	Furthermore, from the explicit description of the filtration and the connection on the top right object and the explicit description of the filtration and the connection on the bottom right object (see \cite[Section 2.2]{brinon-imparfait}), and an easy diagram chase shows that the right vertical arrow is compatible with the respective filtrations and connections.
	This proves the second claim.
\end{proof}

\begin{rem}\label{rem:gr_act_prodbdr}
	From diagram \eqref{eq:cplusp_in_cpplus_obdr} in Lemma \ref{lem:cplusp_in_cpplus_obdr}, note that we have natural $\GRhatp\equivariant$ injective homomorphisms of rings $\Ainf(\Cplusp) \rightarrow \BdR(\Cplusp) \rightarrow \OBdR(\Cplusp)$, for each $\pins$.
	As product is an exact functor on the category of abelian groups, therefore, the preceding natural homomorphisms, extend to natural injective homomorphisms
	\begin{equation}\label{eq:gr_act_prodbdr}
		\textstyle\prod_{\pins} \Ainf(\Cplusp) \longrightarrow \textstyle\prod_{\pins} \BdR(\Cplusp) \longrightarrow \textstyle\prod_{\pins} \OBdR(\Cplusp).
	\end{equation}
	Moreover, by using an argument similar to \cite[Remarque 3.3.2]{brinon-relatif} we see that the products $\prod_{\pins} \BdR(\Cplusp)$ and $\prod_{\pins} \OBdR(\Cplusp)$ may, respectively, be equipped with an action of $G_R$, extending the $G_R\action$ on $\prod_{\pins} \Ainf(\Cplusp)$ (see Remark \ref{rem:gr_act_prodainf}).
	In particular, we conclude that the natural injective homomorphisms in \eqref{rem:gr_act_prodbdr} are $G_R\equivariant$.
\end{rem}

\begin{lem}\label{lem:bdr_embedding}
	In the notations described above, we have a natural $R[1/p]\linear$ $G_R\equivariant$ injective homomorphism $\OBdR(\Rbar) \rightarrow \prod_{\pins} \OBdR(\Cplusp)$, where the right-hand term is equipped with a $G_R\action$ as described in Remark \ref{rem:gr_act_prodbdr}.
	Moreover, for each $\pins$, the induced natural $R[1/p]\linear$ $G_R\equivariant$ homomorphism $\OBdR(\Rbar) \rightarrow \OBdR(\Cplusp)$ is compatible with the respective filtrations and connections.
\end{lem}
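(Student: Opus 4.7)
The plan is to build the embedding in stages, transferring the $G_R$-equivariant injection from Lemma \ref{lem:ainf_intersection} through successive enlargements of $\Ainf$.

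First, I would invert $p$, which preserves injectivity since all rings involved are $p$-torsion free. The element $\xi \in \Ainf(\OFinfty)$ lies in $\Ainf(\Rbar)$ and maps to the common $\xi$ in each factor $\Ainf(\Cplusp)$, so the $\xi$-adic filtrations on both sides are compatible. Taking the $\xi$-adic completion and using that $\prod$ commutes with $\lim$, I obtain a $G_R$-equivariant map $\BdR^+(\Rbar) \to \prod_{\pins} \BdR^+(\Cplusp)$. Inverting the common element $t = \log(1+\mu)$ then extends this to a $G_R$-equivariant embedding $\BdR(\Rbar) \to \prod_{\pins} \BdR(\Cplusp)$.

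Next, to extend to $\OBdR$, I would use the explicit description in \cite[\S 5.2]{brinon-relatif} of $\OBdR^+$ as a formal power series ring over $\BdR^+$ in variables associated to the pairs $([X_i^{\flat}], X_i)$ for $1 \leq i \leq d$. The same description is available over each $\Cplusp$ with the same variables, and the $\BdR$-level embedding extends coefficient-wise to an $R[1/p]$-linear map $\OBdR(\Rbar) \to \prod_{\pins} \OBdR(\Cplusp)$. Compatibility with additional structures is then essentially formal: $G_R$-equivariance follows from Remark \ref{rem:gr_act_prodbdr} together with the fact that the power series variables are common to both sides; the filtration $\Fil^n \OBdR^+$, generated by powers of $\ker \theta$, translates into monomial conditions in $\xi$ and the extra variables that are preserved componentwise; and the connection $\nabla$, determined by its action on the power series variables and by $R$-linearity, commutes with the embedding for the same reason.

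The main obstacle will be establishing injectivity at the $\BdR^+$ stage, which requires showing that an element of $\Ainf(\Rbar)[1/p]$ whose image lies in $\xi^n \prod_{\pins} \Ainf(\Cplusp)[1/p]$ for every $n$ must vanish. I would reduce this to the assertion $\Ainf(\Rbar) \cap \prod_{\pins} \xi^n \Ainf(\Cplusp) = \xi^n \Ainf(\Rbar)$ inside $\prod_{\pins} \Ainf(\Cplusp)$, and prove it by induction on $n$ using Lemma \ref{lem:ainf_intersection} (applied to $x/\xi$ after checking that $x/\xi$ makes sense globally in $W(\CC(\Rbar)^{\flat})$) together with the fact that $\xi$ is a non-zero-divisor in each $\Ainf(\Cplusp)$ and in $W(\CC(\Rbar)^{\flat})$. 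Once injectivity is established at the $\BdR^+$ stage, it propagates through the remaining steps since $t$ is a non-zero-divisor and the extension to $\OBdR$ is faithful.
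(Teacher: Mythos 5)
Your proposal is correct, and the underlying input is the same as the paper's (everything reduces to Lemma \ref{lem:ainf_intersection}, i.e.\ to injectivity of $\CC^+(\Rbar) \rightarrow \prod_{\pins} \Cplusp$ from \eqref{eq:cplus_gequiv}), but the way you establish injectivity is organised differently. The paper defines the map directly from the $\Ainf$-level embedding, observes that the filtration on $\OBdR$ is separated, and then invokes Brinon's computation $\gr^{\bullet}\OBdR(\Rbar) \isomorphic \CC^+(\Rbar)[z_1, \ldots, z_d, t^{\pm 1}]$ (and likewise over each $\Cplusp$) to reduce injectivity of the $\OBdR$-map in one stroke to injectivity of \eqref{eq:cplus_gequiv}; this packages the $\xi$-adic and power-series gradings together. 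You instead prove injectivity first at the $\BdR^+$ stage, via the inductive statement $\Ainf(\Rbar) \cap \prod_{\pins}\xi^n\Ainf(\Cplusp) = \xi^n\Ainf(\Rbar)$ (the base case being exactly \eqref{eq:cplus_gequiv} applied to $\theta(x)$, or equivalently Lemma \ref{lem:ainf_intersection} applied to $x/\xi$, noting $\xi$ is a unit in $W(\CC(\Rbar)^{\flat})$), and then propagate through the power-series description $\OBdR^+ \cong \BdR^+\llbracket z_1,\ldots,z_d\rrbracket$ and localisation at $t$. Your route is more hands-on and avoids citing the graded computation, at the cost of a couple of routine verifications you gloss over (that $(\Ainf(\Cplusp)/\xi^n)[1/p]$-torsion-freeness lets you pass between the integral and rational versions of the $\xi^n$-intersection statement, and that the variables $z_i$ match under the map, which they do since $[X_i^{\flat}] \mapsto [X_i^{\flat}]$ and $X_i \mapsto X_i$); the paper's route is shorter but leans on \cite[Propositions 5.2.7 \& 6.2.6]{brinon-relatif}. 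Your treatment of the filtration and connection compatibilities matches the paper's (both read them off the explicit descriptions in \cite[\S 5.2 \& \S 5.3]{brinon-relatif}).
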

\begin{proof}
	Note that from Lemma \ref{lem:ainf_intersection} and \eqref{eq:gr_act_prodbdr} in Remark \ref{rem:gr_act_prodbdr}, we have $G_R\equivariant$ injective homomorphisms:
	\begin{equation*}
		\Ainf(\Rbar) \longrightarrow \textstyle\prod_{\pins} \Ainf(\Cplusp) \longrightarrow \textstyle\prod_{\pins} \BdR(\Cplusp) \longrightarrow \textstyle\prod_{\pins} \OBdR(\Cplusp).
	\end{equation*}
	Then, from the definition of $\BdR(\Rbar)$ and $\OBdR(\Rbar)$, the preceding maps naturally induce a $G_R\equivariant$ commutative diagram:
	\begin{equation}\label{eq:bdr_embedding}
		\begin{tikzcd}[row sep=15pt]
			\BdR(\Rbar) \arrow[r] \arrow[d] & \prod_{\pins} \BdR(\Cplusp) \arrow[d] \arrow[r] & \BdR(\Cplusp) \arrow[d]\\
			\OBdR(\Rbar) \arrow[r] & \prod_{\pins} \OBdR(\Cplusp) \arrow[r] & \OBdR(\Cplusp),
		\end{tikzcd}
	\end{equation}
	where the vertical maps are injective, with the left and the right vertical arrows being compatible with the respective filtrations and connections.
	Moreover, let us note that from the explicit description of the filtration on $\BdR$ and $\OBdR$ (see \cite[Section 5.2]{brinon-relatif} and the discussion after \eqref{eq:bdrc+p_powerseries}), it easily follows that the respective compositions of the horizontal arrows in \eqref{eq:bdr_embedding} are compatible with filtrations and connections, i.e.\ for each $k \in \ZZ$, the respective images of $\Fil^k \BdR(\Rbar)$ and $\Fil^k \OBdR(\Rbar)$ are contained in $\Fil^k \BdR(\Cplusp)$ and $\Fil^k \OBdR(\Cplusp)$, under the composition of the horizontal arrows.
	Similarly, from the explicit description of the connection on $\OBdR$ (see \cite[Section 5.3]{brinon-relatif} and the discussion before Lemma \eqref{lem:cplusp_in_cpplus_obdr}), it easily follows that the composition of the bottom horizontal arrows in \eqref{eq:bdr_embedding} is further compatible with the respective connections, for each $\pins$.

	Now, for the diagram in \eqref{eq:bdr_embedding}, we will show that the bottom left horizontal arrow is injective, which will also imply the injectivity of the top left horizontal arrow (our argument is similar to \cite[Proposition 6.2.6]{brinon-relatif}).
	Note that the filtration on $\OBdR(\Rbar)$ is separated (see \cite[Corollaire 5.2.9]{brinon-relatif}), and similarly the filtration on $\OBdR(\Cplusp)$ is separated for each $\pins$ (see the discussion after \eqref{eq:bdrc+p_powerseries}).
	Therefore, to obtain the claim it is enough to show that the induced map on the grading of the filtration is injective.
	From \cite[Proposition 5.2.6]{brinon-relatif} recall that $\gr^{\bullet} \OBdR(\Rbar) \isomorphic \CC^+(\Rbar)[z_1, \ldots, z_d, t^{\pm 1}]$, where $z_i$ denotes the image of $(X_i-[X_i]^{\flat})/t$ in $\gr^0 \OBdR(\Rbar) \isomorphic \CC^+(\Rbar)[z_1, \ldots, z_d]$.
	Similarly, we have that $\gr^{\bullet} \OBdR(\Cplusp) \isomorphic \Cplusp[z_1, \ldots, z_d, t^{\pm 1}]$, for each $\pins$.
	The claim now follows from injectivity of the natural map $\CC^+(\Rbar) \rightarrow \prod_{\pins} \Cplusp$ (see \eqref{eq:cplus_gequiv}).
	This concludes our proof.
\end{proof}

\begin{rem}\label{rem:bdr_embedding_l}
	Note that the natural $R[1/p]\linear$ and $G_R\equivariant$ injective homomorphism $\OBdR(\Rbar) \rightarrow \prod_{\pins} \OBdR(\Cplusp)$ of Lemma \ref{lem:bdr_embedding} extends to a natural $L\linear$ and $G_R\equivariant$ injective homomorphism $L \otimes_{R[1/p]} \OBdR(\Rbar) \rightarrow \prod_{\pins} \OBdR(\Cplusp)$ (see \cite[Proposition 6.2.6]{brinon-relatif}).
	Indeed, to obtain the injectivity of the preceding natural map, note that similar to the proof of Lemma \ref{lem:bdr_embedding}, it is enough to show that the induced map on the grading of the $L\linear$ extension of the filtration on the left and the natural filtration on the right is injective, i.e.\ we need to show that the following composition (in the notation of Lemma \ref{lem:bdr_embedding}) is injective:
	\begin{align*}
		L \otimes_{R[1/p]} \gr^{\bullet} \OBdR(\Rbar) &\isomorphic L \otimes_{R[1/p]} \CC^+(\Rbar)[z_1, \ldots, z_d, t^{\pm 1}]\\
		&\qquad \longrightarrow \textstyle\prod_{\pins} \Cplusp[z_1, \ldots, z_d, t^{\pm 1}] \isomorphic \prod_{\pins} \gr^{\bullet} \OBdR(\Cplusp).
	\end{align*}
	The only non-trivial part is the injectivity of the second homomorphism which follows from Remark \ref{rem:ollin_embed}, thus proving the claim.
	Furthermore, from Lemma \ref{lem:bdr_embedding}, it also follows that for each $\pins$, the induced natural map $L \otimes_{R[1/p]} \OBdR(\Rbar) \rightarrow \OBdR(\Cplusp)$ is compatible with the respective filtrations and connections, where the left-hand term is equipped with an integrable tensor product connection ($\partial \otimes 1 + 1 \otimes \partial$) and a filtration given as the $L\linear$ extension of the filtration on $\OBdR(\Rbar)$.
\end{rem}

\subsection{Crystalline period rings}\label{subsec:crystalline_relative}

In this section, we will recall crystalline period rings (see \cite[Chapitre 6]{brinon-relatif}, \cite[Section 2.2]{abhinandan-relative-wach-i} and \cite[Section 2]{brinon-imparfait}).
We set $\Acrys(\Rinfty) \coloneq \Ainf(\Rinfty)\langle \{\xi^k/k!\}_{k \in \NN} \rangle$, and we have that $t = \log(1+\mu)$ converges in $\Acrys(\OFinfty)$ and $\Acrys(\Rinfty)$ is $p\textrm{-torsion}$ free and $t\textrm{-torsion}$ free.
So, we set $\Bcrys^+(\Rinfty) \coloneq \Acrys(\Rinfty)[1/p]$ and $\Bcrys(\Rinfty) \coloneq \Bcrys^+(\Rinfty)[1/t]$.
Furthermore, one may define big crystalline period rings $\OAcrys(\Rinfty)$, $\OBcrys^+(\Rinfty)$ and $\OBcrys(\Rinfty)$.
All the rings defined above are equipped with a continuous action of $\Gamma_R$, a Frobenius endomorphism $\varphi$ and a natural extension of the map $\theta$ (before inverting $t$) (see \cite[Section 6.1]{brinon-relatif}).
Moreover, we have $\Gamma_R\equivariant$ natural injective homomorphisms (see \cite[Section 6.2]{brinon-relatif}),
\begin{align*}
	\Acrys(\Rinfty) &\hookrightarrow \Bcrys^+(\Rinfty) \hookrightarrow \Bcrys(\Rinfty) \hookrightarrow \BdR(\Rinfty),\\
	\OAcrys(\Rinfty) &\hookrightarrow \OBcrys^+(\Rinfty) \hookrightarrow \OBcrys(\Rinfty) \hookrightarrow \OBdR(\Rinfty),
\end{align*}
and we equip all the crystalline period rings above with a $\Gamma_R\textrm{-stable}$ decreasing, separated and exhaustive filtration induced from the filtration on the de Rham period rings (see \cite[p.\ 71]{brinon-relatif}).
Additionally, using loc.\ cit., we equip $\OBcrys(\Rinfty)$ with a $\Bcrys(\Rinfty)\linear$ and $\Gamma_R\equivariant$ integrable connection induced from the connection $\partial$ on $\OBdR(\Rinfty)$, which satisfies Griffiths transversality with respect to the filtration because the same is true for the connection on $\OBdR(\Rinfty)$; we also equip $\OBcrys^+(\Rinfty)$ and $\OAcrys(\Rinfty)$ with induced connections.
Then, we have that $\OBcrys^+(\Rinfty)^{\partial=0} = \OBcrys(\Rinfty)^{\partial=0} = R[1/p]$ and $\OAcrys(\Rinfty)^{\partial=0} = R$.
Note that we have natural variations of these constructions over $\Rbar$ as well, and from \cite[Corollary 4.34]{morrow-tsuji}, we have a natural $(\varphi, \Gamma_R)\equivariant$ isomorphism $\OAcrys(\Rinfty) \isomorphic \OAcrys(\Rbar)^{H_R}$.

As in Section \ref{subsec:localisation}, let $\pazs$ denote the set of minimal primes of $\Rbar$ above $pR \subset R$.
Similar to above, from \cite[Section 2.3 \& 2.4]{brinon-imparfait}, for each $\frakp \in \pazs$ we have rings $\Acrys(\Cpplus)$, $\Bcrys(\Cpplus)$, $\OAcrys(\Cpplus)$ and $\OBcrys(\Cpplus)$ equipped with a $\GRhatp\action$, a natural extension of the map $\theta$ (before inverting $t$) and a Frobenius endomorphism $\varphi$.
Additionally, from \cite[Proposition 2.39]{brinon-imparfait}, the natural injective homomorphism of rings $\Acrys(\Cpplus) \rightarrow \OAcrys(\Cpplus)$ extends to a $\Acrys(\Cpplus)\linear$ isomorphism
\begin{equation}\label{eq:acryscp+_powerseries}
	\Acrys(\Cpplus)[T_1, \ldots, T_d]_{\textrm{PD}}^{\wedge} \isomorphic \OAcrys(\Cpplus), \qquad T_i \mapsto X_i - [X_i^{\flat}],
\end{equation}
where the source denotes the $\padic$ completion of a PD-polynomial algebra over $\Acrys(\Cpplus)$ in variables $\{T_1, \ldots, T_d\}$.
Using the isomorphisms \eqref{eq:bdrcp+_powerseries} and \eqref{eq:acryscp+_powerseries}, in \cite[Proposition 2.42]{brinon-imparfait}, it has been shown that we have a natural $L\linear$ and $\GRhatp\equivariant$ injective homomorphism of rings $\OAcrys(\Cpplus) \hookrightarrow \OBdR^+(\Cpplus)$.
Using the preceding inclusion, we equip all crystalline period rings for $\Cpplus$ described above with an induced $\GRhatp\textrm{-stable}$ decreasing, separated and exhaustive filtration and an induced $\GRhatp\equivariant$ integrable connection (on rings with a prefix ``$\pazo$'') satisfying Griffiths transversality with respect to the filtration.

Analogously, following the construction of crystalline period rings in \cite[Chapitre 6]{brinon-relatif} for each $\pins$, we can  construct period rings for the perfectoid $L\algebra$ $\Cplusp$ (similar to the de Rham period rings for $\Cplusp$ described in Section \ref{subsec:derham_relative} using \cite[Chapitre 5]{brinon-relatif}).
Consequently, we have period rings $\Acrys(\Cplusp)$, $\Bcrys(\Cplusp)$, $\OAcrys(\Cplusp)$ and $\OBcrys(\Cplusp)$ equipped with a $\GRp\action$, a natural extension of the map $\theta$ (before inverting $t$) and a Frobenius endomorphism $\varphi$.
Moreover, by adapting the proof of \cite[Proposition 6.1.5]{brinon-relatif} to our setting, we see that similar to \eqref{eq:acryscp+_powerseries}, the natural injective homomorphism of rings $\Acrys(\Cplusp) \rightarrow \OAcrys(\Cplusp)$ extends to an $\Acrys(\Cplusp)\linear$ isomorphism
\begin{equation}\label{eq:acrysc+p_powerseries}
	\Acrys(\Cplusp)[T_1, \ldots, T_d]_{\textrm{PD}}^{\wedge} \isomorphic \OAcrys(\Cplusp), \qquad T_i \mapsto X_i - [X_i^{\flat}],
\end{equation}
where the source denotes the $\padic$ completion of a PD-polynomial algebra over $\Acrys(\Cplusp)$ in variables $\{T_1, \ldots, T_d\}$.
Now, using the isomorphisms \eqref{eq:bdrc+p_powerseries} and \eqref{eq:acrysc+p_powerseries}, we can adapt the proof of \cite[Proposition 6.2.1]{brinon-relatif} to obtain a natural $L\linear$ and $\GRp\equivariant$ injective homomorphism of rings $\OAcrys(\Cplusp) \hookrightarrow \OBdR^+(\Cplusp)$.
Using the preceding inclusion, we equip all the crystalline period rings for $\Cplusp$ described above with an induced $\GRp\textrm{-stable}$ decreasing, separated and exhaustive filtration and an induced $\GRp\equivariant$ integrable connection (on rings with a prefix ``$\pazo$'') satisfying Griffiths transversality with respect to the filtration.

\begin{lem}\label{lem:cplusp_in_cpplus_bcrys}
	For each $\pins$, the natural $(\varphi, \GRhatp)\equivariant$ injective homomorphism $\Ainf(\Cplusp) \rightarrow \Ainf(\Cpplus)$ from Lemma \ref{lem:cplusp_in_cpplus_ainf} extends to natural $(\varphi, \GRhatp)\equivariant$ and filtration compatible injective homomorphisms $\Bcrys(\Cplusp) \rightarrow \Bcrys(\Cpplus)$ and $\OBcrys(\Cplusp) \rightarrow \OBcrys(\Cpplus)$, where the latter is $L\linear$ and also compatible with the respective connections.
\end{lem}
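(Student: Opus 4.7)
The plan is to bootstrap from the $\Ainf$-level embedding of Lemma \ref{lem:cplusp_in_cpplus_ainf} and the $\BdR$-level embedding of Lemmas \ref{lem:cplusp_in_cpplus_bdr} and \ref{lem:cplusp_in_cpplus_obdr}, using the fact that the crystalline rings are subrings of the de Rham rings. First, I would construct the natural maps by functoriality, then prove injectivity by embedding into $\BdR$, and finally read off compatibility with filtrations and connections from that same embedding.

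More precisely, recall $\Acrys(\Cpplus)$ is the $p$-adically completed divided power envelope of $\Ainf(\Cpplus)$ along $\kert \theta = \xi \Ainf(\Cpplus)$, and similarly for $\Cplusp$. The $(\varphi, \GRhatp)\equivariant$ ring map $\Ainf(\Cplusp) \to \Ainf(\Cpplus)$ of Lemma \ref{lem:cplusp_in_cpplus_ainf} sends $\xi \Ainf(\Cplusp)$ into $\xi \Ainf(\Cpplus)$, so by the universal property of the $\PD\textrm{-envelope}$, followed by $p$-adic completion and inverting $p$ and then $t$, it induces $(\varphi, \GRhatp)\equivariant$ ring maps
\begin{equation*}
	\Acrys(\Cplusp) \longrightarrow \Acrys(\Cpplus), \quad \Bcrys^+(\Cplusp) \longrightarrow \Bcrys^+(\Cpplus), \quad \Bcrys(\Cplusp) \longrightarrow \Bcrys(\Cpplus).
\end{equation*}
For injectivity, I would use the commutative diagram of $(\varphi, \GRhatp)\equivariant$ maps
\begin{center}
	\begin{tikzcd}
		\Bcrys(\Cplusp) \arrow[r] \arrow[d, hook] & \Bcrys(\Cpplus) \arrow[d, hook] \\
		\BdR(\Cplusp) \arrow[r, hook] & \BdR(\Cpplus),
	\end{tikzcd}
\end{center}
where the bottom arrow is injective by Lemma \ref{lem:cplusp_in_cpplus_bdr} and the vertical inclusions follow from the constructions (together with the fact that $\Acrys(\Cpplus)$ and $\Acrys(\Cplusp)$ are $p$-torsion free, which uses that $\Cpplus$ and $\Cplusp$ are perfectoid, the latter by Lemma \ref{lem:rbarphat_perfectoid}). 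A diagram chase then forces the top map $\Bcrys(\Cplusp) \to \Bcrys(\Cpplus)$ to be injective.

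For the $\OBcrys$ case, I would proceed in the same spirit. The ring $\OBcrys(\Cpplus)$ (resp.\ $\OBcrys(\Cplusp)$) is constructed from $\Bcrys(\Cpplus)$ (resp.\ $\Bcrys(\Cplusp)$) by adjoining divided power variables coming from the $O_L\textrm{-structure}$ $O_L \hookrightarrow \Cpplus$ (resp.\ $O_L \hookrightarrow \Cplusp$, using Remark \ref{rem:ollin_embed}); since these structure maps are compatible with the embedding $\Cplusp \hookrightarrow \Cpplus$, functoriality of the construction produces an $L\linear$ and $(\varphi, \GRhatp)\equivariant$ ring map $\OBcrys(\Cplusp) \to \OBcrys(\Cpplus)$. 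Injectivity is obtained from the commutative diagram analogous to the one above, using $\OBcrys \subset \OBdR$ and the embedding $\OBdR(\Cplusp) \hookrightarrow \OBdR(\Cpplus)$ of Lemma \ref{lem:cplusp_in_cpplus_obdr}. Compatibility with filtrations and connections for $\OBcrys$ is then inherited from the corresponding compatibilities on $\OBdR$ already recorded in Lemma \ref{lem:cplusp_in_cpplus_obdr}, since the filtration and connection on $\OBcrys$ are the restrictions of those on $\OBdR$.

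The main obstacle, in my view, is bookkeeping rather than a conceptual difficulty: one must make sure that the various constructions of $\Acrys$ and $\OAcrys$ in the relative setting (which involve choices such as the $L\textrm{-embedding}$ and divided power variables lifting the Frobenius structure) are set up compatibly for $\Cplusp$ and $\Cpplus$, so that the functoriality argument actually applies. Once this compatibility is verified, injectivity and respect of the extra structures follow formally from the de Rham case, avoiding any direct analysis inside $\Acrys$.
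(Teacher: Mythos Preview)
Your proposal is correct and follows essentially the same route as the paper: construct the maps by functoriality of the $\PD$-envelope, then deduce injectivity and compatibility with filtrations and connections by embedding $\OBcrys$ into $\OBdR$ and invoking Lemma \ref{lem:cplusp_in_cpplus_obdr}. The only cosmetic difference is that the paper handles $\Acrys$ and $\OAcrys$ simultaneously in a single diagram mapping into $\OBdR$, establishing injectivity already at the integral level before inverting $t$, whereas you treat $\Bcrys$ and $\OBcrys$ in two separate steps.
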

\begin{proof}
	By definition, the $(\varphi, \GRhatp)\equivariant$ injective homomorphism $\Ainf(\Cplusp) \rightarrow \Ainf(\Cpplus)$ naturally extends to $(\varphi, \GRhatp)\equivariant$ homomorphisms $\Acrys(\Cplusp) \rightarrow \Acrys(\Cpplus)$ and $\OAcrys(\Cplusp) \rightarrow \OAcrys(\Cpplus)$, where the latter is $O_L\textrm{-linear}$ and compatible with the respective connections.
	Now, consider the following $\GRhatp\equivariant$ commutative diagram:
	\begin{center}
		\begin{tikzcd}[row sep=15pt]
			\Acrys(\Cplusp) \arrow[r] \arrow[d] & \OAcrys(\Cplusp) \arrow[r] \arrow[d] & \OBdR(\Cplusp) \arrow[d]\\
			\Acrys(\Cpplus) \arrow[r] & \OAcrys(\Cpplus) \arrow[r] & \OBdR(\Cpplus),
		\end{tikzcd}
	\end{center}
	where all the horizontal arrows are injective and compatible with respective filtrations (see the respective discussions after \eqref{eq:acrysc+p_powerseries} for the top right arrow and \eqref{eq:acryscp+_powerseries} for the bottom right arrow).
	Moreover, by Lemma \ref{lem:cplusp_in_cpplus_obdr}, the right vertical arrow is injective and compatible with the respective filtrations and connections.
	Therefore, it follows that the left and the middle vertical arrows are also injective and compatible with the respective filtrations and connections.
	Finally, the claims for $\Bcrys$ and $\OBcrys$ follow by inverting $t$ in the left and the middle columns of the diagram.
\end{proof}

\begin{rem}\label{rem:gr_act_prodbcrys}
	From Remark \ref{rem:gr_act_prodbdr} and Lemma \ref{lem:cplusp_in_cpplus_bcrys}, it is easy to see that the diagram of injective homomorphisms in \eqref{eq:gr_act_prodbdr} may be upgraded to the following diagram of injective homomorphisms:
	\begin{equation}\label{eq:gr_act_prodbcrys}
		\textstyle\prod_{\pins} \Ainf(\Cplusp) \longrightarrow \textstyle\prod_{\pins} \Bcrys(\Cplusp) \longrightarrow \textstyle\prod_{\pins} \OBcrys(\Cplusp) \longrightarrow \textstyle\prod_{\pins} \OBdR(\Cplusp),
	\end{equation}
	where the first two homomorphisms are compatible with the respective Frobenii.
	Moreover, by an argument similar to \cite[Remarque 3.3.2]{brinon-relatif}, we see that the products $\prod_{\pins} \Bcrys(\Cplusp)$ and $\prod_{\pins} \OBcrys(\Cplusp)$ are stable under the $G_R\action$ on $\prod_{\pins} \OBdR(\Cplusp)$ (see Remark \ref{rem:gr_act_prodbdr}) and we equip them with the induced action.
	Then, it follows that the injective homomorphims in \eqref{eq:gr_act_prodbcrys} are $G_R\equivariant$ as well.
\end{rem}

\begin{lem}\label{lem:bcrys_embedding}
	Using the notations described above, we have a natural $R[1/p]\linear$ and $(\varphi, G_R)\equivariant$ injective homomorphism $\OBcrys(\Rbar) \rightarrow \prod_{\pins} \OBcrys(\Cplusp)$, where the right-hand term is equipped with a $G_R\action$ as described in Remark \ref{rem:gr_act_prodbcrys}.
	Moreover, for each $\pins$, the induced natural $R[1/p]\linear$ $(\varphi, G_R)\equivariant$ homomorphism $\OBcrys(\Rbar) \rightarrow \OBcrys(\Cplusp)$ is compatible with the respective filtrations and connections.
\end{lem}
\begin{proof}
	From Lemma \ref{lem:ainf_intersection} and \eqref{eq:gr_act_prodbcrys} in Remark \ref{rem:gr_act_prodbcrys}, note that we have $(\varphi, G_R)\equivariant$ injective homomorphisms:
	\begin{equation*}
		\Ainf(\Rbar) \longrightarrow \textstyle\prod_{\pins} \Ainf(\Cplusp) \longrightarrow \textstyle\prod_{\pins} \OBcrys(\Cplusp).
	\end{equation*}
	Then, from the definition of $\OBcrys$ we see that the preceding homomorphisms naturally induce an $R[1/p]\linear$ and $(\varphi, G_R)\equivariant$ homomorphism $\OBcrys(\Rbar) \rightarrow \prod_{\pins} \OBcrys(\Cplusp)$.
	We will show that the latter homomorphism is injective (our argument is similar to \cite[Proposition 6.2.6]{brinon-relatif}).
	Let us consider the following natural $R[1/p]\linear$ $G_R\equivariant$ diagram:
	\begin{center}
		\begin{tikzcd}[row sep=15pt]
			\OBcrys(\Rbar) \arrow[r] \arrow[d] & \prod_{\pins} \OBcrys(\Cplusp) \arrow[d] \\
			\OBdR(\Rbar) \arrow[r] & \prod_{\pins} \OBdR(\Cplusp),
		\end{tikzcd}
	\end{center}
	where the left and the right vertical arrows are naturally injective (by the discussion at the beginning of this section), and the bottom arrow is injective by Lemma \ref{lem:bdr_embedding}.
	The diagram commutes since the top and the bottom horizontal arrows are naturally defined using the injective homomorphism $\Ainf(\Rbar) \rightarrow \prod_{\pins} \Ainf(\Cplusp)$ of Lemma \ref{lem:ainf_intersection}.
	So, it follows that the top horizontal arrow is injective, thus proving the first claim.
	Next, for each $\pins$, the induced natural $R[1/p]\linear$ homomorphism $\OBcrys(\Rbar) \rightarrow \OBcrys(\Cplusp)$ is tautologically compatible with the respective Frobenii and the claims on filtrations and connections follow from the corresponding claims on $\OBdR$ in Lemma \ref{lem:bdr_embedding}.
	Hence, the lemma is proved.
\end{proof}

\begin{rem}\label{lem:bcrys_embedding_l}
	Note that the natural $R[1/p]\linear$ and $(\varphi, G_R)\equivariant$ injective homomorphism $\OBcrys(\Rbar) \rightarrow \prod_{\pins} \OBcrys(\Cplusp)$ from Lemma \ref{lem:bcrys_embedding} extends to a natural $L\linear$ and $(\varphi, G_R)\equivariant$ injective homomorphism $L \otimes_{R[1/p]} \OBcrys(\Rbar) \rightarrow \prod_{\pins} \OBcrys(\Cplusp)$ (see from \cite[Proposition 6.2.6]{brinon-relatif}).
	Indeed, this follows from an argument similar to Lemma \ref{lem:bcrys_embedding} using Remark \ref{rem:bdr_embedding_l}.
	Furthermore, from Lemma \ref{lem:bcrys_embedding}, it also follows that the induced natural $L\linear$ and $(\varphi, G_R)\equivariant$ homomorphism $L \otimes_{R[1/p]} \OBcrys(\Rbar) \rightarrow \OBcrys(\Cplusp)$ is compatible with the respective filtrations and connections, where the left-hand term is equipped with the tensor product Frobenius, the $L\linear$ extension of the filtration on $\OBcrys(\Rbar)$ and an integrable tensor product connection ($\partial \otimes 1 + 1 \otimes \partial$).
\end{rem}

\subsection{Rings of \texorpdfstring{$(\varphi, \Gamma)\modules$}{-}}\label{subsec:phigamma_mod_rings}

Let us fix Teichm\"uller lifts $[X_i^{\flat}]$ in $\Ainf(\Rinfty)$, for $1 \leqslant i \leqslant d$, and let $\Aframe^+$ denote the $(p, \mu)\textrm{-adic}$ completion of $O_F[\mu, [X_1^{\flat}]^{\pm 1}, \ldots, [X_d^{\flat}]^{\pm 1}]$.
By defininition, there exists a natural injective homomorphism of $O_F\textrm{-algebras}$ $\Aframe^+ \hookrightarrow \Ainf(\Rinfty)$ and its image in the target is stable under the Witt vector Frobenius endomorphism $\varphi$ and the $\Gamma_R\action$ on $\Ainf(\Rinfty)$ (see \cite[Section 3]{abhinandan-relative-wach-i}); we equip $\Aframe^+$ with the induced $(\varphi, \Gamma_R)\action$.
Furthermore, note that we have an embedding $\iota \colon \Rframe \rightarrow \Aframe^+$ defined by the $O_F\linear$ homomorphism sending $X_i \mapsto [X_i^{\flat}]$, and it is easy to see that $\iota$ extends to an isomorphism of rings $\Rframe\llbracket \mu \rrbracket \isomorphic \Aframe^+$ (enough to check modulo $\mu$ since both the source and the target are $(p, \mu)\textrm{-adically}$ complete and $\mu\textrm{-torsion}$ free).
We extend the Frobenius endomorphism on $\Rframe$ to a Frobenius endomorphism $\varphi$ on $\Rframe\llbracket \mu \rrbracket$ by setting $\varphi(\mu) = (1+\mu)^p-1$.
Then, the Frobenius on $\Rframe\llbracket \mu \rrbracket$ is faithfully flat and finite of degree $p^{d+1}$ .
Moreover, by the preceding discussion, it also follows that the embedding $\iota$ and the isomorphism $\Rframe\llbracket \mu \rrbracket \isomorphic \Aframe^+$ are Frobenius-equivariant.

Let $\AR^+$ denote the $(p, \mu)\textrm{-adic}$ completion of the unique extension of the injective homomorphism $\Aframe^+ \hookrightarrow \Ainf(\Rinfty)$ along the $p\textrm{-adically}$ completed \'etale map $\Rframe \rightarrow R$ (see \cite[Section 3.3.2]{abhinandan-relative-wach-i} and \cite[Proposition 2.1]{colmez-niziol}).
Then, there exists a natural injective homomorphism of $\Aframe^+\textrm{-algebras}$ $\AR^+ \hookrightarrow \Ainf(\Rinfty)$.
By the $\padic$ \'etaleness of the map $\Rframe \rightarrow R$, the Frobenius $\varphi$ and the $\Gamma_R\action$ on $\Aframe^+$, respectively and uniquely, extend to an endomorphism $\varphi \colon \AR^+ \rightarrow \AR^+$, which is a lift of the absolute Frobenius on $\AR^+/p$, and a continuous action of $\Gamma_R$.
As the $(\varphi, \Gamma_R)\action$ on $\Aframe^+$ is induced from the $(\varphi, \Gamma_R)\action$ on $\Ainf(\Rinfty)$, the uniqueness of the extended $(\varphi, \Gamma_R)\action$ on $\AR^+$ implies that the homomorphism $\AR^+ \hookrightarrow \Ainf(\Rinfty)$ is $(\varphi, \Gamma_R)\equivariant$.
Moreover, the $O_F\linear$ injective homomorphism $\iota \colon \Rframe \rightarrow \Aframe^+ \hookrightarrow \AR^+$ and the isomorphism $\Rframe\llbracket \mu \rrbracket \isomorphic \Aframe^+ \hookrightarrow \AR^+$ naturally extend to a unique $O_F\linear$ injective homomorphism $\iota \colon R \rightarrow \AR^+$ and an isomorphism of rings $R\llbracket \mu \rrbracket \isomorphic \AR^+$.
We extend the Frobenius endomorphism on $R$ to a Frobenius endomorphism $\varphi$ on $R\llbracket \mu \rrbracket$ by setting $\varphi(\mu) = (1+\mu)^p-1$.
Then, the Frobenius on $R\llbracket \mu \rrbracket$ is faithfully flat and finite of degree $p^{d+1}$ .
Furthermore, by the preceding disucssion, it follows that the embedding $\iota$ and the isomorphism $R\llbracket \mu \rrbracket \isomorphic \AR^+$ are Frobenius-equivariant.
In particular, the induced Frobenius endomorphism $\varphi$ on $\AR^+$ is faithfully flat and finite of degree $p^{d+1}$, and we have that $\varphi^*(\AR^+) \coloneq \AR^+ \otimes_{\varphi, \AR^+} \AR^+ \isomorphic \oplus_{\alpha} \varphi(\AR^+) u_{\alpha}$, where $u_{\alpha} \coloneq (1+\mu)^{\alpha_0} [X_1^{\flat}]^{\alpha_1} \cdots [X_d^{\flat}]^{\alpha_d}$, for $\alpha = (\alpha_0, \alpha_1, \ldots, \alpha_d)$ with $\alpha_i \in \{0, 1, \ldots, p-1\}$.

Set $\AR \coloneq \AR^+[1/\mu]^{\wedge}$ as the $\padic$ completion.
Note that we have $\varphi(\mu) = (1+\mu)^p-1 = [p]_q\mu$, and for any $g \in \Gamma_R$, an easy computation shows that $g(\mu) = (1+\mu)^{\chi(g)}-1 = \chi(g) \mu v$, where $\chi$ is the $\padic$ cyclotomic character, so $\chi(g) \in \ZZ_p^{\times}$, and $v$ a unit in $\AR^+$.
Consequently, we see that the Frobenius endomorphism $\varphi$ and the continuous action of $\Gamma_R$ on $\AR^+$ naturally extend to $\AR$.
Similar to above, the induced Frobenius endomorphism $\varphi$ on $\AR$ is faithfully flat and finite of degree $p^{d+1}$ and $\varphi^*(\AR) \coloneq \AR \otimes_{\varphi, \AR} \AR \isomorphic \oplus_{\alpha} \varphi(\AR) u_{\alpha} = (\oplus_{\alpha} \varphi(\AR^+) u_{\alpha}) \otimes_{\varphi(\AR^+)} \varphi(\AR) \lisomorphic \AR^+ \otimes_{\varphi, \AR^+} \AR$.

Recall that $\CRbar = \CC^+(\Rbar)[1/p]$ and we set $\Atilde \coloneq W\big(\CRbar^{\flat}\big)$ and $\Btilde \coloneq \Atilde[1/p]$, equipped with the Frobenius on Witt vectors and a continuous (for the weak topology) action of $G_R$.
Moreover, the natural $(\varphi,\Gamma_R)\equivariant$ injective homomorphism $\AR^+ \hookrightarrow \Ainf(\Rinfty)$ extends to a natural $(\varphi, \Gamma_R)\equivariant$ injective homomorphism $\AR \hookrightarrow \Atilde^{H_R}$ and we set $\BR \coloneq \AR[1/p]$ equipped with the induced $(\varphi, \Gamma_R)\action$.
Take $A$ to be the $\padic$ completion of the maximal unramified extension of $\AR$ inside $\Atilde$ and set $B \coloneq A[1/p] \subset \Btilde$.
The rings $A$ and $B$ are stable under the action of $G_R$ and the Frobenius endomorphism $\varphi$ on $\Btilde$, and we equip $A$ and $B$ with the induced $(\varphi, G_R)\action$.
Moreover, from \cite[Proposition 7.8]{andreatta-phigamma}, we have that $\AR = A^{H_R}$ and $\BR = B^{H_R}$.
Next, let us set $A^+ \coloneq \Ainf(\Rbar) \cap A \subset \Atilde$ and $B^+ \coloneq A^+[1/p] \subset B$, and note that these rings are stable under the $(\varphi, G_R)\action$ on $B$.
Then, from the discussion above, it follows that we have $\AR^+ = (A^+)^{H_R}$ and $\BR^+ = (B^+)^{H_R}$.

Note that we have analogous rings for $O_L$ (see \cite[Section 2.1.2]{abhinandan-imperfect-wach}) and by identifying the groups $\Gamma_L \isomorphic \Gamma_R$, we have a $(\varphi, \Gamma_L)\equivariant$ isomorphism $\AL^+ \isomorphic ((\AR^+)_{(p, \mu)})^{\wedge}$, where ${}^{\wedge}$ denotes the $(p, \mu)\textrm{-adic}$ completion.
The preceding isomorphism extends to a $(\varphi, \Gamma_L)\equivariant$ isomorphism $\AL = \AL^+[1/\mu]^{\wedge} \isomorphic ((\AR)_{(p)})^{\wedge}$, where ${}^{\wedge}$ denotes the $\padic$ completion.
\begin{lem}\label{lem:ar_intersect_al+}
	As subrings of $\AL$, we have that $\AR^+ = \AR \cap \AL^+$, and by inverting $p$, we get that $\BR^+ \coloneq \AR^+[1/p] = \BL^+ \cap \BR$ as subrings of $\BL$.
\end{lem}
\begin{proof}
	From the preceding discussion, we have that $\AR^+ \isomorphic R\llbracket \mu \rrbracket$ as $O_F\textrm{-algebras}$, and therefore, $\AR \isomorphic R\llbracket \mu \rrbracket[1/\mu]^{\wedge}$, where ${}^{\wedge}$ denotes the $\padic$ completion.
	Moreover, we have that $\AL^+ \isomorphic O_L\llbracket \mu \rrbracket$ as $O_F\textrm{-algebras}$ (see \cite[Section 2.1.2]{abhinandan-imperfect-wach}).
	So, to get the claim, it is enough to show that 
	\begin{equation*}
		R\llbracket \mu \rrbracket = R\llbracket \mu \rrbracket[1/\mu]^{\wedge} \cap O_L\llbracket \mu \rrbracket \subset O_L\llbracket \mu \rrbracket[1/\mu]^{\wedge}.
	\end{equation*}
	Note that any element $x$ in $R\llbracket \mu \rrbracket[1/\mu]^{\wedge}$ has a unique presentation as $x = \sum_{k \in \ZZ} a_k \mu^k$, with $a_k$ in $R$ and $p\textrm{-adically}$ $a_k \rightarrow 0$ as $k \rightarrow -\infty$.
	Now, as $R \hookrightarrow O_L$, therefore, this presentation is also unique in $O_L\llbracket \mu \rrbracket[1/\mu]^{\wedge}$ and we see that $x$ is in $O_L\llbracket \mu \rrbracket$, if and only if, $a_k = 0$ for all $k < 0$, and $a_k$ is in $O_L$ for all $k \geqslant 0$.
	So, it follows that $x$ is in $R\llbracket \mu \rrbracket[1/\mu]^{\wedge} \cap O_L\llbracket \mu \rrbracket$, if and only if, $x = \sum_{k \in \ZZ} a_k \mu^k$, with $a_k = 0$ for $k < 0$, and $a_k$ is in $R$ for $k \geqslant 0$, i.e.\ $x$ is in $R\llbracket \mu \rrbracket$.
	This proves the lemma.
\end{proof}

To summarise, we have the following $(\varphi, G_R)\equivariant$ commutative diagram with injective arrows:
\begin{center}
	\begin{tikzcd}[row sep=small]
		\AL^+ & \AR^+ && A^+ \\
		&& \Ainf(\Rinfty) && \Ainf(\Rbar) \\
		\AL & \AR && A \\
		&& \Atilde^{H_R} && \Atilde.
		\arrow[from=1-1, to=3-1]
		\arrow[from=1-2, to=1-1]
		\arrow[from=1-2, to=1-4]
		\arrow[from=1-2, to=2-3]
		\arrow[from=1-2, to=3-2]
		\arrow[from=1-4, to=2-5]
		\arrow[from=1-4, to=3-4]
		\arrow[crossing over, from=2-3, to=2-5]
		\arrow[from=2-3, to=4-3]
		\arrow[from=2-5, to=4-5]
		\arrow[from=3-2, to=3-1]
		\arrow[crossing over, from=3-2, to=3-4]
		\arrow[from=3-2, to=4-3]
		\arrow[from=3-4, to=4-5]
		\arrow[from=4-3, to=4-5]
	\end{tikzcd}
\end{center}
Note that the second (resp.\ third) column coincides the $H_R\textrm{-invariants}$ of the fourth (resp.\ fifth) column.
Moreover, after inverting $p$, we have an anologous commutative diagram with injective arrows.

\subsection{\texorpdfstring{$\padic$}{-} representations}\label{subsec:relative_padicreps}

Let $T$ be a finite free $\ZZ_p\representation$ of $G_R$.
By the theory of \'etale $(\varphi, \Gamma)\modules$ (see \cite{fontaine-phigamma} and \cite{andreatta-phigamma}), one can functorially associate to $T$ a finite projective étale $(\varphi, \Gamma_R)\module$ $\DR(T) \coloneq (A \otimes_{\ZZ_p} T)^{H_R}$ over $\AR$ of rank $= \rank_{\ZZ_p} T$.
Moreover, $\Dtilde_R(T) \coloneq (\Atilde \otimes_{\ZZ_p} T)^{H_R} \isomorphic \Atilde^{H_R} \otimes_{\AR} \DR(T)$ and we have a natural $(\varphi, G_R)\equivariant$ isomorphism
\begin{equation}\label{eq:phigamma_comp_iso_relative}
	A \otimes_{\AR} \DR(T) \isomorphic A \otimes_{\ZZ_p} T.
\end{equation}
These constructions are functorial in $\ZZ_p\textrm{-representations}$ and induce an exact equivalence of $\otimes\textrm{-categories}$ (see \cite[Theorem 7.11]{andreatta-phigamma}),
\begin{equation}\label{eq:rep_phigamma_relative}
	\Rep_{\ZZ_p}(G_R) \isomorphic (\varphi, \Gamma_R)\Mod_{\AR}^{\etale},
\end{equation}
with an exact $\otimes\textrm{-compatible}$ quasi-inverse given as $\TR(D) \coloneq (A \otimes_{\AR} D)^{\varphi=1}$, and similar to \eqref{eq:phigamma_comp_iso_relative} we have a natural $(\varphi, G_R)\equivariant$ isomorphism $A \otimes_{\ZZ_p} \TR(D) \isomorphic A \otimes_{\AR} D$.
By extending scalars of the preceding isomorphism along the $(\varphi, G_R)\equivariant$ homomorphism $A \rightarrow \Atilde$ and taking ``$\varphi=1$'' of the resulting isomorphism yields a $G_R\equivariant$ isomorphism $\TR(D) \isomorphic (\Atilde \otimes_{\AR} D)^{\varphi=1}$, since $\tilde{A}^{\varphi=1} = \mathbb{Z}_p$ by the Artin--Schreier theory (see \cite[Proposition B.1]{andreatta-iovita-phigamma}).
Similar statements are also true for $\padic$ representations of $G_R$.
Furthermore, we set $\DR^+(T) \coloneq (A^+ \otimes_{\ZZ_p} T)^{H_R}$ as the $(\varphi, \Gamma_R)\module$ over $\AR^+$ associated to $T$ and for $V \coloneq T[1/p]$ set $\DR^+(V) \coloneq \DR^+(T)[1/p]$ as the $(\varphi, \Gamma_R)\module$ over $\BR^+$ associated to $V$.

Let $V$ be a $\padic$ representation of $G_R$.
From the $\padic$ Hodge theory of $G_R$ (see \cite[Chapitre 8]{brinon-relatif}), we can attach to $V$ a filtered $(\varphi, \partial)\module$ over $R[1/p]$, finite projective of rank $ \leqslant \dim_{\QQ_p} V$, and described via the functor
\begin{align*}
	\ODcrysR \colon \Rep_{\QQ_p}(G_R) &\longrightarrow \MF_R(\varphi, \partial)\\
		V &\longmapsto (\OBcrys(\Rbar) \otimes_{\QQ_p} V)^{G_R}.
\end{align*}
The representation $V$ is said to be crystalline if the following natural homomorphism:
\begin{equation}\label{eq:crys_comp}
	\OBcrys(\Rbar) \otimes_{R[1/p]} \ODcrysR(V) \longrightarrow \OBcrys(\Rbar) \otimes_{\QQ_p} V,
\end{equation}
is an isomorphism, in particular, if $V$ is crystalline then $\rank_{R[1/p]} \ODcrysR(V) = \dim_{\QQ_p} V$.
Restricting the functor $\ODcrysR$ to the category of crystalline representations of $G_R$ and writing $\MF_R\ad(\varphi, \partial)$ for the essential image of the restricted functor, we obtain an exact equivalence of $\otimes\textrm{-categories}$ (see \cite[Th\'eor\`eme 8.5.1]{brinon-relatif}):
\begin{equation}\label{eq:odcrysr_func}
	\ODcrysR \colon \Rep_{\QQ_p}^{\crys}(G_R) \isomorphic \MF_R\ad(\varphi, \partial),
\end{equation}
with an exact $\otimes\textrm{-compatible}$ quasi-inverse functor given as $\OVcrysR(D) \coloneq (\Fil^0(\OBcrys(\Rbar) \otimes_{R[1/p]} D))^{\partial=0, \varphi=1}$.

Finally, note that we have a continuous homomorphism $G_L \rightarrow G_R$, i.e.\ $V$ is also a $\padic$ representation of $G_L$.
So, by base changing the $(\varphi, G_R)\equivariant$ isomorphism in \eqref{eq:crys_comp} along the $(\varphi, G_R)\equivariant$ homomorphism $\OBcrys(\Rbar) \rightarrow \OBcrys(\OLbar)$, we obtain a $(\varphi, G_L)\equivariant$ isomorphism 
\begin{equation*}
	\OBcrys(\OLbar) \otimes_{R[1/p]} \ODcrysR(V) \isomorphic \OBcrys(\OLbar) \otimes_{\QQ_p} V.
\end{equation*}
By taking $G_L\textrm{-invariants}$ in the preceding isomorphism, we further obtain a natural isomorphism of $L\textrm{-vector}$ spaces $L \otimes_{R[1/p]} \ODcrysR(V) \isomorphic \ODcrysL(V)$ compatible with the respective Frobenii and connections.
Consequently, we also get that $V$ is a crystalline representation of $G_L$.

\section{Relative Wach modules}\label{sec:wach_modules}

In this section, we will describe relative Wach modules and finite $\pqheight$ representations of $G_R$, and relate them to crystalline representations.
We start by noting some technical lemmas.

\subsection{Some technical results}\label{subsec:technical_results}

In $\Ainf(\OFinfty)$, let us fix $q \coloneq [\varepsilon]$, $\mu \coloneq [\varepsilon]-1 = q-1$ and $[p]_q \coloneq \varphi(\mu)/\mu$.
\begin{defi}
	Let $N$ be a finitely generated $\AR^+\module$.
	The sequence $\{p, \mu\}$ in $\AR^+$ is said to be \textit{strictly $N\regular$} if both $\{p, \mu\}$ and $\{\mu, p\}$ are $N\regular$ sequences.
\end{defi}

For the relation between the definition above and the local cohomology of $N$ with respect to the ideal $(p, \mu)$, see Lemma \ref{lem:strictreg_koszulcomp} and Remark \ref{rem:strictreg_localcoh}.

\begin{lem}\label{lem:p_pq_regsec}
	Let $N$ be a finitely generated $\AR^+\module$.
	Then, the sequence $\{p, \mu\}$ is strictly $N\regular$ if and only if the sequence $\{p, [p]_q\}$ is strictly $N\regular$.
\end{lem}
\begin{proof}
	Note that we have $Z \coloneq V(p, \mu) = V(p, [p]_q) \subset \Spec(\AR^+)$.
	Therefore, from Lemma \ref{lem:strictreg_koszulcomp} and Remark \ref{rem:strictreg_localcoh}, we conclude that the sequence $\{p, \mu\}$ is strictly $N\regular$ if and only if $H^1_Z(N) = 0$ which is equivalent to the sequence $\{p, [p]_q\}$ being strictly $N\regular$.
\end{proof}

\begin{lem}\label{lem:wach_saturated}
	Let $N$ be a finitely generated $\AR^+\module$ such that $\{p, \mu\}$ is strictly $N\regular$.
	Then, we have $N = N[1/p] \cap N[1/\mu] \subset N[1/p, 1/\mu]$ as $\AR^+\modules$.
	Moreover, $N = N[1/p] \cap N[1/\mu]^{\wedge} \subset N[1/\mu]^{\wedge}[1/p]$, where ${}^{\wedge}$ denotes the $\padic$ completion.
\end{lem}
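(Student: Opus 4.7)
The plan is to reduce both statements to a single divisibility claim. The trivial inclusions $N \subset N[1/p] \cap N[1/\mu]$ and $N \subset N[1/\mu]^{\wedge}[1/p]$ follow immediately from $p$- and $\mu$-torsion freeness of $N$, which are consequences of strict regularity of $\{p,\mu\}$. The key lemma I would isolate is: \emph{if $a \in N$ satisfies $\mu^n a \in p^m N$ for some $n, m \in \NN$, then $a \in p^m N$}. I would prove this by induction on $m$: the case $m=0$ is trivial, while for $m \geq 1$, reducing the identity $\mu^n a = p^m c$ modulo $p$ gives $\mu^n \bar{a} = 0$ in $N/pN$. Strict regularity forces $(N/pN)[\mu] = 0$, hence $\bar{a} = 0$, so $a = p a'$ with $a' \in N$ uniquely determined by $p$-torsion freeness. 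Cancelling $p$ from $p\mu^n a' = p^m c$ yields $\mu^n a' = p^{m-1} c$, and the inductive hypothesis concludes.

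Given this lemma, part one is immediate: writing $x \in N[1/p] \cap N[1/\mu]$ as $x = a/p^m = b/\mu^n$ with $a, b \in N$ produces the equality $\mu^n a = p^m b$ in $N$, whence $a \in p^m N$ and $x \in N$.

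For the second part, I would proceed in two substeps. First, I would establish that $N \hookrightarrow N[1/\mu]^{\wedge}$ is injective: the kernel is $\bigcap_k \bigl(p^k N[1/\mu] \cap N\bigr)$, which by the key lemma equals $\bigcap_k p^k N$, and this vanishes by $p$-adic separatedness of $N$. The latter I would justify via Krull's intersection theorem applied to $\AR^+ \cong R\llbracket \mu \rrbracket$, which is noetherian with $(p, \mu)$ in its Jacobson radical (since $p$ lies in the Jacobson radical of $R$ by $p$-adic completeness, and any $1 - \mu f \in R\llbracket\mu\rrbracket$ is a unit via geometric expansion). Second, take $x \in N[1/p] \cap N[1/\mu]^{\wedge}$ and write $x = a/p^m$ with $a \in N$; the goal is $a \in p^m N$. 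From $a = p^m x$ inside $N[1/\mu]^{\wedge}$ we get $a \in p^m N[1/\mu]^{\wedge} \cap N$. Because $N[1/\mu]$ is $p$-torsion free, the standard Mittag-Leffler argument applied to the short exact sequences $0 \to N[1/\mu] \xrightarrow{p^m} N[1/\mu] \to N[1/\mu]/p^m \to 0$ yields exactness of $0 \to N[1/\mu]^{\wedge} \xrightarrow{p^m} N[1/\mu]^{\wedge} \to N[1/\mu]/p^m \to 0$, hence $p^m N[1/\mu]^{\wedge} \cap N[1/\mu] = p^m N[1/\mu]$. Intersecting with $N$ and invoking the key lemma once more gives $p^m N[1/\mu] \cap N = p^m N$, so $a \in p^m N$ and $x \in N$.

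The main obstacle I anticipate is the clean justification of $p$-adic separatedness of $N$; it is a soft fact but requires combining the noetherianness of $\AR^+$ with the Jacobson radical observation, neither of which has been spelled out explicitly in the paper up to this point. Everything else reduces mechanically to repeated application of the single divisibility claim.
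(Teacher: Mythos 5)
Your proof is correct and follows essentially the same route as the paper's: your key divisibility lemma ($\mu^n a \in p^mN \Rightarrow a \in p^mN$) is exactly the paper's intermediate claim $N \cap p^nN[1/\mu] = p^nN$, established by the same mod-$p$ induction using $(N/pN)[\mu]=0$, and the completed case is handled identically via $(N[1/\mu]^{\wedge})/p^n = (N[1/\mu])/p^n$. You are merely more explicit than the paper about the soft points (injectivity of $N \to N[1/\mu]^{\wedge}$ and $p$-adic separatedness, which the paper gets from $(p,\mu)$-adic completeness of $\AR^+$ rather than Krull intersection).
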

\begin{proof}
	From the definitions we have equalities $(N/p)[\mu] = (N/\mu)[p] = 0$ and $(N[1/\mu])/p = (N/p)[1/\mu]$.
	So, it follows that $N/p^nN \hookrightarrow (N/p^n)[1/\mu]$, for all $n \in \NN$, and therefore, $N[1/p] \cap N[1/\mu] = N$ by Lemma \ref{lem:injectivity_modulo}.
	Furthermore, since $(N[1/\mu]^{\wedge})/p^n = (N[1/\mu])/p^n = (N/p^n)[1/\mu]$ and $N$ is $p\textrm{-adically}$ complete, therefore, taking the $\lim_n$ of the inclusions $N/p^nN \hookrightarrow (N/p^n)[1/\mu]$ gives that $N \hookrightarrow N[1/\mu]^{\wedge}$.
	So, similar to above, by using Lemma \ref{lem:injectivity_modulo}, we get that $N = N[1/p] \cap N[1/\mu]^{\wedge}$ as submodules of $N[1/\mu]^{\wedge}[1/p]$.
\end{proof}

Finally, let us note an important observation for the action of $\Gamma_R$ on $\AR^+\modules$.
Note that the action of $\Gamma_R$ is continuous on $\AR^+$ for the $(p, \mu)\adic$ topology and the induced action of $\Gamma_R$ on $\AR^+/\mu \isomorphic R$ is trivial.
More generally, we claim the following:
\begin{lem}\label{lem:automatic_continuity}
	Let $N$ be a finitely generated $\mu\textrm{-torsion}$ free $\AR^+\module$ equipped with a semilinear action of $\Gamma_R$ such that the induced action of $\Gamma_R$ on $N/\mu N$ is trivial.
	Then, the action of $\Gamma_R$ on $N$ is continuous for the $(p, \mu)\adic$ topology.
\end{lem}
\begin{proof}
	Recall that from Section \ref{sec:period_rings_padic_reps} we have $\Gamma_R \isomorphic \Gamma_R' \rtimes \Gamma_F \isomorphic \ZZ_p(1)^d \rtimes \ZZ_p^{\times}$.
	Moreover, we fixed $\{\gamma_1, \ldots, \gamma_d\}$ to be topological generators of $\Gamma_R'$ and $\gamma_0$ in $\Gamma_R$ to be a lift of a topological generator of $\Gamma_R/\Gamma_R'$.
	Additionally, we may assume that $\chi(\gamma_0) = 1 + pa$, for $p \geqslant 3$, and $\chi(\gamma_0) = 1 + 4a$, for $p = 2$, where $\chi$ is the $\padic$ cyclotomic character and $a$ is a unit in $\ZZ_p$.
	To show that the action of $\Gamma_R$ is continuous on $N$, for the $(p, \mu)\adic$ topology, we need to show that for any $x$ in $N$, any $n \geqslant 1$ and for each $\gamma_i$, there exists an $m \in \NN$ such that $\gamma_i^{p^m}(x) = x \textrm{ mod } (p, \mu)^n$.
	As the action of $\Gamma_R$ is trivial on $N/\mu$, let us note that for each $0 \leqslant i \leqslant d$, the following operators are well-defined (see Section \ref{subsec:wachmod_qdeformation} for more on such operators):
	\begin{equation*}
		\nabla_{q, i} \coloneq \tfrac{\gamma_i-1}{\mu} \colon N \longrightarrow N.
	\end{equation*}
	Moreover, note that for any $a$ in $\AR^+$, $x$ in $N$ and $0 \leqslant i \leqslant d$, we have that $(\gamma_i-1)(ax) = (\gamma_i-1)a \cdot x + \gamma_i(a) \cdot (\gamma_i-1)(x)$, and therefore, $\nabla_{q,i}(ax) = \nabla_{q,i}(a) \cdot x + \gamma_i(a) \cdot \nabla_{q,i}(x)$.
	Now, for $1 \leqslant i \leqslant d$, note that $\nabla_{q, i}(\mu) = \mu$, so by setting $m = n$, we get that
	\begin{equation*}
		\gamma_i^{p^n}(x) = (1 + \mu \nabla_{q, i})^{p^n}(x) = x + \textstyle \sum_{k=1}^{p^n} \binom{p^n}{k} \mu^k \nabla_{q, i}^k(x),
	\end{equation*}
	where the summation in the third term is easily seen to be an element of $(p, \mu)^n N$.
	Next, let $i = 0$ and using the action of $\gamma_0$ on $\mu$ we have that
	\begin{align*}
		(\gamma_0-1)\mu = (1+\mu)^{\chi(\gamma_0)}-(1+\mu) &= \big(1 + \chi(\gamma_0)\mu + \tfrac{\chi(\gamma_0)(\chi(\gamma_0)-1)}{2}\mu^2 + \cdots\big) - (1+\mu)\\
				&= (\chi(\gamma_0)-1)\mu + \tfrac{\chi(\gamma_0)(\chi(\gamma_0)-1)}{2}\mu^2 + \cdots,
	\end{align*}
	where the second equality follows from the binomial expansion formula, and using that the power series expansion above is unique in $F\llbracket \mu \rrbracket$ and the fact that $(\gamma_0-1)\mu$ belongs to $O_F\llbracket \mu \rrbracket$ it follows that the coefficient of $\mu^k$ is in $O_F$, for each $k \geqslant 0$.
	As $\chi(\gamma_0)-1 = pa$, for $p \geqslant 3$ and $\chi(\gamma_0)-1 = 4a$, for $p=2$, therefore, we obtain that $\nabla_{q, 0}(\mu) = \tfrac{(\gamma_0-1)\mu}{\mu}$ is an element of $(p, \mu)\AF^+$.
	Now, an easy induction on $k \geqslant 1$ shows that for any $x$ in $N$, we must have that $(\mu \nabla_{q, 0})^k(x)$ is an element of $(p, \mu)^k N$.
	In particular, by setting $m = n$, we get that
	\begin{equation*}
		\gamma_0^{p^n}(x) = (1 + \mu \nabla_{q, 0})^{p^n}(x) = x + \textstyle \sum_{k=1}^{p^n} \binom{p^n}{k} (\mu \nabla_{q, 0})^k(x),
	\end{equation*}
	where the summation in the third term is again an element of $(p, \mu)^n N$.
	Hence, we conclude that the action of $\Gamma_R$ is continuous on $N$.
\end{proof}

\subsection{Wach modules over \texorpdfstring{$\AR^+$}{-}}\label{subsec:wach_mod_props}

We start with the definition of Wach modules.
\begin{defi}\label{defi:wach_mods_relative}
	A \textit{Wach module} over $\AR^+$ with weights in the interval $[a, b]$, for some $a, b \in \ZZ$ with $b \geqslant a$, is a finitely generated $\AR^+\module$ $N$ satisfying the following assumptions:
	\begin{enumarabicup}
		\item The sequences $\{p, \mu\}$ and $\{\mu, p\}$ are regular on $N$.

		\item $N$ is equipped with a semilinear action of $\Gamma_R$ such that the induced action of $\Gamma_R$ on $N/\mu$ is trivial.

		\item There is a $\Gamma_R\equivariant$ Frobenius-semilinear operator $\varphi \colon N[1/\mu] \rightarrow N[1/\varphi(\mu)]$ such that $\varphi(\mu^b N) \subset \mu^b N$ and the map $(1 \otimes \varphi) \colon \varphi^{\ast}(\mu^b N) \rightarrow \mu^b N$ is injective with its cokernel killed by $[p]_q^{b-a}$.
	\end{enumarabicup}
	We define the $[p]_q\textit{-height}$ of $N$ to be the largest value of $-a$ for $a \in \ZZ$ as above.
	The module $N$ is said to be \textit{effective} if we can take $b = 0$ and $a \leqslant 0$.
	Denote by $(\varphi, \Gamma_R)\Mod_{\AR^+}^{[p]_q}$, the category of Wach modules over $\AR^+$ with morphisms between objects being $\AR^+\linear$ $\varphi\equivariant$ (after inverting $\mu$) and $\Gamma_R\equivariant$.

	A Wach module over $\BR^+$ is a finitely generated module $M$ equipped with a semilinear action of $\Gamma_R$ and a $\Gamma_R\equivariant$ Frobenius-semilinear operator $\varphi \colon M[1/\mu] \rightarrow M[1/\varphi(\mu)]$, and such that there exists a $\Gamma_R\textrm{-stable}$ and $\varphi\textrm{-stable}$ (after inverting $\mu$) $\AR^+\submodule$ $N \subset M$ with $N[1/p] = M$ and equipped with the induced $(\varphi, \Gamma_R)\action$ $N$ is a Wach module over $\AR^+$.
\end{defi}

\begin{rem}
	In Definition \ref{defi:wach_mods_relative}, note that from the triviality of the action of $\Gamma_R$ on $N/\mu$ and Lemma \ref{lem:automatic_continuity}, it follows that the action of $\Gamma_R$ on $N$ is continuous.
\end{rem}

Next, we note some structural properties of Wach modules.

\begin{lem}\label{lem:finite_pqheight_equiv}
	Let $N$ be a finitely generated $\AR^+\module$ satisfying conditions (1) and (2) of Definition \ref{defi:wach_mods_relative}.
	Then, (3) of Definition \ref{defi:wach_mods_relative} is equivalent to giving an $\AR^+\linear$ and $\Gamma_R\equivariant$ isomorphism $\varphi_N \colon (\varphi^*N)[1/[p]_q] = (\AR^+ \otimes_{\varphi, \AR^+} N)[1/[p]_q] \isomorphic N[1/[p]_q]$.
\end{lem}
\begin{proof}
	Suppose that $N$ satisfes condition (3) of Definition \ref{defi:wach_mods_relative}.
	Then, the map $1 \otimes \varphi \colon \varphi^*(\mu^b N) \rightarrow \mu^b N$ induces an isomorphism $1 \otimes \varphi \colon (\mu^b \varphi^*N)[1/[p]_q] \isomorphic (\mu^b N)[1/[p]_q]$.
	Therefore, we obtain an isomorphism
	\begin{equation*}
		\varphi_N \colon (\varphi^*N)[1/[p]_q] \xrightarrow[\hspace{1mm} \mu^b \hspace{1mm}]{\sim} (\mu^b \varphi^*N)[1/[p]_q] \xrightarrow[\hspace{1mm} 1 \otimes \varphi \hspace{1mm}]{\sim} (\mu^b N)[1/[p]_q] \xleftarrow[\hspace{1mm} \mu^b \hspace{1mm}]{\sim} N[1/[p]_q].
	\end{equation*}
	Since, $1 \otimes \varphi$ commutes with the action of $\Gamma_R$, we deduce that $\varphi_N$ is $\Gamma_R\equivariant$.

	Conversely, let us suppose that we are given an $\AR^+\linear$ $\Gamma_R\equivariant$ isomorphism $\varphi_N \colon (\varphi^*N)[1/[p]_q] \isomorphic N[1/[p]_q]$.
	Then, as $N$ is $[p]_q\textrm{-torsion}$ free by Lemma \ref{lem:p_pq_regsec}, note that for some $a, b \in \ZZ$ with $b \geqslant a$, we may write $[p]_q^b \varphi_N(\varphi^* N) \subset N \subset [p]_q^a \varphi_N(\varphi^*N)$.
	Therefore, we get an $\AR^+\textrm{-semilinear}$ and $\Gamma_R\equivariant$ morphism defined via the composition $\varphi \colon \mu^b N \xrightarrow{\hspace{1mm} \textrm{can} \hspace{1mm}} \varphi^*(\mu^b N) \xrightarrow{\hspace{1mm} \varphi_N \hspace{1mm}} \mu^b N$.
	This extends to an $\AR^+\textrm{-semilinear}$ $\Gamma_R\equivariant$ map $\varphi \colon N[1/\mu] \rightarrow N[1/\varphi(\mu)]$, and we have 
	\begin{equation*}
		\varphi_N(\varphi^*(\mu^b N)) = \mu^b [p]_q^b \varphi_N(\varphi^*N) \subset \mu^b N \subset [p]_q^{a-b} \varphi_N(\varphi^*(\mu^b N)).
	\end{equation*}
	Then, it easily follows that $1 \otimes \varphi = \varphi_N \colon \varphi^*(\mu^b N) \rightarrow \mu^b N$ is injective, its cokernel is killed by $[p]_q^{b-a}$ and it commutes with the action of $\Gamma_R$.
	Hence, $N$ satisfies condition (3) of Definition \ref{defi:wach_mods_relative}.
\end{proof}

\begin{prop}\label{prop:wachmod_proj_pmu}
	Let $N$ be a Wach module over $\AR^+$.
	Then, $N[1/p]$ is finite projective over $\AR^+[1/p]$ and $N[1/\mu]$ is finite projective over $\AR^+[1/\mu]$.
\end{prop}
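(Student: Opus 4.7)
The plan is to handle each localisation separately, using that $\AR^+ \cong R\llbracket\mu\rrbracket$ is a noetherian regular ring (since $R$ is regular and adjoining a formal variable preserves regularity), so both $\AR^+[1/p]$ and $\AR^+[1/\mu]$ are themselves noetherian regular. Over such rings, a finitely generated module is projective if and only if it is locally free at every prime. Under the Wach module hypotheses, the strict regularity of $\{p,\mu\}$ from condition (1) of Definition \ref{defi:wach_mods_relative} ensures that $N$ is $p$- and $\mu$-torsion free, so both $N[1/p]$ and $N[1/\mu]$ are honest finitely generated modules over the respective localised rings, and both inherit the Frobenius and $\Gamma_R$-structure.

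For $N[1/\mu]$ over $\AR^+[1/\mu]$, the key input is the Frobenius. Condition (3) of Definition \ref{defi:wach_mods_relative} gives that $(1 \otimes \varphi) : \varphi^*(\mu^b N) \hookrightarrow \mu^b N$ has cokernel annihilated by $[p]_q^{b-a}$, so after inverting $\mu$ the Frobenius becomes a near-isomorphism. The $p$-adic completion $\AR \otimes_{\AR^+} N$ is then the étale $(\varphi,\Gamma_R)$-module $\DR(T)$ attached to some $\ZZ_p$-lattice $T$, which is finite projective by the equivalence \eqref{eq:rep_phigamma_relative}. Combining this with the flatness of the completion map $\AR^+[1/\mu] \to \AR$ and iterating $\varphi$ to trivialise the deficit measured by $[p]_q^{b-a}$, one concludes that $N[1/\mu]$ is finite projective over $\AR^+[1/\mu]$.

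For $N[1/p]$ over $\AR^+[1/p]$, I would first analyse the reduction modulo $\mu$. Because $[p]_q \equiv p \pmod{\mu}$, condition (3) specialises on $N/\mu N$ to a Frobenius-semilinear map whose cokernel is killed by $p^{b-a}$, hence becomes an isomorphism after inverting $p$. Standard arguments for finitely generated $\varphi$-modules over the smooth $F$-algebra $R[1/p]$ then force $(N/\mu N)[1/p]$ to be finite projective over $R[1/p]$. One then lifts this projectivity from $R[1/p] = \AR^+[1/p]/\mu$ to $\AR^+[1/p]$, using the vanishing of local cohomology at $(p,\mu)$ from Lemma \ref{lem:local_coh} (encoding a depth-$2$ condition on $N$) together with a Frobenius-descent argument exploiting that $\varphi : \AR^+ \to \AR^+$ is finite faithfully flat of degree $p^{d+1}$.

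The main obstacle will be the $p$-inverted case: since $\AR^+[1/p]$ is not $\mu$-adically complete, one cannot simply promote local freeness modulo $\mu$ to local freeness over all of $\AR^+[1/p]$ by Nakayama's lemma. The primes of $\AR^+[1/p]$ not containing $\mu$ must be controlled by the Frobenius structure together with the depth-$2$ condition from Lemma \ref{lem:local_coh}, and it is this interplay between the geometric regularity of $\AR^+[1/p]$ and the Frobenius-equivariance of $N$ that I expect to be the most delicate part of the argument.
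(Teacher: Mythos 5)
There is a genuine gap in both halves, and in the half you yourself flag as delicate the missing idea is precisely the one the paper supplies. For $N[1/p]$: the paper does not lift projectivity from $(N/\mu N)[1/p]$ over $R[1/p]$ up to $\AR^+[1/p]$ — as you correctly observe, Nakayama-type promotion fails because $\AR^+[1/p]$ is not $\mu$-adically complete, and no amount of local-cohomology vanishing plus "Frobenius descent along the finite flat $\varphi$" closes this. Instead the paper first twists by $\mu^r N(-r)$ to reduce to the effective case and then invokes Proposition \ref{prop:finiteproj_torus}, whose proof (following du--Liu--Moon--Shimizu) is a Fitting-ideal argument: one shows the smallest nonzero Fitting ideal $J$ of $N$ satisfies $J A[1/[p]_q]=\varphi(J)A[1/[p]_q]$ and then, by induction on the number of variables together with a norm/valuation analysis of the $\Kbar$-valued points of $\Spec(A[1/p]/J)$, that $JA[1/p]=A[1/p]$. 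That mechanism is entirely absent from your sketch, so the first claim is not proved.

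For $N[1/\mu]$: identifying $N[1/\mu]^{\wedge}\isomorphic \AR\otimes_{\AR^+}N$ with a finite projective \'etale $(\varphi,\Gamma_R)\module$ is the right first step, but flatness of $\AR^+[1/\mu]\rightarrow\AR$ alone does not descend projectivity — the completion map is flat but not faithfully flat, and "iterating $\varphi$" is not an argument. The paper completes the step by faithfully flat descent along the cover $\Spec(\AR^+[1/\mu]^{\wedge})\cup\Spec(\AR^+[1/\mu,1/p])\rightarrow\Spec(\AR^+[1/\mu])$; note that projectivity over the second chart $\AR^+[1/\mu,1/p]$ is exactly the first claim localised at $\mu$, so the second claim logically depends on the first. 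Your proposal neither supplies the descent datum nor records this dependence, so as written both assertions remain unestablished.
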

\begin{proof}
	For $r \in \NN$ large enough, note that the Wach module $\mu^r N(-r)$ is always effective.
	So without loss of generality, we may assume that $N$ is effective.
	Then, the first claim follows from Lemma \ref{lem:finite_pqheight_equiv} and Proposition \ref{prop:finiteproj_torus}.
	For the second claim, note that $N$ is $p\textrm{-torsion}$ free, so $\AR \otimes_{\AR^+} N$ is a $p\textrm{-torsion}$ free \'etale $(\varphi, \Gamma_R)\module$ over $\AR$, and therefore, finite projective by \cite[Lemma 7.10]{andreatta-phigamma}.
	Next, as $\AR^+[1/\mu]$ is noetherian, therefore, we have that $N[1/\mu]^{\wedge} \isomorphic \AR \otimes_{\AR^+[1/\mu]} N[1/\mu] = \AR \otimes_{\AR^+} N$, where ${}^{\wedge}$ denotes the $\padic$ completion.
	Moreover, the following natural map is a faithfully flat cover:
	\begin{equation*}
		\Spec(\AR^+[1/\mu]^{\wedge}) \cup \Spec(\AR^+[1/\mu, 1/p]) \longrightarrow \Spec(\AR^+[1/\mu]).
	\end{equation*}
	Therefore, by faithfully flat descent it follows that $N[1/\mu]$ is finite projective over $\AR^+[1/\mu]$.
\end{proof}

\begin{rem}\label{rem:wachmod_pq_finproj}
	Note that the map $\Spec(\AR^+[1/[p]_q]^{\wedge}) \cup \Spec(\AR^+[1/[p]_q, 1/p]) \rightarrow \Spec(\AR^+[1/[p]_q])$ is a faithfully flat cover and $\AR^+[1/\mu]^{\wedge} = \AR^+[1/[p]_q]^{\wedge}$.
	Now, for a Wach module $N$ over $\AR^+$, we have that the $\AR^+[1/p]\module$ $N[1/p]$ is finite projective and the $\AR^+[1/\mu]\module$ $N[1/\mu]$ is finite projective (see Proposition \ref{prop:wachmod_proj_pmu}).
	Therefore, by faithfully flat descent, we get that the $\AR^+[1/[p]_q]\module$ $N[1/[p]_q]$ is also finite projective.
	Moreover, from Lemma \ref{lem:p_pq_regsec} we have that the sequence $\{p, [p]_q\}$ is strictly $N\regular$ and equivalent to condition (1) in Definition \ref{defi:wach_mods_relative}.
\end{rem}

\begin{rem}\label{rem:wachmod_torsionfree}
	Note that for a Wach module $N$ over $\AR^+$, we have that $N$ is $p\textrm{-torsion}$ free, in particular, $N$ is contained in $N[1/p]$.
	As $N[1/p]$ is finite projective over $\AR^+[1/p]$ by Proposition \ref{prop:wachmod_proj_pmu}, therefore, we obtain that $N$ is a torsion free $\AR^+\module$.
\end{rem}

\begin{lem}\label{lem:wach_intersection_lemma}
	Let $N$ be a Wach module over $\AR^+$, then we have $N = (\AL^+ \otimes_{\AR^+} N) \cap (\AR \otimes_{\AR^+} N) \subset \AL \otimes_{\AR^+} N$ as $\AR^+\modules$.
\end{lem}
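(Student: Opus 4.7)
The plan is to combine two ingredients already recorded in the paper: the ring-level identity $\AR^+ = \AL^+ \cap \AR \subset \AL$ from \S \ref{subsec:phigamma_mod_rings}, and the saturation property $N = N[1/p] \cap N[1/\mu]^\wedge$ from Lemma \ref{lem:wach_saturated}. First I would verify that the natural maps $N \to \AR \otimes_{\AR^+} N \to \AL \otimes_{\AR^+} N$ are injective, so that the intersection in the statement is well-defined and obviously contains $N$. The injection $N \hookrightarrow \AR \otimes_{\AR^+} N = N[1/\mu]^\wedge$ follows because $N$ is $\AR^+$-torsion-free (Remark \ref{rem:wachmod_torsionfree}) and $N[1/\mu]$ is finitely generated over the noetherian ring $\AR^+[1/\mu]$, hence $p$-adically separated. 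The injection $\AR \otimes_{\AR^+} N \hookrightarrow \AL \otimes_{\AR^+} N$ is obtained by tensoring the inclusion $\AR \hookrightarrow \AL$ with the finite projective $\AR$-module $N[1/\mu]^\wedge$ (using Proposition \ref{prop:wachmod_proj_pmu} and the étale $(\varphi,\Gamma_R)$-module equivalence).

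For the nontrivial inclusion, I would first establish the analogous rational identity: for any finite projective $\BR^+$-module $P$, one has $P = (\BL^+ \otimes_{\BR^+} P) \cap (\BR \otimes_{\BR^+} P) \subset \BL \otimes_{\BR^+} P$. This reduces to the free case (where it is just the componentwise version of the ring identity $\BR^+ = \BL^+ \cap \BR \subset \BL$, obtained from $\AR^+ = \AL^+ \cap \AR \subset \AL$ by inverting $p$), and then to the projective case by realising $P$ as a direct summand of a finite free module. Applied to $P = N[1/p]$, which is finite projective over $\BR^+$ by Proposition \ref{prop:wachmod_proj_pmu}, we obtain the identity $N[1/p] = (\BL^+ \otimes_{\BR^+} N[1/p]) \cap (\BR \otimes_{\BR^+} N[1/p])$ inside $\BL \otimes_{\BR^+} N[1/p]$.

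Now take $x \in (\AL^+ \otimes_{\AR^+} N) \cap (\AR \otimes_{\AR^+} N)$. Pushing $x$ forward along $\AL \otimes_{\AR^+} N \to \AL \otimes_{\AR^+} N[1/p] \isomorphic \BL \otimes_{\BR^+} N[1/p]$, its image lies in $(\BL^+ \otimes_{\BR^+} N[1/p]) \cap (\BR \otimes_{\BR^+} N[1/p])$, hence in $N[1/p]$ by the rational identity just established. On the other hand, by hypothesis $x$ already lies in $\AR \otimes_{\AR^+} N = N[1/\mu]^\wedge$. Since both $N[1/p]$ and $N[1/\mu]^\wedge$ inject naturally into the common ambient $N[1/\mu]^\wedge[1/p] \isomorphic \BR \otimes_{\BR^+} N[1/p]$, their intersection there equals $N$ by Lemma \ref{lem:wach_saturated}. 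Therefore $x$ lies in $N$.

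The main point to verify carefully, rather than any one conceptual obstacle, is that the various identifications of tensor products with completed localisations, and the compatibilities of the embeddings between $\AR^+, \AL^+, \AR, \AL$ (and the corresponding rings with $p$ inverted), behave as stated; this amounts to noetherianity and flatness of the relevant completion and localisation maps, and to the fact that $N[1/p]$ and $N[1/\mu]^\wedge$ are both finite projective over their respective base rings.
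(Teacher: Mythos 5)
Your proposal is correct and follows essentially the same route as the paper: the paper's proof also first uses finite projectivity of $N[1/p]$ over $\BR^+$ together with the ring identity $\BR^+ = \BL^+ \cap \BR$ to get $(\BL^+ \otimes_{\BR^+} N[1/p]) \cap (\BR \otimes_{\BR^+} N[1/p]) = N[1/p]$, and then intersects with $\AR \otimes_{\AR^+} N = N[1/\mu]^{\wedge}$ and invokes Lemma \ref{lem:wach_saturated}. You merely spell out in more detail the reduction of the projective case to the free case and the injectivity of the various comparison maps, which the paper leaves implicit.
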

\begin{proof}
	Let $N_R \coloneq N$, $N_L \coloneq \AL^+ \otimes_{\AR^+} N$ and $D_R \coloneq \AR \otimes_{\AR^+} N$.
	Note that we have $N_L[1/p] = \BL^+ \otimes_{\BR^+} N_R[1/p]$ and $D_R[1/p] = \BR \otimes_{\BR^+} N_R[1/p]$, therefore, it follows that
	\begin{equation*}
		N_L[1/p] \cap D_R[1/p] = (\BL^+ \cap \BR) \otimes_{\BR^+} N_R[1/p] = N_R[1/p],
	\end{equation*}
	where the first equality holds because $N_R[1/p]$ is finite projective over $\BR^+$ and the second equality holds because we have $\BR^+ = \BL^+ \cap \BR \subset \BL$ from Lemma \ref{lem:ar_intersect_al+}.
	Moreover, we have that $N_L \cap D_R \subset N_L[1/p] \cap D_R[1/p] = N_R[1/p]$, and using Lemma \ref{lem:wach_saturated} we see that $N_L \cap D_R = N_L \cap D_R \cap N_R[1/p] = N_R$.
\end{proof}

From the proof of Proposition \ref{prop:wachmod_proj_pmu}, it is clear that extending scalars along $\AR^+ \rightarrow \AR$ induces a functor $(\varphi, \Gamma_R)\Mod_{\AR^+}^{[p]_q} \rightarrow (\varphi, \Gamma_R)\Mod_{\AR}^{\etale}$, and we make the following claim:
\begin{prop}\label{prop:wach_etale_ff_relative}
	The following natural functor is fully faithful:
	\begin{align*}
		(\varphi, \Gamma_R)\Mod_{\AR^+}^{[p]_q} &\longrightarrow (\varphi, \Gamma_R)\Mod_{\AR}^{\etale}\\
		N &\longmapsto \AR \otimes_{\AR^+} N.
	\end{align*}
\end{prop}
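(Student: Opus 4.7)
My plan is to split the claim into faithfulness and fullness, and to handle fullness by reducing to the (already understood) imperfect residue field case via base change $\AR^+ \to \AL^+$.

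For faithfulness, given $f : N_1 \to N_2$ in $(\varphi, \Gamma_R)\Mod_{\AR^+}^{[p]_q}$ whose base change to $\AR$ vanishes, I would apply Lemma \ref{lem:wach_saturated} to each $N_i$: the relation $N_i = N_i[1/p] \cap N_i[1/\mu]^{\wedge}$, combined with the fact that $N_i[1/\mu]^{\wedge} \isomorphic \AR \otimes_{\AR^+} N_i$ (using that $N_i[1/\mu]$ is finitely generated over the noetherian ring $\AR^+[1/\mu]$, as in the proof of Proposition \ref{prop:wachmod_proj_pmu}), forces $N_i \hookrightarrow \AR \otimes_{\AR^+} N_i$ to be injective. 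Hence $f = 0$. This disposes of injectivity on morphisms with minimal fuss.

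For fullness, suppose given a $(\varphi, \Gamma_R)\equivariant$, $\AR\linear$ morphism $\tilde f : \AR \otimes_{\AR^+} N_1 \to \AR \otimes_{\AR^+} N_2$. The key observation (already invoked in the strategy sketch for Theorem \ref{intro_thm:crystalline_wach_equivalence}) is that $N_{L,i} := \AL^+ \otimes_{\AR^+} N_i$ is a Wach module over $\AL^+$; the regularity of $\{p,\mu\}$ passes to the base change by flatness arguments, and the $(\varphi,\Gamma_L)\action$ transports correctly under $\Gamma_L \isomorphic \Gamma_R$. Base changing $\tilde f$ yields a $(\varphi, \Gamma_L)\equivariant$, $\AL\linear$ morphism $\tilde f_L : \AL \otimes_{\AL^+} N_{L,1} \to \AL \otimes_{\AL^+} N_{L,2}$, to which the analogous fully faithful functor in the imperfect residue field case (a consequence of \cite[Theorem 1.5]{abhinandan-imperfect-wach}, or provable directly from a saturation identity of the type $N_L = N_L[1/p] \cap N_L[1/\mu]^{\wedge}$) associates a unique $f_L : N_{L,1} \to N_{L,2}$ of Wach modules over $\AL^+$ inducing $\tilde f_L$.

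To conclude, I would combine $f_L$ and $\tilde f$ through Lemma \ref{lem:wach_intersection_lemma}. Inside $\AL \otimes_{\AR^+} N_2$ (which contains both $N_{L,2}$ and $\AR \otimes_{\AR^+} N_2$ via the natural embeddings induced by the injections $\AR \hookrightarrow \AL$ and $\AL^+ \hookrightarrow \AL$), for any $x \in N_1$ one has $f_L(x) = \tilde f(x)$ by construction; the left hand side lies in $N_{L,2}$ and the right hand side in $\AR \otimes_{\AR^+} N_2$, so Lemma \ref{lem:wach_intersection_lemma} places $\tilde f(x) \in N_{L,2} \cap (\AR \otimes_{\AR^+} N_2) = N_2$. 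Thus $\tilde f$ restricts to a morphism $N_1 \to N_2$ of Wach modules, proving fullness. The principal obstacle is the input from \cite{abhinandan-imperfect-wach}, namely the fully faithful version over $\AL^+$; verifying that the compatibility of embeddings $\AR^+, \AL^+, \AR, \AL$ inside $\Ainf(\Rinfty)$ is preserved after tensoring with $N_i$ (i.e.\ that Lemma \ref{lem:wach_intersection_lemma} truly takes place inside a single ambient module) is the only nontrivial bookkeeping required.
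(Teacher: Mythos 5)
Your proposal is correct and follows essentially the same route as the paper: faithfulness from the injectivity of $N \rightarrow \AR \otimes_{\AR^+} N$, and fullness by base changing the morphism along $\AR \rightarrow \AL$, invoking the fully faithfulness result in the imperfect residue field case (the paper cites \cite[Proposition 3.3]{abhinandan-imperfect-wach}) to get $f(N_L) \subset N_L'$, and then descending via the intersection identity of Lemma \ref{lem:wach_intersection_lemma} inside $\AL \otimes_{\AR^+} N'$.
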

\begin{proof}
	Let $N, N'$ be two Wach modules over $\AR^+$.
	Write $N_R \coloneq N$, $N_L \coloneq \AL^+ \otimes_{\AR^+} N$, $D_L \coloneq \AR \otimes_{\AR^+} N$ and similarly for $N'$.
	We need to show that for the Wach modules $N_R$ and $N_R'$, we have a natural bijection
	\begin{equation}\label{eqref:homset_bijection_relative}
		\Hom_{(\varphi, \Gamma_R)\Mod_{\AR^+}^{[p]_q}}(N_R, N_R') \isomorphic \Hom_{(\varphi, \Gamma_R)\Mod_{\AR}^{\etale}}(D_R, D_R')
	\end{equation}
	As the homomorphism $\AR^+ \rightarrow \AR = \AR^+[1/\mu]^{\wedge}$ is injective, therefore, we see that the map in \eqref{eqref:homset_bijection_relative} is injective.
	To check that \eqref{eqref:homset_bijection_relative} is surjective, take an $\AR\linear$ and $(\varphi, \Gamma_R)\equivariant$ map $f \colon D_R \rightarrow D_R'$.
	We need to show that $f(N_R) \subset N_R'$.
	Base changing $f$ along $\AR \rightarrow \AL$ and using the isomorphism $\Gamma_L \isomorphic \Gamma_R$, induces an $\AL\linear$ and $(\varphi, \Gamma_L)\equivariant$ map $f \colon D_L \rightarrow D_L'$.
	Then, from \cite[Proposition 3.3]{abhinandan-imperfect-wach} we have that $f(N_L) \subset N_L'$.
	So, using Lemma \ref{lem:wach_intersection_lemma}, we get that inside $D_L'$ we have $f(N_R) = f(N_L \cap D_R) \subset f(N_L) \cap f(D_R) \subset N_L' \cap D_R' = N_R'$, thus concluding the proof.
\end{proof}

Analogous to above, one can define categories $(\varphi, \Gamma_R)\Mod_{\BR^+}^{[p]_q}$ and $(\varphi, \Gamma_R)\Mod_{\BR}^{\etale}$ and a functor from the former to the latter by extending scalars along $\BR^+ \rightarrow \BR$.
Then, passing to the associated isogeny categories in Proposition \ref{prop:wach_etale_ff_relative}, we get the following:
\begin{cor}\label{cor:wach_etale_ff_relative}
	The natural functor $(\varphi, \Gamma_R)\Mod_{\BR^+}^{[p]_q} \rightarrow (\varphi, \Gamma_R)\Mod_{\BR}^{\etale}$ is fully faithful.
\end{cor}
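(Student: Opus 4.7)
The plan is to reduce the statement to Proposition \ref{prop:wach_etale_ff_relative} by interpreting both sides as isogeny categories with respect to $p$-inversion. Given Wach modules $M, M'$ over $\BR^+$, by definition of $(\varphi, \Gamma_R)\Mod_{\BR^+}^{[p]_q}$ there exist Wach modules $N, N'$ over $\AR^+$ with $N[1/p] = M$ and $N'[1/p] = M'$. Setting $D_R := \AR \otimes_{\AR^+} N$ and $D_R' := \AR \otimes_{\AR^+} N'$, which are étale $(\varphi, \Gamma_R)$-modules over $\AR$ by (the proof of) Proposition \ref{prop:wachmod_proj_pmu}, the functor in question sends $M, M'$ to $D_R[1/p], D_R'[1/p]$.

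The key claim is the existence of natural identifications
\begin{equation*}
\Hom_{(\varphi, \Gamma_R)\Mod_{\BR^+}^{[p]_q}}(M, M') = \Hom_{(\varphi, \Gamma_R)\Mod_{\AR^+}^{[p]_q}}(N, N')[1/p],
\end{equation*}
\begin{equation*}
\Hom_{(\varphi, \Gamma_R)\Mod_{\BR}^{\etale}}(D_R[1/p], D_R'[1/p]) = \Hom_{(\varphi, \Gamma_R)\Mod_{\AR}^{\etale}}(D_R, D_R')[1/p].
\end{equation*}
For the first: any $\BR^+\linear$ $(\varphi, \Gamma_R)\equivariant$ map $f : M \to M'$ restricts to an $\AR^+\linear$ map $N \to N'[1/p]$; since $N$ is finitely generated over $\AR^+$, one has $p^n f(N) \subset N'$ for some $n \geq 0$, so $p^n f \in \Hom_{\AR^+}(N, N')$ and thus $f$ lies in the right hand side. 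Conversely, any element of the right hand side obviously determines a morphism $M \to M'$, and the identification respects Frobenius and $\Gamma_R$-actions. The same argument, using finite generation of $D_R$ over $\AR$, establishes the second identification.

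Having these two descriptions, Proposition \ref{prop:wach_etale_ff_relative} gives the isomorphism $\Hom_{\AR^+}(N, N') \isomorphic \Hom_{\AR}(D_R, D_R')$, and inverting $p$ on both sides (which preserves the isomorphism since $\otimes_{\ZZ_p} \QQ_p$ is exact) yields the desired bijection. There is no substantive obstacle here beyond bookkeeping: the only subtlety worth checking is well-definedness of the reduction, i.e.\ that a different choice of $\AR^+$-lattice $N \subset M$ still produces the same $\BR$-module $D_R[1/p] \cong M \otimes_{\BR^+} \BR$, which is immediate.
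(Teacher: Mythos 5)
Your argument is correct and is essentially the paper's proof: the paper simply states that the corollary follows by "passing to associated isogeny categories" in Proposition \ref{prop:wach_etale_ff_relative}, and your two Hom-set identifications are exactly the bookkeeping that phrase encapsulates. Nothing further is needed.
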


\subsection{\texorpdfstring{$G_R\textrm{-representations}$}{-} attached to Wach modules}\label{subsec:wach_mod_rep}

Composition of the functor in Proposition \ref{prop:wach_etale_ff_relative} with the equivalence in \eqref{eq:rep_phigamma_relative}, yield the following fully faithful functor:
\begin{equation}\label{eq:wach_reps_relative}
	\begin{aligned}
		\TR \colon (\varphi, \Gamma_R)\Mod_{\AR^+}^{[p]_q} &\longrightarrow \Rep_{\ZZ_p}(G_R)\\
		N &\longmapsto \big(A \otimes_{\AR^+} N\big)^{\varphi = 1} \isomorphic \big(W(\CC(\Rbar)^{\flat}) \otimes_{\AR^+} N\big)^{\varphi = 1}.
	\end{aligned}
\end{equation}

\begin{prop}\label{prop:wachmod_comp_relative}
	Let $N$ be a Wach module over $\AR^+$ and $T \coloneq \TR(N)$, the associated finite free $\ZZ_p\representation$ of $G_R$.
	Then, we have a natural $G_R\equivariant$ comparison isomorphism
	\begin{equation}\label{eq:wachmod_comp_relative_ainf}
		\Ainf(\Rbar)[1/\mu] \otimes_{\AR^+} N \isomorphic \Ainf(\Rbar)[1/\mu] \otimes_{\ZZ_p} T.
	\end{equation}
	Additionally, \eqref{eq:wachmod_comp_relative_ainf} is compatible with the respective Frobenii after base change along the natural map $\Ainf(\Rbar)[1/\mu] \rightarrow W(\CC(\Rbar)^{\flat})$.
\end{prop}
\begin{proof}
	Note that for $T = \TR(N)$, from the equivalence in \eqref{eq:rep_phigamma_relative}, we have that $\DR(T) \isomorphic \AR \otimes_{\AR^+} N$ as \'etale $(\varphi, \Gamma_R)\modules$ over $\AR$.
	Then, extending scalars of the isomorphism in \eqref{eq:phigamma_comp_iso_relative} along $A \rightarrow W(\CC(\Rbar)^{\flat})$ gives a $(\varphi, G_R)\equivariant$ isomorphism
	\begin{equation}\label{eq:phigamma_comp_tilde_relative}
		W(\CC(\Rbar)^{\flat}) \otimes_{\AR^+} N \isomorphic W(\CC(\Rbar)^{\flat}) \otimes_{\ZZ_p} T.
	\end{equation}
	Now, for $r \in \NN$ large enough, the Wach module $\mu^r N (-r)$ is always effective and we have $\TR(\mu^rN(-r)) = T(-r)$ (the twist $(-r)$ denotes the Tate twist on which $\Gamma_R$ acts via the cyclotomic character).
	Therefore, we see that it is enough to show the claim for effective Wach modules (see Definition \ref{defi:wach_mods_relative}), in particular, in the rest of the proof we will assume that $N$ is effective.

	Let $\pazs$ denote the set of minimal primes of $\Rbar$ above $pR \subset R$.
	From Section \ref{subsec:localisation}, recall that for each $\pins$, we have $\Lbar(\frakp) \subset \Cp$, an algebraic closure of $L$ containing $\Rbarp$, and we have $\GRhatp = \Gal(\Lbar(\frakp)/L)$.
	Moreover, we have an isomorphism of groups $\Gamma_L \isomorphic \Gamma_R$ and for each prime $\pins$, let $\ALplusp$ denote the base ring for Wach modules in the imperfect residue field case (see \cite[Section 2.1.2]{abhinandan-imperfect-wach}).
	To avoid confusion, let us write $N_R \coloneq N$ and $\NLp \coloneq \ALplusp \otimes_{\AR^+} N$, in particular, $\NLp$ is a Wach module over $\ALplusp$ finite free of rank $= \rank_{\ZZ_p} T$.
	From \cite[Lemma 3.6]{abhinandan-imperfect-wach} note that we have $\GRhatp\equivariant$ inclusions for each $\pins$,
	\begin{equation}\label{eq:wachmod_almost_comp_imperfect}
		\mu^s \Ainf(\Cpplus) \otimes_{\ZZ_p} T \subset \Ainf(\Cpplus) \otimes_{\ALplusp} \NLp \subset \Ainf(\Cpplus) \otimes_{\ZZ_p} T.
	\end{equation}
	Now, observe that the $(\varphi, \GRp)\equivariant$ composition $\AR^+ \rightarrow W(\CC(\Rbar)^{\flat}) \rightarrow W(\CC(\frakp)^{\flat})$ naturally factors as the $(\varphi, \GRp)\equivariant$ homomorphisms $\AR^+ \rightarrow \ALplusp \rightarrow W(\CC(\frakp)^{\flat})$.
	So, by base changing the $(\varphi, G_R)\equivariant$ isomorphism in \eqref{eq:phigamma_comp_tilde_relative} along the $(\varphi, \GRp)\equivariant$ homomorphism $W(\CC(\Rbar)^{\flat}) \rightarrow W(\CC(\frakp)^{\flat})$, we obtain a natural $(\varphi, \GRp)\equivariant$ isomorphism
	\begin{equation}\label{eq:phigamma_comp_tildep_relative}
		W(\CC(\frakp)^{\flat}) \otimes_{\ALplusp} \NLp \isomorphic W(\CC(\frakp)^{\flat}) \otimes_{\ZZ_p} T.
	\end{equation}
	All terms in \eqref{eq:wachmod_almost_comp_imperfect} and \eqref{eq:phigamma_comp_tildep_relative} admit $(\varphi, \GRhatp)\equivariant$ embedding into $W(\Cp^{\flat}) \otimes_{\ALplusp} \NLp \isomorphic W(\Cp^{\flat}) \otimes_{\ZZ_p} T$, where the action of $\GRhatp$ on \eqref{eq:phigamma_comp_tildep_relative} factors through $\GRhatp \twoheadrightarrow \GRp$.
	Therefore, taking the intersection of \eqref{eq:wachmod_almost_comp_imperfect} with \eqref{eq:phigamma_comp_tildep_relative} inside $W(\Cp^{\flat}) \otimes_{\ALplusp} \NLp \isomorphic W(\Cp^{\flat}) \otimes_{\ZZ_p} T$, and using Lemma \ref{lem:cplusp_in_cpplus_ainf}, for each $\pins$, we obtain the following $(\varphi, \GRp)\equivariant$ inclusions:
	\begin{equation}\label{eq:wachmod_almost_comp_p}
		\mu^s \Ainf(\Cplusp) \otimes_{\ZZ_p} T \subset \Ainf(\Cplusp) \otimes_{\ALplusp} \NLp \subset \Ainf(\Cplusp) \otimes_{\ZZ_p} T,
	\end{equation}
	where the middle term may be written as $\Ainf(\Cplusp) \otimes_{\ALplusp} \NLp = \Ainf(\Cplusp) \otimes_{\AR^+} N_R$.

	Next, from Remark \ref{rem:gr_act_prodainf}, recall that $\prod_{\pins} \Ainf(\Cplusp)$ is equipped with a $G_R\action$ and from Lemma \ref{lem:ainf_intersection} we have a $(\varphi, G_R)\equivariant$ injective homomorphism $\Ainf(\Rbar) \rightarrow \prod_{\pins} \Ainf(\Cplusp)$.
	Then, we may equip 
	\begin{equation*}
		\textstyle\prod_{\pins} (\Ainf(\Cplusp) \otimes_{\ZZ_p} T) = (\textstyle\prod_{\pins} \Ainf(\Cplusp)) \otimes_{\ZZ_p} T,
	\end{equation*}
	with the diagonal action of $(\varphi, G_R)$, and similarly for 
	\begin{equation*}
		\textstyle\prod_{\pins} \big(\Ainf(\Cplusp) \otimes_{\ALplusp} \NLp\big) = \textstyle\prod_{\pins} \big(\Ainf(\Cplusp) \otimes_{\AR^+} N_R\big) = \big(\textstyle\prod_{\pins} \Ainf(\Cplusp)\big) \otimes_{\AR^+} N_R,
	\end{equation*}
	where the second equality follows from the fact that product is an exact functor on the category of $\AR^+\modules$ and $N_R$ is finitely presented over the noetherian ring $\AR^+$ (see \cite[\href{https://stacks.math.columbia.edu/tag/059K}{Tag 059K}]{stacks-project}).
	So, taking the product of \eqref{eq:wachmod_almost_comp_p} over all $\pins$ and using the discussion above, we obtain $(\varphi, G_R)\equivariant$ inclusions:
	\begin{equation}\label{eq:wachmod_prod_almost_comp}
		\mu^s \textstyle\prod_{\pins} \big(\Ainf(\Cplusp) \otimes_{\ZZ_p} T\big) \subset \big(\textstyle\prod_{\pins} \Ainf(\Cplusp)\big) \otimes_{\AR^+} N_R \subset \textstyle\prod_{\pins} \big(\Ainf(\Cplusp) \otimes_{\ZZ_p} T\big).
	\end{equation}
	Inverting $\mu$ in \eqref{eq:wachmod_prod_almost_comp} yields the top horizontal isomorphism in the following $(\varphi, G_R)\equivariant$ commutative diagram:
	\begin{equation}\label{eq:wachmod_prod_muinverse_comp}
		\begin{tikzcd}[row sep=15pt]
			\big(\textstyle\prod_{\pins} \Ainf(\Cplusp)\big)[1/\mu] \otimes_{\AR^+[1/\mu]} N_R[1/\mu] & \big(\textstyle\prod_{\pins} \Ainf(\Cplusp)\big)[1/\mu] \otimes_{\ZZ_p} T\\
			\big(\textstyle\prod_{\pins} W(\CC(\frakp)^{\flat})\big) \otimes_{\AR^+} N_R & \big(\textstyle\prod_{\pins} W(\CC(\frakp)^{\flat})\big) \otimes_{\ZZ_p} T\\
			W(\CC(\Rbar)^{\flat}) \otimes_{\AR^+[1/\mu]} N_R[1/\mu] & W(\CC(\Rbar)^{\flat}) \otimes_{\ZZ_p} T,
			\arrow["\sim", from=1-1, to=1-2]
			\arrow[hook, from=1-1, to=2-1]
			\arrow[hook, from=1-2, to=2-2]
			\arrow["\sim", from=2-1, to=2-2]
			\arrow[hook', from=3-1, to=2-1]
			\arrow["\sim", "\eqref{eq:phigamma_comp_tilde_relative}"', from=3-1, to=3-2]
			\arrow[hook', from=3-2, to=2-2]
		\end{tikzcd}
	\end{equation}
	where the vertical arrows are natural inclusions by Lemma \ref{lem:ainf_intersection}, and the horizontal isomorphism in the middle row is obtained by taking the product of \eqref{eq:phigamma_comp_tildep_relative} over all $\pins$ and using that (see \cite[\href{https://stacks.math.columbia.edu/tag/059K}{Tag 059K}]{stacks-project}):
	\begin{equation*}
		\textstyle\prod_{\pins} \big(W(\CC(\frakp)^{\flat}) \otimes_{\ALplusp} \NLp\big) = \textstyle\prod_{\pins} \big(W(\CC(\frakp)^{\flat}) \otimes_{\AR^+} N_R\big) = \big(\textstyle\prod_{\pins} W(\CC(\frakp)^{\flat})\big) \otimes_{\AR^+} N_R.
	\end{equation*}
	Note that $N_R[1/\mu]$ is finite projective over $\AR^+[1/\mu]$ (see Proposition \ref{prop:wachmod_proj_pmu}), so in diagram \eqref{eq:wachmod_prod_muinverse_comp}, taking the intersection of the top left term and the bottom left term, inside the middle left term, gives
	\begin{align*}
		\big(W(\CC(\Rbar)^{\flat}) \otimes_{\AR^+[1/\mu]} N_R[1/\mu]\big) &\cap \big(\big(\textstyle\prod_{\pins} \Ainf(\Cplusp)\big)[1/\mu] \otimes_{\AR^+[1/\mu]} N_R[1/\mu]\big)\\
		&= \Ainf(\Rbar)[1/\mu] \otimes_{\AR^+[1/\mu]} N_R[1/\mu] = \Ainf(\Rbar)[1/\mu] \otimes_{\AR^+} N_R,
	\end{align*}
	where the first equality follows from Lemma \ref{lem:ainf_intersection}.
	Similarly, taking the intersection of the top right term and bottom right term, inside the middle right term, gives
	\begin{align*}
		\big(W(\CC(\Rbar)^{\flat}) \otimes_{\ZZ_p} T\big) \cap \big(\big(\textstyle\prod_{\pins} \Ainf(\Cplusp)\big)[1/\mu] \otimes_{\ZZ_p} T\big) = \Ainf(\Rbar)[1/\mu] \otimes_{\ZZ_p} T,
	\end{align*}
	where the equality again follows from Lemma \ref{lem:ainf_intersection}.
	As the horizontal arrows in \eqref{eq:wachmod_prod_muinverse_comp} are bijective, therefore, we obtain that the intersection of the top and the bottom rows, inside the middle row, is naturally a $G_R\equivariant$ isomorphism, thus yielding the claimed isomorphism in \eqref{eq:wachmod_comp_relative_ainf}.
	From the preceding discussion, it also follows that the isomorphism in \eqref{eq:wachmod_comp_relative_ainf} is compatible with the respective Frobenii after base change along $\Ainf(\Rbar) \rightarrow W(\CC(\Rbar)^{\flat})$.
\end{proof}

\begin{cor}\label{cor:wachmod_comp_relative}
	Let $N$ be a Wach module over $\AR^+$ and let $T \coloneq \TR(N)$ denote the associated finite free $\ZZ_p\representation$ of $G_R$.
	Then, we have a natural $G_R\equivariant$ comparison isomorphism
	\begin{equation}\label{eq:wachmod_comp_relative_a+}
		A^+[1/\mu] \otimes_{\AR^+} N \isomorphic A^+[1/\mu] \otimes_{\ZZ_p} T.
	\end{equation}
	Additionally, the isomorphism above is compatible with the respective Frobenii after base change along $A^+[1/\mu] \rightarrow A$.
\end{cor}
\begin{proof}
	From Section \ref{subsec:phigamma_mod_rings}, recall that $A^+ = \Ainf(\Rbar) \cap A \subset W(\CC(\Rbar)^{\flat})$.
	So, taking the intersection of the isomorphism \eqref{eq:wachmod_comp_relative_ainf} in Proposition \ref{prop:wachmod_comp_relative} with the isomorphism in \eqref{eq:phigamma_comp_iso_relative}, inside $W(\CC(\Rbar)^{\flat}) \otimes_{\ZZ_p} T$, and noting that $N[1/\mu]$ is a finite projective module over $\AR^+[1/\mu]$, yields the $G_R\equivariant$ isomorphism in \eqref{eq:wachmod_comp_relative_a+}.
	Moreover, from Proposition \ref{prop:wachmod_comp_relative}, it also follows that the isomorphism in \eqref{eq:wachmod_comp_relative_a+} is further compatible with the respective Frobenii after base change along $A^+[1/\mu] \rightarrow A$.
\end{proof}

\begin{prop}\label{prop:wachmod_almost_comp}
	Let $N$ be an effective Wach module over $\AR^+$ and $T \coloneq \TR(N)$ the associated finite free $\ZZ_p\representation$ of $G_R$.
	Then, we have $(\varphi, \Gamma_R)\equivariant$ inclusions $\mu^s \DR^+(T) \subset N \subset \DR^+(T)$ (see Section \ref{subsec:relative_padicreps} for notations).
\end{prop}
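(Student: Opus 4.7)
The plan is to globalise the almost comparison from the imperfect residue field case and then descend from the level of $\Ainf(\Rbar)$ to the $H_R\textrm{-invariant}$ level at $\mbfa^+$, mimicking the strategy of Proposition \ref{prop:wachmod_comp_relative} while tracking the almost inclusions rather than only their $\mu\textrm{-inverted}$ isomorphism. I will first work locally at each $\pins$: setting $\NLp := \ALplusp \otimes_{\AR^+} N$, which is a Wach module over $\ALplusp$ associated to $T$ viewed as a $G_L\textrm{-representation}$, an application of \cite[Lemma 3.6]{abhinandan-imperfect-wach} yields the local almost comparison $\mu^s \Ainf(\Cpplus) \otimes_{\ZZ_p} T \subset \Ainf(\Cpplus) \otimes_{\ALplusp} \NLp \subset \Ainf(\Cpplus) \otimes_{\ZZ_p} T$. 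Intersecting with the isomorphism $W(\CC(\frakp)^{\flat}) \otimes_{\AR^+} N \cong W(\CC(\frakp)^{\flat}) \otimes_{\ZZ_p} T$ from \eqref{eq:phigamma_comp_tildep_relative}, via Lemma \ref{lem:cplusp_in_cpplus_ainf}, descends the containment to $\Ainf(\Cplusp)$; taking the product over $\pins$ and intersecting with the global $W(\CC(\Rbar)^{\flat})\textrm{-comparison}$ \eqref{eq:phigamma_comp_tilde_relative}, using Lemma \ref{lem:ainf_intersection}, then produces the global almost comparison $\mu^s \Ainf(\Rbar) \otimes_{\ZZ_p} T \subset \Ainf(\Rbar) \otimes_{\AR^+} N \subset \Ainf(\Rbar) \otimes_{\ZZ_p} T$.

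To derive $N \subset \DR^+(T)$, I will combine this with the \'etale $(\varphi, \Gamma_R)\module$ isomorphism $\mbfa \otimes_{\AR^+} N \cong \mbfa \otimes_{\ZZ_p} T$, arising by base changing $\AR \otimes_{\AR^+} N \cong \DR(T)$ along $\AR \to \mbfa$ and invoking \eqref{eq:phigamma_comp_iso_relative}. The two natural embeddings of $N$ into $\Atilde \otimes_{\ZZ_p} T$ (via $\Ainf(\Rbar)$ and via $\mbfa$) agree by functoriality, so the image of $N$ lies in $(\Ainf(\Rbar) \otimes_{\ZZ_p} T) \cap (\mbfa \otimes_{\ZZ_p} T) = \mbfa^+ \otimes_{\ZZ_p} T$, where the equality uses $\mbfa^+ = \Ainf(\Rbar) \cap \mbfa$ and the $\ZZ_p\textrm{-freeness}$ of $T$; since $H_R$ acts trivially on $N$ (the $G_R\textrm{-action}$ factoring through $\Gamma_R$), this forces $N \subset (\mbfa^+ \otimes_{\ZZ_p} T)^{H_R} = \DR^+(T)$.

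The reverse inclusion $\mu^s \DR^+(T) \subset N$ is the more delicate half. For $x \in \DR^+(T) \subset \mbfa^+ \otimes_{\ZZ_p} T$, the global almost comparison places $\mu^s x \in \Ainf(\Rbar) \otimes_{\AR^+} N$ while the \'etale comparison places $\mu^s x \in \mbfa \otimes_{\AR^+} N$; the hard part will be to identify the intersection of these two modules inside $\Atilde \otimes_{\AR^+} N$ with $\mbfa^+ \otimes_{\AR^+} N$ and to descend to $N$ itself via $H_R\textrm{-invariants}$. Since $N$ is not globally flat over $\AR^+$, I expect to split the argument: after inverting $\mu$ one has $N[1/\mu]$ finite projective over $\AR^+[1/\mu]$ and after inverting $p$ one has $N[1/p]$ finite projective over $\BR^+$ (both by Proposition \ref{prop:wachmod_proj_pmu}), so flat base change and Galois descent apply separately, and one can glue the two conclusions via the saturation identity $N = N[1/p] \cap N[1/\mu]$ from Lemma \ref{lem:wach_saturated}.
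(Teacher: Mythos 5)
Your proposal follows essentially the same route as the paper: localise at each $\pins$, apply the imperfect-residue-field almost comparison, descend to $\Ainf(\Cplusp)$ via Lemma \ref{lem:cplusp_in_cpplus_ainf}, globalise with Lemma \ref{lem:ainf_intersection}, then pass to $H_R\textrm{-invariants}$ and glue using $N = N[1/p] \cap N[1/\mu]^{\wedge}$ from Lemma \ref{lem:wach_saturated}. The only caution is that the integral global inclusion $\mu^s \Ainf(\Rbar) \otimes_{\ZZ_p} T \subset \Ainf(\Rbar) \otimes_{\AR^+} N$ claimed in your first paragraph is not directly available, since the intersection argument needs $N$ flat over the base; the paper inverts $p$ before intersecting (so that $N[1/p]$ is projective over $\BR^+$), which is exactly the remedy you anticipate in your final paragraph, so the plan closes correctly.
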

\begin{proof}
	The proof follows in a manner similar to the proof of Proposition \ref{prop:wachmod_comp_relative}, so we will freely use the notations therein.
	By inverting $p$ in \eqref{eq:wachmod_prod_almost_comp} we have $(\varphi, G_R)\equivariant$ inclusions
	\begin{equation}\label{eq:wachmod_prod_pinverse_almost_comp}
		\begin{aligned}
			\mu^s \big(\textstyle\prod_{\pins} \big(\Ainf(\Cplusp) \otimes_{\ZZ_p} T\big)\big)[1/p] &\subset \big(\textstyle\prod_{\pins} \Ainf(\Cplusp)\big)[1/p] \otimes_{\BR^+} N_R[1/p]\\
			&\subset \big(\textstyle\prod_{\pins} \big(\Ainf(\Cplusp) \otimes_{\ZZ_p} T\big)\big)[1/p],
		\end{aligned}
	\end{equation}
	where the last term may also be written as $\big(\prod_{\pins} \Ainf(\Cplusp)\big)[1/p] \otimes_{\QQ_p} V$ and similarly for the first term.
	Moreover, by inverting $p$ in \eqref{eq:phigamma_comp_tilde_relative}, we have the following $(\varphi, G_R)\equivariant$ comparison isomorphism:
	\begin{equation}\label{eq:phigamma_comp_pinverse}
		W(\CC(\Rbar)^{\flat})[1/p] \otimes_{\BR^+} N_R[1/p] \isomorphic W(\CC(\Rbar)^{\flat})[1/p] \otimes_{\QQ_p} V.
	\end{equation}
	Using Lemma \ref{lem:ainf_intersection} and diagram \eqref{eq:wachmod_prod_muinverse_comp} (after inverting $p$), we embed all terms of \eqref{eq:wachmod_prod_pinverse_almost_comp} and \eqref{eq:phigamma_comp_pinverse}, in a $(\varphi, G_R)\equivariant$ manner, inside
	\begin{equation}\label{eq:phigamma_comp_prod_pinverse}
		\big(\textstyle\prod_{\pins} W(\CC(\frakp)^{\flat})\big)[1/p] \otimes_{\BR^+} N_R[1/p] \isomorphic \big(\textstyle\prod_{\pins} W(\CC(\frakp)^{\flat})\big)[1/p] \otimes_{\QQ_p} V,
	\end{equation}
	where the isomorphism above is obtained by inverting $p$ in the middle row of \eqref{eq:wachmod_prod_muinverse_comp}.
	Since $N_R[1/p]$ is a finite projective module over $\BR^+$, therefore, the intersection of the middle term of \eqref{eq:wachmod_prod_pinverse_almost_comp} and the left-hand term of \eqref{eq:phigamma_comp_pinverse}, inside the left-hand term of \eqref{eq:phigamma_comp_prod_pinverse}, gives
	\begin{equation*}
		\begin{aligned}
			\big(W(\CC(\Rbar)^{\flat})[1/p] \otimes_{\BR^+} N_R[1/p]\big) \cap \big(\big(\textstyle\prod_{\pins} \Ainf(\Cplusp)\big)[1/p] &\otimes_{\BR^+} N_R[1/p]\big)\\
			&= \Ainf(\Rbar)[1/p] \otimes_{\BR^+} N_R[1/p],
		\end{aligned}
	\end{equation*}
	where the equality follows from Lemma \ref{lem:ainf_intersection}.
	Similarly, taking the intersection of the right-most terms of \eqref{eq:wachmod_prod_pinverse_almost_comp} and \eqref{eq:phigamma_comp_pinverse}, inside the right-hand term of \eqref{eq:phigamma_comp_prod_pinverse}, gives
	\begin{align*}
		\big(W(\CC(\Rbar)^{\flat})[1/p] \otimes_{\QQ_p} V\big) \cap \big(\big(\textstyle\prod_{\pins} \Ainf(\Cplusp)\big)[1/p] \otimes_{\QQ_p} V\big) = \Ainf(\Rbar)[1/p] \otimes_{\QQ_p} V,
	\end{align*}
	where the equality again follows from Lemma \ref{lem:ainf_intersection}.
	Therefore, from \eqref{eq:wachmod_prod_pinverse_almost_comp} and using the $(\varphi, G_R)\textrm{-equivariance}$ of \eqref{eq:phigamma_comp_pinverse}, we obtain the following $(\varphi, G_R)\equivariant$ inclusions:
	\begin{equation}\label{eq:wachmodrat_almost_comp}
		\mu^s \big(\Ainf(\Rbar)[1/p] \otimes_{\QQ_p} V\big) \subset \Ainf(\Rbar)[1/p] \otimes_{\BR^+} N_R[1/p] \subset \Ainf(\Rbar)[1/p] \otimes_{\QQ_p} V.
	\end{equation}
	Inverting $p$ in the isomorphism \eqref{eq:wachmod_comp_relative_a+} of Corollary \ref{cor:wachmod_comp_relative} and taking its intersection with \eqref{eq:wachmodrat_almost_comp}, inside \eqref{eq:phigamma_comp_pinverse}, we obtain the following $(\varphi, G_R)\equivariant$ inclusions:
	\begin{equation*}
		\mu^s \big(B^+ \otimes_{\QQ_p} V\big) \subset B^+ \otimes_{\BR^+} N_R[1/p] \subset B^+ \otimes_{\QQ_p} V.
	\end{equation*}
	In the preceding equation, first we take the $H_R\textrm{-invariants}$ and then its intersection with $\DR(T) = N_R[1/\mu]^{\wedge}$, inside $\DR(V)$, to obtain $(\varphi, \Gamma_R)\equivariant$ inclusions $\mu^s \DR^+(T) \subset N_R \subset \DR^+(T)$, because we have $N_R = N_R[1/p] \cap N_R[1/\mu]^{\wedge}$ from Lemma \ref{lem:wach_saturated} and $\DR^+(T) = \DR(T) \cap \DR^+(V) \subset \DR(V)$, by definition.
	Hence, the proposition is proved.
\end{proof}

\subsection{Finite \texorpdfstring{$\pqheight$}{-} representations}\label{subsec:finite_pqheight_reps}

In this section, we will generalise the definition of finite $\pqheight$ representations from \cite[Definition 4.9]{abhinandan-relative-wach-i} in the relative case.

\begin{defi}\label{defi:finite_pqheight}
	A finite $\pqheight$ $\ZZ_p\representation$ of $G_R$ is a finite free $\ZZ_p\module$ $T$ admitting a linear and continuous action of $G_R$ such that there exists a finitely generated $\AR^+\submodule$ $\NR(T) \subset \DR(T)$, stable under the action of $\Gamma_R$ on $\DR(T)$, and such that $\NR(T)$, equipped with the induced actions of $\varphi$ and $\Gamma_R$, satisfies the following:
	\begin{enumarabicup}
	\item The module $\NR(T)$ is a Wach module in the sense of Definition \ref{defi:wach_mods_relative}.
		
	\item The $\AR\linear$ extension of the inclusion $\NR(T) \subset \DR(T)$ induces a $(\varphi, \Gamma_R)\equivariant$ isomorphism $\AR \otimes_{\AR^+} \NR(T) \isomorphic \DR(T)$.
	\end{enumarabicup}
	The height of $T$ is defined to be the height of $\NR(T)$.
	Say that $T$ is \textit{positive} if $\NR(T)$ is effective.

	A finite $\pqheight$ $\padic$ representation of $G_R$ is a finite dimensional $\QQ_p\textrm{-vector space}$ admitting a linear and continuous action of $G_R$ such that there exists a $G_R\textrm{-stable}$ $\ZZ_p\textrm{-lattice}$ $T \subset V$, with $T$ of finite $\pqheight$.
	We set $\NR(V) \coloneq \NR(T)[1/p]$ satisfying properties analogous to (1) and (2) above.
	The height of $V$ is defined to be the height of $T$.
	Say that $V$ is positive if $\NR(V)$ is effective.
\end{defi}

\begin{lem}\label{lem:wach_unique_relative}
	Let $T$ be a finite $\pqheight$ $\ZZ_p\representation$ of $G_R$.
	Then, the $\AR^+\module$ $\NR(T)$, associated to $T$ in Definition \ref{defi:finite_pqheight}, is unique, i.e.\ if there exists an $\AR^+\textrm{-submodule}$ $N \subset \DR(T)$ satisfying conditions (1) and (2) of Definition \ref{defi:finite_pqheight}, then we must have that $N = \NR(T) \subset \DR(T)$.
\end{lem}
\begin{proof}
	By definition, $\AR \otimes_{\AR^+} \NR(T) \isomorphic \DR(T)$ and this scalar extension induces a fully faithful functor in Proposition \ref{prop:wach_etale_ff_relative}.
	So from \eqref{eq:rep_phigamma_relative} we obtain the uniqueness of $\NR(T)$.
\end{proof}

\begin{rem}\label{rem:finiteheight_props_relative}
	Let $V$ be a finite $\pqheight$ $\padic$ representation of $G_R$ and $T \subset V$ a finite $\pqheight$ $G_R\textrm{-stable}$ $\ZZ_p\textrm{-lattice}$.
	Then, we have $\NR(V) = \NR(T)[1/p]$ and from Corollary \ref{prop:wach_etale_ff_relative} it follows that $\NR(V)$ is unique, i.e.\ if there exists a $\BR^+\textrm{-submodule}$ $M \subset \DR(V)$ satisfying the conditions of Definition \ref{defi:finite_pqheight}, then we must have that $M = \NR(V) \subset \DR(V)$.
	Consequently, we get that $\NR(V)$ is independent of the choice of a $G_R\textrm{-stable}$ $\ZZ_p\textrm{-lattice}$ $T \subset V$.
\end{rem}

\begin{rem}\label{rem:finitepqheight_wach_relative}
	By the definition of finite $\pqheight$ representations, Lemma \ref{lem:wach_unique_relative} and the fully faithful functor in \eqref{eq:wach_reps_relative} it follows that the data of a finite $\pqheight$ representation of $G_R$ is equivalent to the data of a Wach module.
	Subsequently, in Theorem \ref{thm:crys_fh_relative}, we will show that the data of an integral Wach module is equivalent to the data of a $G_R\textrm{-stable}$ $\ZZ_p\textrm{-lattice}$ inside a crystalline representation of $G_R$.
	Thus, our notion of finite $\pqheight$ is equivalent to the notion of crystallinity for a $\ZZ_p\textrm{-representation}$ (resp.\ $\padic$ representation) of $G_R$.
\end{rem}

\begin{rem}
	Let us note that the notion of finite height representations and several closely related variations were first studied by Fontaine in \cite[Section B]{fontaine-phigamma}.
	Fontaine's definition is general, and in the unramified arithmetic case (i.e.\ $R = O_F$), there are further refinements of these notions where one imposes various conditions on the \'etale $(\varphi, \Gamma)\module$ associated to a $\ZZ_p\textrm{-representation}$ (resp.\ $\padic$ representaion) of $G_F$, as in \cite[Section B.2]{fontaine-phigamma} and in the works of Wach \cite{wach-free, wach-torsion}, Colmez \cite{colmez-hauteur} and Berger \cite{berger-limites}.
	More precisely, in the case $R = O_F$, our notion of finite $\pqheight$ representations in Definition \ref{defi:finite_pqheight} coincides with Fontaine's notion of finite crystalline height representations (\textit{cr-hauteur finie}) in \cite[Paragraph B.2.1.6]{fontaine-phigamma}.
	Also note that in \cite{wach-free, wach-torsion, colmez-hauteur, berger-limites}, several of these notions were shown to be equivalent.
	This is the reason behind naming the objects in Definition \ref{defi:finite_pqheight} as such.
	Finally, let us also remark that somewhat different notion of finite height representations with less restrictive conditions have been studied in the works of Kisin \cite{kisin-deformation-rings}, Liu \cite{liu-semistable-lattices} and others.
	We hope that this terminology is clear from the different contexts and it is not too confusing for the reader.
\end{rem}

\subsection{Nygaard filtration on Wach modules}\label{subsec:nygaard_fil_wach_mod}

In this section, we shall consider the Nygaard filtration on Wach modules.
Let $N$ be a Wach module over $\AR^+$ (resp.\ $\AL^+$).
\begin{defi}\label{defi:nygaard_fil}
	Define a decreasing filtration by $\AR^+\textrm{-submodules}$ (resp.\ $\AL^+\textrm{-submodules}$) on $N$, called the \textit{Nygaard filtration}, by setting
	\begin{equation*}
		\Fil^k N \coloneq \{ x \in N \textrm{ such that } \varphi(x) \in [p]_q^k N\}, \qquad k \in \ZZ.
	\end{equation*}
	From the definition it is clear that $N$ is effective if and only if $\Fil^0 N = N$.
	Similarly, we define the Nygaard filtration on $M \coloneq N[1/p]$.
\end{defi}

\begin{lem}\label{lem:nygaard_fil_twist}
	Let $N$ be a Wach module over $\AR^+$ (resp.\ $\AL^+$) and set $M \coloneq N[1/p]$.
	\begin{enumarabicup}
		\item For any $k, r \in \ZZ$, and the Wach module $\mu^{-r}N(r)$ over $\AR^+$ (resp.\ $\AL^+$), we have that $\Fil^k(\mu^{-r}N(r)) = \mu^{-r} (\Fil^{r+k} N) (r)$.
		\item For all $k \in \ZZ$, we have that $\Fil^k N \cap \mu N = \mu\Fil^{k-1} N \subset N$.

		\item For all $k \in \ZZ$, we have that $\Fil^k N = N \cap \Fil^k M \subset M$, in particular, $\Fil^k M = (\Fil^k N)[1/p]$.
			Moreover, statements analogous to (1) and (2) are also true for the $\BR^+\module$ (resp.\ $\BL^+\module$) $M$.
	\end{enumarabicup}
\end{lem}
\begin{proof}
	The proof follows from arguments similar to \cite[Lemma 3.3 \& Lemma 3.4]{abhinandan-syntomic}.
	In (1), the inclusion $\mu^{-r} (\Fil^{r+k} N) (r) \subset \Fil^k(\mu^{-r}N(r))$ is obvious.
	For the converse, let $\mu^{-r} x \otimes \epsilon^{\otimes r}$ be an element of $\Fil^k (\mu^{-r} N(r))$, where $x$ is an element of $N$ and $\epsilon^{\otimes r}$ is a $\ZZ_p\textrm{-basis}$ of $\ZZ_p(r)$.
	By assumption, note that 
	\begin{equation*}
		\varphi(\mu^{-r} x \otimes \epsilon^{\otimes r}) = ([p]_q \mu)^{-r} \varphi(x) \otimes \epsilon^{\otimes r},
	\end{equation*}
	is in $[p]_q^k \mu^{-r} N(r)$.
	Therefore, $\varphi(x)$ belongs to $[p]_q^{r+k} N$, i.e.\ $x$ is in $\Fil^{r+k} N$.

	For (2), note that using (1) we may assume that $N$ is effective.
	The claim is obvious if $\Fil^{k-1} N = N$, so we assume that $k \geqslant 2$.
	Let $x$ be an element of $\Fil^k N \cap \mu N$ and write $x = \mu y$, for some $y$ in $N$.
	Now, as $\varphi(x)$ is in $[p]_q^k N$, therefore, $\mu \varphi(y)$ is in $[p]_q^{k-1} N$, i.e.\ $\mu \varphi(y) = [p]_q^{k-1} z$, for some $z$ in $N$.
	In particular, we have that $[p]_q^{k-1}z = p^{k-1}z = 0 \textrm{ mod } \mu N$.
	But $N/\mu$ is $p\textrm{-torsion}$ free, so it follows that $z = 0 \textrm{ mod } \mu N$, i.e.\ $y$ is in $\Fil^{k-1} N$.
	The other inclusion is obvious because $\mu \Fil^{k-1} N \subset \Fil^k N$.

	For (3), note that using (1) we may assume that $N$ is effective and $k \geqslant 0$.
	By definitions, it is clear that $\Fil^k N \subset N \cap \Fil^k M$.
	Conversely, let $x$ be an element of $N \cap \Fil^k M$.
	Then, we see that $\varphi(x)$ is an element of $[p]_q^k M \cap N \subset M$.
	As $\{[p]_q, p\}$ is a regular sequence on $N$, we have that $N/[p]_q^k \hookrightarrow (N/[p]_q^k)[1/p]$, and by Lemma \ref{lem:injectivity_modulo} it follows that $[p]_q^k N = [p]_q^k M \cap N$, i.e.\ $x$ is an element of $\Fil^k N$.
	From the preceding conclusion, it easily follows that $\Fil^k M  = (\Fil^k N)[1/p]$.
	Then, by inverting $p$ in (1) and (2), we get the analogous claims for $M$. 
	This allows us to conclude.
\end{proof}

\begin{rem}\label{rem:gamma_minus1_image}
	The Nygaard filtration from Definition \ref{defi:nygaard_fil}, on a Wach module $N$ over $\AR^+$ (resp.\ $\AL^+$), is stable under the action of $\Gamma_R$ (resp.\ $\Gamma_L$).
	Therefore, using Lemma \ref{lem:nygaard_fil_twist} (2) we see that for any $g$ in $\Gamma_R$ (resp.\ $\Gamma_L$) and $k \in \ZZ$, we have that $(g-1)\Fil^k N \subset (\Fil^k N) \cap \mu N = \mu \Fil^{k-1} N \subset N$.
\end{rem}

Note that the base ring $\AR^+$ (resp.\ $\AL^+$) is trivially a Wach module and we have a Nygaard filtration on it as in Definition \ref{defi:nygaard_fil}.
It is clear that $\Fil^k \AR^+ = \AR^+$ (resp.\ $\Fil^k \AL^+ = \AL^+$), for $k \leqslant 0$.
In positive degrees, we claim the following:
\begin{lem}\label{lem:nygaard_muadic_base}
	For $k \geqslant 0$ the Nygaard filtration on $\AR^+$ (resp.\ $\AL^+$) coincides with the $\mu\adic$ filtration, i.e.\ $\Fil^k \AR^+ = \mu^k \AR^+$ (resp.\ $\Fil^k \AL^+ = \mu^k \AL^+$).
	Analogous statement is also true for the ring $\BR^+$ (resp.\ $\BL^+$).
\end{lem}
\begin{proof}
	It is enough to show the claims for $k \geqslant 1$.
	So, we begin by proving the first claim for $\AR^+$ and $k=1$.
	Note that we have $\mu \AR^+ \subset \Fil^1 \AR^+$ because $\varphi(\mu) = [p]_q\mu$.
	For the converse, let us recall that we have a Frobenius equivariant $O_F\llbracket \mu \rrbracket\linear$ isomorphism $R\llbracket \mu \rrbracket \isomorphic \AR^+$ from the discussion in Section \ref{subsec:phigamma_mod_rings}, and we set $\Fil^k R\llbracket \mu \rrbracket$ to be the inverse image of $\Fil^k \AR^+$ under the preceding isomorphism.
	Now, let $a$ be an element of $\Fil^1 R\llbracket \mu \rrbracket$, so we have that $\varphi(a) = [p]_q b$ for some $b$ in $R\llbracket \mu \rrbracket$.
	Observe that have a unique presentation $a = \sum_{i \geqslant 0} a_i \mu^i$ in $R\llbracket \mu \rrbracket$, with $a_i$ in $R$.
	Using this, we compute that $\varphi(a) = \sum_i\varphi(a_i)\varphi(\mu^i) = [p]_qy$, in particular, we see that $\varphi(a_0)$ is in $[p]_q\AR^+$.
	As the composition
	\begin{equation*}
		R \longhookrightarrow R\llbracket \mu \rrbracket \longrightarrow R\llbracket \mu \rrbracket/[p]_q \isomorphic R[\zeta_p]
	\end{equation*}
	is the natural inclusion, we conclude that $\varphi(a_0) = 0$, hence, $a_0 = 0$ because $\varphi \colon R \rightarrow R$ is injective.
	In particular, it follows that $a = \sum_{i \geqslant 1} a_i \mu^i$ is in $\mu R\llbracket \mu \rrbracket$, and thus $\Fil^1 \AR^+ = \mu \AR^+$.
	To show the claim for any $k \geqslant 1$, we shall proceed by induction.
	Indeed, the case $k = 1$ shown above, and assuming that $\Fil^k \AR^+ = \mu^k \AR^+$, from Lemma \ref{lem:nygaard_fil_twist} we get that
	\begin{equation*}
		\Fil^{k+1} \AR^+ = \Fil^{k+1} \AR^+ \cap \Fil^1 \AR^+ = \Fil^{k+1} \AR^+ \cap \mu \AR^+ = \mu \Fil^k \AR^+ = \mu^{k+1} \AR^+.
	\end{equation*}
	This proves the claim for $\AR^+$ and a similar reasoning for $\AL^+ \isomorphic O_L\llbracket \mu \rrbracket$ shows that $\Fil^k \AL^+ = \mu^k \AL^+$, for $k \geqslant 0$.
	Finally, we have that $\Fil^k \BR^+ = (\Fil^k \AR^+)[1/p] = \mu^k \BR^+$ and $\Fil^k \BL^+ = (\Fil^k \AL^+)[1/p] = \mu^k \BL^+$.
\end{proof}

\subsubsection{Nygaard filtration on scalar extension of Wach modules}

Let $N_R$ be a Wach module over $\AR^+$.
Then, by Remark \ref{inro_rem:wach_mods_field} we know that $N_L \coloneq \AL^+ \otimes_{\AR^+} N_R$ is a Wach module over $\AL^+$ equipped with the natural action of $\varphi$ and $\Gamma_L \isomorphic \Gamma_R$ (in the sense of \cite[Definition 3.1]{abhinandan-imperfect-wach}), and we claim the following:
\begin{prop}\label{prop:nygaard_fil_nr_nl}
	For each $k \in \ZZ$, we have a natural $\AR^+\linear$ isomorphism 
	\begin{equation*}
		\Fil^k N_R \isomorphic N_R \cap \Fil^k N_L \subset N_L,
	\end{equation*}
	in particular, $\gr^k N_R \hookrightarrow \gr^k N_L$.
	Moreover, we have a natural $\AL^+\linear$ isomorphism
	\begin{equation}\label{eq:nygaard_fil_nr_nl}
		\AL^+ \otimes_{\AR^+} \Fil^k N_R \isomorphic \Fil^k N_L,
	\end{equation}
	in particular, a natural $O_L\linear$ isomorphism $O_L \otimes_R \gr^k N_R \isomorphic \gr^k N_L$.
	Analogous statements are also true after inverting $p$.
\end{prop}
\begin{proof}
	Using Lemma \ref{lem:nygaard_fil_twist} (1), note that it is enough to prove the claim for effective Wach modules, in particular, we will assume that $\Fil^0 N_R = N_R$.
	Note that the case $k=0$ is trivial.
	We shall first prove the claim rationally and use it to deduce the integral claim.
	Set $M_R \coloneq N_R[1/p]$ and $M_L \coloneq N_L[1/p]$, equipped with induced structures.
	Now, consider the following commutative diagram with exact rows:
	\begin{equation}\label{eq:gr_mr_ml_1}
		\begin{tikzcd}[row sep=15pt]
			0 \arrow[r] & \Fil^{k+1} M_R \arrow[r] \arrow[d] & \Fil^k M_R \arrow[r] \arrow[d] & \gr^k M_R \arrow[r] \arrow[d] & 0\\
			0 \arrow[r] & \Fil^{k+1} M_L \arrow[r] & \Fil^k M_L \arrow[r] & \gr^k M_L \arrow[r] & 0,
		\end{tikzcd}
	\end{equation}
	where the left and the middle vertical arrows are natural inclusions.
	For the induced right vertical arrow note that we have $\Fil^k M_R \cap \Fil^{k+1} M_L = \Fil^{k+1} M_R \subset M_L$, which easily follows from the defintions and the following equalities:
	\begin{equation*}
		[p]_q^k M_R \cap [p]_q^{k+1} M_L = (\BR^+ \cap [p]_q \BL^+) \otimes_{\BR^+} [p]_q^k M_R = [p]_q^{k+1} M_R \subset M_L,
	\end{equation*}
	where the first equality follows because $\BL^+ \otimes_{\BR^+} M_R \isomorphic M_L$ and $M_R$ is finite projective over $\BR^+$, and the second equality follows by Lemma \ref{lem:injectivity_modulo} using that $\AR^+/[p]_q \isomorphic R[\zeta_p] \hookrightarrow O_L[\zeta_p] \lisomorphic \AL^+/[p]_q$.
	In particular, we get that the right vertical arrow of \eqref{eq:gr_mr_ml_1} is injective, and 
	\begin{equation*}
		\Fil^k M_R = M_R \cap \Fil^k M_L \subset M_L \hspace{2mm} \textrm{ and } \hspace{2mm} \Fil^k N_R = N_R \cap \Fil^k N_L \subset N_L,
	\end{equation*}
	where the equality on the right follows because $\Fil^k N_R = N_R \cap \Fil^k M_R$ and $\Fil^k N_L = N_L \cap \Fil^k M_L$ (see Lemma \ref{lem:nygaard_fil_twist} (3)).
	The preceding equalities imply that $\gr^k N_R \hookrightarrow \gr^k N_L$ and $\gr^k M_R \hookrightarrow \gr^k M_L$.
	This proves the first claim.

	Next, from Lemma \ref{lem:nygaard_fil_twist}, note that we have $\mu\Fil^k M_R \subset \Fil^{k+1} M_R$, for any $k \geqslant 0$.
	Therefore, we get an induced natural surjective homomorphism $(\Fil^k M_R)/\mu \twoheadrightarrow \gr^k M_R$ of modules over $\gr^0\BR^+ \isomorphic \BR^+/\mu \isomorphic R[1/p]$ (see Lemma \ref{lem:nygaard_muadic_base}).
	In particular, we see that $\gr^k M_R$ is a finitely generated $R[1/p]\module$.
	A similar reasoning for the $\BL^+\module$ $M_L$ shows that $\gr^k M_L$ is a finite dimensional vector space over $\gr^0 \BL^+ \isomorphic \BL^+/\mu \isomorphic L$ (see Lemma \ref{lem:nygaard_muadic_base}).
	Now, consider the following $\BL^+\linear$ diagram with exact rows:
	\begin{equation}\label{eq:gr_mr_ml_2}
		\begin{tikzcd}[row sep=15pt]
			0 \arrow[r] & \BL^+ \otimes_{\BR^+} \Fil^{k+1} M_R \arrow[r] \arrow[d] & \BL^+ \otimes_{\BR^+} \Fil^k M_R \arrow[r] \arrow[d] & \BL^+ \otimes_{\BR^+} \gr^k M_R \arrow[r] \arrow[d] & 0\\
			0 \arrow[r] & \Fil^{k+1} M_L \arrow[r] & \Fil^k M_L \arrow[r] & \gr^k M_L \arrow[r] & 0,
		\end{tikzcd}
	\end{equation}
	where the top row is the scalar extension of the top exact row in \eqref{eq:gr_mr_ml_1} along the natural flat map $\BR^+ \rightarrow \BL^+$ and the vertical maps are the $\BL^+\textrm{-linear}$ extension of the corresponding maps in \eqref{eq:gr_mr_ml_1} (the left and middle vertical arrows are naturally compatible with the isomorphism $\BL^+ \otimes_{\BR^+} M_R \isomorphic M_L$).
	From the preceding discussion, note that the right vertical arrow of \eqref{eq:gr_mr_ml_2} coincides with the extension of the injective right vertical arrow of \eqref{eq:gr_mr_ml_1} along the flat homomorphism $R[1/p] \rightarrow L$, i.e.\ the following homomorphism:
	\begin{equation*}
		\BL^+ \otimes_{\BR^+} \gr^k M_R \isomorphic L \otimes_{R[1/p]} \gr^k M_R \longrightarrow \gr^k M_L.
	\end{equation*}
	Using the Frobenius on $M_R$, we will show that the second homomorphism above is injective.
	Indeed, let us first note that by definition the semilinear homomorphism $\varphi \colon \Fil^k M_R \hookrightarrow [p]_q^k M_R$ induces a semilinear injective homomorphism $\overline{\varphi} \colon \gr^k M_R \hookrightarrow [p]_q^k M_R/[p]_q^{k+1} M_R$.
	In other words, $\overline{\varphi}$ is linear over the source of the map $\varphi \colon R[1/p] \rightarrow R[1/p]$, and so we see that extension of scalars of $\overline{\varphi}$ along the flat homomorphism $R[1/p] \rightarrow L$, yields an injective map $\varphi \otimes \overline{\varphi} \colon L \otimes_{R[1/p]} \gr^k M_R \hookrightarrow \varphi(L) \otimes_{\varphi(R[1/p])} [p]_q^k M_R/[p]_q^{k+1} M_R$.
	Note that the target of $\varphi \otimes \overline{\varphi}$ identifies with $L \otimes_{R[1/p]} [p]_q^k M_R/[p]_q^{k+1} M_R$.
	To see this, recall that $\varphi \colon R \rightarrow R$ is finite free of degree $p^d$ with a set of basis elements given as $\{\prod_{k=1}^d X_k^{j_k}\}_{0 \leqslant j_k \leqslant p-1}$, and similarly for $\varphi \colon O_L \rightarrow O_L$.
	In particular, we may write $R = \oplus_{\smbfj \in J} \varphi(R) (\prod_{k=1}^d X_k^{j_k})$, where $J = \{(j_1, \ldots, j_d), \textrm{ with } 0 \leqslant j_k \leqslant p-1\}$, and similarly $O_L = \oplus_{\smbfj \in J} \varphi(O_L) (\prod_{k=1}^d X_k^{j_k}) = (\oplus_{\smbfj \in J} \varphi(R) (\prod_{k=1}^d X_k^{j_k})) \otimes_{\varphi(R)} \varphi(O_L)$, thus yielding the equality in the following diagram:
	\begin{equation*}
		\varphi \otimes \overline{\varphi} \colon L \otimes_{R[1/p]} \gr^k M_R \hookrightarrow \varphi(L) \otimes_{\varphi(R[1/p])} [p]_q^k M_R/[p]_q^{k+1} M_R = L \otimes_{R[1/p]} [p]_q^k M_R/[p]_q^{k+1} M_R.
	\end{equation*}
	Similar to above, note that by definition we also have a semilinear injective homomorphism $\overline{\varphi} \colon \gr^k M_L \hookrightarrow [p]_q^k M_L/[p]_q^{k+1} M_L$, and we consider the following diagram:
	\begin{center}
		\begin{tikzcd}
			L \otimes_{R[1/p]} \gr^k M_R \arrow[r, hookrightarrow, "\varphi \otimes \overline{\varphi}"] \arrow[d] & L \otimes_{R[1/p]} [p]_q^k M_R/[p]_q^{k+1} M_R \arrow[d, "\wr"]\\
			\gr^k M_L \arrow[r, hookrightarrow, "\overline{\varphi}"] & {[p]_q^k M_L/[p]_q^{k+1} M_L},
		\end{tikzcd}
	\end{center}
	where the right vertical arrow is bijective because $M_L = \BL^+ \otimes_{\BR^+} M_R$, and the left vertical arrow is the right vertical arrow of \eqref{eq:gr_mr_ml_2}.
	As the Frobenius structure on $M_R$ and $M_L$ are compatible, it follows that the diagram commutes and its left vertical arrow, and hence the right vertical arrow of \eqref{eq:gr_mr_ml_2}, is injective.

	Next, to show the bijectivity of \eqref{eq:nygaard_fil_nr_nl} (after inverting $p$, i.e.\ for $M_R$), we will proceed by induction on $k$, i.e.\ assume that the middle vertical arrow of \eqref{eq:gr_mr_ml_2} is an isomorphism for some $k \geqslant 0$.
	Then, it follows that the right vertical arrow of \eqref{eq:gr_mr_ml_2} is surjective and it is injective by the discussion above, hence bijective.
	So, we conclude that the left vertical arrow of \eqref{eq:gr_mr_ml_2} must be bijective as well, i.e.\ $\BL^+ \otimes_{\BR^+} \Fil^{k+1} M_R \isomorphic \Fil^{k+1} M_L$.
	To get the bijectivity of \eqref{eq:nygaard_fil_nr_nl}, note that the natural map $\AR^+ \rightarrow \AL^+$ is flat, therefore, for any $k \in \ZZ$, we have that
	\begin{align*}
		\AL^+ \otimes_{\AR^+} \Fil^k N_R &= \AL^+ \otimes_{\AR^+} (\Fil^k M_R \cap N_R)\\
			&= (\AL^+ \otimes_{\AR^+} \Fil^k M_R) \cap (\AL^+ \otimes_{\AR^+}N_R) \isomorphic \Fil^k M_L \cap N_L = \Fil^k N_L.
	\end{align*}
	Taking the associated graded pieces and using the preceding isomorphism, it follows that $O_L \otimes_R \gr^k N_R \isomorphic \gr^k N_L$, thus concluding our proof.
\end{proof}

\begin{rem}\label{rem:nygaard_fil_nl_nlbreve}
	The ideas employed in the proof of Proposition \ref{prop:nygaard_fil_nr_nl} enables us to relate the Nygaard filtration on $N_L$ to the Nygaard filtration on classical Wach modules.
	Indeed, set $\OLbreve \coloneq (\cup_{i=1}^d O_L[X_i^{1/p^{\infty}}])^{\wedge}$, where ${}^{\wedge}$ denotes the $\padic$ completion, and note that the $O_L\algebra$ $\OLbreve$ is a complete discrete valuation ring with perfect residue field, uniformiser $p$ and fraction field $\Lbreve \coloneq \OLbreve[1/p]$.
	The Witt vector Frobenius on $\OLbreve$ is given by the Frobenius on $O_L$ described in Section \ref{subsec:setup_nota} and setting $\varphi(X_i^{1/p^n}) = X_i^{1/p^{n-1}}$, for all $1 \leqslant i \leqslant d$ and $n \in \NN$.
	Let $\Lbreveinfty \coloneq \Lbreve(\mu_{p^{\infty}})$ and let $\Lbrevebar \supset \Lbar$ denote a fixed algebraic closure of $\Lbreve$.
	We have the Galois groups $G_{\Lbreve} \coloneq \Gal(\Lbrevebar/\Lbreve) \isomorphic \Gal(\Lbar/\cup_{i=1}^d L(X_i^{1/p^{\infty}}))$ and $\GammaLbreve \coloneq \Gal(\Lbreveinfty/\Lbreve) \isomorphic \Gal(L(\mu_{p^{\infty}})/L) \isomorphic \ZZ_p^{\times}$.
	Note that $G_{\Lbreve}$ may be identified with a subgroup of $G_L$ and $\GammaLbreve$ may be identified with a quotient of $\Gamma_L$.
	Next, recall that from \cite{berger-limites}, we have the theory of Wach modules over $\ALbreve^+ = \OLbreve\llbracket \mu \rrbracket$ (see \cite[Section 4.1]{abhinandan-imperfect-wach} for a quick recollection).
	Now, if $N_L$ is a Wach module over $\AL^+$, then $\NLbreve \coloneq \ALbreve^+ \otimes_{\AL^+} N_L$ is naturally a Wach module over $\ALbreve^+$ (see \cite[Corollary 4.27]{abhinandan-imperfect-wach}).
	We equip $\NLbreve$ with the Nygaard filtration similar to Definition \ref{defi:nygaard_fil}, and observe that statements analogous to Lemma \ref{lem:nygaard_fil_twist} and Lemma \ref{lem:nygaard_muadic_base} hold for the filtration on $\NLbreve$ (follows by arguments similar to the proofs of these statements).
	Then, employing an argument similar to the proof of Proposition \ref{prop:nygaard_fil_nr_nl} shows that, for each $k \in \ZZ$, we have that $\Fil^k N_L = N_L \cap \Fil^k \NLbreve \subset \NLbreve$ and $\ALbreve^+ \otimes_{\AL^+} \Fil^k N_L \isomorphic \Fil^k \NLbreve$.
	In particular, we see that $\gr^k N_L \hookrightarrow \gr^k \NLbreve$ and $\OLbreve \otimes_{O_L} \gr^k N_L \isomorphic \gr^k \NLbreve$.
	Analogous statements are also true after inverting $p$.
\end{rem}

\subsubsection{Reduction modulo \texorpdfstring{$\mu$}{-} of the Nygaard filtration}\label{subsubsec:nygaardfil_modmu}

Let $N_R$ be a Wach module over $\AR^+$ and set $M_R \coloneq N_R[1/p]$.
Note that $N_R/\mu$ is equipped with a decreasing $R\linear$ filtration $\Fil^k(N_R/\mu)$ given as the image of $\Fil^k N_R$ under the surjection $N_R \twoheadrightarrow N_R/\mu$.
We equip $M_R/\mu$ with the induced $R[1/p]\linear$ filtration $\Fil^k(M_R/\mu) \coloneq (\Fil^k(N_R/\mu))[1/p]$, and note that $M_R/\mu$ is a $\varphi\module$ over $R[1/p]$ since $[p]_q = p \textmod \mu \AR^+$, in particular, it is a filtered $\varphi\module$ over $R[1/p]$.
Similarly, we equip the reduction modulo $\mu$ of a Wach module over $\AL^+$ (resp.\ $\BL^+$) with a decreasing $O_L\linear$ (resp.\ $L\linear$) filtration and note that, after inverting $p$, it is a filtered $\varphi\module$ over $L$.

\begin{lem}\label{lem:fil_gr_modmu}
	For each $k \in \ZZ$, the following natural $\AR^+\linear$ sequence is exact:
	\begin{equation}\label{eq:filnmodmu}
		0 \longrightarrow \Fil^{k-1} N_R \xrightarrow{\hspace{1mm} \mu \hspace{1mm}} \Fil^k N_R \longrightarrow \Fil^k (N_R/\mu) \longrightarrow 0.
	\end{equation}
	Moreover, by taking the associated graded pieces, we obtain the following natural $R\linear$ exact sequence:
	\begin{equation}\label{eq:grnmodmu}
		0 \longrightarrow \gr^{k-1} N_R \xrightarrow{\hspace{1mm} \mu \hspace{1mm}} \gr^k N_R \longrightarrow \gr^k (N_R/\mu) \longrightarrow 0.
	\end{equation}
	Additionally, \eqref{eq:filnmodmu} induces the following natural $R\linear$ exact sequence:
	\begin{equation}\label{eq:filn_modmu_filnmodmu}
		0 \longrightarrow \gr^{k-1} N_R \xrightarrow{\hspace{1mm} \mu \hspace{1mm}} (\Fil^k N_R)/\mu \longrightarrow \Fil^k (N_R/\mu) \longrightarrow 0.
	\end{equation}
	Furthermore, the natural inclusion $\mu\Fil^k N_R \subset \Fil^{k+1} N_R$ induces the following $R\linear$ exact sequence:
	\begin{equation}\label{eq:filn_modmu_grn}
		0 \longrightarrow \Fil^{k+1} (N_R/\mu) \longrightarrow (\Fil^k N_R)/\mu \longrightarrow \gr^k N_R \longrightarrow 0.
	\end{equation}
	Analogous statements are also true after inverting $p$, i.e.\ over $\BR^+$, and we have that $\Fil^k(M_R/\mu)$ coincides with the image of $\Fil^k M_R$ under the surjective map $M_R \twoheadrightarrow M_R/\mu$.
	Moreover, analogous claims also hold for Wach modules over $\AL^+$ and $\BL^+$.
\end{lem}
\begin{proof}
	We will only show the claim for Wach modules over $\AR^+$ and $\BR^+$, and note that a similar reasoning shows the claim for Wach modules over $\AL^+$ and $\BL^+$.
	For the first claim, observe that the kernel of the surjective map $\Fil^k N_R \twoheadrightarrow \Fil^k(N_R/\mu)$ is given as $\Fil^k N_R \cap \mu N_R = \mu\Fil^{k-1} N_R$ (see Lemma \ref{lem:nygaard_fil_twist} (2)).
	This shows the exactness of \eqref{eq:filnmodmu}.
	For the second claim, consider the following commutative diagram:
	\begin{center}
		\begin{tikzcd}[row sep=15pt]
			0 \arrow[r] & \Fil^k N_R \arrow[r, "\mu"] \arrow[d, hookrightarrow] & \Fil^{k+1} N_R \arrow[r] \arrow[d, hookrightarrow] & \Fil^{k+1}(N_R/\mu) \arrow[r] \arrow[d, hookrightarrow] & 0\\
			0 \arrow[r] & \Fil^{k-1} N_R \arrow[r, "\mu"] \arrow[d, twoheadrightarrow] & \Fil^k N_R \arrow[r] \arrow[d, twoheadrightarrow] & \Fil^k(N_R/\mu) \arrow[r] \arrow[d, twoheadrightarrow] & 0\\
			0 \arrow[r] & \gr^{k-1} N_R \arrow[r, "\mu"] & \gr^k N_R \arrow[r] & \gr^k(N_R/\mu) \arrow[r] & 0,
		\end{tikzcd}
	\end{center}
	where the top and the middle rows are exact by \eqref{eq:filnmodmu} and the columns are exact by definition.
	Therefore, it follows that the bottom row is exact as well, thus yielding the exact sequence in \eqref{eq:grnmodmu}.

	For the third claim, consider the following commutative diagram with exact rows:
	\begin{center}
		\begin{tikzcd}[row sep=15pt]
			0 \arrow[r] & \Fil^k N_R \arrow[r, "\mu"] \arrow[d] & \Fil^k N_R \arrow[r] \arrow[d, equal] & (\Fil^k N_R)/\mu \arrow[r] \arrow[d] & 0\\
			0 \arrow[r] & \Fil^{k-1} N_R \arrow[r, "\mu"] & \Fil^k N_R \arrow[r] & \Fil^k(N_R/\mu) \arrow[r] & 0,
		\end{tikzcd}
	\end{center}
	where the left vertical arrow is the natural inclusion and the right vertical arrow is surjective.
	Then, an application of the snake lemma, gives us the exact sequence in \eqref{eq:filn_modmu_filnmodmu}.
	For the fourth claim, note that the natural inclusion $\mu\Fil^k N_R \subset \Fil^{k+1} N_R$ induces a natural surjective $R\linear$ homomorphism $(\Fil^k N_R)/\mu \twoheadrightarrow (\Fil^k N_R)/(\Fil^{k+1} N_R) = \gr^k N_R$, whose kernel is given as $(\Fil^{k+1} N_R)/(\mu\Fil^k N_R) \isomorphic \Fil^{k+1}(N_R/\mu)$ by \eqref{eq:filnmodmu}.
	Hence, the sequence \eqref{eq:filn_modmu_grn} is exact.

	Next, as we have that $\Fil^k M_R = (\Fil^k N_R)[1/p]$ and $\Fil^k(M_R/\mu) = \Fil^k(N_R/\mu)[1/p]$, the analogous claims for $M_R$ follow by inverting $p$ in the preceding argument.
	Finally, let $\textup{F}^k(M_R/\mu)$ denote the image of $\Fil^k M_R$ under the surjective map $M_R \twoheadrightarrow M_R/\mu$.
	Then, the kernel of the surjection $\Fil^k M_R \twoheadrightarrow \textup{F}^k(M_R/\mu)$ is given as $\Fil^k M_R \cap \mu M_R = \mu\Fil^{k-1} M_R$ (see Lemma \ref{lem:nygaard_fil_twist} (2)).
	So, from the exactness of \eqref{eq:filnmodmu} (after inverting $p$), it follows that $\textup{F}^k(M_R/\mu) = \Fil^k(M_R/\mu)$.
	This concludes our proof.
\end{proof}

\begin{rem}\label{rem:fil_gr_nlbreve_modmu}
	In the notations of Remark \ref{rem:nygaard_fil_nl_nlbreve}, let $\NLbreve$ denote a Wach module over $\ALbreve^+$.
	Similar to above, we equip $\NLbreve/\mu$ with a decreasing $O_{\Lbreve}\linear$ filtration given as the image of $\Fil^k \NLbreve$ under the surjective map $\NLbreve \twoheadrightarrow \NLbreve/\mu$.
	Then, claims analogous to the ones in Lemma \ref{lem:fil_gr_modmu} are also true for $\NLbreve$, by using similar arguments as in the proof of the lemma.
	Moreover, after equipping $\MLbreve \coloneq (\NLbreve/\mu)[1/p]$ with the filtration $\Fil^k(\MLbreve/\mu) \coloneq (\Fil^k(\NLbreve/\mu))[1/p]$, we note that it is a filtered $\varphi\module$ over $\Lbreve$ and analogous claims from Lemma \ref{lem:fil_gr_modmu} also hold true for $\MLbreve$.
\end{rem}

Let $N_R$ be a Wach module over $\AR^+$ as above and $M_R = N_R[1/p]$.
Set $N_L \coloneq \AL^+ \otimes_{\AR^+} N_R$ and $M_L \coloneq N_L[1/p]$ equipped with the natural actions of $\varphi$ and $\Gamma_L \isomorphic \Gamma_R$.
Our next goal is to compare $\Fil^k(N_R/\mu)$ and $\Fil^k(N_L/\mu)$.
To that end, let us first note that we have the following commutative diagram:
\begin{equation}\label{eq:nr_nl_modmu_inject}
	\begin{tikzcd}[row sep=15pt]
		\Fil^k(N_R/\mu) \arrow[r] \arrow[d] & N_R/\mu \arrow[r] \arrow[d] & M_R/\mu \arrow[d] \\
		\Fil^k(N_L/\mu) \arrow[r] & N_L/\mu \arrow[r] & M_L/\mu,
	\end{tikzcd}
\end{equation}
where all arrows are injective.
Indeed, the top and the bottom horizontal arrows are natural inclusions (because $N_R/\mu$ is $p\textrm{-torsion}$ free and $R \rightarrow O_L$ is flat).
Moreover, the right vertical arrow in \eqref{eq:nr_nl_modmu_inject} is injective because it identifies with the tensor product of the injective map $R[1/p] \hookrightarrow L$ with the projective $R[1/p]\module$ $M_R/\mu$.
So, it follows that the left and the middle vertical arrows are injective as well.
Moreover, we have the following:
\begin{lem}\label{lem:filnrnl_modmu_injective}
	The natural isomorphism in \eqref{eq:nygaard_fil_nr_nl} of Proposition \ref{prop:nygaard_fil_nr_nl}, induces a natural injective homomorphism
	\begin{equation*}
		(\Fil^k N_R)/\mu \longrightarrow O_L \otimes_R (\Fil^k N_R)/\mu \isomorphic (\Fil^k N_L)/\mu.
	\end{equation*}
	Analogous statements are also true after inverting $p$.
\end{lem}
\begin{proof}
	To show that the induced homomorphism $(\Fil^k N_R)/\mu \rightarrow (\Fil^k N_L)/\mu$ is injective, it is enough to show that the following equality holds:
	\begin{equation*}
		\mu \Fil^k N_R = \Fil^k N_R \cap \mu \Fil^k N_L \subset N_L.
	\end{equation*}
	It is clear that $\mu \Fil^k N_R \subset \Fil^k N_R \cap \mu \Fil^k N_L$.
	For the converse, note that inside $N_L$, the following hold:
	\begin{align*}
		\Fil^k N_R \cap \mu \Fil^k N_L \subset \Fil^k N_R \cap \Fil^{k+1} N_L &= \Fil^{k+1} N_R,\\
		\Fil^k N_R \cap \mu \Fil^k N_L \subset N_R \cap \mu N_L &= \mu N_R,
	\end{align*}
	where the equality in the first line follows from Proposition \ref{prop:nygaard_fil_nr_nl} and the equality in the second line follows from the injectivity of the middle vertical arrow in \eqref{eq:nr_nl_modmu_inject} and Lemma \ref{lem:injectivity_modulo}.
	Combining the two equations above and using Lemma \ref{lem:nygaard_fil_twist} (2), it follows that 
	\begin{equation*}
		\Fil^k N_R \cap \mu \Fil^k N_L \subset \Fil^{k+1} N_R \cap \mu N_R = \mu\Fil^k N_R.
	\end{equation*}
	The analogous claims involving $M_R$ and $M_L$ follow by inverting $p$ in the argument above.
\end{proof}

\begin{lem}\label{lem:nygaard_fil_nr_nl_modmu}
	The natural isomorphism of $\varphi\modules$ $O_L \otimes_R (N_R/\mu) \isomorphic N_L/\mu$ is compatible with filtrations, i.e. for each $k \in \ZZ$, we have that
	\begin{equation}\label{eq:nygaard_fil_nr_nl_modmu}
		O_L \otimes_R \Fil^k (N_R/\mu) \isomorphic \Fil^k (N_L/\mu),
	\end{equation}
	in particular, we get that $O_L \otimes_R \gr^k(N_R/\mu) \isomorphic \gr^k(N_L/\mu)$.
	Moreover, if $N_R$ is an effective Wach module, then we have that
	\begin{equation}\label{eq:fil1_nrnlmodmu_induced}
		\Fil^1 (N_R/\mu) \isomorphic (N_R/\mu) \cap \Fil^1 (N_L/\mu) \subset N_L/\mu.
	\end{equation}
	Analogous statements are also true after inverting $p$.
\end{lem}
\begin{proof}
	Let us consider the following $\AL^+\linear$ commutative diagram with exact rows:
	\begin{equation}\label{lem:nygaard_fil_modmu}
		\begin{tikzcd}[column sep=18pt, row sep=16pt]
			0 \arrow[r] & \AL^+ \otimes_{\AR^+} \Fil^{k-1} N_R \arrow[r, "\mu"] \arrow[d, "\wr"', "\eqref{eq:nygaard_fil_nr_nl}"] & \AL^+ \otimes_{\AR^+} \Fil^k N_R \arrow[r] \arrow[d, "\wr"', "\eqref{eq:nygaard_fil_nr_nl}"] & O_L \otimes_R \Fil^k (N_R/\mu N_R) \arrow[r] \arrow[d, "\eqref{eq:nygaard_fil_nr_nl_modmu}"] & 0\\
			0 \arrow[r] & \Fil^{k-1} N_L \arrow[r, "\mu"] & \Fil^k N_L \arrow[r] & \Fil^k (N_L/\mu N_L) \arrow[r] & 0,
		\end{tikzcd}
	\end{equation}
	where the top row is the extension of the exact sequence \eqref{eq:filnmodmu} along the flat map $\AR^+ \rightarrow \AL^+$ and the bottom row is the exact sequence (analogous to \eqref{eq:filnmodmu}) for Wach modules over $\AL^+$.
	Now, note that the map in \eqref{eq:nygaard_fil_nr_nl_modmu} (also see the right vertical arrow in \eqref{lem:nygaard_fil_modmu}) is compatible with the natural map in \eqref{eq:nygaard_fil_nr_nl}, so the bottom right square in \eqref{lem:nygaard_fil_modmu} commutes, and therefore, it follows that \eqref{eq:nygaard_fil_nr_nl_modmu} is bijective, thus proving the first claim.
	For the second claim, from the diagram \eqref{eq:nr_nl_modmu_inject}, it is clear that \eqref{eq:fil1_nrnlmodmu_induced} is injective and it remains to show that it is also surjective.
	So, let $x$ be an element of $(N_R/\mu) \cap \Fil^1 (N_L/\mu)$ and let $y$ in $N_R$ (resp.\ $z$ in $\Fil^1 N_L$) denote a lift of $x$.
	As we have that $N_R \subset N_L$, therefore, we see that $y-z$ is in $\mu N_L \subset \Fil^1 N_L$.
	But, then we get that $y$ is in $N_R \cap \Fil^1 N_L = \Fil^1 N_R$ (see Proposition \ref{prop:nygaard_fil_nr_nl}), in particular, the image of $y$ under the homomorphism $N_R \twoheadrightarrow N_R/\mu$, i.e.\ $x$, is in $\Fil^1(N_R/\mu)$.
	Finally, after inverting $p$ in the arguments above, the analogous claims involving $M_R/\mu$ and $M_L/\mu$ follow.
	This allows us to conclude.
\end{proof}

\begin{rem}\label{rem:nygaard_fil_nl_nlbreve_modmu}
	In the notation of Remark \ref{rem:nygaard_fil_nl_nlbreve} and Remark \ref{rem:fil_gr_nlbreve_modmu}, let $N_L$ denote a Wach module over $\AL^+$ and set $\NLbreve \coloneq \ALbreve^+ \otimes_{\AL^+} N_L$.
	Then, arguments similar to the proof of Lemma \ref{lem:nygaard_fil_nr_nl_modmu} show that the natural isomorphism of $\varphi\modules$ $\OLbreve \otimes_{O_L} (N_L/\mu N_L) \isomorphic \NLbreve/\mu \NLbreve$ is compatible with filtrations, i.e.\ for each $k \in \ZZ$, we have that
	\begin{equation*}
		\OLbreve \otimes_{O_L} \Fil^k (N_L/\mu N_L) \isomorphic \Fil^k (\NLbreve/\mu \NLbreve),
	\end{equation*}
	in particular, $O_{\Lbreve} \otimes_{O_L} \gr^k(N_L/\mu) \isomorphic \gr^k(N_{\Lbreve}/\mu)$.
	Analogous statements are also true after inverting $p$.
\end{rem}

\subsection{Wach modules are crystalline}\label{subsec:wachmod_crystalline}

The goal of this section is to prove Theorem \ref{thm:fh_crys_relative}.
To state and prove our result, we will need some auxiliary period rings $\ARpi^{\PD}$ and $\OARpi^{\PD}$ from \cite[Section 4.3.1]{abhinandan-relative-wach-i}.
We briefly recall their definitions.
Let $\varpi \coloneq \zeta_{p^m}-1$, where $m = 1$, for $p \geqslant 3$, and $m = 2$, for $p = 2$, and set 
\begin{equation*}
	\ARpi^+ \coloneq \AR^+[Y]/(Y^{p^m}-q) \isomorphic \AR^+[q^{1/p^m}] = \AR^+[\varphi^{-m}(q)] \subset \Ainf(\Rinfty),
\end{equation*}
where $q = 1+\mu = [\varepsilon]$.
By the definition, it is clear that $\ARpi^+$ is finite free as a module over $\AR^+$, with a basis given as $\{1, q^{1/p^m}, \ldots, q^{(p^m-1)/p^m}\}$.
Moreover, it is also clear that $\ARpi^+$ is stable under the Frobenius on $\Ainf(\Rinfty)$, and as the action of $\Gamma_R$ on $\Ainf(\Rinfty)$ is Frobenius-equivariant, therefore, it follows that $\ARpi^+$ is stable under the action of $\Gamma_R$; we equip it with the induced structures.

Next, we note that the restriction to $\ARpi^+$ of the map $\theta$ on $\Ainf(\Rinfty)$ (see Section \ref{subsec:ainf_relative}), gives a surjective ring homomorphism $\theta: \ARpi^+ \twoheadrightarrow R[\varpi]$.
We define $\ARpi^{\PD}$ to be the $\padic$ completion of the divided power envelope of $\ARpi^+$, with respect to $\kert \theta$.
Furthermore, the map $\theta$ extends $R\textrm{-linearly}$ to a surjective ring homomorphism $\theta_R \colon R \otimes_{\ZZ} \ARpi^+ \twoheadrightarrow R[\varpi]$, given as $x \otimes y \mapsto x\theta(y)$.
Similar to above, we define $\OARpi^{\PD}$ to be the $\padic$ completion of the divided power envelope of $(R \otimes_{\ZZ} \ARpi^+)$, with respect to $\kert \theta_R$.
The morphisms $\theta$ and $\theta_R$ naturally extend to respective surjections $\theta \colon \ARpi^{\PD} \twoheadrightarrow R[\varpi]$ and $\theta_R \colon \OARpi^{\PD} \twoheadrightarrow R[\varpi]$.

\begin{lem}\label{lem:oarpipd_oacrys}
	The natural $(\varphi, \Gamma_R)\equivariant$ injective homomorphism $\ARpi^+ \rightarrow \Ainf(\Rinfty)$ extends to natural injective homomorphisms $\ARpi^{\PD} \rightarrow \Acrys(\Rinfty)$ and $\OARpi^{\PD} \rightarrow \OAcrys(\Rinfty)$.
	Moreover, the source rings are stable under the $(\varphi, \Gamma_R)\action$ on the target rings.
\end{lem}
\begin{proof}
	By the universal property of PD-envelopes, the $(\varphi, \Gamma_R)\equivariant$ injective homomorphism $\ARpi^+ \rightarrow \Ainf(\Rinfty)$ extends to natural homomorphisms $\ARpi^{\PD} \rightarrow \Acrys(\Rinfty)$ and $\OARpi^{\PD} \rightarrow \OAcrys(\Rinfty)$, and we need to show that these maps are injective.
	Let us first note that $\ARpi^{\PD}$ and $\Acrys(\Rinfty)$ are $p\textrm{-adically}$ complete and $p\torsion$ free (see \cite[Proposition 6.1.3]{brinon-relatif} and \cite[Lemma 2.43]{bhatt-scholze-prisms}).
	Moreover, as $\lim_n$ is left exact, therefore, it is enough to show that the induced homomorphism $\ARpi^{\PD}/p^n \rightarrow \Acrys(\Rinfty)/p^n$ is injective, where the case $n=1$ would suffice as the injectivity for $n > 1$ follows by an easy induction.
	Now, consider the following commutative diagram:
	{\small
	\begin{equation*}
		\begin{tikzcd}[column sep=12pt, row sep=16pt]
			\ARpi^{\PD}/p \arrow[r, "\sim"] \arrow[d] & (\ARpi^+/p)[Y_0, Y_1, \ldots]/(pY_0-\xi^p, pY_{k+1}-Y_k^p)_{k \geqslant 0} \arrow[r, "\sim"] \arrow[d] & (R[\zeta_{p^m}]/p)[Y_0, Y_1, \ldots]/(Y_k^p)_{k \geqslant 0} \arrow[d]\\
			\Acrys(\Rinfty)/p \arrow[r, "\sim"] & (\Ainf(\Rinfty)/p)[Y_0, Y_1, \ldots]/(pY_0-\xi^p, pY_{k+1}-Y_k^p)_{k \geqslant 0} \arrow[r, "\sim"] & (\Rinfty/p)[Y_0, Y_1, \ldots]/(Y_k^p)_{k \geqslant 0},
		\end{tikzcd}
	\end{equation*}
	}
	where the left vertical arrow is the natural homomorphism described above and the top (resp.\ bottom) left isomorphism is obtained by an argument similar to \cite[Corollaire 6.1.2]{brinon-relatif}.
	For the top right isomorphism, recall that we have $\ARpi^+ = \AR^+[q^{1/p^m}]\isomorphic R\llbracket q-1 \rrbracket[q^{1/p^m}]$ and $[p]_q = \varphi(\xi) = \xi^p \textrm{ mod } p$, therefore, we see that $\ARpi^+/(p, \xi^p) = \ARpi^+/([p]_q, p) \isomorphic R[\zeta_{p^m}]/p$.
	Similarly, for the bottom right isomorphism, observe that $\Ainf(\Rinfty)/(p, \xi^p) = \Ainf(\Rinfty)/([p]_q, p) \isomorphic \Rinfty/p$.
	Moreover, the middle and the right vertical arrows are defined using the left vertical arrow and an easy diagram chase shows that $Y_k$ in the top row goes to $Y_k$ in the bottom row, for each $k \geqslant 0$.
	It is clear that the right vertical arrow is injective, so it follows that the left vertical arrow is also injective.
	Furthermore, using the fact that $\OAcrys(\Rinfty)$ (resp.\ $\OARpi^{\PD}$) is the $p\adic$ completion of a PD-polynomial algebra over $\Acrys(\Rinfty)$ (resp.\ $\ARpi^{\PD}$) in $d$ variables (see \cite[Proposition 6.1.5]{brinon-relatif}, resp.\ \cite[Lemma 4.20]{abhinandan-relative-wach-i}), we get that the natural map $\OARpi^{\PD} \rightarrow \OAcrys(\Rinfty)$ is injective.
	From the preceding descriptions, it is easy to see that $\ARpi^{\PD}$ (resp.\ $\OARpi^{\PD}$) is stable under the $(\varphi, \Gamma_R)\action$ on $\Acrys(\Rinfty)$ (resp.\ $\OAcrys(\Rinfty)$).
\end{proof}

Using the injectivity of the map $\ARpi^{\PD} \hookrightarrow \Acrys(\Rinfty)$ (resp.\ $\OARpi^{\PD} \hookrightarrow \OAcrys(\Rinfty)$) from Lemma \ref{lem:oarpipd_oacrys}, we equip the source rings with structures induced from the target rings, in particular, a Frobenius endomorphism $\varphi$, a continuous action of $\Gamma_R$, and an induced $\Gamma_R\textrm{-stable}$ decreasing, separated and exhaustive filtration.
In particular, the natural homomorphism $\ARpi^{\PD} \rightarrow \OARpi^{\PD}$ is injective and compatible with all the structures.
Moreover, as the filtration on $\Acrys(\Rinfty)$ (resp.\ $\OAcrys(\Rinfty)$) is given by the divided powers of the ideal $\kert \theta \subset \Acrys(\Rinfty)$ (resp.\ $\kert \theta_R \subset \OAcrys(\Rinfty)$), therefore, the induced filtration on $\ARpi^{\PD}$ (resp.\ $\OARpi^{\PD}$) is also given by the divided powers of the ideal $\kert \theta \subset \ARpi^{\PD}$ (resp.\ $\kert \theta_R \subset \OARpi^{\PD}$).
Furthermore, the $\Acrys(\Rinfty)\linear$ integrable connection on $\OAcrys(\Rinfty)$, denoted as $\partial$, induces a $\Gamma_R\equivariant$ $\ARpi^{\PD}\linear$ integrable connection $\partial_A \colon \OARpi^{\PD} \rightarrow \OARpi^{\PD} \otimes \Omega^1_R$.
Note that the connection $\partial_A$ satisfies Griffiths transversality with respect to the induced filtration: indeed, by definitions we have that
\begin{align*}
	\partial_A(\Fil^k \OARpi^{\PD}) &\subset (\OARpi^{\PD} \otimes \Omega^1_R) \cap \partial(\Fil^k \OAcrys(\Rinfty)) \\
		&\subset (\OARpi^{\PD} \cap \Fil^{k-1} \OAcrys(\Rinfty)) \otimes \Omega^1_R = \Fil^{k-1} \OARpi^{\PD} \otimes \Omega^1_R,
\end{align*}
where the second inclusion follows because the connection on $\OAcrys(\Rinfty)$ satisfies Griffiths transversality with respect to the filtration (see Section \ref{subsec:crystalline_relative}).
Additionally, we have that $\OAcrys(\Rinfty)^{\partial = 0} = \Acrys(\Rinfty)$, and so it follows that $(\OARpi^{\PD})^{\partial_A=0} = \ARpi^{\PD}$.

\begin{rem}\label{rem:fil1ar+_intersect}
	Note that for the composition $\AR^+ \hookrightarrow \Ainf(\Rinfty) \xrightarrow{\hspace{1mm}\theta\hspace{1mm}} \widehat{R}_{\infty}$, we have that $\theta(\AR^+) = R$, and therefore, $\kert \theta = \mu \AR^+ \subset \AR^+$.
	So, it follows that $\Fil^1 \OARpi^{\PD} \cap \AR^+ = \Fil^1 \ARpi^{\PD} \cap \AR^+ = \mu \AR^+ \subset \OARpi^{\PD}$.
\end{rem}

Using the period rings described above, we are ready to state the main result of this section:
\begin{thm}\label{thm:fh_crys_relative}
	Let $N$ be a Wach module over $\AR^+$ and let $T \coloneq \TR(N)$ denote the associated finite free $\ZZ_p\representation$ of $G_R$ from \eqref{eq:wach_reps_relative}.
	Then, $V \coloneq T[1/p]$ is a $\padic$ crystalline representation of $G_R$ and we have a natural isomorphism of $R[1/p]\textrm{-modules}$ $(\OARpi^{\PD} \otimes_{\AR^+} N[1/p])^{\Gamma_R} \isomorphic \ODcrysR(V)$ compatible with the respective Frobenii and connections.
\end{thm}

In order to prove Theorem \ref{thm:fh_crys_relative}, we need the following key statement:
\begin{prop}\label{prop:oarpd_comparison}
	Let $N$ be an effective Wach module over $\AR^+$.
	Then, the $R[1/p]\module$ $\ODR \coloneq \big(\OARpi^{\PD} \otimes_{\AR^+} N[1/p]\big)^{\Gamma_R}$ is finite projective of rank $= \rank_{\BR^+} N[1/p]$ and equipped with a Frobenius and an integrable connection.
	Moreover, we have a natural comparison isomorphism
	\begin{equation}\label{eq:oarpd_comparison}
		\begin{aligned}
			f \colon \OARpi^{\PD} \otimes_R \ODR &\isomorphic \OARpi^{\PD} \otimes_{\AR^+} N[1/p]\\
				a \otimes b \otimes x &\longmapsto ab \otimes x,
		\end{aligned}
	\end{equation}
	compatible with the respective Frobenii, connections and actions of $\Gamma_R$.
\end{prop}

\begin{rem}\label{rem:oarpd_comparison_struct}
	In \eqref{eq:oarpd_comparison}, the Frobenius on each term is given as $\varphi \otimes \varphi$; the integrable connection on the right-hand term is given as the natural $\ARpi^{\PD}\textrm{-linear}$ differential operator $\partial_A \otimes 1$ and on the left-hand term, it is given as $\partial_A \otimes 1 + 1 \otimes \partial_D$, where $\partial_D$ is the integrable connection on $\ODR$ (induced from the integrable connection on $\OARpi^{\PD}$); the action of any $g$ in $\Gamma_R$ on the left-hand term is given as $g \otimes 1$ and on the right-hand term, it is given as $g \otimes g$.
\end{rem}

\begin{proof}[Proof of Proposition \ref{prop:oarpd_comparison}]
	We will adapt the proof of \cite[Proposition 4.28]{abhinandan-relative-wach-i}.
	Recall the following rings from \cite[Section 4.4.1]{abhinandan-relative-wach-i}:
	for $n \geqslant 1$, consider the following $p\torsion$ free $\AR^+\algebra$:
	\begin{equation*}
		\AR^+\big[\tfrac{\mu}{p^n}, \tfrac{\mu^2}{2!p^{2n}}, \ldots, \tfrac{\mu^k}{k!p^{kn}}, \ldots\big] \hookrightarrow \AR^+[1/p] = \BR^+,
	\end{equation*}
	and define $\SnPD$ to be the $\padic$ completion of the left-hand term, in particular, $\SnPD$ is a $p\torsion$ free $\AR^+\algebra$.
	Moreover, we have an injective Frobenius homomorphism $\varphi \colon \SnPD \rightarrow S_{n-1}^{\PD}$, and the $n\textrm{-fold}$ composition of $\varphi$ induces an inclusion $\varphi^n(\SnPD) \subset \ARpi^{\PD}$, and the ring $\SnPD$ is further equipped with a continuous (for the $\padic$ topology) action of $\Gamma_R$ which commutes with $\varphi$.
	The injectivity of the homomorphism $\varphi$ may be seen as follows: using the $\varphi\equivariant$ isomorphism $\AR^+ \isomorphic R\llbracket \mu \rrbracket$, we may view $\SnPD$ as the $\padic$ completion of $R\big[\mu, \tfrac{\mu}{p^n}, \tfrac{\mu^2}{2!p^{2n}}, \ldots, \tfrac{\mu^k}{k!p^{kn}}, \ldots\big]$, in particular, as a subring of $R[1/p]\llbracket \mu \rrbracket$, and note that on the latter the endomorphism $\varphi$, defined as the extension of the Frobenius on $R[1/p]$ by setting $\varphi(\mu) = (1+\mu)^p-1$, is injective.
	Furthermore, let us remark that in \cite[Section 4.4.1]{abhinandan-relative-wach-i} we considered a further completion of $\SnPD$, with respect to a certain filtration by PD-ideals, which we denote as $\SnhatPD$ in loc.\ cit.
	However, such a completion is not strictly necessary and all the proofs of loc.\ cit.\ may be carried out without it.
	In particular, note that many good properties of $\SnhatPD$ restrict to good properties on $\SnPD$ as well (for example, the $(\varphi, \Gamma_R)\action$ above).

	Next, let us consider the $O_F\textrm{-linear}$ homomorphism of rings $\iota \colon R \rightarrow \SnPD$, defined by sending $X_j \mapsto [X_j^{\flat}]$, for $1 \leqslant j \leqslant d$.
	Using $\iota$ we define an $O_F\textrm{-linear}$ homomorphism of rings $\jmath \colon R \otimes_{O_F} \SnPD \rightarrow \SnPD$, sending $a \otimes b \mapsto \iota(a) b$.
	Let $\OSnPD$ denote the $\padic$ completion of the divided power envelope of $R \otimes_{O_F} \SnPD$, with respect to $\kert \jmath$.
	On $\OSnPD$, the tensor product Frobenius induces an injective homomorphism $\varphi \colon \OSnPD \rightarrow \pazo S_{n-1}^{\PD}$, so that $\varphi^n(\OSnPD) \subset \OARpi^{\PD}$.
	Moreover, we have a natural Frobenius-equivariant injective homomorphism of rings $\SnPD \hookrightarrow \OSnPD$.

\begin{lem}
	The continuous $\Gamma_R\textrm{-action}$ on $\SnPD$ naturally extends to a continuous $\Gamma_R\action$ on $\OSnPD$.
	Moreover, the $\Gamma_R\textrm{-invariant}$ elements of $\OSnPD$ are given by $R$, i.e. $(\OSnPD)^{\Gamma_R} = R$.
\end{lem}
\begin{proof}
	Let us first note that using the composition $\iota \colon R[1/p] \rightarrow \BR^+ \hookrightarrow \BdR^+(\Rbar)$, any summation $\sum_{k \in \NN} \iota(a_k) \mu^k$, with $a_k$ in $R[1/p]$ for all $k \in \NN$, converges in $\BdR^+(\Rbar)$.
	As any element $x$ in $\SnPD$ has a unique presentation as the summation above (using that $\SnPD \hookrightarrow R[1/p]\llbracket \mu \rrbracket$ from the preceding paragraphs), therefore, we see that the natural $G_R\equivariant$ injective homomorphism $\BR^+ \hookrightarrow \BdR^+(\Rbar)$ extends to a natural $G_R\equivariant$ injective homomorphism $\SnPD \hookrightarrow \BdR^+(\Rbar)$.
	Furthermore, from the definition of $\OSnPD$ and $\OBdR^+(\Rbar)$, we see that the preceding homomorphism induces a natural $R\linear$ homomorphism $\OSnPD \rightarrow \OBdR^+(\Rbar)$.
	Now, note that using an argument similar to \cite[Proposition 6.1.5]{brinon-relatif} and \cite[Lemma 4.20]{abhinandan-relative-wach-i}, we may write $\OSnPD$ as the $\padic$ completion of a PD-polynomial algebra over $\SnPD$ in $d$ variables (see the diagram below), and therefore, using the presentation of $\OBdR^+(\Rbar)$ as a power series algebra (in the same variables) over $\BdR^+(\Rbar)$ (see \cite[Proposition 5.2.2]{brinon-relatif}), we conclude that the map $\OSnPD \rightarrow \OBdR^+(\Rbar)$ is injective.
	In particular, we have the following commutative diagram:
	\begin{equation}\label{eq:OSnPD_in_OBdR+}
		\begin{tikzcd}
			\SnPD[z_1, \ldots, z_d]_{\PD}^{\wedge} \arrow[r, hookrightarrow] \arrow[d, "\wr"] & \BdR^+\llbracket z_1, \ldots, z_d\rrbracket \arrow[d, "\wr"]\\
			\OSnPD \arrow[r, hookrightarrow] & \OBdR^+(\Rbar),
		\end{tikzcd}
	\end{equation}
	where the injective horizontal arrows are as described above, the left vertical arrow sends $z_j^{[k_j]}$ to $(1 \otimes [X_j^{\flat}] - X_j \otimes 1)^{[k_j]}$, and similarly for the right vertical arrow.
	In the following, we shall abuse notations and write $[X_j]^{\flat}$ for $1 \otimes [X_j^{\flat}]$ and $X_j$ for $X_j \otimes 1$.

	We equip the top right corner of \eqref{eq:OSnPD_in_OBdR+} with a $\BdR^+(\Rbar)\textrm{-semilinear}$ action of $G_R$ by transport of structure from the bottom right corner.
	Note that from Section \ref{sec:period_rings_padic_reps} we have that $g([X_j^{\flat}] - X_j) = [X_j^{\flat}]-X_j$ for $g$ in $H_R = \kert(G_R \twoheadrightarrow \Gamma_R)$, and $\gamma_j([X_j^{\flat}]-X_j) = ([X_j^{\flat}]-X_j) + \mu [X_j^{\flat}]$ and $\gamma_i([X_j^{\flat}]-X_j) = [X_j^{\flat}]-X_j$ for $i \neq j$.
	So, using the right vertical arrow, we get that $g(z_j) = z_j$ for $g$ in $H_R = \kert(G_R \twoheadrightarrow \Gamma_R)$, and $\gamma_j(z_j) = z_j + \mu [X_j^{\flat}]$ and $\gamma_i(z_j) = z_j$ for $i \neq j$.
	Using this we compute that $g(z_j^{[k]}) = z_j^{[k]}$ for $g$ in $H_R$, and $\gamma_i(z_j^{[k]}) = z_j^{[k]}$ for $i \neq j$ and
	\begin{equation*}
		\gamma_j(z_j^{[k]}) = \textstyle\sum_{r=0}^{k}\binom{k}{r}\tfrac{z_j^{k-r} (\mu[X_i^{\flat}])^r}{k!} = \textstyle\sum_{r=0}^{k} p^{nr} z_j^{[k-r]} \tfrac{(\mu[X_i^{\flat}])^r}{r!p^{rn}},
	\end{equation*}
	which clearly converges in $\SnPD[z_1, \ldots, z_d]_{\PD}^{\wedge}$ for all $k \geqslant 0$; the key observation here is that $\SnPD$ already admits divided powers of $\tfrac{\mu}{p^n}$.
	In particular, from the preceding discussion it follows that for any element $x = \sum_{\smbfk \in \NN^d} x_{\smbfk} \prod_{j=1}^d z_j^{[k_j]}$ in $\SnPD[z_1, \ldots, z_d]_{\PD}^{\wedge}$, we have that $\gamma_i(x)$ converges in $\SnPD[z_1, \ldots, z_d]_{\PD}^{\wedge}$ and $g(x) = x$ for all $g$ in $H_R$.
	Thus, using the diagram in \eqref{eq:OSnPD_in_OBdR+} and the action of $G_R$ on $\OBdR^+(\Rbar)$, we obtain an induced action of $\Gamma_R$ on $\SnPD[z_1, \ldots, z_d]_{\PD}^{\wedge}$ which is continuous for the $\padic$ topology, Frobenius-equivariant and compatible with the action of $\Gamma_R$ on $\SnPD$.
	We equip the bottom left corner of \eqref{eq:OSnPD_in_OBdR+}, i.e.\ $\OSnPD$, with a continuous, Frobenius-equivariant and $\SnPD\textrm{-semilinear}$ action of $\Gamma_R$ by transport of structure from the top left corner.
	By construction, we see that the bottom horizontal arrow of \eqref{eq:OSnPD_in_OBdR+} is $G_R\textrm{-equivariant}$.

	For the second claim, it is clear that we have $R \subset (\OSnPD)^{\Gamma_R}$.
	Moreover, using \eqref{eq:OSnPD_in_OBdR+} we see that $(\OSnPD)^{\Gamma_R} \hookrightarrow \OBdR(\Rbar)^{G_R} = R[1/p]$ (see \cite[Proposition 5.2.12]{brinon-relatif}).
	Furthermore, using the commutativity of the action of $\varphi$ and $\Gamma_R$ on $\OSnPD$, and the compatibility of $G_R\action$ on $\OARpi^{\PD} \subset \OAcrys(\Rbar) \subset \OBdR^+(\Rbar)$, we observe that
	\begin{equation*}
		\varphi^n\big((\OSnPD)^{\Gamma_R}\big) = (\varphi^n(\OSnPD))^{\Gamma_R} \subset (\OARpi^{\PD})^{\Gamma_R} \longhookrightarrow (\OAcrys(\Rinfty))^{\Gamma_R} = \OAcrys(\Rbar)^{G_R} = R,
	\end{equation*}
	where the last two equalities follow from \cite[Corollary 4.34]{morrow-tsuji} and \cite[Proposition 6.2.9]{brinon-relatif}, respectively.
	As $\varphi \colon R/p \rightarrow R/p$ is injective, therefore, from Lemma \ref{lem:injectivity_modulo} we get that $(\OSnPD)^{\Gamma_R} \subset R$.
	Hence, the claim follows.
\end{proof}

	From the preceding discussion, note that we have a $(\varphi, \Gamma_R)\equivariant$ injective homomorphism $\SnPD \hookrightarrow \OSnPD$.
	Moreover, $\OSnPD$ is equipped with a $\Gamma_R\equivariant$ $\SnPD\linear$ integrable connection given as the universal continuous $\SnPD\linear$ de Rham differential $d \colon \OSnPD \rightarrow \Omega^1_{\OSnPD/\SnPD}$.
	By setting $V_j = \frac{X_j \otimes 1}{1 \otimes [X_j^{\flat}]}$, for $1 \leqslant j \leqslant d$, we have the $p\textrm{-adically}$ complete divided power ideals of $\OSnPD$ as follows:
	\begin{equation*}
		J^{[i]}\OSnPD \coloneq \Big\langle \tfrac{\mu^{[k_0]}}{p^{nk_0}} \textstyle\prod_{j=1}^d (1-V_j)^{[k_j]}, \hspace{1mm} \smbfk = (k_0, k_1, \ldots, k_d) \in \NN^{d+1} \hspace{1mm} \textrm{such that} \hspace{1mm} \sum_{j=0}^d k_j \geqslant i\Big\rangle.
	\end{equation*}

	Now, we equip $\OSnPD \otimes_{\AR^+} N$ with the tensor product Frobenius, and the integrable connection on $\OSnPD$ induces an $\SnPD\linear$ integrable connection on $\OSnPD \otimes_{\AR^+} N$.
	Then, $D_n \coloneq \big(\OSnPD \otimes_{\AR^+} N[1/p]\big)^{\Gamma_R}$ is an $R[1/p]\module$ equipped with a semilinear Frobenius homomorphism $\varphi \colon D_n \rightarrow D_{n-1}$ and an integrable connection.
	In particular, we have that
	\begin{equation*}
		\varphi^n(D_n) \longhookrightarrow \ODR = \big(\OARpi^{\PD} \otimes_{\AR^+} N[1/p]\big)^{\Gamma_R} \longhookrightarrow \big(\OAcrys(\Rbar) \otimes_{\AR^+} N[1/p]\big)^{H_R},
	\end{equation*}
	where we have used that $\OARpi^{\PD} \hookrightarrow \OAcrys(\Rinfty) = \OAcrys(\Rbar)^{H_R}$ (see \cite[Corollary 4.34]{morrow-tsuji} for the equality).
	Let $T \coloneq \TR(N)$ denote the finite free $\ZZ_p\representation$ of $G_R$, associated to $N$ from \eqref{eq:wach_reps_relative}, and set $V \coloneq T[1/p]$.
	Then, we have $R[1/p]\linear$ injective homomorphisms
	\begin{equation}\label{eq:dr_in_dcrys}
		\begin{aligned}
			\ODR \hookrightarrow \big(\OBcrys^+(\Rbar) \otimes_{\BR^+} N[1/p]\big)^{G_R} \subset &\big(\OBcrys(\Rbar) \otimes_{\BR^+} N[1/p]\big)^{G_R}\\
			&\xrightarrow[\eqref{eq:wachmod_comp_relative_ainf}]{\hspace{1mm}\sim\hspace{1mm}} \big(\OBcrys(\Rbar) \otimes_{\QQ_p} V\big)^{G_R} = \ODcrysR(V),
		\end{aligned}
	\end{equation}
	where the isomorphism follows by taking $G_R\textrm{-fixed}$ elements of the extension along $\Ainf(\Rbar)[1/\mu] \rightarrow \OBcrys(\Rbar)$ of the isomorphism \eqref{eq:wachmod_comp_relative_ainf} in Proposition \ref{prop:wachmod_comp_relative}.
	Using the injectivity of \eqref{eq:dr_in_dcrys} and the fact that $\ODcrysR(V)$ is a finitely generated module over the noetherian ring $R[1/p]$ (see Section \ref{subsec:relative_padicreps}), it follows that $\ODR$ is also a finitely generated $R[1/p]\module$.
	Moreover, as explained in Remark \ref{rem:oarpd_comparison_struct}, the $R[1/p]\module$ $\ODR$ is equipped with an integrable connection, so from \cite[Proposition 7.1.2]{brinon-relatif} it follows that $\ODR$ is a finite projective $R[1/p]\module$.
	Now, note that we have an injective homomorphism $\varphi^n \colon D_n \rightarrow \ODR$, and we may view it as a homomorphism of modules over the source of the finite and faithfully flat ring homomorphism $\varphi^n \colon R[1/p] \rightarrow R[1/p]$ (see Section \ref{subsec:setup_nota}).
	So it follows that $\ODR$ is a finite module over the source of $\varphi^n$, and since $R[1/p]$ is noetherian, we get that $D_n$ is also a finite module over the source of $\varphi^n$, i.e.\ $D_n$ is a finitely generated $R[1/p]\module$.
	As $D_n$ is further equipped with an integrable connection, therefore, again from \cite[Proposition 7.1.2]{brinon-relatif} we conclude that $D_n$ is a finite projective $R[1/p]\module$.
	Moreover, note that the following $R[1/p]\linear$ homomorphism is injective:
	\begin{equation}\label{eq:phindn_odr}
		1 \otimes \varphi^n \colon R[1/p] \otimes_{\varphi^n, R[1/p]} D_n \longrightarrow \ODR.
	\end{equation}
	To see this, let $y = \sum_{i=1}^r a_i \otimes y_i$ be an element of $R[1/p] \otimes_{\varphi^n, R[1/p]} D_n$ such that $(1 \otimes \varphi^n)y = \sum_{i=1}^r a_i\varphi^n(y_i) = 0$.
	As $\varphi^n \colon R[1/p] \rightarrow R[1/p]$ is finite free of degree $p^{nd}$ with a basis given by elements $\{\prod_{k=1}^{d} X_k^{j_k}\}_{0 \leqslant j_k \leqslant p^n-1}$, therefore, we may write $a_i = \sum_{\smbfj \in J} \varphi^n(b_{i\smbfj}) (\prod_{k=1}^d X_k^{j_k})$, where $J = \{(j_1, \ldots, j_d) \textrm{ with } 0 \leqslant j_k \leqslant p^n-1\}$ and $b_{i\smbfj} \in R[1/p]$.
	Plugging this into the expression for $y$, we get that $(1 \otimes \varphi^n)y = \sum_{\smbfj \in J} (\sum_{i=1}^r \varphi^n(b_{i\smbfj}y_i))\prod_{k=1}^d X_k^{j_k}$.
	But, note that $\varphi^n(D_n)$ is a $\varphi^n(R[1/p])\module$.
	So, using the injectivity of $\varphi^n$ and the linear independence of the basis elements described above, we observe that for each $\smbfj \in J$, we must have $\sum_{i=1}^n \varphi^n(b_{i\smbfj} y_i) = 0$, or equivalently, $\sum_{i=1}^n b_{i\smbfj}y_i = 0$.
	In particular, we obtain that $y = \sum_{i=1}^n a_i \otimes y_i = \sum_{\smbfj \in J} (\prod_{k=1}^d X_k^{j_k} \otimes \sum_{i=1}^n b_{i\smbfj}y_i) = 0$, implying the injectivity of \eqref{eq:phindn_odr}.

	Next, note that $N[1/p]$ is a finite projective $\BR^+\module$ (see Proposition \ref{prop:wachmod_proj_pmu}), so $\OSnPD \otimes_{\AR^+} N[1/p]$ is a finite projective $\OSnPD[1/p]\module$.
	Moreover, as $\OSnPD$ is $p\textrm{-adically}$ complete and $p\torsion$ free, therefore, from Lemma \ref{lem:pcomplete_finprojrational} it follows that $\OSnPD \otimes_{\AR^+} N$ is $p\textrm{-adically}$ complete.
	Now, for $n \geqslant 1$, similar to the proof of \cite[Lemmas 4.32 \& 4.36]{abhinandan-relative-wach-i}, it is easy to show that $\log \gamma_i \coloneq \sum_{k \in \NN} (-1)^k \frac{(\gamma_i-1)^{k+1}}{k+1}$ converges as a series of operators on $\OSnPD \otimes_{\AR^+} N$, where $\{\gamma_0, \gamma_1, \ldots, \gamma_d\}$ are topological generators of $\Gamma_R$ (see Section \ref{sec:period_rings_padic_reps}).

\begin{lem}\label{lem:osmpd_comp}
	Let $m \geqslant 1$ (let $m \geqslant 2$ if $p=2$), then the following map induces a natural $\Gamma_R\equivariant$ isomorphism:
	\begin{equation}\label{eq:osmpd_dm_n}
		\begin{aligned}
			\OSmPD \otimes_R D_m &\isomorphic \OSmPD \otimes_{\AR^+} N[1/p]\\
			a \otimes b \otimes x &\longmapsto ab \otimes x.
		\end{aligned}
	\end{equation}
\end{lem}
\begin{proof}
	The map in \eqref{eq:osmpd_dm_n} is obviously compatible with the respective actions of $\Gamma_R$, so we need to check that it is bijective.
	Let us first check the injectivity of \eqref{eq:osmpd_dm_n} by considering the following commutative diagram with injective arrows:
	\begin{center}
		\begin{tikzcd}[row sep=15pt]
			R[1/p] \arrow[r, "\varphi^m"] \arrow[d] & R[1/p] \arrow[d] \arrow[rd]\\
			\OSmPD[1/p] \arrow[r, "\varphi^m"] & \OARpi^{\PD}[1/p] \arrow[r] & \OBcrys(\Rbar).
		\end{tikzcd}
	\end{center}
	As $D_m$ is a finite projective $R[1/p]\module$, therefore, the tensor product of the injective homomorphism $\OSmPD \rightarrow \OBcrys(\Rbar)$ with $D_m$, over $R[1/p]$ in the top left corner of the diagram, yields the following natural injective map:
	\begin{equation}\label{eq:osmpd_obcrys_dm}
		\OSmPD \otimes_R D_m = \OSmPD[1/p] \otimes_{R[1/p]} D_m \longrightarrow \OBcrys(\Rbar) \otimes_{\varphi^m, R[1/p]} D_m.
	\end{equation}
	Next, we have $V = T[1/p]$ and we consider the following natural composition:
	\begin{equation}\label{eq:dm_obcrys_dcrys}
		\OBcrys(\Rbar) \otimes_{\varphi^m, R[1/p]} D_m \xrightarrow[1 \otimes \varphi^m]{\hspace{1mm} \eqref{eq:phindn_odr} \hspace{1mm}} \OBcrys(\Rbar) \otimes_{R[1/p]} \ODR \xrightarrow{\hspace{1mm} \eqref{eq:dr_in_dcrys} \hspace{1mm}} \OBcrys(\Rbar) \otimes_{R[1/p]} \ODcrysR(V).
	\end{equation}
	As $R[1/p] \rightarrow \OBcrys(\Rbar)$ is faithfully flat (see \cite[Th\'eor\`eme 6.3.8]{brinon-relatif}), therefore, from the injectivity of \eqref{eq:phindn_odr} and \eqref{eq:dr_in_dcrys}, it follows that \eqref{eq:dm_obcrys_dcrys} is injective.

	Now, set $\pazn_m \coloneq \OSmPD \otimes_{\AR^+} N$, and note that since $N[1/p]$ is a finite projective $\BR^+\module$, therefore, similar to \eqref{eq:osmpd_obcrys_dm}, we see that the following natural map is injective:
	\begin{equation*}
		\pazn_m[1/p] = \OSmPD[1/p] \otimes_{\BR^+} N[1/p] \longrightarrow \OBcrys(\Rbar) \otimes_{\varphi^m, \BR^+} N[1/p].
	\end{equation*}
	Moreover, from the definition of Wach modules and Lemma \ref{lem:finite_pqheight_equiv}, we have a natural isomorphism
	\begin{equation*}
		1 \otimes \varphi \colon \BR^+ \otimes_{\varphi, \BR^+} N[1/p, 1/[p]_q] \isomorphic N[1/p, 1/[p]_q],
	\end{equation*}
	and by iterating the Frobenius twist $m\textrm{-times}$, extending scalars along $\BR^+ \rightarrow \OBcrys(\Rbar)$, and noting that $[p]_q$ is unit in $\OBcrys(\Rbar)$, we obtain the following natural isomorphism
	\begin{equation*}
		1 \otimes \varphi \colon \OBcrys(\Rbar) \otimes_{\varphi^m, \BR^+} N[1/p] \isomorphic \OBcrys(\Rbar) \otimes_{\BR^+} N[1/p].
	\end{equation*}	
	So, from the preceding observations, it follows that the following natural composition is injective:
	\begin{equation}\label{eq:osmpd_obcrys_n}
		\pazn_m[1/p] \longrightarrow \OBcrys(\Rbar) \otimes_{\varphi^m, \BR^+} N[1/p] \xrightarrow[\sim]{\hspace{1mm} 1 \otimes \varphi^m \hspace{1mm}} \OBcrys(\Rbar) \otimes_{\BR^+} N[1/p].
	\end{equation}
	Let us now consider the following diagram with injective horizontal arrows:
	\begin{center}
		\begin{tikzcd}[row sep=16pt]
			\OSmPD \otimes_R D_m \arrow[r, "\eqref{eq:osmpd_obcrys_dm}"] \arrow[d, "\eqref{eq:osmpd_dm_n}"'] & \OBcrys(\Rbar) \otimes_{\varphi^m, R[1/p]} D_m \arrow[r, "\eqref{eq:dm_obcrys_dcrys}"] & \OBcrys(\Rbar) \otimes_{R[1/p]} \ODcrysR(V) \arrow[d]\\
			\pazn_m[1/p] \arrow[r, "\eqref{eq:osmpd_obcrys_n}"] & \OBcrys(\Rbar) \otimes_{\BR^+} N[1/p] \arrow[r, "\eqref{eq:wachmod_comp_relative_ainf}"] & \OBcrys(\Rbar) \otimes_{\QQ_p} V,
		\end{tikzcd}
	\end{center}
	and where the right vertical arrow is the natural injective map (see \cite[Proposition 8.2.6]{brinon-relatif}).
	From the definitions, it easily follows that the diagram commutes, therefore, we conclude that the left vertical arrow, i.e.\ \eqref{eq:osmpd_dm_n} is injective.

	Next, let us check the surjectivity of the map in \eqref{eq:osmpd_dm_n}.
	Note that we have the following well-defined operators on $M$ (see \cite[Section 4.4.2]{abhinandan-relative-wach-i}):
	\begin{equation*}
		\partial_i \coloneq \left\{
			\begin{array}{ll}
				-(\log \gamma_0)/t & \textrm{for} \hspace{1mm} i = 0,\\
				(\log \gamma_i)/(tV_i) & \textrm{for} \hspace{1mm} 1 \leqslant i \leqslant d,
			\end{array}
		\right.
	\end{equation*}
	where $V_i = \frac{X_i \otimes 1}{1 \otimes [X_i^{\flat}]}$, for $1 \leqslant i \leqslant d$.

\begin{claim}\label{claim:connection_osmpdn}
	The following operator defines an $R\linear$ and $p\textrm{-adically}$ quasi-nilpotent integrable connection on $\pazn_m$:
	\begin{align*}
		\partial \colon \pazn_m &\longrightarrow \pazn_m \otimes_{\OSmPD} \Omega^1_{\OSmPD/R}\\
		x &\longmapsto \partial_0(x) dt + \textstyle\sum_{i=1}^d \partial_i(x) d [X_i^{\flat}].
	\end{align*}
\end{claim}
\begin{proof}
	The operator $\partial_i$ is well defined on $\pazn_m$ and to get that $\partial$ defines a connection it remains to check that each $\partial_i$ satisfy a Leibniz rule.
	We will only show the case $i \neq 0$ as the case $i = 0$ follows by a similar argument.
	Our proof will follow an argument appearing in \cite[Proof of Theorem 4.2]{morrow-tsuji}.
	Observe that for any $x$ in $\pazn_m$, we have that
	\begin{equation}\label{eq:log_as_lim}
		\lim_{a \rightarrow +\infty} \tfrac{\gamma_i^{p^a}-1}{p^a} (x) = tV_i\partial_i(x).
	\end{equation}
	Indeed, note that $t^k/k!$ converges $p\textrm{-adically}$ to $0$ in $\OSmPD$ as $k$ goes to $+\infty$, therefore, we may write $\gamma_i^b = \exp(btV_i\partial_i)$, for any $b \in \ZZ$.
	Expanding the preceding exponential, yields the following well-defined operator:
	\begin{equation*}
		\tfrac{\gamma_i^b-1}{b} = tV_i\partial_i + b \textstyle\sum_{k \geqslant 2}b^{k-2} \tfrac{t^k}{k!} (tV_i\partial_i)^k \colon \pazn_m \longrightarrow \pazn_m,
	\end{equation*}
	where $(tV_i\partial_i)^k$ denotes the $k\textrm{-fold}$ composition of $tV_i\partial_i$.
	By setting $b = p^a$ and letting $a \rightarrow +\infty$, we obtain the formula in \eqref{eq:log_as_lim}.
	Now, for any $f$ in $\OSmPD$ and $x$ in $\pazn_m$, note that the following equality holds:
	\begin{equation*}
		(\gamma_i^{p^a} - 1)(fx) = (\gamma_i^{p^a} - 1)(f) \cdot x + \gamma_i^{p^a}(f) (\gamma_i^{p^a} - 1)(x).
	\end{equation*}
	Dividing out the preceding equality by $p^atV_i$, letting $a \rightarrow +\infty$ and using \eqref{eq:log_as_lim}, we obtain that
	\begin{equation*}
		\partial_i(fx) = \partial_i(f)x + f\partial_i(x),
	\end{equation*}
	where the first operator on the right is $\partial_i \coloneq \log(\gamma_i)/(tV_i) \colon \OSmPD \rightarrow \OSmPD$, which is well-defined by \cite[Section 4.4.2]{abhinandan-relative-wach-i} and satisfies the identity in \eqref{eq:log_as_lim} by an argument similar to above.
	In particular, we have shown that the operators $\partial_i$ satisfy a Leibniz rule.
	Moreover, note that $\partial$ is a well-defined differential operator by the discussion in \cite[Section 4.4.2]{abhinandan-relative-wach-i}, and from \cite[Lemma 4.38]{abhinandan-relative-wach-i} we have that the operators $\partial_i$ commute with each other, therefore, the connection $\partial$ is integrable.
	Furthermore, using the finite $\pqheight$ property of $N$, similar to \cite[Lemma 4.39]{abhinandan-relative-wach-i}, it is easy to show that $\partial$ is $p\textrm{-adically}$ quasi-nilpotent, thus establishing the claim.
\end{proof}

	Let us get back to checking the surjectivity of \eqref{eq:osmpd_dm_n}.
	Note that similar to the proof of \cite[Lemma 4.39 \& Lemma 4.41]{abhinandan-relative-wach-i}, we have that for any $x$ in $N$, the following sum converges in $\pazn_m^{\partial=0} = \pazn_m^{\Gamma_R}$:
	\begin{equation}\label{eq:horizontal_elems}
		\tau(x) \coloneq \textstyle\sum_{\smbfk \in \NN^{d+1}} \partial_0^{k_0} \circ \partial_1^{k_1} \circ \cdots \circ \partial_d^{k_d} (x) \tfrac{t^{[k_0]}}{p^{mk_0}} (1-V_1)^{[k_1]} \cdots (1-V_d)^{[k_d]}.
	\end{equation}
	Now, to show the surjectivity of \eqref{eq:osmpd_dm_n}, we will work with a set of generators of $N$.
	So, let $\{y_1, \ldots, y_r\}$ denote a set of generators of $N$ over $\AR^+$, and note that these set of elements also generate $\pazn_m$ over $\OSmPD$.
	Then, using \eqref{eq:horizontal_elems} for $y_i$, we may rewrite the sum in terms of the generators of $\pazn_m$ to obtain that $y_i = \tau(y_i) + \sum_{j=1}^r a_{ij} y_j$, for some $a_{ij}$ in the PD-ideal $J^{[1]}\OSmPD$.
	From these observations, note that for any $x$ in $N$, we have
	\begin{equation}\label{eq:xastauyi}
		\begin{aligned}
			x = \tau(x) + \textstyle\sum_{i=1}^r b_{1i} y_i &= \tau(x) + \textstyle\sum_{i=1}^r b_{1i} (\tau(y_i)+\sum_{j=1}^r a_{ij} y_j)\\
				&= \tau(x) + \textstyle\sum_{i=1}^r b_{1i}\tau(y_i) + \textstyle\sum_{i=1}^r b_{2i} y_i\\
				&= \tau(x) + \textstyle\sum_{i=1}^r b_{1i}\tau(y_i) + \cdots + \textstyle\sum_{i=1}^r b_{ki}\tau(y_i) + \textstyle\sum_{i=1}^r b_{(k+1),i}y_i\\
				&= \tau(x) + \textstyle\sum_{i=1}^r (\sum_{k=1}^{\infty}b_{ki}) \tau(y_i),
		\end{aligned}
	\end{equation}
	for some $b_{ki}$ in $\OSmPD$ obtained as follows: in \eqref{eq:xastauyi}, the second term is obtained by rearranging the terms of \eqref{eq:horizontal_elems}, the third term is obtained by plugging $y_i = \tau(y_i) + \sum_{j=1}^r a_{ij} y_j$ into the second term, the fourth term is a rearrangement of the third term, the fifth term is obtained by repeating the previous two steps $k \geqslant 1$ number of times and the last term is obtained by letting $k \rightarrow +\infty$.
	From the preceding description and an easy induction on $k \geqslant 1$ shows that each $b_{ki}$ is an element of $(J^{[1]}\OSmPD)^k$, for $k \geqslant 1$ and $1 \leqslant i \leqslant d$.
	As we have that $(J^{[1]}\OSmPD)^{p^n(d+1)} \subset p^n J^{[p^n(d+1)]}\OSmPD$, for all $n \in \NN$, therefore, it follows that $p\textrm{-adically}$ $b_{ki}$ goes to $0$ as $k \rightarrow +\infty$.
	Consequently, for $1 \leqslant j \leqslant r$, plugging $x = y_j$ in \eqref{eq:xastauyi}, we see that its final term converges in $\sum_{i=1}^r \OSmPD \cdot \tau(y_i) \subset \OSmPD \otimes_R D_m$.
	Since $\{y_1, \ldots y_r\}$ generate $\pazn_m$ over $\OSmPD$, therefore, from the preceding discussion we obtain that any $z = \sum_{i=1}^r c_i y_i$ in $\pazn_m[1/p]$ may be rewritten as $z = \sum_{i=1}^r b_i \tau(y_i)$, for some $b_i$ in $\OSmPD[1/p]$.
	Hence, it follows that \eqref{eq:osmpd_dm_n} is surjective, thus allowing us to conclude.
\end{proof}

	Next, let us show the comparison isomorphism in \eqref{eq:oarpd_comparison}.
	Recall that $\ODR$ is a finite projective $R[1/p]\module$ equipped with an integrable connection (see the discussion after \eqref{eq:dr_in_dcrys}).
	Moreover, $\ODR$ is equipped with a Frobenius-semilinear operator $\varphi$.
	Now, consider the following diagram:
	\begin{equation}\label{eq:oarpipd_commdiag}
		\begin{tikzcd}[row sep=15pt]
			\OARpi^{\PD} \otimes_{\varphi^m, R} D_m \arrow[r, "1 \otimes \varphi^m"] \arrow[d, "\eqref{eq:osmpd_dm_n}"', "\wr"] & \OARpi^{\PD} \otimes_R \ODR \arrow[d, "\eqref{eq:oarpd_comparison}"]\\
			\OARpi^{\PD} \otimes_{\varphi^m, \AR^+} N[1/p] \arrow[r, "\sim"] & \OARpi^{\PD} \otimes_{\AR^+} N[1/p],
		\end{tikzcd}
	\end{equation}
	where the left vertical arrow is the extension along $\varphi^m \colon \OSmPD \rightarrow \OARpi^{\PD}$ of the isomorphism \eqref{eq:osmpd_dm_n} in Lemma \ref{lem:osmpd_comp} and the bottom horizontal isomorphism follows from an argument similar to \cite[Lemma 4.46]{abhinandan-relative-wach-i}.
	By the description of the arrows, it follows that the diagram is $(\varphi, \Gamma_R)\equivariant$ and commutative.
	Taking $\Gamma_R\textrm{-invariants}$ of the diagram \eqref{eq:oarpipd_commdiag}, we obtain an isomorphism of $R[1/p]\modules$ $1 \otimes \varphi^m \colon R \otimes_{\varphi^m, R} D_m \isomorphic \ODR$.
	In particular, it follows that the top horizontal arrow of \eqref{eq:oarpipd_commdiag} is an isomorphism.
	Hence, we conclude that the right vertical arrow of \eqref{eq:oarpipd_commdiag} is bijective as well, in particular, the comparison in \eqref{eq:oarpd_comparison} is an isomorphism compatible with the respective Frobenii, connections and actions of $\Gamma_R$.
	This finishes our proof of the proposition.
\end{proof}

\begin{proof}[Proof of Theorem \ref{thm:fh_crys_relative}]
	For $r \in \NN$ large enough, note that the Wach module $\mu^r N (-r)$ is always effective and $\TR(\mu^rN(-r)) = \TR(N)(-r)$ (the twist $(-r)$ denotes a Tate twist on which $\Gamma_R$ acts via $\chi^{-r}$, where $\chi$ is the $\padic$ cyclotomic character).
	Therefore, we see that it is enough to show the claim for effective Wach modules.
	So, in the rest of the proof, we will assume that $N$ is effective.
	Let us set $\ODR \coloneq (\OARpi^{\PD} \otimes_{\AR^+} N[1/p])^{\Gamma_R} \hookrightarrow \ODcrysR(V)$, and using Proposition \ref{prop:oarpd_comparison}, we note that $\ODR$ is a finite projective $R[1/p]\module$ of rank $= \rank_{\BR^+} N[1/p]$ equipped with the tensor product Frobenius and an integrable connection induced from the integrable connection on $\OARpi^{\PD}$.
	Now, consider the following diagram:
	\begin{equation}\label{eq:fh_crys_relative_diag}
		\begin{tikzcd}[row sep=15pt]
			\OBcrys(\Rbar) \otimes_{R[1/p]} \ODR \arrow[r, "\eqref{eq:oarpd_comparison}", "\sim"'] \arrow[d, "\eqref{eq:dr_in_dcrys}"'] & \OBcrys(\Rbar) \otimes_{\BR^+} N[1/p] \arrow[d, "\eqref{eq:wachmod_comp_relative_ainf}"', "\wr"]\\
			\OBcrys(\Rbar) \otimes_{R[1/p]} \ODcrysR(V) \arrow[r] & \OBcrys(\Rbar) \otimes_{\QQ_p} V,
		\end{tikzcd}
	\end{equation}
	where the top horizontal arrow is the extension along $\OARpi^{\PD}[1/p] \rightarrow \OBcrys(\Rbar)$ of the isomorphism \eqref{eq:oarpd_comparison} in Proposition \ref{prop:oarpd_comparison}, the right vertical arrow is the extension along $A^+[1/\mu] \rightarrow \OBcrys(\Rbar)$ of the natural isomorphism in Proposition \ref{prop:wachmod_comp_relative} and the bottom horizontal arrow is obtained as the extension along $R[1/p] \rightarrow \OBcrys(\Rbar)$ of the $G_R\textrm{-invariants}$ of the bottom right corner which is naturally injective (see \cite[Proposition 8.2.6]{brinon-relatif}).
	Moreover, in \eqref{eq:fh_crys_relative_diag}, the left vertical arrow is obtained by first taking the $G_R\textrm{-invariants}$ of the composition of the right vertical arrow with the top horizontal arrow (see \eqref{eq:dr_in_dcrys}), and then extending it along the faithfully flat ring homomorphism $R[1/p] \rightarrow \OBcrys(\Rbar)$ (see \cite[Th\`eor\'eme 6.3.8]{brinon-relatif}).
	By the preceding description of the arrows, it follows that the diagram \eqref{eq:fh_crys_relative_diag} is commutative and compatible with the respective actions of $\varphi$ and $G_R$.
	Now, as the top horizontal and the right vertical arrows in \eqref{eq:fh_crys_relative_diag} are bijective, therefore, we conclude that its left vertical arrow and the bottom horizontal arrow are also bijective.
	Hence, we obtain that $V$ is a $\padic$ crystalline representation of $G_R$.
	Finally, recall that $\OBcrys(\Rbar)^{G_R} = R[1/p]$ (see \cite[Proposition 6.2.9]{brinon-relatif}) and $\ODR$ and $\ODcrys(V)$ are fixed under the action of $G_R$.
	So, by taking the $G_R\textrm{-invariants}$ of the left vertical arrow in \eqref{eq:fh_crys_relative_diag}, we obtain a natural $R[1/p]\linear$ isomorphism
	\begin{equation}\label{eq:dr_dcrys_comp}
		\ODR \isomorphic \ODcrysR(V),
	\end{equation}
	compatible with the respective Frobenii and connections.
	This concludes our proof.
\end{proof}

\begin{rem}\label{rem:relate_con_qcon}
	Let us make an observation that will be useful for the proof of Theorem \ref{thm:qdeformation_dcrys}.
	In the basis $\{\dlog(X_1), \ldots, \dlog(X_d)\}$ of $\Omega^1_R$, let $\partial_{A, i}$ denote the $i^{\textrm{th}}$ component of the $\ARpi^{\PD}\linear$ connection on $\OARpi^{\PD}$, for $1 \leqslant i \leqslant d$, and let $\partial_{D, i}$ denote the induced operator on $\ODR$.
	Note that by employing arguments similar to \cite[Lemmas 4.12, 5.17 \& 5.18]{abhinandan-syntomic}, one may show that, for $1 \leqslant i \leqslant d$, the operator $\nabla_i = (\log \gamma_i)/t = \frac{1}{t}\sum_{k \in \NN} (-1)^k \frac{(\gamma_i-1)^{k+1}}{k+1}$ converges as a series of operators on $\OARpi^{\PD} \otimes_{\AR^+} N$, which is $p\textrm{-adically}$ complete by Lemma \ref{lem:pcomplete_finprojrational}.
	Now, using \eqref{eq:horizontal_elems} and the top horizontal arrow in diagram \eqref{eq:oarpipd_commdiag}, we note that for any $x$ in $N[1/p]$, there exists $y$ in $\ODR$ and $z$ in $(\Fil^1\OARpi^{\PD}) \otimes_{\AR^+} N[1/p]$, such that $x = f(y) + z$, where $f$ is the isomorphism in \eqref{eq:oarpd_comparison}.
	Then, an easy computation shows that $\nabla_i(x) - f(\partial_{D, i}(y)) = \nabla_i(z) + \partial_{A, i}(z)$ belongs to $(\Fil^1 \OARpi^{\PD}) \otimes_{\AR^+} N[1/p]$.
\end{rem}

\section{Crystalline implies finite height}\label{sec:crystalline_finite_height}

The goal of this section is to prove the following claim and provide some applications.
\begin{thm}\label{thm:crys_fh_relative}
	Let $T$ be a finite free $\ZZ_p\representation$ of $G_R$ such that $V \coloneq T[1/p]$ is a $\padic$ crystalline representation of $G_R$.
	Then, there exists a unique Wach module $\NR(T)$ over $\AR^+$ attached to $T$.
	In other words, $T$ is of finite $\pqheight$.
\end{thm}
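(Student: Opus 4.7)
The plan is to mimic the strategy outlined in \S\ref{subsec:base_relative} and to construct $\NR(T)$ by descent from the imperfect residue field case. First, since $V$ is crystalline as a $G_R$-representation and we have a continuous homomorphism $G_L \rightarrow G_R$, base-changing the crystalline comparison isomorphism along $\OBcrys(\Rbar) \rightarrow \OBcrys(\OLbar)$ (cf.\ \S\ref{subsec:relative_padicreps}) shows that $V$ is also crystalline as a representation of $G_L$. Applying \cite[Theorem 1.5 \& Theorem 4.1]{abhinandan-imperfect-wach} in the imperfect residue field setting, we obtain a unique Wach module $\NL(T)$ over $\AL^+$ with $T \isomorphic \big(W(\Lbar^{\flat}) \otimes_{\AL^+} \NL(T)\big)^{\varphi=1}$. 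In parallel, the theory of \'etale $(\varphi, \Gamma)$-modules produces a finite projective étale $(\varphi, \Gamma_R)\module$ $\DR(T)$ over $\AR$ with $\DL(T) := \AL \otimes_{\AR} \DR(T)$ the associated $(\varphi, \Gamma_L)\module$ over $\AL$. Under the equivalence \eqref{eq:rep_phigamma_relative} applied over $L$, we get $\AL \otimes_{\AL^+} \NL(T) \isomorphic \DL(T)$, so both $\NL(T)$ and $\DR(T)$ sit naturally inside $\DL(T)$.

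Next, I would define
\begin{equation*}
	\NR(T) := \NL(T) \cap \DR(T) \subset \DL(T),
\end{equation*}
as an $\AR^+\module$, using that $\AR^+ = \AL^+ \cap \AR \subset \AL$ from \S\ref{subsec:phigamma_mod_rings}. Since the isomorphism $\Gamma_L \isomorphic \Gamma_R$ is compatible with the $\Gamma$-actions on $\NL(T)$ and $\DR(T)$, and since $\varphi$ acts on both compatibly (after inverting $\mu$), the intersection is stable under a natural $(\varphi, \Gamma_R)\action$. The heart of the proof is then to establish the two descent isomorphisms
\begin{equation*}
	\AL^+ \otimes_{\AR^+} \NR(T) \isomorphic \NL(T), \qquad \AR \otimes_{\AR^+} \NR(T) \isomorphic \DR(T),
\end{equation*}
analogous to Lemma \ref{lem:wach_intersection_lemma}. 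These would follow from the flat cover $\Spec(\AL^+) \cup \Spec(\AR) \rightarrow \Spec(\AR^+)$ together with the matching of $\NL(T)$ and $\DR(T)$ after base change to $\AL$: namely $\AL \otimes_{\AL^+} \NL(T) = \DL(T) = \AL \otimes_{\AR} \DR(T)$.

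Granted the descent isomorphisms, the Wach module axioms for $\NR(T)$ follow by descent. Finite generation over $\AR^+$ is obtained by choosing a basis of $\NL(T)$ over $\AL^+$ and a basis of $\DR(T)$ over $\AR$, writing the change-of-basis matrix inside $\DL(T)$, and arguing (as in the spirit of \cite{du-liu-moon-shimizu}) that its entries lie in $\AL^+ \cap \AR[1/\mu] \cap \AL = \AR^+[1/\mu]$ with $\mu$-denominators bounded by the $[p]_q\textrm{-height}$ of $\NL(T)$; this exhibits $\NR(T)$ between two finitely generated $\AR^+$-modules, and the identities above pin it down exactly. Strict $N\regular$ity of $\{p, \mu\}$ (equivalent to vanishing of local cohomology in degree 1 by Lemma \ref{lem:local_coh}) descends from the corresponding property of $\NL(T)$ and the $p\textrm{-torsion}$ freeness of $\DR(T)$ via the flat cover; triviality of $\Gamma_R$ on $\NR(T)/\mu$ descends from $\NL(T)/\mu$; and the Frobenius height condition ($\varphi(\mu^b \NR(T)) \subset \mu^b \NR(T)$ with cokernel killed by $[p]_q^{b-a}$) follows from the same condition on $\NL(T)$ since the map $1 \otimes \varphi$ is already an isomorphism after inverting $[p]_q$ on the étale side $\DR(T)$. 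Uniqueness of $\NR(T)$ is then immediate from Lemma \ref{lem:wach_unique_relative} (itself a consequence of Proposition \ref{prop:wach_etale_ff_relative}).

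The main obstacle will be making the descent rigorous: showing that the intersection $\NL(T) \cap \DR(T)$ is genuinely finitely generated over $\AR^+$ and that the two base-change maps above are isomorphisms, rather than mere inclusions. A naive faithfully flat descent argument is not available because $\AR^+ \rightarrow \AL^+ \times \AR$ is not itself flat; the argument must instead exploit the explicit identification $\AR^+ = \AL^+ \cap \AR$, the $(p, \mu)\textrm{-adic}$ geometry of $\Spec(\AR^+)$, and the compatibility of $\NL(T)$ and $\DR(T)$ on the overlap $\AL$, in a manner analogous to, but technically more delicate than, the Breuil--Kisin descent in \cite{du-liu-moon-shimizu}. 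Once this is settled, the remaining verifications are essentially formal.
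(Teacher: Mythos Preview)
Your overall strategy is exactly the paper's: reduce crystallinity to $G_L$, invoke \cite[Theorem 4.1]{abhinandan-imperfect-wach} to get $\NL(T)$, set $\NR(T):=\NL(T)\cap\DR(T)\subset\DL(T)$, and verify the axioms. You also correctly identify the descent step as the crux. Two points, however, need sharpening when compared with the paper's actual proof (Proposition \ref{prop:wach_module_constr} and its lemmas).

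\textbf{Finite generation.} Your integral change-of-basis argument does not work as stated. A general element of $\AL=(\AL^+[1/\mu])^{\wedge_p}$ has \emph{no} bounded $\mu$-denominator, so you cannot conclude that the inverse matrix entries lie in $\mu^{-k}\AL^+$ for a uniform $k$. The paper circumvents this by working modulo $p$ (Lemma \ref{lem:modp_fingen}): over $\EL=\AL/p$, which is a field with valuation ring $\EL^+=\AL^+/p$, the finitely many matrix entries automatically have bounded $\mu$-valuation, giving $(\NL(T)/p)\cap(\DR(T)/p)\subset\mu^{-k}M$ for an explicit free $\ER^+$-lattice $M$. (The projective-not-free case for $\DR(T)/p$ is handled by taking a complement.) One then checks $\NR(T)$ is $p$-adically complete and deduces finite generation as an inverse limit.

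\textbf{The isomorphism $\AR\otimes_{\AR^+}\NR(T)\isomorphic\DR(T)$.} You are right that naive flat descent is unavailable, and you leave this step open. The paper's key input is a Frobenius-amplitude argument modulo $p$ (Lemma \ref{lem:mcirc_props}): one considers the family $\pazs$ of all $\varphi$-stable finite-height $\ER^+$-submodules $M'\subset M_1:=(\NL(T)/p)\cap(\DR(T)/p)$ with $M'[1/\mu]=\DR(T)/p$, sets $M^{\circ}:=\bigcap_{M'\in\pazs}M'$, and shows via a cokernel computation that $\mu^{s+1}M_1\subset M'$ for every such $M'$, where $s$ is the height. Since each $f_n(M_n)\in\pazs$, this forces $M^{\circ}\subset\NR(T)/p$, whence $(\NR(T)/p)[1/\mu]=\DR(T)/p$ (Proposition \ref{prop:wach_phigamm_comp}), and the integral statement follows by $p$-completeness. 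This is precisely the ``Breuil--Kisin-style'' bound you allude to, but it is applied to the mod $p$ picture, not to an integral change-of-basis matrix.

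Finally, note that the paper does \emph{not} need $\AL^+\otimes_{\AR^+}\NR(T)\isomorphic\NL(T)$ to run the argument; this is recorded afterwards (Remark \ref{rem:wachmod_ar_al}) as a consequence of uniqueness over $\AL^+$.
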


The proof of Theorem \ref{thm:crys_fh_relative} is entirely contained in Section \ref{subsec:crys_wach_equiv_proof}.
Readers interested only in the applications may directly skip to Section \ref{subsec:crys_wach_equiv_consequence}.

\subsection{Proof of Theorem \ref{thm:crys_fh_relative}}\label{subsec:crys_wach_equiv_proof}

To prove Theorem \ref{thm:crys_fh_relative}, we need some preparations.
So, we begin with the following assumptions:
let $T$ be a finite free $\ZZ_p\representation$ of $G_R$ such that $T$ is positive and of finite $\pqheight$ as a $\ZZ_p\representation$ of $G_L$ (see Definition \ref{defi:finite_pqheight} and \cite[Definition 3.7]{abhinandan-imperfect-wach}).
In particular, naturally associated to $T$, we have an \'etale $(\varphi, \Gamma_R)\module$ $\DR(T)$ over $\AR$ and a Wach module $\NL(T)$ over $\AL^+$.
Let us note that $\DR(T)$ is a finite projective module over $\AR$ (see \cite[Lemma 7.10]{andreatta-phigamma}) and $\NL(T)$ is a finite free module over $\AL^+$ (see \cite[Definition 3.1]{abhinandan-imperfect-wach}).
A key ingredient for the proof of Theorem \ref{thm:crys_fh_relative} is the following claim:
\begin{prop}\label{prop:wach_module_constr}
	The $\AR^+\module$ $\NR(T) \coloneq \NL(T) \cap \DR(T) \subset \DL(T)$ satisfies all the axioms of Definition \ref{defi:finite_pqheight}.
	In particular, $T$ is a finite $\pqheight$ representation of $G_R$.
\end{prop}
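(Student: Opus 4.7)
The plan is to verify each axiom of Definition \ref{defi:finite_pqheight} for $\NR(T) := \NL(T) \cap \DR(T) \subset \DL(T)$ step by step, in direct parallel with the relative Breuil--Kisin argument of \cite{du-liu-moon-shimizu}. After twisting by a suitable power of the cyclotomic character, I would first reduce to the case where $T$ is positive, so that $\NL(T)$ is effective and the analogue of Proposition \ref{prop:wachmod_almost_comp} (in the imperfect residue field setting from \cite{abhinandan-imperfect-wach}) supplies the sandwich $\mu^s \DL^+(T) \subset \NL(T) \subset \DL^+(T)$ inside $\DL(T)$, where $s$ is the height.

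Next, I would equip $\NR(T)$ with natural $(\varphi, \Gamma_R)$-structures. Both $\NL(T)$ and $\DR(T)$ are stable inside $\DL(T)$ under the Frobenius $\varphi$ (after inverting $\mu$) and the action of $\Gamma_L \isomorphic \Gamma_R$, so the intersection $\NR(T)$ inherits both. Triviality of the $\Gamma_R$-action on $\NR(T)/\mu\NR(T)$ follows from the injection into $\NL(T)/\mu\NL(T)$, and the height condition $\varphi(\mu^b \NR(T)) \subset \mu^b \NL(T) \cap \DR(T) = \mu^b \NR(T)$ with the cokernel of $(1 \otimes \varphi)$ being killed by $[p]_q^{b-a}$ are inherited from $\NL(T)$ by restriction of the corresponding statements to the intersection.

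I would then establish the crucial sandwich $\mu^s \DR^+(T) \subset \NR(T) \subset \DR^+(T)$ inside $\DL(T)$. This rests on the identity $\DR^+(T) = \DL^+(T) \cap \DR(T)$ inside $\DL(T)$, which is the module analogue of $\AR^+ = \AL^+ \cap \AR$ and $\mbfb^+ = \mbfa^+ \cap \mbfa$ from Section \ref{subsec:phigamma_mod_rings}, and can be verified by passing through $H_R$-invariants of the intersection $\mbfa^+ \otimes T = (\Ainf(\Rbar) \otimes T) \cap (\mbfa \otimes T)$ inside $\tilde{\mbfa} \otimes T$. Since $\DR^+(T)$ is finitely generated over the noetherian ring $\AR^+$ (via the classical theory of $(\varphi, \Gamma)$-modules over $\AR^+$), both ends of the sandwich are finitely generated, and hence so is $\NR(T)$. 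Strict $\{p, \mu\}$-regularity is checked next: $p$-torsion freeness follows from $\NR(T) \subset \DR(T)$ (which is projective over $\AR$), $\mu$-torsion freeness follows from $\NR(T) \subset \NL(T)$ (which is finite free over $\AL^+$), and the two remaining conditions $(\NR(T)/\mu)[p] = 0$ and $(\NR(T)/p)[\mu] = 0$ are verified via the vanishing $H^1(\calc^{\bullet}) = 0$ of Lemma \ref{lem:local_coh}, through a diagram chase combining the sandwich with the corresponding regularity for $\NL(T)$ and $\DR^+(T)$.

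Finally, for the axiom $\AR \otimes_{\AR^+} \NR(T) \isomorphic \DR(T)$: injectivity of the natural map follows from flatness of $\AR^+ \to \AR$ together with the intersection definition, while surjectivity follows from the left half of the sandwich, since $\mu^s \DR(T) \subset \AR \otimes_{\AR^+} \NR(T)$ and $\mu$ is a unit in $\AR$. Uniqueness of $\NR(T)$ is then immediate from Proposition \ref{prop:wach_etale_ff_relative}. The main obstacle I anticipate is the intersection identity $\DR^+(T) = \DL^+(T) \cap \DR(T)$: it is morally a straightforward consequence of the ring-level identity $\AR^+ = \AL^+ \cap \AR$, but the passage to $H_R$ and $H_L$-invariants requires careful compatibility arguments and is the technical heart of the construction.
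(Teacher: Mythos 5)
Your overall strategy (verify the axioms one at a time for the intersection $\NL(T)\cap\DR(T)$) matches the paper's, and several individual steps are essentially what the paper does: the $(\varphi,\Gamma_R)$-structure on the intersection, triviality of $\Gamma_R$ on $\NR(T)/\mu\NR(T)$ via $(g-1)\NR(T)\subset\mu\NL(T)\cap\DR(T)=\mu\NR(T)$, and the inheritance of the height bound (though the latter also needs flatness of $\varphi$ on $\AR^+$ to commute $\varphi^{*}$ with the intersection, as in Lemma \ref{lem:finite_height}). However, your two central steps are routed through the sandwich $\mu^s\DR^+(T)\subset\NR(T)\subset\DR^+(T)$, and both have genuine gaps. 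First, to conclude finite generation you need $\DR^+(T)=\DL^+(T)\cap\DR(T)$ to be finitely generated over $\AR^+$; this is not supplied by ``the classical theory'' --- it is an intersection of exactly the same shape as $\NL(T)\cap\DR(T)$ itself, and the paper has to prove finite generation of such intersections by hand (Lemma \ref{lem:modp_fingen}: a mod-$p^n$ change-of-basis argument bounding the intersection inside $\mu^{-k}$ times a lattice, followed by a Mittag--Leffler argument to recover the limit). Second, and more seriously, your surjectivity argument for $\AR\otimes_{\AR^+}\NR(T)\to\DR(T)$ is circular: the left half of the sandwich only gives $\AR\otimes_{\AR^+}\NR(T)\supset\AR\otimes_{\AR^+}\DR^+(T)$, and the assertion that this equals $\DR(T)$ --- i.e.\ that $\DR^+(T)$ generates $\DR(T)$ over $\AR$ --- is essentially the finite $\pqheight$ property of $T$ over $R$ that the proposition is meant to establish. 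Nothing in the hypotheses (finite height over $L$, crystallinity) gives this for free: the intersection $\DL^+(T)\cap\DR(T)$ could a priori be too small to span $\DR(T)$ after inverting $\mu$.

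The paper closes exactly this gap with a separate argument (Lemma \ref{lem:mcirc_props} and Proposition \ref{prop:wach_phigamm_comp}, modelled on \cite[Lemma 4.25]{du-liu-moon-shimizu}): working modulo $p$, one considers the family of $\varphi$-stable, finite-height submodules $M'\subset M_1=(\NL(T)/p)\cap(\DR(T)/p)$ with $M'[1/\mu]=\DR(T)/p$, shows via a $\psi$-operator computation that any such $M'$ satisfies $\mu^{s+1}M_1\subset M'$, concludes that the intersection $\Mcirc$ of all of them still spans $\DR(T)/p$ after inverting $\mu$, and finally checks $\Mcirc\subset\NR(T)/p$ using that each $f_n(M_n)$ lies in this family. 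If you want to salvage your approach, you must supply an argument of this kind; the sandwich alone does not yield surjectivity, and it also does not independently yield finite generation.
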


The proof of Proposition \ref{prop:wach_module_constr} requires some preparations which we carry out next.

\begin{lem}\label{lem:modp_fingen}
	The $\AR^+\module$ $\NR(T) \coloneq \NL(T) \cap \DR(T)$ is finitely generated.
\end{lem}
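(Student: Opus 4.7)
The plan is to exhibit a finitely generated $\AR^+$-submodule of $\DL(T)$ containing $\NR(T)$, and then invoke the noetherianity of $\AR^+ \cong R\llbracket \mu \rrbracket$. The natural candidate is $\DR^+(T) := (\mbfa^+ \otimes_{\ZZ_p} T)^{H_R}$, which is finitely generated over $\AR^+$ by the relative $(\varphi, \Gamma)$-module theory (see e.g.\ \cite{andreatta-phigamma} and \cite[Proposition 4.13]{abhinandan-relative-wach-i}).

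First I would reduce to the case where the Wach module $\NL(T)$ over $\AL^+$ is effective by twisting $T$ by a sufficiently high power of the cyclotomic character $\chi$; since this twist commutes with the intersection defining $\NR(T)$, it suffices to prove finite generation under this assumption. Next I would establish the key inclusion $\NR(T) \subset \DR^+(T)$. Applying the imperfect-residue-field analogue of Proposition \ref{prop:wachmod_almost_comp}, namely \cite[Proposition 3.10]{abhinandan-imperfect-wach}, to the effective Wach module $\NL(T)$ gives $\NL(T) \subset \mbfa^+_L \otimes_{\ZZ_p} T$, where $\mbfa^+_L$ is the analog of $\mbfa^+$ built from $\OLbar$ in place of $\Rbar$. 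Intersecting with $\DR(T) \subset \mbfa \otimes_{\ZZ_p} T$ inside the common ambient ring $\mbfa_L \otimes_{\ZZ_p} T$, and using the compatibility $\mbfa^+_L \cap \mbfa = \mbfa^+$ (which will follow from the tilt-level identity $\CC(\Rbar)^\flat \cap \OLbar^\flat = \Rbar^\flat$ together with the left exactness of the Witt-vector functor), one obtains
\begin{equation*}
\NR(T) \subset (\mbfa^+ \otimes_{\ZZ_p} T) \cap (\mbfa \otimes_{\ZZ_p} T)^{H_R} = (\mbfa^+ \otimes_{\ZZ_p} T)^{H_R} = \DR^+(T),
\end{equation*}
so that finite generation of $\NR(T)$ over $\AR^+$ follows at once from noetherianity.

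As a byproduct, I would also derive the embedding $\NR(T)/p \hookrightarrow (\NL(T)/p) \cap (\DR(T)/p) \subset \DL(T)/p$ used later in the proof of Proposition \ref{prop:wach_module_constr}. The crucial point is the injection $\DR(T)/p \hookrightarrow \DL(T)/p$, which (via $\AR$-flatness of $\DR(T)$) reduces to injectivity of $\AR/p \hookrightarrow \AL/p$. Under the identifications $\AR/p \cong (R/p)\llbracket \mu \rrbracket[1/\mu]$ and $\AL/p \cong \kappa(R/p)((\mu))$, this in turn holds because $R/p$ is a domain (geometrically integral special fiber) with fraction field $\kappa(R/p) \cong O_L/p$; combined with the obvious injection $\NL(T)/p \hookrightarrow \DL(T)/p$ (from $\AL^+$-freeness of $\NL(T)$), the claimed inclusion follows.

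The main obstacle is the intersection identity $\mbfa^+_L \cap \mbfa = \mbfa^+$ inside the ambient Witt-vector ring: carefully verifying it requires tracking several layers of embeddings between $\Ainf(\Rbar)$, $\Ainf(\OLbar)$, and the maximal unramified extensions of $\AR$ and $\AL$, and reducing the statement at the tilt level to a statement about subrings of $\Cp^\flat$. Once this identification is in place, the rest of the proof is routine.
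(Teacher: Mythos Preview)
Your overall strategy---embed $\NR(T)$ into the finitely generated module $\DR^+(T)$ and invoke noetherianity of $\AR^+$---is attractive, but the key intersection identity has a genuine gap. The claimed equality $\CC(\Rbar)^\flat \cap \OLbar^\flat = \Rbar^\flat$ (and hence $\mbfa_L^+ \cap \mbfa = \mbfa^+$) is not available: the fixed embedding $\Rbar \hookrightarrow \OLbar$ singles out \emph{one} prime $\frakp$ in the set $\pazs$ of minimal primes of $\Rbar$ above $pR$, and integrality at that single prime does not force membership in $\Rbar^\flat$. Indeed the paper explicitly notes (just before \eqref{eq:cplus_gequiv}) that already the untilted map $\CC^+(\Rbar) \to \Cplusp$ for a single $\frakp$ ``need not be injective'', so the intersection inside $\Cp^\flat$ cannot even be set up as you describe; this is precisely why Lemma~\ref{lem:ainf_intersection} requires the product over \emph{all} $\frakp \in \pazs$. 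Your argument implicitly assumes $\vert\pazs\vert = 1$, which is neither asserted nor proved. A secondary issue is that finite generation of $\DR^+(T)$ over $\AR^+$ is, in the relative setting, not obviously available at this stage without circularity (in the paper it is a \emph{consequence} of Proposition~\ref{prop:wachmod_almost_comp}, which in turn needs the Wach module).

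The paper's proof avoids both problems by never leaving $\DL(T)$: it works modulo $p^n$, shows by an elementary change-of-basis argument over $\ER^+ = \AR^+/p$ that $(\NL(T)/p) \cap (\DR(T)/p)$ lies in $\mu^{-k}$ times a finite free lattice, and then uses a Mittag--Leffler argument to conclude that $\NR(T) \isomorphic \lim_n \NR(T)/p^n$ is finitely generated. Your derivation of the embedding $\NR(T)/p \hookrightarrow (\NL(T)/p) \cap (\DR(T)/p)$ via $\AR/p \hookrightarrow \AL/p$ is correct and essentially matches the paper's first step; what is missing is a replacement for the faulty single-prime intersection, and the paper's mod-$p$ matrix argument is the cleanest way to fill that gap.
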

\begin{proof}
	We first claim that for each $n \in \NN_{\geqslant 1}$, the following natural $\AR^+/p^n\linear$ map is injective:
	\begin{equation*}
		\NR(T)/p^n \longrightarrow (\NL(T)/p^n) \cap (\DR(T)/p^n) \subset \DL(T)/p^n,
	\end{equation*}
	and the $\AR^+/p^n\module$ $(\NL(T)/p^n) \cap (\DR(T)/p^n)$ is finitely generated.
	As $\NR(T)$, $\NL(T)$ and $\DR(T)$ are $p\textrm{-torsion free}$, therefore, it is enough to show the claim for $n=1$ and the claim for any $n > 1$ may be deduced by an easy induction.
	So, we are reduced to showing that the following natural map is injective:
	\begin{equation*}
		\NR(T)/p \longrightarrow (\NL(T)/p) \cap (\DR(T)/p) \subset \DL(T)/p,
	\end{equation*}
	and the module $(\NL(T)/p) \cap (\DR(T)/p)$ is finitely generated over $\ER^+ \coloneq \AR^+/p$.
	We have that $p \NL(T) \cap \NR(T) \subset p \DL(T) \cap \DR(T) = p \DR(T)$, and so we get that
	\begin{equation*}
		p\NL(T) \cap \NR(T) \subset p \NL(T) \cap p \DR(T) = p\NR(T),
	\end{equation*}
	in particular, $\NR(T)/p \hookrightarrow \NL(T)/p$ by Lemma \ref{lem:injectivity_modulo}.
	Similarly, we have that $p \DR(T) \cap \NR(T) \subset p \DL(T) \cap \NL(T) = p \NL(T)$, so we get that 
	\begin{equation*}
		p\DR(T) \cap \NR(T) \subset p \DR(T) \cap p \NL(T) = p\NR(T),
	\end{equation*}
	in particular, $\NR(T)/p \hookrightarrow \DR(T)/p$ by Lemma \ref{lem:injectivity_modulo}.

	Next, we will show that $(\NL(T)/p) \cap (\DR(T)/p)$ is a finitely generated $\ER^+\module$, where $\ER^+ = \AR^+/p$.
	Set $\ER \coloneq \AR/p = \ER^+[1/\mu]$, where we abuse notations by writing $\mu$ for its image modulo $p$, and set $\EL^+ \coloneq \AL^+/p$ and $\EL \coloneq \AL/p = \EL^+[1/\mu]$.
	Also, set $\DRbar \coloneq \DR(T)/p$, $\NLbar \coloneq \NL(T)/p$ and $\DLbar \coloneq \DL(T)/p$.
	Note that $\DRbar$ is a finite projective $\ER\module$, so we may choose an $\ER\module$ $D'$ such that $\DRbar \oplus D' = \ER^{\oplus r}$, for some $r \in \NN$.
	Moreover, we have that $\DLbar \isomorphic \EL \otimes_{\ER} \DRbar$ and by setting $D_L' \coloneq \EL \otimes_{\ER} D'$, we get that $\DLbar \oplus D_L' = \EL^{\oplus r}$.
	As $\EL$ is a discrete valuation field with ring of integers $\EL^+$, therefore, we may choose a lattice of $D_L'$ over $\EL^+$, i.e.\ there exists a free $\EL^+\textrm{-submodule}$ $N_L' \subset D_L'$ such that $N_L'[1/\mu] = D_L'$.
	So, we see that $\NLbar \oplus N_L'$ is a free $\EL^+\module$ such that $\EL \otimes_{\EL^+} (\NLbar \oplus N_L') = \DLbar \oplus D_L' = \EL^{\oplus r}$.
	Now, inside $\EL^{\oplus r}$, consider the following inclusion of $\ER^+\modules$:
	\begin{equation}\label{eq:modp_fingen}
		(\DRbar \cap \NLbar) \oplus (D' \cap N_L') \subset (\DRbar \oplus D') \cap (\NLbar \oplus N_L') = \ER^{\oplus r} \cap (\NLbar \oplus N_L').
	\end{equation}
	Then, from Lemma \ref{lem:modp_freecap_fingen} below, we see that the last term in \eqref{eq:modp_fingen} is a finite $\ER^+\module$.
	Therefore, it follows that $\DRbar \cap \NLbar$ is also a finite $\ER^+\module$.

	To complete our proof, it remains to show that $\NR(T)$ is $p\textrm{-adically}$ complete.
	From the claim above note that, for all $n \in \NN_{\geqslant 1}$, $\NR(T)/p^n$ is a finitely generated $\AR^+/p^n\module$.
	Since $\AR^+$ is noetherian, therefore, for each $n \in \NN$ and $r \in \NN$ as in the previous paragraph, we have a presentation $0 \rightarrow M_n \rightarrow (\AR^+/p^n)^{\oplus r} \rightarrow \NR(T)/p^n \rightarrow 0$, where $M_n$ is a finitely generated $\AR^+/p^n\module$.
	By taking a finite presentation of $M_n$ as an $\AR^+/p^n\module$, it is easy to see that the system $\{M_n\}_{n \in \NN_{\geqslant 1}}$ is Mittag-Leffler.
	In particular, it follows that $\lim_n \NR(T)/p^n$ is a finitely generated module over $\lim_n \AR^+/p^n = \AR^+$.
	Now, from the discussion above, we have the following natural $\AR^+/p^n\linear$ commutative diagram with injective arrows:
	\begin{center}
		\begin{tikzcd}[row sep=15pt]
			\NR(T)/p^n & \NL(T)/p^n \cap \DR(T)/p^n & \DR(T)/p^n\\
			& \NL(T)/p^n & \DL(T)/p^n.
			\arrow[from=1-1, to=1-2]
			\arrow[from=1-2, to=1-3]
			\arrow[from=1-2, to=2-2]
			\arrow[from=1-3, to=2-3]
			\arrow[from=2-2, to=2-3]
		\end{tikzcd}
	\end{center}
	As the collection of the diagram above, for $n \in \NN$, form an inverse system, therefore, by taking the limit over $n$ we obtain the following $\AR^+\linear$ natural commutative diagram with injective arrows:
	\begin{equation}\label{eq:intersection_limit}
		\begin{tikzcd}[row sep=15pt]
			\lim_n \NR(T)/p^n & \lim_n (\NL(T)/p^n \cap \DR(T)/p^n) & \DR(T)\\
			& \NL(T) & \DL(T),
			\arrow[from=1-1, to=1-2]
			\arrow[from=1-2, to=1-3]
			\arrow[from=1-2, to=2-2]
			\arrow[from=1-3, to=2-3]
			\arrow[from=2-2, to=2-3]
		\end{tikzcd}
	\end{equation}
	where we have used that $\DR(T)$, $\NL(T)$ and $\DL(T)$ are $p\textrm{-adically}$ complete.
	Now, consider the composition of the following natural $\AR^+\linear$ maps:
	\begin{equation*}
		\begin{aligned}
			f \colon \NR(T) \longrightarrow \lim_n \NR(T)/p^n &\longrightarrow \lim_n ((\NL(T)/p^n) \cap (\DR(T)/p^n))\\
					&\longrightarrow \NL(T) \cap \DR(T) = \NR(T),
		\end{aligned}
	\end{equation*}
	where the first map is the natural projection map, the second map is the top left injective horizontal arrow in \eqref{eq:intersection_limit} and the third map is induced by the injectivity of the top right horizontal arrow and the left vertical arrow in \eqref{eq:intersection_limit} (note that $\NL(T) \cap \DR(T) \subset \DL(T)$ is well defined), in particular, the third map is injective as well.
	Chasing an element of $x$ in $\NR(T)$ through the composition above, we see that $f(x) = x$.
	Hence, we get that $\NR(T) \isomorphic \lim_n \NR(T)/p^n$, in particular, it is a finitely generated $\AR^+\module$.
\end{proof}

The following observation was used above:
\begin{lem}\label{lem:modp_freecap_fingen}
	Let $D_R$ be a finite free module over $\ER \coloneq \AR/p$ and $N_L$ a finite free module over $\EL^+ \coloneq \AL^+/p$ such that $\EL \otimes_{\ER} D_R \isomorphic \EL \otimes_{\EL^+} N_L$, where $\EL \coloneq \AL/p$.
	Then, the module $D_R \cap N_L \subset \EL \otimes_{\ER} D_R$ is finitely generated over $\ER^+ \coloneq \AR^+/p$.
\end{lem}
\begin{proof}
	Let $r \in \NN$ be the $\ER\textrm{-rank}$ of $D_R$.
	Take $\smbff \coloneq \{f_1, \ldots, f_r\}$ to be a basis of $D_R$ over $\ER$ and take $\smbfe \coloneq \{e_1, \ldots, e_r\}$ to be a basis of $N_L$ over $\EL^+$.
	Then, we have that $\smbfe = A \smbff$, for some $A \coloneq (a_{i j}) \in \GL(r, \EL)$.
	In particular, there exists some integer $k \gg 0$ such that $a_{i j}$ is in $\mu^{-k} \EL^+$, for all $1 \leqslant i, j \leqslant r$; we fix such a $k$.
	Set $M \coloneq \oplus_{i=1}^r \ER^+ f_i$, so that $M[1/\mu] = D_R$.
	Let $x$ be any element of the $\ER^+\module$ $M[1/\mu] \cap N_L = D_R \cap N_L \subset D_L \coloneq \EL \otimes_{\ER} D_R$, and write 
	\begin{equation*}
		x = \textstyle\sum_{i=1}^r c_i e_i = \textstyle\sum_{i=1}^r d_i f_i,
	\end{equation*}
	with $c_i$ in $\EL^+$ and $d_i$ in $\ER$, for all $1 \leqslant i \leqslant r$.
	Using that $\smbfe = A\smbff$, we obtain that $d_i = \sum_{j=1}^r a_{i j} c_j$, for all $1 \leqslant i \leqslant r$.
	As $c_j$ is in $\EL^+$, and $a_{i j}$ is in $\mu^{-k} \EL^+$, therefore, it follows that $d_i$ is in $\mu^{-k} \EL^+$, for all $1 \leqslant i \leqslant r$.
	In particular, we get that $d_i$ is in $\mu^{-k} \EL^+ \cap \ER \subset \EL$.
	Now, consider the following composition:
	\begin{equation*}
		\ER^+/(\mu^k) \isomorphic (R/p)\llbracket \mu \rrbracket/(\mu^k) \hookrightarrow (O_L/p)\llbracket \mu \rrbracket/(\mu^k) \lisomorphic \EL^+/(\mu^k),
	\end{equation*}
	where for the first (resp.\ last) isomorphism we used that $\AR^+ \isomorphic R\llbracket \mu \rrbracket$ (resp.\ $\AL^+ \isomorphic O_L\llbracket \mu \rrbracket$) and the map in the middle is injective because $O_L/p = \Fr(R/p)$.
	Therefore, from Lemma \ref{lem:injectivity_modulo}, we get that $\mu^{-k} \EL^+ \cap \ER = \mu^{-k} \ER^+$.
	So, it follows that $d_i$ is in $\mu^{-k}\ER^+$, and thus $x = \sum_{i=1}^r d_i f_i$ is in $\mu^{-k} M$.
	As $x$ is arbitrary and $k \in \NN$ is independent of $x$, therefore, we conclude that $M[1/\mu] \cap N_L \subset \mu^{-k} M$ as modules over the noetherian ring $\ER^+$.
	Hence, it follows that $M[1/\mu] \cap N_L = D_R \cap N_L$ is finitely generated over $\ER^+$.
\end{proof}

\begin{lem}\label{lem:finite_height}
	The $\AR^+\module$ $\NR(T)$ is of finite $\pqheight$, i.e.\ the cokernel of the injective map $1 \otimes \varphi \colon \varphi^*(\NR(T)) \rightarrow \NR(T)$ is killed by $[p]_q^s$, for some $s \in \NN$.
\end{lem}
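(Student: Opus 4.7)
The plan is to exploit the intersection definition $\NR(T) = \NL(T) \cap \DR(T)$ inside $\DL(T)$, together with the known Frobenius structures on $\NL(T)$ (cokernel of $1\otimes\varphi$ killed by $[p]_q^s$, where $s$ denotes the height of $\NL(T)$) and on $\DR(T)$ ($1\otimes\varphi$ an isomorphism, by étaleness). I will prove the identity of $\AR^+$-submodules of $\DL(T)$,
\[
	\AR^+ \cdot \varphi(\NR(T)) \;=\; \AL^+ \cdot \varphi(\NL(T)) \,\cap\, \DR(T),
\]
from which the lemma follows at once: any $x \in \NR(T)$ satisfies $[p]_q^s x \in \AL^+ \varphi(\NL(T))$ by the Wach property of $\NL(T)$ and $[p]_q^s x \in \DR(T)$ automatically, hence $[p]_q^s x \in \AR^+ \varphi(\NR(T))$, which is exactly the finite $[p]_q$-height assertion.

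To prove the identity, note that $\DL(T)$ is étale over $\AL$, so the linearised Frobenius $1\otimes\varphi : \varphi^*\DL(T) \isomorphic \DL(T)$ is an isomorphism. Therefore every $y \in \AL^+\varphi(\NL(T)) \cap \DR(T)$ has a unique preimage $u \in \varphi^*\DL(T)$, which necessarily lies in $\varphi^*\NL(T) \cap \varphi^*\DR(T)$. So the identity for images reduces to the identity of Frobenius-pullbacks
\[
	\varphi^*\NR(T) \;=\; \varphi^*\NL(T) \,\cap\, \varphi^*\DR(T) \qquad \textrm{inside } \varphi^*\DL(T),
\]
which is an analogue of Lemma \ref{lem:wach_intersection_lemma} for the pullback. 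Since $\varphi : \AR^+ \to \AR^+$ is faithfully flat (\S\ref{subsec:phigamma_mod_rings}), the pullback $\varphi^*\NR(T)[1/p]$ remains finite projective over $\BR^+$ (by Proposition \ref{prop:wachmod_proj_pmu}) and satisfies the saturation $\varphi^*\NR(T) = \varphi^*\NR(T)[1/p] \cap \varphi^*\NR(T)[1/\mu]^{\wedge}$ (by Lemma \ref{lem:wach_saturated}). Combining this with the base-change identifications $\AR\otimes_{\AR^+}\varphi^*\NR(T) \isomorphic \varphi^*\DR(T)$ and $\AL^+\otimes_{\AR^+}\varphi^*\NR(T) \isomorphic \varphi^*\NL(T)$ (obtained by applying $\varphi^*$ to the base-change identifications supplied by Proposition \ref{prop:wach_phigamm_comp}) then yields the pullback identity by the method of Lemma \ref{lem:wach_intersection_lemma}.

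The main obstacle is the faithful transfer of the module-theoretic properties of $\NR(T)$ through the Frobenius pullback, and verifying that Proposition \ref{prop:wach_phigamm_comp} indeed supplies both base-change identifications $\AL^+\otimes_{\AR^+}\NR(T)\isomorphic \NL(T)$ and $\AR\otimes_{\AR^+}\NR(T)\isomorphic \DR(T)$ at this stage of the argument. If only the $\AR$-identification is immediately available, one can argue alternatively by working modulo $p^n$: using Lemma \ref{lem:modp_fingen}, which yields $\NR(T)/p^n \subset (\NL(T)/p^n) \cap (\DR(T)/p^n)$, one establishes the finite height statement modulo each $p^n$ by the same intersection argument, and then invokes $p$-adic completeness of $\NR(T)$ (Lemma \ref{lem:modp_fingen}) to lift to the integral statement.
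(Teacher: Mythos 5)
Your overall skeleton matches the paper's: reduce the finite‑height claim to the identity $\varphi^*(\NR(T)) = \varphi^*(\NL(T)) \cap \varphi^*(\DR(T))$ inside $\varphi^*(\DL(T))$, then conclude from the $[p]_q^s$‑bound on the cokernel for $\NL(T)$ and the isomorphism $1\otimes\varphi : \varphi^*(\DR(T)) \isomorphic \DR(T)$, using injectivity of $1\otimes\varphi$ on $\varphi^*(\DL(T))$ to match preimages. That reduction and the final deduction are correct.

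The gap is in your justification of the pullback identity, which as written is circular. You derive it from the base‑change identifications $\AL^+\otimes_{\AR^+}\NR(T)\isomorphic\NL(T)$ and $\AR\otimes_{\AR^+}\NR(T)\isomorphic\DR(T)$, finite projectivity of $\NR(T)[1/p]$ over $\BR^+$ (Proposition \ref{prop:wachmod_proj_pmu}), and the saturation of Lemma \ref{lem:wach_saturated}, ``by the method of Lemma \ref{lem:wach_intersection_lemma}''. None of these is available at this stage: Proposition \ref{prop:wach_phigamm_comp} is proved only after Lemma \ref{lem:finite_height} (its proof runs through Lemmas \ref{lem:Mn_large} and \ref{lem:mcirc_props}, where the finite‑height statement is an input); the identification $\AL^+\otimes_{\AR^+}\NR(T)\isomorphic\NL(T)$ appears only in Remark \ref{rem:wachmod_ar_al} as a consequence of the completed construction; and finite projectivity of $\NR(T)[1/p]$ rests on Proposition \ref{prop:finiteproj_torus}, whose hypothesis is exactly the finite‑height property you are trying to prove. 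Your mod‑$p^n$ fallback does not repair this, since Lemma \ref{lem:modp_fingen} gives only an inclusion $\NR(T)/p^n\subset(\NL(T)/p^n)\cap(\DR(T)/p^n)$, not an equality, so ``the same intersection argument'' mod $p^n$ yields the height bound for $M_n=\NLn\cap\DRn$ rather than for $\NR(T)/p^n$. The correct justification is much simpler and is what the paper uses: $\varphi:\AR^+\to\AR^+$ is (faithfully) flat, and flat base change commutes with finite intersections of submodules, so $\AR^+\otimes_{\varphi,\AR^+}(\NL(T)\cap\DR(T)) = (\AR^+\otimes_{\varphi,\AR^+}\NL(T))\cap(\AR^+\otimes_{\varphi,\AR^+}\DR(T))$ inside $\AR^+\otimes_{\varphi,\AR^+}\DL(T)$; one then identifies $\AR^+\otimes_{\varphi,\AR^+}\NL(T)\isomorphic\varphi^*(\NL(T))$ and $\AR^+\otimes_{\varphi,\AR^+}\DR(T)\isomorphic\varphi^*(\DR(T))$ via the explicit decompositions $\varphi^*(\AL^+)\isomorphic\AR^+\otimes_{\varphi,\AR^+}\AL^+$ and $\varphi^*(\AR)\isomorphic\AR^+\otimes_{\varphi,\AR^+}\AR$ from \S\ref{subsec:phigamma_mod_rings}. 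No projectivity, saturation, or base‑change property of $\NR(T)$ itself is needed.
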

\begin{proof}
	Note that $\varphi \colon \AR^+ \rightarrow \AR^+$ is faithfully flat and finite of degree $p^{d+1}$ (see Section \ref{subsec:ainf_relative}).
	Moreover, from Section \ref{subsec:ainf_relative} we have that $\varphi^*(\AR) \isomorphic \AR^+ \otimes_{\varphi, \AR^+} \AR$ and 
	\begin{equation*}
		\varphi^*(\AL^+) \coloneq \AL^+ \otimes_{\varphi, \AL^+} \AL^+ \isomorphic  \oplus_{\alpha} \varphi(\AL^+) u_{\alpha} = (\oplus_{\alpha} \varphi(\AR^+) u_{\alpha}) \otimes_{\varphi(\AR^+)} \varphi(\AL^+) \lisomorphic \AR^+ \otimes_{\varphi, \AR^+} \AL^+.
	\end{equation*}
	Therefore, we also obtain that
	\begin{align*}
		\varphi^*(\NL(T)) \coloneq \AL^+ \otimes_{\varphi, \AL^+} \NL(T) &\isomorphic \AR^+ \otimes_{\varphi, \AR^+} \NL(T),\\
		\varphi^*(\DR(T)) \coloneq \AR \otimes_{\varphi, \AR} \DR(T) &\isomorphic \AR^+ \otimes_{\varphi, \AR^+} \DR(T).
	\end{align*}
	So, it follows that, as $\AR^+\textrm{-submodules}$ of $\varphi^*(\DL(T))$, we have that 
	\begin{align*}
		\varphi^*(\NR(T)) &\coloneq \AR^+ \otimes_{\varphi, \AR^+} \NR(T) = \AR^+ \otimes_{\varphi, \AR^+} (\NL(T) \cap \DR(T))\\
			&\hspace{1mm}= (\AR^+ \otimes_{\varphi, \AR^+} \NL(T)) \cap (\AR^+ \otimes_{\varphi, \AR^+} \DR(T))\\
			&\isomorphic \varphi^*(\NL(T)) \cap \varphi^*(\DR(T)) \subset \varphi^*(\DL(T)).
	\end{align*}
	Now, let $x$ be an element of $\NR(T) = \NL(T) \cap \DR(T)$.
	As the cokernel of the injective map $(1 \otimes \varphi) \colon \varphi^*(\NL(T)) \rightarrow \NL(T)$ is killed by $[p]_q^s$, for some $s \in \NN$, therefore, there exists some $y$ in $\varphi^*(\NL(T))$ such that $(1 \otimes \varphi) y = [p]_q^s x$.
	Moreover, we have a natural isomorphism $(1 \otimes \varphi) \colon \varphi^*(\DR(T)) \isomorphic \DR(T)$, so it follows that there exists some $z$ in $\varphi^*(\DR(T))$ such that $(1 \otimes \varphi) z = [p]_q^s x$.
	As the Frobenius structure on $\NL(T)$ and $\DR(T)$ are compatible (since both have a Frobenius-equivariant embedding into $\DL(T)$), so it follows that $(1 \otimes \varphi)(y-z) = 0$ in $\DL(T)$.
	But, we also have that $(1 \otimes \varphi) \colon \varphi^*(\DL(T)) \isomorphic \DL(T)$, therefore, we conclude that $y = z$ is in $\varphi^*(\NL(T)) \cap \varphi^*(\DR(T)) \isomorphic \varphi^*(\NR(T))$.
	Hence, the cokernel of the injective map $(1 \otimes \varphi) \colon \varphi^*(\NR(T)) \rightarrow \NR(T)$ is killed by $[p]_q^s$.
\end{proof}

Finally, we will show that $\AL^+ \otimes_{\AR^+} \NR(T) \isomorphic \NL(T)$ and $\AR \otimes_{\AR^+} \NR(T) \isomorphic \DR(T)$, using an approach parallel to \cite[Proposition 4.24 \& Lemma 4.25]{du-liu-moon-shimizu}.
For $n \in \NN_{\geqslant 1}$, let $\NRn \coloneq \NR(T)/p^n$, $\DRn \coloneq \DR(T)/p^n$, $\NLn \coloneq \NL(T)/p^n$, $\DLn \coloneq \DL(T)/p^n$ and $M_n \coloneq \NLn \cap \DRn \subset \DLn$.
Then, have the following commutative diagram:
\begin{center}
	\begin{tikzcd}[row sep=15pt]
		M_n \arrow[r, "f_n"] \arrow[d] & M_1 \arrow[d]\\
		\DRn \arrow[r, twoheadrightarrow, "f_n"] & D_{R,1},
	\end{tikzcd}
\end{center}
where the vertical arrows are natural inclusions, the bottom horizontal arrow $f_n$ is the natural projection map and the top arrow is the induced map.
We have a similar diagram with the bottom row replaced by $\NLn \twoheadrightarrow N_{L, 1}$.

\begin{lem}\label{lem:Mn_large}
	The following statements are true:
	\begin{enumarabicup}
		\item $M_n$ is a finitely generated $\AR^+/p^n\module$ and $\NR(T) \isomorphic \lim_n M_n$.
	
		\item $M_n$ is of finite $\pqheight$ $s$, for $s \in \NN$ from Lemma \ref{lem:finite_height}.
	
		\item $M_n[1/\mu] = \AR \otimes_{\AR^+} M_n \isomorphic \DRn$ and $\AL^+ \otimes_{\AR^+} M_n \isomorphic \NLn$.
	\end{enumarabicup}
\end{lem}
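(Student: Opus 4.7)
For part (1), finite generation of $M_n$ over $\AR^+/p^n$ is exactly what was established in the proof of Lemma~\ref{lem:modp_fingen}: after reducing to $n = 1$ by dévissage along the $p$-adic filtration (using $p$-torsion-freeness of the three modules involved), we choose a finite projective presentation $\overline{D}_R \oplus D' = \ER^{\oplus k}$, lift to a free complement over $\EL^+$, and bound $\overline{D}_R \cap \overline{N}_L$ inside $\mu^{-k}\ER^{\oplus k}$ using $\mu^{-k}\EL^+ \cap \ER = \mu^{-k}\ER^+$. For the limit identification $\NR(T) \isomorphic \lim_n M_n$, I use the inclusions $\NR(T)/p^n \hookrightarrow M_n \hookrightarrow \NLn$ from Lemma~\ref{lem:modp_fingen} together with the $p$-adic completeness of $\NR(T)$, $\NL(T)$, $\DR(T)$: the sandwich $\NR(T) \isomorphic \lim_n \NR(T)/p^n \subset \lim_n M_n \subset \lim_n \NLn \cap \lim_n \DRn \isomorphic \NL(T) \cap \DR(T) = \NR(T)$ forces all inclusions to be equalities after passing to the limit.

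For part (2), the key identity is
\begin{equation*}
	\varphi^{\ast}(M_n) \isomorphic \varphi^{\ast}(\NLn) \cap \varphi^{\ast}(\DRn) \subset \varphi^{\ast}(\DLn),
\end{equation*}
which follows from the faithful flatness of $\varphi$ on $\AR^+$ (this is the same mechanism as in the proof of Lemma~\ref{lem:finite_height}) together with the fact that $\varphi^{\ast}$ commutes with mod-$p^n$ reduction since all modules are $p$-torsion-free at finite levels via the chain $\varphi^{\ast}(\NR(T))/p^n \isomorphic \AR^+/p^n \otimes_{\varphi, \AR^+/p^n} \NR(T)/p^n$. Given this, a direct diagram chase inside $\varphi^{\ast}(\DLn)$ shows that if $x \in M_n$, then the preimage $y_L \in \varphi^{\ast}(\NLn)$ with $(1 \otimes \varphi)(y_L) = [p]_q^s x$ and the preimage $[p]_q^s y_R \in \varphi^{\ast}(\DRn)$ (with $(1 \otimes \varphi)(y_R) = x$, using that $(1 \otimes \varphi)$ is an isomorphism on $\varphi^{\ast}(\DRn)$) must agree by injectivity of $(1 \otimes \varphi)$ on $\varphi^{\ast}(\DLn)$, hence lie in $\varphi^{\ast}(M_n)$, proving that $[p]_q^s$ kills the cokernel.

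For part (3), the isomorphism $M_n[1/\mu] \isomorphic \DRn$ is essentially formal: injectivity is automatic from $M_n \subset \DRn$, while surjectivity uses that any $x \in \DRn \subset \DLn = \NLn[1/\mu]$ satisfies $\mu^k x \in \NLn \cap \DRn = M_n$ for some $k \in \NN$. The second assertion $\AL^+ \otimes_{\AR^+} M_n \isomorphic \NLn$ is the most delicate step, and I expect this to be the main obstacle. The plan is to combine two inputs: first, after inverting $\mu$ both sides become $\AL \otimes_{\AR} \DRn \isomorphic \DLn$ using the first isomorphism just established; second, after $(p, \mu)$-adic completion one reduces to comparing finitely generated modules over $\AL^+/p^n$ whose images generate $\NLn$ because $\AL^+ = ((\AR^+)_{(p,\mu)})^{\wedge}$ provides enough denominators to approximate any element of $\NLn$ by $\AL^+\linear$ combinations of elements of $M_n$. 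A faithfully-flat-cover argument along $\Spec(\AL^+[1/\mu]) \cup \Spec(\AL^+)^{\wedge} \to \Spec(\AL^+)$ (analogous to the one used in Proposition~\ref{prop:wachmod_proj_pmu}) then patches the two localized statements to deduce the full isomorphism.

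\textbf{Main obstacle.} The second isomorphism in (3) is the crux: unlike the first, which is a simple inversion of $\mu$, the base change along $\AR^+ \to \AL^+$ is a completion of a localization and does not visibly commute with the intersection $M_n = \NLn \cap \DRn$. The technical heart is to verify that the natural inclusion $M_n \hookrightarrow \NLn$ generates $\NLn$ after $\AL^+$-linear extension, which ultimately rests on the intersection description together with noetherianness of $\AL^+$ at finite $p$-adic levels.
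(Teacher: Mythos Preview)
Your treatment of (1) and (2) is correct and matches the paper's approach (which simply refers back to Lemmas~\ref{lem:modp_fingen} and~\ref{lem:finite_height}). Your argument for the first isomorphism in (3) is also fine and essentially what the paper does.

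The second isomorphism in (3), however, is where you go astray. You flag $\AL^+ \otimes_{\AR^+} M_n \isomorphic \NLn$ as the ``main obstacle'' and propose an elaborate patching argument over a faithfully flat cover, with a vague approximation step. This is unnecessary, and the sketch as written is not obviously correct: you are already working modulo $p^n$, so your proposed cover $\Spec(\AL^+[1/\mu]) \cup \Spec((\AL^+)^{\wedge})$ does not make sense as stated, and the claim that ``$\AL^+$ provides enough denominators to approximate any element of $\NLn$'' is not a proof of surjectivity.

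The point you are missing is that $\AR^+ \to \AL^+$ is \emph{flat}: it is the composition of a localization and a completion of a noetherian ring. Flatness lets you commute the tensor product with the intersection $M_n = \NLn \cap \DRn$ directly, exactly as you (implicitly) did for the first isomorphism via $\AR^+ \to \AR$. The paper's proof of (3) is a two-line computation: by flatness,
\begin{align*}
\AL^+ \otimes_{\AR^+} M_n &= (\AL^+ \otimes_{\AR^+} \DRn) \cap (\AL^+ \otimes_{\AR^+} \NLn),
\end{align*}
and then one identifies $\AL^+ \otimes_{\AR^+} \DRn$ with $\AR \otimes_{\AR^+} \NLn$ (both equal $\DLn$ after noting that modulo $p^n$ one has $\AR/p^n = (\AR^+/p^n)[1/\mu]$ and $\AL/p^n = (\AL^+/p^n)[1/\mu]$), reducing the intersection to $\NLn$. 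No patching or approximation is needed.
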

\begin{proof}
	The claim in (1) follows from the proof of Lemma \ref{lem:modp_fingen} and the claim in (2) follows by an argument similar to Lemma \ref{lem:finite_height}.
	To obtain the first isomorphism in (3), note that we have natural $\AR\linear$ inclusions $\AR \otimes_{\AR^+} M_n = M_n[1/\mu] \hookrightarrow \DRn \hookrightarrow \DLn$, because $M_n$ is $\mu\textrm{-torsion}$ free.
	Moreover, we have a natural $\AL\linear$ isomorphism $\NLn[1/\mu] \isomorphic \DLn$.
	Then, inside $\DLn$ we have that
	\begin{equation}\label{eq:mn_muinverse}
		M_n[1/\mu] \isomorphic \DRn \cap \NLn[1/\mu] \isomorphic \DRn \cap \DLn = \DRn,
	\end{equation}
	where the first isomorphism follows because we have a natural $\AR\linear$ inclusion $M_n[1/\mu] \hookrightarrow \DRn \cap \NLn[1/\mu]$, and to obtain its surjectivity let $x$ be an element of $\DRn \cap \NLn[1/\mu]$, then there exists some $k \in \NN$ such that $\mu^kx$ is in $\NLn$, i.e.\ $\mu^k x$ is in $\DRn \cap \NLn = M_n$, as claimed.
	This gives us the first claim in (3).

	To obtain the second isomorphism in (3), note that from \eqref{eq:mn_muinverse} and Lemma \ref{lem:injectivity_modulo}, we have a natural $\AR^+/\mu\linear$ injective homomorphism $M_n/\mu \hookrightarrow \NLn/\mu$.
	Moreover, we have that $\AR^+/\mu \isomorphic R$ and $\AL^+/\mu \isomorphic O_L$ (see Section \ref{subsec:phigamma_mod_rings}), and $R/p^n \hookrightarrow (R_{(p)})/p^n \isomorphic O_L/p^n$.
	But, recall that localisation commutes with passing to quotient by ideals (see \cite[Theorem 4.2]{matsumura}), therefore, we have that $(R/p^n)_{(p)} \isomorphic (R_{(p)})/p^n \isomorphic O_L/p^n$.
	As $\NLn/\mu$ is a free $O_L/p^n\textrm{-module}$ and $M_n/\mu \hookrightarrow \NLn/\mu$, so we see that for any $a$ in $(R/p^n) \setminus (p)$, the $R/p^n\module$ $M_n/\mu$ is $a\textrm{-torsion}$ free.
	Therefore, by localising the preceding inclusion we obtain that $(R/p^n)_{(p)} \otimes_{R/p^n} M_n/\mu \hookrightarrow \NLn/\mu$, and hence $(\AL^+ \otimes_{\AR^+} M_n)/\mu \isomorphic O_L/p^n \otimes_{R/p^n} M_n/\mu \hookrightarrow \NLn/\mu$.
	In particular, we see that inside $\DLn$ we have
	\begin{equation*}
		\AL^+ \otimes_{\AR^+} M_n = \NLn \cap \big(\AL^+ \otimes_{\AR^+} M_n[1/\mu]\big) \isomorphic \NLn \cap \big(\AL^+ \otimes_{\AR^+} \DRn\big) \isomorphic \NLn \cap \DLn = \NLn,
	\end{equation*}
	where we used Lemma \ref{lem:injectivity_modulo} for the equality and \eqref{eq:mn_muinverse} for the middle isomorphism.
	This completes our proof.
\end{proof}

Let $\pazs$ denote the set of $\AR^+\textrm{-submodules}$ $M' \subset M_1$ such that $M'$ is stable under the action of $\varphi$, it is of finite $[p]_q\textrm{-height}$ $s \in \NN$ and $M'[1/\mu] = M_1[1/\mu] = D_{R,1} = \DR(T)/p$.
Set $\Mcirc \coloneq \cap_{M' \in \pazs} M' \subset M_1$.

\begin{lem}\label{lem:mcirc_props}
	The $\AR^+\module$ $\Mcirc$ belongs to $\pazs$ and $f_n(M_n)$ is also in $\pazs$, for all $n \in \NN_{\geqslant 1}$.
\end{lem}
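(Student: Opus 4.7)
The plan is to verify $f_n(M_n) \in \pazs$ first, then deduce $\Mcirc \in \pazs$ by exhibiting it as a stable member of the descending chain $\{f_n(M_n)\}_n$.

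For the first assertion, I would check the three defining properties in turn by transferring them from $M_n$ through the surjection $f_n$. By Lemma \ref{lem:Mn_large}(1), $M_n$ is finitely generated over $\AR^+/p^n$, so its image $f_n(M_n) \subset M_1$ is a finitely generated $\AR^+$-submodule. The projection $f_n$ is induced from the reduction $\DL_n \twoheadrightarrow \DL_1$ and is therefore $\varphi$-equivariant, so $f_n(M_n)$ is $\varphi$-stable. The finite $[p]_q$-height $s$ of $M_n$ from Lemma \ref{lem:Mn_large}(2) transfers to $f_n(M_n)$, because the surjection $M_n \twoheadrightarrow f_n(M_n)$ induces via right-exactness of $\varphi^*$ a surjection of cokernels of $1 \otimes \varphi$, and the former is killed by $[p]_q^s$. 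Finally, Lemma \ref{lem:Mn_large}(3) gives $M_n[1/\mu] = \DRn$, and the surjection $\DRn \twoheadrightarrow D_{R,1}$ restricts to the surjection $M_n \twoheadrightarrow f_n(M_n)$, yielding $f_n(M_n)[1/\mu] = D_{R,1}$.

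For the second assertion, I would first observe that the natural projections $M_{n+1} \twoheadrightarrow M_n$ (well-defined from $\DL_{n+1} \twoheadrightarrow \DL_n$ and the definitions $M_n = \NLn \cap \DRn$) compose with $f_n$ to yield $f_{n+1}$; hence $\{f_n(M_n)\}_n$ is a decreasing chain inside $\pazs$, and in particular $\cap_n f_n(M_n) \supseteq \Mcirc$. The strategy is then to show that every $M' \in \pazs$ contains some $f_n(M_n)$, which together with the descending-chain property yields $\cap_n f_n(M_n) \subseteq \Mcirc$ and hence equality. To stabilize the chain, I would argue that since $M_1/M'$ is a finitely generated $\mu$-torsion module over the noetherian ring $\ER^+ = \AR^+/p$ (using $M'[1/\mu] = D_{R,1} = M_1[1/\mu]$), it is killed by some $\mu^{N(M')}$; combining this with the inclusions $\NR(T)/p^n \subset M_n$ from the proof of Lemma \ref{lem:modp_fingen} and exploiting the finite $[p]_q$-height bound, which controls the $\mu$-denominators through the $\varphi$-structure, should allow one to lift the containment $\mu^{N(M')} M_1 \subset M'$ to $f_n(M_n) \subset M'$ for $n$ sufficiently large. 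Once this is granted, the noetherianity of $M_1$ as an $\ER^+$-module and the fact that the intersection $\cap_n f_n(M_n)$ is contained in every $f_n(M_n)$ show that the chain stabilizes at $\Mcirc = f_N(M_N)$ for some $N$, giving $\Mcirc \in \pazs$.

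The main obstacle will be making rigorous the uniform containment $f_n(M_n) \subset M'$: the $\mu$-denominators required to express elements of $D_{R,1}$ inside an arbitrary $M' \in \pazs$ must be controlled uniformly across $\pazs$, and doing so requires a delicate interplay between the finite $[p]_q$-height condition (which forces $[p]_q^s M' \subset \AR^+ \cdot \varphi(M')$, hence $\mu^{s(p-1)} M' \subset \AR^+ \cdot \varphi(M')$ modulo $p$) and the approximation of $\NR(T)$ by the tower $\{M_n\}$ via Lemma \ref{lem:Mn_large}(1). This ingredient is what ultimately ensures that the class $\pazs$ has a well-behaved minimum element and justifies the stabilization step.
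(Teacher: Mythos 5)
Your treatment of the first assertion, $f_n(M_n) \in \pazs$, is essentially the paper's argument (surjectivity of the induced map on cokernels of $1 \otimes \varphi$, plus $f_n(M_n[1/\mu]) = D_{R,1}$) and is fine. The gap is in the second assertion, and it is twofold. First, the bound $\mu^{N(M')} M_1 \subset M'$ that you extract from noetherianity of $\ER^+$ depends on $M'$; since $\pazs$ is a priori infinite, the exponents $N(M')$ could be unbounded, and then $\Mcirc = \cap_{M' \in \pazs} M'$ need not contain $\mu^r M_1$ for any finite $r$, so you cannot conclude $\Mcirc[1/\mu] = M_1[1/\mu]$. What is needed, and what the paper actually proves, is a \emph{uniform} bound $\mu^{s+1} M_1 \subset M'$ for every $M' \in \pazs$, with exponent depending only on the height $s$. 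This is obtained by a concrete computation you only gesture at: setting $M'' = M_1/M'$ and $k = p(p-1)s$, one introduces the operator $\psi_M$ with $\psi_M \circ (1 \otimes \varphi_M) = \mu^k \Id$, picks $i$ maximal with $\mu^{p(i-1)} M'' \neq 0$, and plays $\varphi(\mu) \equiv \mu^p \bmod p$ against the height bound to force $pi + k > p^2(i-1)$, i.e.\ $i < s + \tfrac{p}{p-1}$. Without carrying out some version of this, the argument does not close.

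Second, your stabilization step is incorrect as stated: noetherianity of $M_1$ over $\ER^+$ gives the ascending chain condition, not the descending one, so the decreasing chain $\{f_n(M_n)\}_n$ has no reason to stabilize on those grounds, and the auxiliary claim that every $M' \in \pazs$ contains some $f_n(M_n)$ is neither proved nor needed. The paper bypasses the chain entirely: once the uniform bound gives $\mu^{s+1} M_1 \subset \Mcirc$, the remaining point is that $\Mcirc$ has height $s$, which is proved directly by noting that for $x \in \Mcirc$ the element $y \in \varphi^*(M_1)$ with $(1 \otimes \varphi)y = [p]_q^s x$ is unique and lies in each $\varphi^*(M')$, and that flatness of $\varphi : \AR^+ \rightarrow \AR^+$ yields $\cap_{M'} \varphi^*(M') = \varphi^*(\cap_{M'} M') = \varphi^*(\Mcirc)$. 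This flatness argument is absent from your proposal and is required unless you can genuinely identify $\Mcirc$ with a member of the chain, which you have not done.
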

\begin{proof}
	The idea of the proof is motivated from \cite[Lemma 4.25]{du-liu-moon-shimizu}.
	Let $M'$ be an element of $\pazs$.
	First, let us show that there exists some $r \in \NN$ such that $\mu^r M_1 \subset M' \subset M_1$.
	Let $M'' \coloneq M_1/M'$ such that $M'' \neq 0$ and let $k = p(p-1)s \in \NN$.
	Also, let $\varphi^*(M'') \coloneq \varphi^*(M_1)/\varphi^*(M')$ and let $1 \otimes \varphi_{M''} \colon \varphi^*(M'') \rightarrow M''$ denote the map induced from $1 \otimes \varphi_M$.
	Since $M_1$ (resp.\ $M'$) is of finite $\pqheight$ $k$ (since $s < k$), we define $\psi_M \colon M_1 \xrightarrow{\mu^k} \mu^k M_1 \rightarrow \varphi^*(M_1)$ (resp.\ $\psi_{M'} \colon M' \xrightarrow{\mu^k} \mu^k M' \rightarrow \varphi^*(M')$) to be the unique $\AR^+/p\textrm{-linear}$ map such that $\psi_M \circ (1 \otimes \varphi_M) = \mu^k \Id_{\varphi^*_M}$ (resp.\ $\psi_{M'} \circ (1 \otimes \varphi_{M'})  = \mu^k \Id_{\varphi^*_{M'}}$).
	Let $\psi_{M''} \colon M'' \rightarrow \varphi^*(M'')$ denote the map induced from $\psi_M$.
	Now, consider the following commutative diagram:
	\begin{center}
		\begin{tikzcd}
			0 \arrow[r] & \varphi^*(M') \arrow[r] \arrow[d, "1 \otimes \varphi_{M'}"] & \varphi^*(M_1) \arrow[r] \arrow[d, "1 \otimes \varphi_{M}"] & \varphi^*(M'') \arrow[r] \arrow[d, "1 \otimes \varphi_{M''}"] & 0\\
			0 \arrow[r] & M' \arrow[r] \arrow[d, "\psi_{M'}"] & M_1 \arrow[r] \arrow[d, "\psi_{M}"] & M'' \arrow[r] \arrow[d, "\psi_{M''}"] & 0\\
			0 \arrow[r] & \varphi^*(M') \arrow[r] & \varphi^*(M_1) \arrow[r] & \varphi^*(M'') \arrow[r] & 0.
		\end{tikzcd}
	\end{center}
	Note that $[p]_q = \mu^{p-1} \mod p$, $\varphi(\mu) = \mu^p \mod p$ and $\varphi([p]_q) = \mu^{p(p-1)} \mod p$.
	Since $M_1[1/\mu] = M'[1/\mu]$, let $i \in \NN_{\geqslant 1}$ such that $\mu^{pi} M'' = 0$ and $\mu^{p(i-1)} M'' \neq 0$.
	Let $x$ be in $M''$ such that $\mu^{p(i-1)} x \neq 0$ and set $y = 1 \otimes x$ to be in $\varphi^*(M'')$.
	Then, $\varphi(\mu^{pi}) y = 1 \otimes \mu^{pi} x = 0$, but 
	\begin{equation*}
		\mu^{p^2(i-1)}y = \varphi(\mu^{p(i-1)}) y = 1 \otimes \mu^{p(i-1)} x \neq 0.
	\end{equation*}
	Let $z = (1 \otimes \varphi_{M''}) y$ be in $M''$, then $\mu^{pi} z = 0$.
	So, we have that
	\begin{equation*}
		0 = \psi_{M''}(\mu^{pi} z) = \mu^{pi} (\psi_{M''} \circ (1 \otimes \varphi_{M''}) y) = \mu^{pi+k} y.
	\end{equation*}
	Therefore, we get that $pi+k = pi+p(p-1)s > p^2 (i-1)$, i.e.\ $i < s + \frac{p}{p-1}$.
	Hence, it follows that $\mu^{p(s+1)} M'' = 0$.
	Since the constant $i$ obtained above is independent of $M'$, we also get that $\mu^{p(s+1)} M_1 \subset \Mcirc \subset M_1$ and $\Mcirc[1/\mu] = M_1[1/\mu]$.

	Next, we will show that $\Mcirc$ is of finite height $s$.
	Let $x$ be in $\Mcirc$, so that $x$ is in $M'$, for each $M'$ in $\pazs$, and there exists some $y$ in $\varphi^*(M') \subset \varphi^*(M_1)$ such that $(1 \otimes \varphi)y = [p]_q^s x$.
	Note that $y$ is unique in $\varphi^*(M_1)$ and since $\varphi: \AR^+ \rightarrow \AR^+$ is flat, we get that $y$ is in $\cap_{M' \in \pazs} (\AR^+ \otimes_{\varphi, \AR^+} M') = \AR^+ \otimes_{\varphi, \AR^+} (\cap_{M' \in \pazs} M') = \varphi^*(\Mcirc)$.
	Therefore, we concldue that $\Mcirc$ is an element of $\pazs$.

	For the second part of the claim, note that $M_n[1/\mu] = \DRn$ and $f_n(\DRn) = \DRn/p = \DR(T)/p$ (see Lemma \ref{lem:Mn_large}).
	So, we get that $f_n\big(M_n[1/\mu]\big) = \DR(T)/p$ and we are left to show that $f_n(M_n)$ is of finite height $s$.
	Note that we have the following commutative diagram with exact rows:
	\begin{center}
		\begin{tikzcd}
			0 \arrow[r] & \varphi^*(\textrm{kernel}) \arrow[r] \arrow[d, "1 \otimes \varphi"] & \varphi^*(M_n) \arrow[r] \arrow[d, "1 \otimes \varphi"] & \varphi^*(f_n(M_n)) \arrow[r] \arrow[d, "1 \otimes \varphi"] & 0\\
			0 \arrow[r] & \textrm{kernel} \arrow[r] & M_n \arrow[r, "f_n"] & f_n(M_n) \arrow[r] & 0.
		\end{tikzcd}
	\end{center}
	The rightmost vertical arrow is injective because $f_n(M_n) \subset \DRn$ and the cokernel of the middle vertical arrow is killed by $[p]_q^s$ (see Lemma \ref{lem:Mn_large}).
	Hence, the cokernel of the rightmost vertical arrow is also killed by $[p]_q^s$.
	This concludes our proof.
\end{proof}

\begin{prop}\label{prop:wach_phigamm_comp}
	The natural inclusion $\NR(T) \subset \DR(T)$ extends to a natural $(\varphi, \Gamma_R)\equivariant$ isomorphism $\AR \otimes_{\AR^+} \NR(T) \isomorphic \DR(T)$.
	Similarly, the natural inclusion $\NR(T) \subset \NL(T)$ extends to a natural $(\varphi, \Gamma_R)\equivariant$ isomorphism $\AL^+ \otimes_{\AR^+} \NR(T) \isomorphic \NL(T)$.
\end{prop}
\begin{proof}
	For the first claim, note that $\DR(T)$ and $\NR(T)$ are $p\torsion$ free and $p\textrm{-adically}$ complete, so it is enough to show the claim modulo $p$.
	From the proof of Lemma \ref{lem:modp_fingen}, recall that we have $\NR(T)/p \subset M_1 = \DR(T)/p \cap \NL(T)/p \subset \DL(T)/p$, and from Lemma \ref{lem:mcirc_props} we have that $\Mcirc \subset \NR(T)/p$.
	Therefore,
	\begin{equation*}
		\DR(T)/p = \Mcirc[1/\mu] \subset \AR/p \otimes_{\AR^+/p} \NR(T)/p \subset M_1[1/\mu] = \DR(T)/p.
	\end{equation*}
	Hence, $(\NR(T)/p)[1/\mu] \isomorphic \DR(T)/p$, thus proving the first claim.

	For the second claim, note that $T$ is a $\ZZ_p\representation$ of $G_L$ such that $V \coloneq T[1/p]$ is crystalline for $G_L$, and from \cite[Theorem 4.1]{abhinandan-imperfect-wach} there exists a unique Wach module $\NL(T)$ over $\AL^+$ associated to $T$.
	Moreover, it is easy to verify that $\AL^+ \otimes_{\AR^+} \NR(T)$ is also a Wach module over $\AL^+$ associated to $T$, using that $\Gamma_L \isomorphic \Gamma_R$.
	Now, from the first claim, we have that $\AL \otimes_{\AR^+} \NR(T) \isomorphic \AL \otimes_{\AR} \DR(T) \isomorphic \DL(T)$ as \'etale $(\varphi, \Gamma_L)\modules$ over $\AL$.
	Hence, by the uniqueness of the Wach module associated to $T$ over $\AL^+$ (see \cite[Lemma 3.9]{abhinandan-imperfect-wach}), it follows that $\AL^+ \otimes_{\AR^+} \NR(T) \isomorphic \NL(T)$ as $(\varphi, \Gamma_L)\modules$ over $\AL^+$.
\end{proof}

\begin{proof}[Proof of Proposition \ref{prop:wach_module_constr}]
	It is immediate that $\NR(T)$ is $p\textrm{-torsion}$ free and $\mu\textrm{-torsion}$ free.
	From Lemma \ref{lem:modp_fingen} and its proof, note that $\NR(T)$ is finitely generated over $\AR^+$ and we have that $\NR(T)/p \subset (\NL(T)/p) \cap (\DR(T)/p) \subset \DL(T)/p$, in particular, $\NR(T)/p$ is $\mu\textrm{-torsion}$ free.
	Then, by using Lemma \ref{lem:ab_torsion}, we also get that $(\NR(T)/\mu)[p] = (\NR(T)/p)[\mu] = 0$, i.e.\ $\NR(T)/\mu$ is $p\torsion$ free.
	Next, from Lemma \ref{lem:finite_height}, we know that $\NR(T)$ is of finite $\pqheight$, i.e.\ the cokernel of the injective map $1 \otimes \varphi \colon \varphi^*(\NR(T)) \rightarrow \NR(T)$ is killed by $[p]_q^s$, where $s$ is the height of $\NL(T)$.
	Furthermore, from Proposition \ref{prop:wach_phigamm_comp} we have that $\AR \otimes_{\AR^+} \NR(T) \isomorphic \DR(T)$.
	Finally, recall that the action of $\Gamma_L$ is trivial on $\NL(T)/\mu \NL(T)$ and $\Gamma_L \isomorphic \Gamma_R$, so for any $g \in \Gamma_R$, we have that $(g-1)\NL(T) \subset \mu \NL(T)$.
	Therefore, we get that $(g-1)\NR(T) \subset (\mu \NL(T)) \cap \DR(T) = \mu \NR(T)$, so it follows that $\Gamma_R$ acts trivially on $\NR(T)/\mu$.
	This concludes our proof.
\end{proof}

We have all the ingredients to prove Theorem \ref{thm:crys_fh_relative}, which we show next.
\begin{proof}[Proof of Theorem \ref{thm:crys_fh_relative}]
	For a $\padic$ representation, the property of being crystalline (resp.\ of being finite $\pqheight$) is invariant under twisting the representation by $\chi^r$, where $\chi$ is the $\padic$ cyclotomic character and $r \in \NN$.
	Therefore, we may assume that $V$ is positive crystalline.
	In this case, note that $V$ is also a positive crystalline representation of $G_L$, and therefore, from \cite[Theorem 4.1]{abhinandan-imperfect-wach} it follows that $T$ is positive and of finite $\pqheight$ as a $\ZZ_p\representation$ of $G_L$ (in the sense of \cite[Definition 3.7]{abhinandan-imperfect-wach}).
	Moreover, associated to $T$, from \cite[Theorem 4.1]{abhinandan-imperfect-wach} we have the Wach module $\NL(T)$ over $\AL^+$ and we set $\NR(T) \coloneq \NL(T) \cap \DR(T) \subset \DL(T)$ as an $\AR^+\module$.
	Then, from Proposition \ref{prop:wach_module_constr}, the module $\NR(T)$ satisfies all the axioms of Definition \ref{defi:wach_mods_relative} and Definition \ref{defi:finite_pqheight}.
	Hence, it follows that $\NR(T)$ is the unique Wach module associated to $T$, or equivalently, $T$ is of finite $\pqheight$ as representation of $G_R$.
	In particular, from Proposition \ref{prop:wach_phigamm_comp}, we have that the natural inclusion $\NR(T) \subset \DR(T)$ (resp.\ $\NR(T) \subset \NL(T)$) extends to a natural $(\varphi, \Gamma_R)\equivariant$ isomorphism $\AR \otimes_{\AR^+} \NR(T) \isomorphic \DR(T)$ (resp.\ $\AL^+ \otimes_{\AR^+} \NR(T) \isomorphic \NL(T)$).
	This allows us to conclude.
\end{proof}

\subsection{Consequences of Theorem \ref{thm:crys_fh_relative}}\label{subsec:crys_wach_equiv_consequence}

In this section, we provide some applications of Theorem \ref{thm:crys_fh_relative}.
Let $\Rep_{\ZZ_p}^{\crys}(G_R)$ denote the category of $\ZZ_p\textrm{-lattices}$ inside $\padic$ crystalline representations of $G_R$.
Then, by combining Theorem \ref{thm:fh_crys_relative} and Theorem \ref{thm:crys_fh_relative}, we obtain the following:
\begin{cor}\label{cor:crystalline_wach_equivalence_relative}
	The Wach module functor induces a natural equivalence of categories
	\begin{align*}
		\Rep_{\ZZ_p}^{\crys}(G_R) &\isomorphic (\varphi, \Gamma_R)\Mod_{\AR^+}^{[p]_q}\\
		T &\longmapsto \NR(T),
	\end{align*}
	with a quasi-inverse functor given as $N \mapsto \TR(N) \coloneq \big(W\big(\Rbar^{\flat}[1/p^{\flat}]\big) \otimes_{\AR^+} N\big)^{\varphi=1}$.
\end{cor}

Passing to the associated isogeny categories, we obtain the following:
\begin{cor}\label{cor:crystalline_wach_rat_equivalence_relative}
	The Wach module functor induces an exact equivalence of $\otimes\textrm{-categories}$ 
	\begin{align*}
		\Rep_{\QQ_p}^{\crys}(G_R) &\isomorphic (\varphi, \Gamma_R)\Mod_{\BR^+}^{[p]_q}\\
		V &\longmapsto \NR(V),
	\end{align*}
	with an exact $\otimes\textrm{-compatible}$ quasi-inverse functor given as $M \mapsto \VR(M) \coloneq \big(W\big(\Rbar^{\flat}[1/p^{\flat}]\big) \otimes_{\AR^+} M\big)^{\varphi=1}$.
\end{cor}
\begin{proof}
	The equivalence of categories follows from Theorem \ref{thm:crys_fh_relative}.
	For the rest of the proof, let us remark that for a $\padic$ crystalline representation $V$ of $G_R$, from Proposition \ref{prop:wach_module_constr}, we have that
	\begin{equation*}
		\NR(V) = \NL(V) \cap \DR(V) \subset \DL(V),
	\end{equation*}
	as finite projective $(\varphi, \Gamma_R)\modules$ over $\BR^+$.
	Moreover, from Theorem \ref{thm:crys_fh_relative} and Proposition \ref{prop:wach_phigamm_comp}, note that $\BL^+ \otimes_{\BR^+} \NR(V) \isomorphic \NL(V)$ and $\BR \otimes_{\BR^+} \NR(V) \isomorphic \DR(V)$ compatible with the respective natural actions of $\varphi$ and $\Gamma_R$.

	Now, let $V_1$ and $V_2$ be two crystalline representations of $G_R$, then $V_1 \otimes_{\QQ_p} V_2$ is again crystalline (see \cite[Th\'eor\`em 8.4.2]{brinon-relatif}).
	So, inside $\DL(V_1 \otimes_{\QQ_p} V_2)$, we have the following:
	\begin{align*}
		\NR(V_1) \otimes_{\BR^+} \NR(V_2) &= \NR(V_1) \otimes_{\BR^+} (\NL(V_2) \cap \DR(V_2))\\
			&= (\NR(V_1) \otimes_{\BR^+} \NL(V_2)) \cap (\NR(V_1) \otimes_{\BR^+} \DR(V_2))\\
			&= (\NL(V_1) \otimes_{\BL^+} \NL(V_2)) \cap (\DR(V_1) \otimes_{\BR} \DR(V_2))\\
			&\isomorphic \NL(V_1 \otimes_{\QQ_p} V_2) \cap \DR(V_1 \otimes_{\QQ_p} V_2) = \NR(V_1 \otimes_{\QQ_p} V_2),
	\end{align*}
	where the first equality follows from the discussion above, the second equality follows because $\NR(V_1)$ is finite projective over $\BR^+$, the third equality again follows from the discussion above and the last isomorphism is $(\varphi, \Gamma_R)\equivariant$ and follows from \cite[Corollary 4.3]{abhinandan-imperfect-wach} and \eqref{eq:rep_phigamma_relative}.
	This shows the compatibility of $\NR$ with tensor products.

	Conversely, let $M_1$ and $M_2$ be two Wach modules over $\BR^+$.
	Then, note that
	\begin{equation}\label{eq:nr_v1otimesv2}
		\NR(\VR(M_1) \otimes_{\QQ_p} \VR(M_2)) \isomorphic \NR(\VR(M_1)) \otimes_{\BR^+} \NR(\VR(M_2)) \isomorphic M_1 \otimes_{\BR^+} M_2,
	\end{equation}
	where the first isomorphism is $(\varphi, \Gamma_R)\equivariant$ and follows from the compatibility of $\NR$ with tensor products shown in the previous paragraph, and the second isomorphism is also $(\varphi, \Gamma_R)\equivariant$ and follows because we have a natural isomorphism of functors $\NR \circ \VR \isomorphic id$.
	Applying the functor $\VR$ to \eqref{eq:nr_v1otimesv2}, noting that we have a natural isomorphism of functors $\VR \circ \NR \isomorphic id$ and using \eqref{eq:rep_phigamma_relative} (after inverting $p$), we obtain a natural $G_R\equivariant$ isomorphism $\VR(M_1) \otimes_{\QQ_p} \VR(M_2) \isomorphic \VR(M_1 \otimes_{\BR^+} M_2)$, thus establishing the compatibility of $\VR$ with tensor products.

	Next, we will show the exactness of the functors $\VR$ and $\NR$.
	The functor $\VR$ is defined to be the composition of the quasi-inverse of the functor in \eqref{eq:rep_phigamma_relative} (after inverting $p$) with the fully faithful functor in Corollary \ref{cor:wach_etale_ff_relative}.
	As the functor in Corollary \ref{cor:wach_etale_ff_relative} is given by extension of scalars along the flat homomorphism $\BR^+ \rightarrow \BR$ and the equivalence in \eqref{eq:rep_phigamma_relative} has an exact quasi-inverse, therefore, it follows that $\VR$ is exact.
	It remains to show that $\NR$ is also exact.
	So, let us consider an exact sequence of $\padic$ crystalline representations of $G_R$ as $0 \rightarrow V_1 \rightarrow V_2 \rightarrow V_3 \rightarrow 0$, and our goal is to show that the sequence
	\begin{equation}\label{eq:nrv_exact}
		0 \longrightarrow \NR(V_1) \longrightarrow \NR(V_2) \longrightarrow \NR(V_3) \longrightarrow 0,
	\end{equation}
	is exact.
	Let $T_2 \subset V_2$ be a $G_R\textrm{-stable}$ $\ZZ_p\lattice$, then $T_1 \coloneq V_1 \cap T_2 \subset V_2$ is a $G_R\textrm{-stable}$ $\ZZ_p\lattice$ inside $V_1$ and set $T_3 \coloneq T_2/T_1 \subset V_3$ as a $G_R\textrm{-stable}$ $\ZZ_p\lattice$.
	By definition, we have Wach modules $\NR(T_1)$, $\NR(T_2)$ and $\NR(T_3)$ and we set $N \coloneq \NR(T_2)/\NR(T_1)$ as a finitely generated $\AR^+\module$ equipped with a Frobenius $\varphi \colon N[1/\mu] \rightarrow N[1/\varphi(\mu)]$ and a continuous action of $\Gamma_R$ induced from the corresponding structures on $\NR(T_2)$.

	We claim that $N[1/p] \isomorphic \NR(V_3)$ as $(\varphi, \Gamma_R)\modules$ over $\BR^+$.
	Indeed, first recall that $\DR$ is an exact functor from the category of $\ZZ_p\textrm{-representations}$ of $G_R$ to the category of \'etale $(\varphi, \Gamma_R)\modules$ over $\AR$ (see Section \ref{subsec:relative_padicreps}).
	So, the following is an exact sequence of \'etale $(\varphi, \Gamma_R)\modules$ over $\AR$:
	\begin{equation*}
		0 \longrightarrow \DR(T_1) \longrightarrow \DR(T_2) \longrightarrow \DR(T_3) \longrightarrow 0.
	\end{equation*}
	As we have that $\AR \otimes_{\AR^+} \NR(T_i) \isomorphic \DR(T_i)$, for $i \in \{1, 2, 3\}$, and $\AR^+ \rightarrow \AR$ is flat, therefore, we get that $\AR \otimes_{\AR^+} N \isomorphic \DR(T_2)/\DR(T_1) \isomorphic \DR(T_3) \lisomorphic \AR \otimes_{\AR^+} \NR(T_3)$ as \'etale $(\varphi, \Gamma_R)\modules$ over $\AR$.
	Moreover, since $\varphi \colon \AR^+ \rightarrow \AR^+$ is flat, therefore, using the finite $\pqheight$ property of $\NR(T_1)$ and $\NR(T_2)$, we get that $N$ is of finite $\pqheight$, i.e.\ $ 1 \otimes \varphi \colon (\varphi^*N)[1/[p]_q] \isomorphic N[1/[p]_q]$.
	In particular, $N[1/p]$ is finite projective over $\BR^+$ by Proposition \ref{prop:finiteproj_torus}.
	Next, for $i \in \{1, 2, 3\}$, considering $V_i$ as a $\padic$ crystalline representation of $G_L$, from \cite[Corollary 4.3]{abhinandan-imperfect-wach}, the following is an exact sequence of Wach modules over $\BL^+ = \AL^+[1/p]$:
	\begin{equation*}
		0 \longrightarrow \NL(V_1) \longrightarrow \NL(V_2) \longrightarrow \NL(V_3) \longrightarrow 0.
	\end{equation*}
	As we have that $\BL^+ \otimes_{\BR^+} \NR(V_i) \isomorphic \NL(V_i)$, for $i \in \{1, 2, 3\}$ (see Theorem \ref{thm:crys_fh_relative} and Proposition \ref{prop:wach_phigamm_comp}), and $\BR^+ \rightarrow \BL^+$ is flat, therefore, we get that $\BL^+ \otimes_{\BR^+} N[1/p] \isomorphic \NL(V_3) \lisomorphic \BL^+ \otimes_{\BR^+} \NR(V_3)$ as $(\varphi, \Gamma_L)\modules$ over $\BL^+$.
	Now, as submodules of $\DL(V_3)$, we have the following isomorphism of $(\varphi, \Gamma_R)\modules$ over $\BR^+$:
	\begin{equation*}
		N[1/p] = (\BL^+ \otimes_{\BR^+} N[1/p]) \cap (\BR \otimes_{\BR^+} N[1/p]) \isomorphic \NL(V_3) \cap \DR(V_3) \lisomorphic \NR(V_3),
	\end{equation*}
	where the first equality follows because $N[1/p]$ is finite projective over $\BR^+$ and $\BR^+ = \BR \cap \BL^+ \subset \BL$ (see Lemma \ref{lem:ar_intersect_al+}).
	Hence, we obtain that \eqref{eq:nrv_exact} is exact, thus concluding our proof.
\end{proof}

We also have the following purity result as another application of Theorem \ref{thm:crys_fh_relative}:
\begin{thm}\label{thm:purity_crystalline}
	Let $V$ be a $\padic$ representation of $G_R$.
	Then, the following are equivalent:
	\begin{enumarabicup}
		\item $V$ is crystalline as a representation of $G_R$;
	
		\item $V$ is crystalline as a representation of $G_L$;
	
		\item $\rank_{R[1/p]} \ODcrysR(V) = \dim_{\QQ_p} V$.
	\end{enumarabicup}
\end{thm}
\begin{proof}
	Let $V$ be an object of $\Rep_{\QQ_p}^{\crys}(G_R)$, then obviously we have that $V$ is in $\Rep_{\QQ_p}^{\crys}(G_L)$.
	Conversely, let $V$ be an object of $\Rep_{\QQ_p}^{\crys}(G_L)$ and choose a $G_R\textrm{-stable}$ $\ZZ_p\textrm{-lattice}$ $T \subset V$ such that $T$ is of finite $\pqheight$ as a representation of $G_L$.
	Then, using Proposition \ref{prop:wach_module_constr}, note that $T$ is of finite $\pqheight$ as a representation of $G_R$.
	Therefore, $V = T[1/p]$ is a crystalline representation of $G_R$ by Theorem \ref{thm:fh_crys_relative}.
	This shows the equivalence of (1) and (2).
	Moreover, if $V$ is an object of $\Rep_{\QQ_p}^{\crys}(G_R)$, then $\rank_{R[1/p]} \ODcrysR(V) = \dim_{\QQ_p} V$ (see Section \ref{subsec:relative_padicreps}), thus proving that (1) implies (3).
	So, it remains to show that (3) implies (2).

	Let $V$ be a $\padic$ representation of $G_R$ such that $\rank_{R[1/p]} \ODcrysR(V) = \dim_{\QQ_p} V$.
	From \cite[Section 3.3]{brinon-imparfait} recall that $V$ is crystalline as a representation of $G_L$ if and only if we have $\dim_L \ODcrysL(V) = \dim_{\QQ_p} V$.
	So we will show that $\dim_L \ODcrysL(V) = \dim_{\QQ_p} V$ by constructing a natural isomorphism of $L\textrm{-vector spaces}$ $L \otimes_{R[1/p]} \ODcrysR(V) \isomorphic \ODcrysL(V)$.
	Since we always have that $\dim_L \ODcrysL(V) \leqslant \dim_{\QQ_p} V$ (see \cite[Proposition 3.22]{brinon-imparfait}), therefore, it is enough to construct a natural $L\textrm{-linear}$ injective map $L \otimes_{R[1/p]} \ODcrysR(V) \rightarrow \ODcrysL(V)$ and the claim would follow by considering $L\textrm{-dimensions}$.

	To construct this injective map, from Remark \ref{lem:bcrys_embedding_l}, note that we have a natural $L\linear$ and $(\varphi, G_R)\equivariant$ injective homomorphism $L \otimes_{R[1/p]} \OBcrys(\Rbar) \rightarrow \prod_{\pins} \OBcrys(\Cplusp)$.
	Tensoring this homomorphism with $V$ (over $\QQ_p$) and considering the diagonal action of $G_R$, we obtain a $(\varphi, G_R)\equivariant$ injective map
	\begin{equation}\label{eq:lbcrysv_embed}
		L \otimes_{R[1/p]} \OBcrys(\Rbar) \otimes_{\QQ_p} V \longrightarrow \big(\textstyle\prod_{\pins} \OBcrys(\Cplusp)\big) \otimes_{\QQ_p} V = \textstyle\prod_{\pins} (\OBcrys(\Cplusp) \otimes_{\QQ_p} V).
	\end{equation}
	The composition in \eqref{eq:lbcrysv_embed} further induces a natural homomorphism
	\begin{equation*}
		L \otimes_{R[1/p]} \OBcrys(\Rbar) \otimes_{\QQ_p} V \longrightarrow \OBcrys(\Cplusp) \otimes_{\QQ_p} V,
	\end{equation*}
	compatible with the respective Frobenii, filtrations and connections (see Remark \ref{lem:bcrys_embedding_l}).
	Now, we take the $G_R\textrm{-invariant}$ part of \eqref{eq:lbcrysv_embed} and note that product commutes with left exact functors, in particular, with taking $G_R\textrm{-invariants}$.
	So, we obtain the following $\varphi\equivariant$ $L\linear$ injective homomorphisms:
	\begin{equation}\label{eq:ldcrysrv_embed}
		\begin{aligned}
			L \otimes_{R[1/p]} \ODcrysR(V) &\longrightarrow \big(\textstyle\prod_{\pins} \OBcrys(\Cplusp) \otimes_{\QQ_p} V\big)^{G_R}\\
			&\xrightarrow{\hspace{1mm} = \hspace{1mm}} \textstyle\prod_{\pins} (\OBcrys(\Cplusp) \otimes_{\QQ_p} V)^{G_R}\\
			&\longrightarrow \textstyle\prod_{\pins} (\OBcrys(\Cplusp) \otimes_{\QQ_p} V)^{\GRp},
		\end{aligned}
	\end{equation}
	where we note that the last arrow is injective because $\GRp \subset G_R$ is a subgroup.
	Moreover, since $G_R$ acts transitively on $\pazs$, it transitively permutes the components of the product $\prod_{\pins} (\OBcrys(\Cplusp) \otimes_{\QQ_p} V)^{\GRp}$.
	So, if $x \neq 0$ is an element of $L \otimes_{R[1/p]} \ODcrysR(V)$, then its image $(x_{\frakp})_{\pins}$ under the composition \eqref{eq:ldcrysrv_embed} satisfies that $x_{\frakp} \neq 0$, for all $\pins$.
	Therefore, for each $\pins$, composing \eqref{eq:ldcrysrv_embed} with the natural $\varphi\equivariant$ $L\linear$ projection
	\begin{equation*}
		\textstyle\prod_{\pins} (\OBcrys(\Cplusp) \otimes_{\QQ_p} V)^{\GRp} \longrightarrow (\OBcrys(\Cplusp) \otimes_{\QQ_p} V)^{\GRp},
	\end{equation*}
	gives a natural $\varphi\equivariant$ $L\linear$ injective homomorphism
	\begin{equation}\label{eq:ldrcysrv_embed_p}
		L \otimes_{R[1/p]} \ODcrysR(V) \longrightarrow (\OBcrys(\Cplusp) \otimes_{\QQ_p} V)^{\GRp},
	\end{equation}
	compatible with the respective Frobenii, filtrations and connections (see above and Remark \ref{lem:bcrys_embedding_l}), where the left-hand term is equipped with the tensor product Frobenius, $L\linear$ extension of the filtration on $\ODcrysR(V)$ and an integrable tensor product connection ($\partial \otimes 1 + 1 \otimes \partial$).

	Next, from Lemma \ref{lem:cplusp_in_cpplus_bcrys}, recall that we have a natural $(\varphi, \GRhatp)\equivariant$ injective homomorphism of $L\textrm{-algebras}$ $\OBcrys(\Cplusp) \rightarrow \OBcrys(\Cpplus)$ compatible with the respective filtrations and connections, and where the $\GRhatp\action$ on the left-hand term factors through $\GRhatp \twoheadrightarrow \GRp$.
	Tensoring the preceding injective homomorphism with $V$ (over $\QQ_p$), equipping each term with the diagonal action of $\GRhatp$ and taking the $\GRhatp\textrm{-invariants}$ yields a natural $L\linear$ injective homomorphism
	\begin{equation*}
		(\OBcrys(\Cplusp) \otimes_{\QQ_p} V)^{\GRp} \longrightarrow \ODcrysL(V),
	\end{equation*}
	compatible with the respective Frobenii, filtrations and connections.
	Composing \eqref{eq:ldrcysrv_embed_p} with the preceding $L\linear$ homomorphism gives a natural $L\linear$ injective homomorphism
	\begin{equation}\label{eq:odcrys_functoriality}
		L \otimes_{R[1/p]} \ODcrysR(V) \longrightarrow \ODcrysL(V),
	\end{equation}
	compatible with the respective Frobenii, filtrations and connections.
	By considering $L\textrm{-dimensions}$, it follows that \eqref{eq:odcrys_functoriality} is bijective (see Corollary \ref{cor:odcrys_functoriality} for a stronger statement).
	Hence, we get that $\dim_L \ODcrysL(V) = \rank_{R[1/p]} \ODcrysR(V) = \dim_{\QQ_p} V$, showing that (3) implies (2).
	This concludes our proof.
\end{proof}

\begin{cor}\label{cor:odcrys_functoriality}
	Let $V$ be a $\padic$ representation of $G_R$.
	Under the equivalent conditions of Theorem \ref{thm:purity_crystalline}, the homomorphism in \eqref{eq:odcrys_functoriality} induces a natural isomorphism of filtered $(\varphi, \partial)\modules$ over $L$.
\end{cor}
\begin{proof}
	Note that $V$ is a crystalline representation of $G_R$ and the homomorphism in \eqref{eq:odcrys_functoriality} is filtered.
	By taking the associated graded, we obtain the following homomorphism of graded $L\textrm{-vector}$ spaces:
	\begin{equation}\label{eq:gr_odcrys_compatibility}
		L \otimes_{R[1/p]} \gr^{\bullet} \ODcrysR(V) \longrightarrow \gr^{\bullet} \ODcrysL(V).
	\end{equation}
	We claim that \eqref{eq:gr_odcrys_compatibility} is bijective.

	As $V$ is a crystalline representation of $G_R$, so it is also a de Rham representation of $G_R$ (in the sense of \cite[Section 8.2]{brinon-relatif}), and in our setting, we have a natural isomorphism of filtered $R[1/p]\modules$ $\ODcrysR(V) \isomorphic \pazo\mbfd_{\textup{dR}, R}(V) \coloneq (\OBdR(\Rbar) \otimes_{\QQ_p} V)^{G_R}$ (see \cite[Proposition 8.2.12]{brinon-relatif}; in fact, the filtration on $\ODcrysR(V)$ is induced from $\pazo\mbfd_{\textup{dR}, R}(V)$).
	Furthermore, $V$ is also a Hodge--Tate representation of $G_R$ (in the sense of \cite[Proposition-Definition 2.1]{hyodo-hodge-tate} and \cite[D\'efinition 4.2.1]{brinon-relatif}), and from \cite[p.\ 131, Proof of Proposition 8.4.3]{brinon-relatif} we have a natural isomorphism of graded $R[1/p]\modules$ 
	\begin{equation}\label{eq:grdr_ht_R}
		\gr^{\bullet} \pazo\mbfd_{\textup{dR}, R}(V) \isomorphic \pazo\mbfd_{\textup{HT}, R}(V) \coloneq (\pazo B_{\textup{HT}}(\Rbar) \otimes_{\QQ_p} V)^{G_R},
	\end{equation}
	where $\pazo B_{\textup{HT}}(\Rbar)$ is the Hodge--Tate period ring of Hyodo (see \cite{hyodo-hodge-tate} and \cite[Section 4.1]{brinon-relatif}), and we have a natural $G_R\equivariant$ isomorphism $\pazo B_{\textup{HT}}(\Rbar) \isomorphic \gr^{\bullet} \OBdR(\Rbar)$ (see \cite[Corollaire 5.2.7]{brinon-relatif}; one may even take the latter as a definition of $\pazo B_{\textup{HT}}(\Rbar)$).

	Similarly, as $V$ is a crystalline representation of $G_L$, therefore, it is also a de Rham representation of $G_L$ (in the sense of \cite[Section 3.3]{brinon-imparfait}) and in our setting, we have a natural isomorphism of filtered $L\textrm{-vector}$ spaces $\ODcrysL(V) \isomorphic \pazo\mbfd_{\textup{dR}, L}(V) \coloneq (\OBdR(\OLbar) \otimes_{\QQ_p} V)^{G_L}$ (see \cite[Proposition 3.30]{brinon-imparfait}; in fact, the filtration on $\ODcrysL(V)$ is induced from $\pazo\mbfd_{\textup{dR}, L}(V)$).
	Furthermore, $V$ is also a Hodge--Tate representation of $G_L$ (in the sense of \cite[Proposition-Definition 2.1]{hyodo-hodge-tate}) and from \cite[p.\ 976, Proof of Proposition 3.35]{brinon-imparfait}, we have a natural isomorphism of graded $L\textrm{-vector}$ spaces
	\begin{equation}\label{eq:grdr_ht_L}
		\gr^{\bullet} \pazo\mbfd_{\textup{dR}, L}(V) \isomorphic \pazo\mbfd_{\textup{HT}, L}(V) \coloneq (\pazo B_{\textup{HT}}(\OLbar) \otimes_{\QQ_p} V)^{G_L},
	\end{equation}
	where $\pazo B_{\textup{HT}}(\OLbar)$ is the Hodge--Tate period ring of Hyodo, and we have a natural $G_L\equivariant$ isomorphism $\pazo B_{\textup{HT}}(\OLbar) \isomorphic \gr^{\bullet} \OBdR(\OLbar)$ (see \cite{hyodo-hodge-tate} and \cite[Proposition 2.22]{brinon-imparfait}; one may even take the latter as a definition of $\pazo B_{\textup{HT}}(\OLbar)$).

	Now, using the observations of the preceding paragraphs, in particular, the inverse of the isomorphism in \eqref{eq:grdr_ht_R}, the isomorphism in \eqref{eq:grdr_ht_L} and the homomorphism in \eqref{eq:gr_odcrys_compatibility}, we obtain the following natural homomorphism of graded $L\textrm{-vector}$ spaces:
	\begin{align*}\label{eq:gr_odcrys_compatibility}
		L \otimes_{R[1/p]} \pazo\mbfd_{\textup{HT}, R}(V) &\isomorphic L \otimes_{R[1/p]} \gr^{\bullet} \pazo\mbfd_{\textup{dR}, R}(V) \isomorphic L \otimes_{R[1/p]} \gr^{\bullet} \ODcrysR(V)\\
			&\qquad \xrightarrow{\hspace{1mm}\eqref{eq:gr_odcrys_compatibility}\hspace{1mm}} \gr^{\bullet} \ODcrysL(V) \isomorphic \gr^{\bullet} \pazo\mbfd_{\textup{dR}, L}(V) \isomorphic \pazo\mbfd_{\textup{HT}, L}(V).
	\end{align*}
	Note that from \cite[p.\ 856, Proof of Theorem 9.1]{tsuji-hodge-tate-purity}, we have that the composition above is bijective (observe that our constructions are compatible with \cite{hyodo-hodge-tate, brinon-imparfait, brinon-relatif} and the constructions of \cite{tsuji-hodge-tate-purity} are also compatible with \cite{hyodo-hodge-tate}, in particular, we may directly use Tsuji's result).
	Hence, it follows that \eqref{eq:gr_odcrys_compatibility} is also bijective.

	As the respective filtrations on $\ODcrysR(V)$ and $\ODcrysL(V)$ are finite, there exists some $r \in \ZZ$ such that $\Fil^k(\ODcrysL(V)) = \ODcrysL(V)$ and $\Fil^k \ODcrysR(V) = \ODcrysR(V)$, for all $k \leqslant r$.
	Then, using that \eqref{eq:odcrys_functoriality} and \eqref{eq:gr_odcrys_compatibility} are bijective, an easy induction on $k \geqslant r$ establishes that
	\begin{equation*}
		L \otimes_{R[1/p]} \Fil^k \ODcrysR(V) \isomorphic \Fil^k \ODcrysL(V),
	\end{equation*}
	i.e.\ the isomorphism in \eqref{eq:odcrys_functoriality} is filtered.
	This allows us to conclude.
\end{proof}

\section{Wach modules and \texorpdfstring{$q\textrm{-connections}$}{-}}\label{sec:wachmod_qconnection}

In this section, we will interpret Wach modules over $\AR^+$ (resp.\ $\BR^+$) as modules with $\qconnection$ and show that Wach modules over $\BR^+$ may be seen as the $q\textrm{-deformation}$ of filtered $(\varphi, \partial)\modules$ over $R[1/p]$, coming from $\padic$ crystalline representations of $G_R$ (see Theorem \ref{thm:qdeformation_dcrys}).
For our definitions, we will follow \cite[Section 2]{morrow-tsuji}, with slight modifications.

\subsection{Formalism on \texorpdfstring{$\qconnection$}{-}}\label{subsec:qconnection_formalism}

Let $D$ be a commutative ring and consider a $D\algebra$ $A$ equipped with $d$ commuting $D\algebra$ automorphisms $\gamma_1 \ldots, \gamma_d$, i.e.\ an action of $\ZZ^d$.
Moreover, fix an element $q$ in $D$ such that $q-1$ is a nonzerodivisor of $D$ and $\gamma_i = 1 \textmod (q-1)A$, for all $1 \leqslant i \leqslant d$.
Assume that we have units $U_1, \ldots, U_d \in A^{\times}$ such that $\gamma_i(U_j) = q U_j$, if $i=j$, or $U_j$, if $i \neq j$.
We fix these choices for the rest of the section.

\begin{defi}[{\cite[Definition 2.1]{morrow-tsuji}}]\label{defi:qdeRham_complex}
	Let $q\Omega^{\bullet}_{A/D} \coloneq \oplus_{k=0}^d q\Omega^k_{A/D}$ be a differential graded $D\algebra$ defined as:
	\begin{itemize}
		\item $q\Omega^0_{A/D} \coloneq A$ and $q\Omega^1_{A/D}$ is a free left $A\module$ on formal basis elements $\dlog(U_i)$.

		\item The right $A\module$ structure on $q\Omega^1_{A/D}$ is twisted by the rule $\dlog(U_i) \cdot f = \gamma_i(f) \dlog(U_i)$.

		\item $\dlog(U_i) \dlog(U_j) = - \dlog(U_j) \dlog(U_i)$, if $i \neq j$, and $0$, if $i=j$.

		\item The following map is an isomorphism of left $A\modules$:
			\begin{align*}
				\oplus_{\smbfi \in I_k} A &\isomorphic q\Omega^k_{A/D}\\
				(f_{\smbfi}) &\longmapsto \textstyle\sum_{\smbfi \in I_k} f_{\smbfi} \dlog(U_{i_1}) \cdots \dlog(U_{i_k}),
			\end{align*}
			where $I_k = \{\smbfi = (i_1, \ldots, i_k) \in \NN^k \textrm{ such that } 1 \leqslant i_1 < \cdots < i_k \leqslant d\}$.

		\item The zeroth differential $d_q \colon A \rightarrow \Omega^1_{A/D}$ is given as $f \mapsto \sum_{i=1}^d \frac{\gamma_i(f)-f}{q-1} \dlog(U_i)$.

		\item The elements $\dlog(U_i)$ in $q\Omega^1_{D/A}$ are cocycles, for all $1 \leqslant i \leqslant d$.
	\end{itemize}
	The data $d_q \colon A \rightarrow q\Omega^1_{A/D}$ forms a differential ring over $D$, i.e.\ $q\Omega^1_{A/D}$ is a $D\textrm{-bimodule}$ and $d_q$ is $D\linear$ satisfying the Leibniz rule $d_q(fg) = d_q(f)g + fd_q(g)$ (see \cite[Section II.1.2.1]{andre}).
\end{defi}

\begin{defi}[{\cite[Definition 2.2]{morrow-tsuji}}]\label{defi:qconnection}
	A module with $\textit{q-connection}$ over $A$ is a right $A\module$ $N$ equipped with a $D\linear$ map $\nabla_q \colon N \rightarrow N \otimes_A q\Omega^1_{A/D}$ satisfying the Leibniz rule $\nabla_q(xf) = \nabla_q(x)f + x \otimes d_q(f)$, for all $f$ in $A$ and $x$ in $N$.
	The $\qconnection$ $\nabla_q$ extends uniquely to a map of graded $D\modules$ $\nabla_q \colon N \otimes_A q\Omega^{\bullet}_{A/D} \rightarrow N \otimes_A q\Omega^{\bullet+1}_{A/D}$ satisfying 
	\begin{equation*}
		\nabla_q((x \otimes \omega) \cdot \omega') = \nabla_q(x \otimes \omega) \cdot \omega' + (-1)^{\textrm{deg } \omega} (x \otimes \omega) \cdot d_q(\omega').
	\end{equation*}
	The $\qconnection$ $\nabla_q$ is said to be \textit{flat} or \textit{integrable} if $\nabla_q \circ \nabla_q = 0$.
\end{defi}

\begin{nota}
	For a module with $\qconnection$ $(N, \nabla_q)$ over $A$ as in Definition \ref{defi:qconnection}, we will denote by $\nabla_{q,i} \colon N \rightarrow N$ the $D\linear$ maps characterised by $\nabla_q(x) = \sum_{i=1}^d \nabla_{q,i}(x) \otimes \dlog(U_i)$, for all $x$ in $N$.
	Also, we will set $\partial_{q,i} \coloneq U_i^{-1}\nabla_{q, i} \colon N \rightarrow N$.
\end{nota}

Next, assume that $D$ is equipped with an endomorphism $\varphi \colon D \rightarrow D$ such that it is a lift of the absolute Frobenius on $D/p$ and $\varphi(q) = q^p$.
Moreover, assume that $A$ is equipped with a compatible (with $\varphi$ on $D$) endomorphism $\varphi \colon A \rightarrow A$ such that it is a lift of the absolute Frobenius on $A/p$, and $\varphi$ commutes with the action of $\gamma_1, \ldots, \gamma_d$ on $A$.
The endomorphism $\varphi$ induces an endomorphism $\varphi_{\Omega}$ on $q\Omega^1_{A/D}$ given as $\varphi_{\Omega}(\sum_{i=1}^d f_i \dlog(U_i)) = [p]_q \sum_{i=1}^d \varphi(f_i) \dlog(U_i)$.
In particular, from \cite[Lemma 2.12]{morrow-tsuji} the following diagram commutes:
\begin{center}
	\begin{tikzcd}[row sep=15pt]
		A \arrow[r, "d_q"] \arrow[d, "\varphi"'] & q\Omega^1_{A/D} \arrow[d, "\varphi_{\Omega}"]\\
		A \arrow[r, "d_q"] & q\Omega^1_{A/D}.
	\end{tikzcd}
\end{center}
It follows that given a $\qconnection$ $(N, \nabla_q)$ we can define the base change via Frobenius, of the $\qconnection$, denoted $\varphi^*\nabla_q$ on $\varphi^*N \coloneq N \otimes_{A, \varphi} A$, as 
\begin{align*}
	\varphi^* \nabla_q \colon \varphi^* N &\longrightarrow N \otimes_{A, \varphi} q\Omega^1_{A/D} = \varphi^* N \otimes q\Omega^1_{A/D}\\
	x \otimes f &\longmapsto (1 \otimes \varphi_{\Omega})(\nabla_q(x)) \cdot f + x \otimes d_q(f).
\end{align*}
A $\varphi\module$ with $\qconnection$ is given by a pair $(N, \nabla_q)$ equipped with an $A\linear$ isomorphism $\varphi_N \colon (\varphi^*N)[1/[p]_q] \isomorphic N[1/[p]_q]$ such that the following diagram commutes:
\begin{equation}\label{eq:qconnection_horizontal_frob}
	\begin{tikzcd}[row sep=15pt]
		(\varphi^*N)[1/[p]_q] \arrow[r, "\varphi^* \nabla_q"] \arrow[d, "\varphi_N"'] & (\varphi^*N)[1/[p]_q] \otimes q\Omega^1_{A/D} \arrow[d, "\varphi_N \otimes 1"]\\
		N[1/[p]_q] \arrow[r, "\nabla_q"] & N[1/[p]_q] \otimes q\Omega^1_{A/D}.
	\end{tikzcd}
\end{equation}

\subsection{Wach modules as \texorpdfstring{$q\textrm{-deformations}$}{-}}\label{subsec:wachmod_qdeformation}

In this section, we take $D \coloneq O_F\llbracket \mu \rrbracket$, $A \coloneq \AR^+$ equipped with the action of $\Gamma_R$ and $\{\gamma_1, \ldots, \gamma_d\}$ as topological generators of $\Gamma_R'$, the geometric part of $\Gamma_R$ (see Section \ref{sec:period_rings_padic_reps}).
Then, by setting $q \coloneq 1+\mu$ and $U_i \coloneq [X_i^{\flat}]$, for $1 \leqslant i \leqslant d$, we have that $\gamma_i = 1 \textmod \mu \AR^+$, for all $1 \leqslant i \leqslant d$.
In particular, $\AR^+$ satisfies the hypotheses of Definition \ref{defi:qdeRham_complex}.
Moreover, the Frobenius endomorphism on $\AR^+$ extends the Frobenius on $D$ given by the natural Frobenius on $O_F$ and $\varphi(\mu) = (1+\mu)^p-1$.
Furthermore, in this case, $q\Omega^1_{\AR^+/D}$ identifies with $\Omega^1_{\AR^+/D}$, which is given as $(p, \mu)\textrm{-adic}$ completion of the module of K\"ahler differentials of $\AR^+$ with respect to $D$.

Note that we have a Frobenius-equivariant isomorphism of rings $\AR^+/\mu \isomorphic R$, so from \cite[Remarks 2.4 \& 2.10]{morrow-tsuji}, reduction modulo $q-1$ of the differential ring $d_q \colon \AR^+ \rightarrow \Omega^1_{\AR^+/D}$ is the usual de Rham differential $d \colon R \rightarrow \Omega^1_R$.
Similarly, the reduction modulo $q-1$ of a module with $\qconnection$ over $\AR^+$ (see Definition \ref{defi:qconnection}) is an $R\module$ with connection.
We say that a $\qconnection$ is $(p, [p]_q)\textit{-adically}$ \textit{quasi-nilpotent} (equivalently, $(p, q-1)\textit{-adically}$ \textit{quasi-nilpotent}) if $\nabla_q \textmod q-1$ is $p\textrm{-adically}$ quasi-nilpotent, i.e.\ for each $1 \leqslant i \leqslant d$, the operator $\partial_{q,i} \textmod q-1$ is $p\textrm{-adically}$ quasi-nilpotent (see after Definition \ref{defi:qconnection} for notations, and \cite[Definition 2.18 \& Remark 2.19]{morrow-tsuji} for similar definitions).

\begin{prop}\label{defi:wachmod_qconnection}
	Let $N$ be a Wach module over $\AR^+$.
	Then, the geometric $\qconnection$
	\begin{align*}
		\nabla_q \colon N &\longrightarrow N \otimes_{\AR^+} \Omega^1_{\AR^+/D}\\
		x &\longmapsto \textstyle\sum_{i=1}^d \tfrac{\gamma_i(x)-x}{\mu} \dlog([X_i^{\flat}]),
	\end{align*}
	describes $(N, \nabla_q)$ as a $\varphi\module$ equipped with a $(p, [p]_q)\textrm{-adically}$ quasi-nilpotent flat $\qconnection$ over $\AR^+$.
\end{prop}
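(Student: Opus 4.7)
The plan is to verify, in turn, well-definedness of $\nabla_q$, the Leibniz rule, $D$-linearity, flatness, compatibility with the Frobenius, and finally $(p, [p]_q)$-adic quasi-nilpotence. Most of these are direct computations once one unwinds the formalism of Definition \ref{defi:qdeRham_complex} and Definition \ref{defi:qconnection}; the real work lies in the quasi-nilpotence, for which I would import the crystalline analysis carried out for Theorem \ref{thm:fh_crys_relative}.

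First I would observe that $\nabla_q$ is well defined: by axiom (2) of Definition \ref{defi:wach_mods_relative}, $\Gamma_R$ acts trivially on $N/\mu N$, so $(\gamma_i-1)(x)\in\mu N$ for each $x\in N$ and $1\le i\le d$; since $\{p,\mu\}$ is strictly $N$-regular, $N$ is $\mu$-torsion free and the quotient $\frac{\gamma_i(x)-x}{\mu}\in N$ is unambiguously defined. The Leibniz rule is then a direct calculation: writing $\gamma_i(xf)-xf=(\gamma_i(x)-x)\gamma_i(f)+x(\gamma_i(f)-f)$ and dividing by $\mu$ recovers exactly $\nabla_q(x)f+x\otimes d_q(f)$ once one takes into account the twisted right $\AR^+$-module structure on $\Omega^1_{\AR^+/D}$ given by $\dlog([X_i^{\flat}])\cdot f=\gamma_i(f)\dlog([X_i^{\flat}])$. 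The $D$-linearity is immediate since $\gamma_i$ fixes $D=\ZZ_p\llbracket\mu\rrbracket$.

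Next I would address flatness. Writing $\nabla_i(x):=\frac{\gamma_i(x)-x}{\mu}$, the identity $\gamma_i\gamma_j=\gamma_j\gamma_i$ on $N$ implies $\gamma_i\nabla_j=\nabla_j\gamma_i$ and consequently $\nabla_i\nabla_j=\nabla_j\nabla_i$ on $N$ (using $\mu$-torsion freeness to cancel the common factor). Combined with the fact that each $\dlog([X_i^{\flat}])$ is a cocycle in $\Omega^\bullet_{\AR^+/D}$, a direct expansion of $\nabla_q\circ\nabla_q$ (paying attention to the twisted bimodule structure and the sign rule for the wedge product) yields $\nabla_q\circ\nabla_q=0$. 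For the Frobenius compatibility, the key input is that $\varphi$ commutes with the action of $\Gamma_R$ on $N$: if $(\gamma_i(x)-x)=\mu y$ in $N[1/\mu]$, then $\varphi(\gamma_i(x)-x)=\varphi(\mu)\varphi(y)$, so $\frac{\gamma_i(\varphi(x))-\varphi(x)}{\mu}=[p]_q\,\varphi(\nabla_i(x))$, which is precisely the statement that diagram \eqref{eq:qconnection_horizontal_frob} commutes after inverting $[p]_q$ and using the isomorphism $\varphi_N:(\varphi^*N)[1/[p]_q]\isomorphic N[1/[p]_q]$ from Definition \ref{defi:wach_mods_relative}(3).

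The main obstacle is the $(p,[p]_q)$-adic quasi-nilpotence, equivalently the $p$-adic quasi-nilpotence of the connection induced on $N/\mu N$ by $\nabla_q\bmod\mu$. The plan is to reduce to the rational setting and invoke the results of \S \ref{subsec:wachmod_crystalline}: by Theorem \ref{thm:fh_crys_relative} the $\padic$ representation $V:=\TR(N)[1/p]$ is crystalline, and in the proof of Proposition \ref{prop:oarpd_comparison} one constructs explicit operators $\partial_{D,i}$ on $D_R\isomorphic\ODcrysR(V)$ whose $p$-adic quasi-nilpotence is established via the convergence argument of \cite[Lemmas 4.39 \& 4.41]{abhinandan-relative-wach-i}. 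Then Remark \ref{rem:relate_con_qcon} compares $\nabla_i$ and $\partial_{D,i}$ modulo $\Fil^1\OARpi^{\PD}$, so that $p$-adic quasi-nilpotence of the latter transfers to the former on $(N/\mu N)[1/p]$. It remains only to promote the statement from $(N/\mu N)[1/p]$ to $N/\mu N$, which follows because $N/\mu N$ is $p$-torsion free by strict regularity of $\{\mu,p\}$ and embeds into its rationalisation.
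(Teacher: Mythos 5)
Your treatment of well-definedness, the Leibniz rule, $D$-linearity, flatness and the Frobenius compatibility is correct and essentially matches the paper: the paper outsources flatness to the first part of \cite[Proposition 2.6]{morrow-tsuji} and gets the commutativity of \eqref{eq:qconnection_horizontal_frob} from the fact that $\varphi$ and $\Gamma_R'$ commute on $N$, which is exactly your computation. Where you diverge is the quasi-nilpotence, and this is where the proposal has a genuine gap. The paper does \emph{not} use any crystalline input here: it observes that $(N,\nabla_q)$ is a $\varphi\module$ with flat $\qconnection$ satisfying the hypotheses of \cite[Lemma 2.24]{morrow-tsuji}, and quasi-nilpotence is then a formal consequence of the Frobenius structure alone (the relation $\nabla_{q,i}\circ\varphi = [p]_q\,\varphi\circ\nabla_{q,i}$, iterated through the isomorphism $1\otimes\varphi:\varphi^*(N)[1/[p]_q]\isomorphic N[1/[p]_q]$, forces topological nilpotence of $\nabla_q \bmod (q-1)$). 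This is both lighter and, crucially, an \emph{integral} argument.

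Your route through Theorem \ref{thm:fh_crys_relative}, Proposition \ref{prop:oarpd_comparison} and Remark \ref{rem:relate_con_qcon} is not circular (the crystalline results do not invoke this proposition), but it only produces statements about rational objects: the operators $\partial_{D,i}$ live on $D_R\subset\ODcrysR(V)$, the comparisons of Remarks \ref{rem:qconnection_arpipd} and \ref{rem:relate_con_qcon} take place in $\OARpi^{\PD}\otimes_{\AR^+}N[1/p]$ modulo $\Fil^1$, and the convergence arguments of \cite[Lemmas 4.39 \& 4.41]{abhinandan-relative-wach-i} are carried out on $\OSmPD\otimes_{\AR^+}N[1/p]$. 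By the definition adopted in \S \ref{subsec:wachmod_qdeformation}, $(p,[p]_q)$-adic quasi-nilpotence means that $\nabla_q\bmod(q-1)$ is $p$-adically quasi-nilpotent on $N/\mu N$, i.e.\ the induced operators are nilpotent modulo $p$ on the integral lattice $N/\mu N$. Your closing step, ``promote from $(N/\mu N)[1/p]$ to $N/\mu N$ because $N/\mu N$ is $p$-torsion free and embeds into its rationalisation,'' does not accomplish this: a mod-$p$ nilpotence statement cannot be read off from the rationalisation, and the isomorphism $(N/\mu N)[1/p]\isomorphic\ODcrysR(V)$ carries no preferred integral structure matching $N/\mu N$, so you would still need to control denominators (commensurability of lattices and preservation of $N/\mu N$ under iterated $\nabla_{q,i}$) to descend quasi-nilpotence. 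To repair the proof you should either supply that lattice comparison explicitly or, better, replace the whole crystalline detour by the formal Frobenius argument of \cite[Lemma 2.24]{morrow-tsuji}.
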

\begin{proof}
	Set $\nabla_{q, i} \coloneq \tfrac{\gamma_i-1}{\mu}$ and $\partial_{q, i} \coloneq [X_i^{\flat}]^{-1} \nabla_{q, i}$.
	Then, from \cite[Lemma 2.3]{morrow-tsuji}, the $\qconnection$ $\nabla_q$ is flat if and only if the operators $\nabla_{q, 1}, \ldots \nabla_{q, d}$ commute pairwise, which is equivalent to the pairwise commutativity of $\gamma_1, \ldots, \gamma_d$.
	As $\Gamma_R'$ is a commutative group, therefore, it follows that $\nabla_q$ is flat.
	Moreover, from Definition \ref{defi:wach_mods_relative} and Lemma \ref{lem:finite_pqheight_equiv}, note that we have $\varphi \otimes 1: (N \otimes_{\AR^+, \varphi} \AR^+)[1/[p]_q] \isomorphic N[1/[p]_q]$.
	So we see that the pair $(N, \nabla_q)$ is a $\varphi\module$ equipped with a flat $\qconnection$ over $\AR^+$.
	Furthermore, since the action of $\varphi$ and $\Gamma_R'$ commute on $N$, therefore, it follows that the corresponding diagram \eqref{eq:qconnection_horizontal_frob} is commutative.

	Next, from the commutativity of the action of $\varphi$ and $\Gamma_R$ and the diagram \eqref{eq:qconnection_horizontal_frob}, note that we have $\nabla_q \circ \varphi = [p]_q \varphi \circ \nabla_q$.
	Furthermore, from the Frobenius finite height condition on $N$, we have that for any $x$ in $N$, there exists $r \in \NN$ large enough, such that $[p]_q^r x$ belongs to $\varphi^*(N)$.
	So, using the relation $\nabla_q \circ \varphi = [p]_q \varphi \circ \nabla_q$ and the fact that $[p]_q = p \textrm{ mod } q-1$, we see that $\partial_{q, i}^k([p]_q^rx) \textrm{ mod } q-1$ converges $p\textrm{-adically}$ to $0$ as $k \rightarrow +\infty$ (see after Definition \ref{defi:qconnection} for notations).
	Hence, it follows that $\partial_{q,i}^k(x) = [p]_q^{-r} \partial_{q,i}^k([p]_q^r x)$ modulo $q-1$ converges $p\textrm{-adically}$ to 0, i.e.\ $\nabla_q$ is $(p, [p]_q)\textrm{-adically}$ quasi-nilpotent.
	This concludes our proof.
\end{proof}

\begin{rem}
	In Proposition \ref{defi:wachmod_qconnection} we call the $\qconnection$ ``geometric'' because in the definition we only use the geometric part of $\Gamma_R$, i.e.\ $\Gamma_R'$.
\end{rem}

\begin{rem}\label{rem:qconnection_arpipd}
	From Section \ref{subsec:wachmod_crystalline} recall that we have the $(\varphi, \Gamma_R)\equivariant$ inclusion $\ARpi^{\PD} \subset \Acrys(\Rinfty)$.
	For $R = O_F$, we denote the aforementioned ring, i.e.\ $\AFpi^{\PD}$ by $D^{\PD}$ and for general $R$, we denote it by $A^{\PD} \coloneq \ARpi^{\PD}$ (we do not use $D$ and $A$ for these rings to avoid conflict with assumptions at the beginning of this section).
	Then, it is easy to see that the hypotheses of Definition \ref{defi:qdeRham_complex} are satisfied for $D^{\PD}$, $A^{\PD}$ with $\Gamma_R\action$ and $U_i \coloneq [X_i^{\flat}]$.
	Now, given a Wach module $N$ over $\AR^+$ note that $N^{\PD} \coloneq A^{\PD} \otimes_{\AR^+} N$ is $p\textrm{-adically}$ complete (see Lemma \ref{lem:pcomplete_finprojrational}), and similar to Propostion \ref{defi:wachmod_qconnection}, one may show that the $\qconnection$
	\begin{equation*}
		\nabla_q \colon N^{\PD} \longrightarrow N^{\PD} \otimes_{A^{\PD}} \Omega^1_{A^{\PD}/D^{\PD}}, \hspace{5mm} x \longmapsto \textstyle\sum_{i=1}^d \tfrac{\gamma_i(x)-x}{\mu} \dlog([X_i^{\flat}]),
	\end{equation*}
	describes $(N^{\PD}, \nabla_q)$ as a $\varphi\module$ equipped with a $p\textrm{-adically}$ quasi-nilpotent flat $\qconnection$ over $A^{\PD}$.
	Set $\nabla_{q, i} \coloneq (\gamma_i-1)/\mu$, for $1 \leqslant i \leqslant d$.
	Then, by employing arguments similar to \cite[Lemmas 4.12, 5.17 \& 5.18]{abhinandan-syntomic} we see that for $1 \leqslant i \leqslant d$, the operator $\nabla_i \coloneq (\log \gamma_i)/t = \frac{1}{t}\sum_{k \in \NN} (-1)^k \frac{(\gamma_i-1)^{k+1}}{k+1}$ converges as a series of operators on $N^{\PD}$.
	So, using the explicit formulas described above, it is easy to see that for any $x$ in $N[1/p]$, we have that $\nabla_{q, i}(x) - \nabla_i(x) = (\frac{\gamma_i-1}{\mu} - \frac{\log\gamma_i}{t})(x)$ belongs to $(\Fil^1 A^{\PD}) \otimes_{\AR^+} N[1/p]$, since $t/\mu$ is a unit in $A^{\PD}$ by \cite[Lemma 3.14]{abhinandan-relative-wach-i}.
\end{rem}

We are now ready to state the main result of this section.
Let $N$ be a Wach module over $\AR^+$ equipped with a $\qconnection$ as in Proposition \ref{defi:wachmod_qconnection} and a Nygaard filtration as in Definition \ref{defi:nygaard_fil}.
Then, from the discussion preceding Proposition \ref{defi:wachmod_qconnection}, we note that $N/\mu$ is a $\varphi\module$ over $R$ equipped with a $p\textrm{-adically}$ quasi-nilpotent flat (integrable) connection and a filtration $\Fil^k(N/\mu)$ given as the image of $\Fil^k N$ under the surjection $N \twoheadrightarrow N/\mu$.
Using Remark \ref{rem:gamma_minus1_image} note that the connection on $N/\mu$ satisfies Griffiths transversality with respect to the filtration, i.e.\ $\nabla(\Fil^k (N/\mu)) \subset \Fil^{k-1}(N/\mu) \otimes \Omega^1_R$.
By inverting $p$, these structures naturally extend to $(N/\mu)[1/p]$ and we equip it with these structures, in particular, we note that $(N/\mu)[1/p]$ is a filtered $(\varphi, \partial)\module$ over $R[1/p]$.

\begin{thm}\label{thm:qdeformation_dcrys}
	Let $N$ be a Wach module over $\AR^+$ and $V \coloneq \TR(N)[1/p]$ the associated crystalline representation from Theorem \ref{thm:fh_crys_relative}.
	Then, we have a natural isomorphism $(N/\mu)[1/p] \isomorphic \ODcrysR(V)$ of filtered $(\varphi, \partial)\modules$ over $R[1/p]$.
\end{thm}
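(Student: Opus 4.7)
The plan is to leverage the comparison isomorphism
\begin{equation*}
	f : \OARpi^{\PD} \otimes_R D_R \isomorphic \OARpi^{\PD} \otimes_{\AR^+} N[1/p]
\end{equation*}
from Proposition \ref{prop:oarpd_comparison} together with the identification $D_R \isomorphic \ODcrysR(V)$ established in the proof of Theorem \ref{thm:fh_crys_relative}. The key observation is that the composition $\AR^+ \xrightarrow{\theta} R[\varpi] \xrightarrow{\varpi \mapsto 0} R$ coincides with the reduction modulo $\mu$ under the identification $\AR^+ \isomorphic R\llbracket \mu \rrbracket$, and this lifts naturally to a $\varphi$-equivariant surjection $\OARpi^{\PD} \to R$.

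First, I would apply the base change functor $R \otimes_{\OARpi^{\PD}}(-)$ to the isomorphism $f[1/p]$. On the left side one obtains $R \otimes_R D_R = D_R$, while on the right side, since the composition $\AR^+ \to \OARpi^{\PD} \to R$ is reduction mod $\mu$, one obtains $R \otimes_{\AR^+} N[1/p] = (N/\mu N)[1/p]$. This yields a natural $R[1/p]$-linear isomorphism $\alpha : D_R \isomorphic (N/\mu N)[1/p]$. Composing with $D_R \isomorphic \ODcrysR(V)$ gives the desired abstract isomorphism; it remains to verify compatibility with Frobenius, connection, and filtration.

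Next, compatibility with Frobenius is immediate since $f$ is $\varphi$-equivariant, the reduction map $\OARpi^{\PD} \to R$ is $\varphi$-equivariant, and the reduction of $\varphi$ on $N$ modulo $\mu$ gives $\varphi$ on $N/\mu N$. For the connection, I would use Remark \ref{rem:qconnection_arpipd} together with Remark \ref{rem:relate_con_qcon}: the operators $\nabla_{q, i} = (\gamma_i - 1)/\mu$ and $\nabla_i = (\log \gamma_i)/t$ differ by elements of $\Fil^1 \OARpi^{\PD} \otimes_{\AR^+} N[1/p]$. Hence, modulo $\Fil^1$ (and in particular modulo $\mu$) the induced connections via $f$ and via $\nabla_q$ agree, and further reducing along $\OARpi^{\PD} \to R$ shows that the connection on $D_R$ induced from $\partial_A$ on $\OARpi^{\PD}$ matches the reduction mod $\mu$ of $\nabla_q$ on $N[1/p]$.

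The hardest step will be verifying filtration compatibility. The filtration on $D_R$ is $\Fil^k D_R = \sum_{i+j=k}(\Fil^i \OARpi^{\PD} \otimes_{\AR^+} \Fil^j N[1/p])^{\Gamma_R}$, while the filtration on $(N/\mu N)[1/p]$ is defined as the image of the Nygaard filtration from Definition \ref{defi:nygaard_fil}. To handle this, I would invoke the strict compatibility of $f$ with filtrations, which follows from the arguments of \cite[\S 4.5.1]{abhinandan-relative-wach-i} with \cite[Lemma 4.53]{abhinandan-relative-wach-i} replaced by Lemma \ref{lem:nygaardfil_induced} (as already noted in the proof of Theorem \ref{thm:fh_crys_relative}). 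Once strict filtration compatibility of $f$ is established, the reduction along $\OARpi^{\PD} \to R$ passes the strictness to $\alpha$, and the identification $D_R \isomorphic \ODcrysR(V)$ as filtered $(\varphi, \partial)\modules$ (a by-product of the proof of Theorem \ref{thm:fh_crys_relative}) completes the argument.
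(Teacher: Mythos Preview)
Your approach is essentially the same as the paper's: both use the comparison isomorphism $f$ of Proposition \ref{prop:oarpd_comparison}, reduce it along the surjection $\OARpi^{\PD}\to R$ (the paper does this in two steps, first modulo $\Fil^1$ to land in $R[\varpi]$ and then taking $\Gal(F(\zeta_p)/F)$-invariants, whereas you base change directly along $\OARpi^{\PD}\to R$), and then verify Frobenius, connection, and filtration compatibility using exactly the same ingredients (Remarks \ref{rem:relate_con_qcon} and \ref{rem:qconnection_arpipd} for the connection, strict filtration compatibility of $f$ via Lemma \ref{lem:nygaardfil_induced} for the filtration). One small omission: you should begin by twisting to reduce to the case of an \emph{effective} Wach module, since Proposition \ref{prop:oarpd_comparison} is stated only under that hypothesis; the paper does this explicitly at the start of its proof.
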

\begin{proof}
	For $r \in \NN$ large enough, note that the Wach module $\mu^r N (-r)$ is always effective and we have that $\TR(\mu^rN(-r)) = \TR(N)(-r)$ (the twist $(-r)$ denotes a Tate twist on which $\Gamma_R$ acts via $\chi^{-r}$, where $\chi$ is the $\padic$ cyclotomic character).
	Therefore, it is enough to show both the claims for effective Wach modules.
	So, let us assume that $N$ is effective and set $M \coloneq N[1/p]$ to which the action of $\Gamma_R$ and the Frobenius-semilinear operator $\varphi$ naturally extend, and we also equip $M$ with the Nygaard filtration as in Definition \ref{defi:nygaard_fil}.
	It follows that the finite projective $R[1/p]\module$ $M/\mu$ is equipped with a Frobenius-semilinear operator $\varphi$, induced from $M$.
	Note that $[p]_q = p \mod \mu \AR^+$, therefore, we have that $1 \otimes \varphi \colon \varphi^*(M/\mu) \isomorphic M/\mu$ .
	Moreover, the filtration $\Fil^k (M/\mu)$ is given as the image of $\Fil^k M$ under the surjective map $M \twoheadrightarrow M/\mu$ (see Lemma \ref{lem:fil_gr_modmu}).
	Furthermore, from Theorem \ref{thm:fh_crys_relative}, we have the $R[1/p]\module$ $\ODR \coloneq (\OARpi^{\PD} \otimes_{\AR^+} M)^{\Gamma_R}$ equipped with a Frobenius-semilinear operator $\varphi$ and a connection, and an $R[1/p]\linear$ isomorphism $\ODR \isomorphic \ODcrysR(V)$ compatible with the respective Frobenii and connections (see \eqref{eq:dr_dcrys_comp} in Theorem \ref{thm:fh_crys_relative}).

	Now, note that we have $(\Fil^1 \OARpi^{\PD} \otimes_{\AR^+} M) \cap M = (\Fil^1 \OARpi^{\PD} \cap \AR^+) \otimes_{\AR^+} M = \mu M$, where the first equality follows because $M$ is finite projective over $\BR^+ = \AR^+[1/p]$, so flat over $\AR^+$, and the second equality follows from Remark \ref{rem:fil1ar+_intersect}.
	So, let us consider the following diagram with exact rows:
	\begin{center}
		\begin{tikzcd}[row sep=16pt]
			0 \arrow[r] & \mu M \arrow[r] \arrow[d] & M \arrow[r] \arrow[d] & M/\mu \arrow[r] \arrow[d] & 0\\
			0 \arrow[r] & (\Fil^1 \OARpi^{\PD}) \otimes_{\AR^+} M \arrow[r] & \OARpi^{\PD} \otimes_{\AR^+} M \arrow[r] & R[\varpi] \otimes_R (M/\mu M) \arrow[r] & 0\\
			0 \arrow[r] & (\Fil^1 \OARpi^{\PD}) \otimes_R \ODR \arrow[r] \arrow[u, "\wr"] & \OARpi^{\PD} \otimes_R \ODR \arrow[r] \arrow[u, "\wr", "\eqref{eq:oarpd_comparison}"'] & R[\varpi] \otimes_R \ODR \arrow[r] \arrow[u, "\wr"] & 0,
		\end{tikzcd}
	\end{center}
	where from the exactness of the second row and the discussion above, it follows that the vertical maps from the first to the second row are natural inclusions.
	Moreover, the middle vertical arrow from the third to the second row is the isomorphism \eqref{eq:oarpd_comparison} in Proposition \ref{prop:oarpd_comparison}, and the left vertical arrow is the tensor product of the $\OARpi^{\PD}\linear$ isomorphism in the middle vertical arrow with the $\OARpi^{\PD}\module$ $\Fil^1 \OARpi^{\PD}$, in particular, the left vertical arrow is bijective as well.
	So, we conclude that the right vertical arrow is also an isomorphism.
	Taking the $\Gal(R[1/p][\varpi]/R[1/p]) = \Gal(F(\zeta_{p^m})/F)\textrm{-invariants}$ (recall that $m = 1$, for $p \geqslant 3$, and $m = 2$, for $p = 2$) of the right vertical arrows gives a natural isomorphism compatible with the respective Frobenii:
	\begin{equation}\label{eq:dr_iso_mmodmu}
		\ODR \isomorphic M/\mu.
	\end{equation}

	We claim that \eqref{eq:dr_iso_mmodmu} is compatible with the respective connections as well.
	Indeed, note that the connection on $M/\mu$ is obtained by first reducing, the $\qconnection$ $\nabla_q$ on $N$, modulo $\mu = q-1$ and then inverting $p$.
	On the other hand, the connection $\partial_D$ on $\ODR = (\OARpi^{\PD} \otimes_{\AR^+} M)^{\Gamma_R}$ is induced from the natural $\ARpi^{\PD}\linear$ connection on $\OARpi^{\PD}$.
	Let $\nabla_{q, i}$ and $\partial_{D, i}$ respectively denote the $i^{\textrm{th}}$ component of the $\qconnection$ on $N$ and the connection on $\ODR$.
	Now, let $x$ be in $M$, and note that from Remark \ref{rem:relate_con_qcon} there exists some $y$ in $\OARpi^{\PD} \otimes_R \ODR$ such that $x = f(y) \textmod (\Fil^1 \OARpi^{\PD}) \otimes_{\AR^+} M$, where $f$ is the isomorphism in \eqref{eq:oarpd_comparison}.
	Then, it follows that to check the compatibility of the isomorphism $\ODR \isomorphic M/\mu$ with connections, it is enough to show that $\nabla_{q, i}(x) - f(\partial_{D, i}(y))$ belongs to $(\Fil^1 \OARpi^{\PD}) \otimes_{\AR^+} M$.
	From Remark \ref{rem:relate_con_qcon} for $\nabla_i = (\log \gamma_i)/t$, we know that $\nabla_i(x) - f(\partial_{D, i}(y))$ is in $(\Fil^1 \OARpi^{\PD}) \otimes_{\AR^+} M$.
	Furthermore, from Remark \ref{rem:qconnection_arpipd} we have that $\nabla_{q, i}(x) - \nabla_i(x)$ is in $(\Fil^1 \OARpi^{\PD}) \otimes_{\AR^+} M$.
	Upon combining the two, we get that $\nabla_{q, i}(x) - f(\partial_{D_i}(y))$ is in $(\Fil^1 \OARpi^{\PD}) \otimes_{\AR^+} M$, i.e.\ the isomorphism \eqref{eq:dr_iso_mmodmu} is compatible with the respective connections.

	Finally, by composing the inverse of \eqref{eq:dr_iso_mmodmu} with \eqref{eq:dr_dcrys_comp} from Theorem \ref{thm:fh_crys_relative}, we obtain isomorphisms
	\begin{equation}\label{eq:qdeformation_dcrys}
		M/\mu \isomorphic \ODR \isomorphic \ODcrysR(V),
	\end{equation}
	compatible with the respective Frobenii and connections.
	By transport of structure, we equip $\ODR$ with a filtration induced from the Hodge filtration on $\ODcrysR(V)$.
	Then, by Proposition \ref{prop:fil_compatibility} we get that the isomorphisms in \eqref{eq:qdeformation_dcrys} are further compatible with the respective filtrations.
	This allows us to conclude.
\end{proof}

The following observation was used above:
\begin{prop}\label{prop:fil_compatibility}
	Let $N$ be a Wach module over $\AR^+$, set $M \coloneq N[1/p]$ and let $V \coloneq \TR(N)[1/p]$ denote the associated crystalline representation of $G_R$.
	Then, the isomorphism $f \colon M/\mu \isomorphic \ODcrysR(V)$ from Theorem \ref{thm:qdeformation_dcrys} is compatible with filtrations, i.e.\ for each $k \in \ZZ$, we have a natural $R[1/p]\linear$ isomorphism
	\begin{equation}\label{eq:fil_compatibility}
		\Fil^k(M/\mu) \isomorphic \Fil^k \ODcrysR(V).
	\end{equation}
\end{prop}
\begin{proof}
	For $r \in \NN$ large enough, note that the Wach module $\mu^r N (-r)$ is always effective and we have that $\TR(\mu^rN(-r)) = \TR(N)(-r)$ (the twist $(-r)$ denotes a Tate twist on which $\Gamma_R$ acts via $\chi^{-r}$, where $\chi$ is the $\padic$ cyclotomic character).
	Therefore, it is enough to show the claim for effective Wach modules.
	Let us set $N_R \coloneq N$, $M_R \coloneq M$, $N_L \coloneq \AL^+ \otimes_{\AR^+} N_R$ (a Wach module over $\AL^+$, see Proposition \ref{prop:wach_phigamm_comp}) and $M_L \coloneq N_L[1/p]$, equipped with the induced actions of $\varphi$ and $\Gamma_L \isomorphic \Gamma_R$.
	Recall that we have the finite projective $R[1/p]\module$ $\ODR \coloneq (\OARpi^{\PD} \otimes_{\AR^+} M_R)^{\Gamma_R}$ and similarly we have the finite dimensional $L\textrm{-vector space}$ $\ODL \coloneq (\OALpi^{\PD} \otimes_{\AL^+} M_L)^{\Gamma_L}$, where the ring $\OALpi^{\PD}$ (depending on $L$) is analogous to $\OARpi^{\PD}$ (see \cite[Section 3.3]{abhinandan-imperfect-wach} for precise definitions), and admits a natural map $\OARpi^{\PD} \rightarrow \OALpi^{\PD}$ compatible with supplementary structures.
	From \cite[Theorem 1.8 \& Corollary 3.16]{abhinandan-imperfect-wach}, recall that we have isomorphisms $M_L/\mu \isomorphic \ODL \isomorphic \ODcrysL(V)$ of $\varphi\modules$ over $L$ (similar to \eqref{eq:qdeformation_dcrys}), and note that the constructions of loc.\ cit.\ are compatible with the constructions of this paper.
	Now, consider the following diagram:
	\begin{equation}\label{eq:fil_compatibility_l}
		\begin{tikzcd}[row sep=15pt]
			L \otimes_{R[1/p]} (M_R/\mu) \arrow[r, "\sim"] \arrow[d, "\wr"] & L \otimes_{R[1/p]} \ODR \arrow[d] \arrow[r, "\sim", "\eqref{eq:dr_dcrys_comp}"'] & L \otimes_{R[1/p]} \ODcrysR(V) \arrow[d, "\wr"', "\eqref{eq:odcrys_functoriality}"]\\
			M_L/\mu \arrow[r, "\sim"] & \ODL \arrow[r, "\sim"] & \ODcrysL(V),
		\end{tikzcd}
	\end{equation}
	where the top row is the scalar extension of \eqref{eq:qdeformation_dcrys} along the flat homomorphism $R[1/p] \rightarrow L$ and the bottom row is as discussed above (see the proof of \cite[Corollary 3.16]{abhinandan-imperfect-wach} for details).
	In \eqref{eq:fil_compatibility_l}, the left and the middle vertical arrows are the natural maps.
	Then, by the discussion above we see that the left square commutes.
	Moreover, as the top right and the bottom right horizontal isomorphisms are induced by natural inclusions (see \eqref{eq:dr_in_dcrys} and \cite[Equation (3.7)]{abhinandan-imperfect-wach}, respectively) and the crystalline period rings over $R$ and $L$ are compatible, therefore, it follows that the right square commutes as well.
	Furthermore, in \eqref{eq:fil_compatibility_l}, the left vertical arrow is a filtered isomorphism by Lemma \ref{lem:nygaard_fil_nr_nl_modmu}, the composition of the bottom arrows is a filtered isomorphism by \cite[Theorem 1.8]{abhinandan-imperfect-wach} (see Remark \ref{rem:fil_compatibility_ml_mlbreve} for another proof) and the right vertical arrow is a filtered isomorphism by Corollary \ref{cor:odcrys_functoriality}.

	Now, we note that the composition of the arrows in the top row of \eqref{eq:fil_compatibility_l} is the extension along $R[1/p] \rightarrow L$ of the isomorphism $f \colon M_R/\mu \isomorphic \ODcrysR(V)$, and from Lemma \ref{lem:fil_compatibility}, it follows that the map $f$ induces the map in \eqref{eq:fil_compatibility} and the induced map is bijective.
	This concludes our proof.
\end{proof}

\begin{lem}\label{lem:fil_compatibility}
	The map $f$ induces the map in \eqref{eq:fil_compatibility} and the induced map is bijective.
\end{lem}
\begin{proof}
	The case $k=0$ follows from the isomorphism $f$, and for $k \geqslant 1$, we shall proceed via induction, by assuming the following:
	\begin{enumerate}
		\item[(1)] The morphism $f$ induces the isomorphism in \eqref{eq:fil_compatibility} for $k-1$.

		\item[(2)] The following isomorphism of $R[1/p]\textrm{-modules}$ holds:
		\begin{equation}\label{eq:rational_filmodmu_induced}
			\Fil^k (M_R/\mu) \isomorphic (M_R/\mu) \cap \Fil^k (M_L/\mu) \subset M_L/\mu.
		\end{equation}
	\end{enumerate}
	For $k = 1$, we already have that $f$ is an isomorphism and from \eqref{eq:fil1_nrnlmodmu_induced} of Lemma \ref{lem:nygaard_fil_nr_nl_modmu} note that \eqref{eq:rational_filmodmu_induced} holds.
	So, we may assume that for some $k$, it is given that (1) holds for $k-1$ and (2) holds for $k$, and we need to show that $f$ induces the isomorphism in \eqref{eq:fil_compatibility} for $k$ and \eqref{eq:rational_filmodmu_induced} holds for $k+1$.

	For the first claim, note that using the isomorphism \eqref{eq:rational_filmodmu_induced}, the filtered isomorphism $M_L/\mu \isomorphic \ODcrysL(V)$ and its compatibility with $f$ (see \eqref{eq:fil_compatibility_l}), it follows that we have the following isomorphisms of $R[1/p]\textrm{-modules}$:
	\begin{equation*}
		\Fil^k (M_R/\mu) \isomorphic (M_R/\mu) \cap \Fil^k(M_L/\mu) \isomorphic \ODcrysR(V) \cap \Fil^k \ODcrysL(V) \lisomorphic \Fil^k \ODcrysR(V),
	\end{equation*}
	where the intersection in the third term is taken inside $\ODcrysL(V)$ via the filtered isomorphism \eqref{eq:odcrys_functoriality} (see Corollary \ref{cor:odcrys_functoriality}), and the last isomorphism follows because we have that $\gr^k \ODcrysR(V) \hookrightarrow \gr^k \ODcrysL(V)$ using the isomorphism \eqref{eq:gr_odcrys_compatibility} and finite projectivity of $\gr^k \ODcrysR(V)$ as an $R[1/p]\module$ (see \cite[Proposition 8.3.2]{brinon-relatif}).
	Therefore, it follows that \eqref{eq:fil_compatibility} is bijective for $k$, proving the first claim.
	Moreover, recall that $\Fil^k \ODcrysR(V)$ is a finite projective $R[1/p]\module$ (see \cite[Proposition 8.3.2]{brinon-relatif}), so from the preceding isomorphism we also get that $\Fil^k (M_R/\mu)$ is a finite projective $R[1/p]\module$.

	Next, for the second claim, using the diagram \eqref{eq:nr_nl_modmu_inject} it is clear that \eqref{eq:rational_filmodmu_induced} is injective and to show that it is surjective, let us consider the following diagram with exact rows:
	\begin{equation}\label{eq:filgr_modmu_diagram}
		\begin{tikzcd}[row sep=8pt, column sep=2pt]
			\Fil^{k+1} (M_R/\mu) && (\Fil^k M_R)/\mu && \gr^k M_R \\
			& \Fil^{k+1} (M_L/\mu) && (\Fil^k M_L)/\mu && \gr^k M_L \\
			\Fil^{k+1} (M_R/\mu) && \Fil^k (M_R/\mu) && \gr^k(M_R/\mu) \\
			& \Fil^{k+1} (M_L/\mu) && \Fil^k (M_L/\mu) && \gr^k(M_L/\mu),
			\arrow[hook, from=1-1, to=1-3]
			\arrow[dotted, hook, from=1-1, to=2-2]
			\arrow[equal, from=1-1, to=3-1]
			\arrow[two heads, from=1-3, to=1-5]
			\arrow[dotted, hook, from=1-3, to=2-4]
			\arrow[two heads, from=1-3, to=3-3]
			\arrow[dotted, hook, from=1-5, to=2-6]
			\arrow[two heads, from=1-5, to=3-5]
			\arrow[two heads, from=2-6, to=4-6]
			\arrow[hook, from=3-1, to=3-3]
			\arrow[dotted, hook, from=3-1, to=4-2]
			\arrow[shift left=2, bend left=40pt, dashed, "s"{pos=0.65}, from=3-3, to=1-3]
			\arrow[two heads, from=3-3, to=3-5]
			\arrow[dotted, hook, from=3-3, to=4-4]
			\arrow[dotted, from=3-5, to=4-6]
			\arrow[hook, from=4-2, to=4-4]
			\arrow[shift left=2, bend left=40pt, dashed, "1 \otimes s"{pos=0.65}, from=4-4, to=2-4]
			\arrow[two heads, from=4-4, to=4-6]
			\arrow[hook, crossing over, from=2-2, to=2-4]
			\arrow[equal, crossing over, from=2-2, to=4-2]
			\arrow[two heads, crossing over, from=2-4, to=2-6]
			\arrow[two heads, crossing over, from=2-4, to=4-4]
		\end{tikzcd}
	\end{equation}
	where the squares involving $M_R$ will be referred to as the bottom layer and it is $R[1/p]\linear$, the squares involving $M_L$ will be referred to as the top layer and it is $L\linear$, and the dotted arrows from the bottom layer to the top layer are $R[1/p]\linear$.
	In the diagram \eqref{eq:filgr_modmu_diagram}, the exactness of the first row of the top (resp.\ bottom) layer follows from \eqref{eq:filn_modmu_grn} of Lemma \ref{lem:fil_gr_modmu}, and the second row of the top (resp.\ bottom) layer is exact by definition.
	Moreover, the surjective map in the second column of the top (resp.\ bottom) layer is the surjective map in \eqref{eq:filn_modmu_filnmodmu} of Lemma \ref{lem:fil_gr_modmu}, and the surjective map in the third column of the top (resp.\ bottom) layer is the surjective map in \eqref{eq:grnmodmu} of Lemma \ref{lem:fil_gr_modmu}.
	Using Proposition \ref{prop:nygaard_fil_nr_nl}, Lemma \ref{lem:filnrnl_modmu_injective} and Lemma \ref{lem:nygaard_fil_nr_nl_modmu}, we see that the top layer is the base change of the bottom layer along the flat homomorphism $R \rightarrow O_L$ and each arrow from the bottom layer to the top layer (except for the lower right corner) is already injective.
	Also, note that by definition each square involving solid and dotted arrows in the diagram \eqref{eq:filgr_modmu_diagram} is commutative.
	Furthermore, the dashed arrow in the bottom layer labelled ``$s$'' is an $R[1/p]\linear$ section to the surjective arrow in the middle column such that the composition is identity on $\Fil^k(M_R/\mu)$, and it exists because $\Fil^k(M_R/\mu)$ is a finite projective $R[1/p]\module$; the dashed arrow in the top layer denotes the extension of scalars of $s$ along the flat homomorphism $R \rightarrow O_L$ and it determines a section to the surjective arrow in the middle column such that the composition is identity on $\Fil^k(M_L/\mu)$.

	Now, we claim that the dotted arrow between the lower right corner of the two layers in \eqref{eq:filgr_modmu_diagram} is also injective which is clearly equivalent to the bijectivity of the map in \eqref{eq:rational_filmodmu_induced} for $k+1$.
	For clarity, using \eqref{eq:filgr_modmu_diagram} let us consider the following diagram with exact rows, $R[1/p]\linear$ bottom layer and $L\linear$ top layer:
	\begin{equation}\label{eq:filgr_modmu_diagram_s}
		\begin{tikzcd}[row sep=8pt, column sep=2pt]
			\Fil^{k+1} (M_R/\mu) && \Fil^k (M_R/\mu) && \gr^k(M_R/\mu) \\
			& \Fil^{k+1} (M_L/\mu) && \Fil^k (M_L/\mu) && \gr^k(M_L/\mu) \\
			\Fil^{k+1} (M_R/\mu) && (\Fil^k M_R)/\mu && Q_R \\
			& \Fil^{k+1} (M_L/\mu) && (\Fil^k M_L)/\mu && Q_L,
			\arrow[hook, from=1-1, to=1-3]
			\arrow[dotted, hook, from=1-1, to=2-2]
			\arrow[equal, from=1-1, to=3-1]
			\arrow[two heads, from=1-3, to=1-5]
			\arrow[dotted, hook, from=1-3, to=2-4]
			\arrow[hook, "s"{pos=0.20}, from=1-3, to=3-3]
			\arrow[dotted, from=1-5, to=2-6]
			\arrow[hook, from=1-5, to=3-5]
			\arrow[hook, from=2-6, to=4-6]
			\arrow[hook, "s"{pos=0.25}, from=3-1, to=3-3]
			\arrow[dotted, hook, from=3-1, to=4-2]
			\arrow[two heads, from=3-3, to=3-5]
			\arrow[dotted, hook, from=3-3, to=4-4]
			\arrow[dotted, hook, from=3-5, to=4-6]
			\arrow[hook, "1 \otimes s", from=4-2, to=4-4]
			\arrow[two heads, from=4-4, to=4-6]
			\arrow[hook, crossing over, from=2-2, to=2-4]
			\arrow[equal, crossing over, from=2-2, to=4-2]
			\arrow[two heads, crossing over, from=2-4, to=2-6]
			\arrow[hook, "1 \otimes s"{pos=0.2}, crossing over, from=2-4, to=4-4]
		\end{tikzcd}
	\end{equation}
	where the first row of the top (resp.\ bottom) layer is the second row of the top (resp.\ bottom) layer of \eqref{eq:filgr_modmu_diagram}, the map $1 \otimes s$ (resp.\ $s$) is as described after diagram \eqref{eq:filgr_modmu_diagram}, and the $L\textrm{-vector space}$ $Q_L$ (resp.\ $R[1/p]\module$ $Q_R$) denotes the cokernel of the left horizontal arrow in the second row of the top (resp.\ bottom) layer of \eqref{eq:filgr_modmu_diagram_s}.
	The right vertical arrow in the top (resp.\ bottom) layer is naturally induced by the middle vertical arrow $1 \otimes s$ (resp.\ $s$), and its injectivity follows from an easy application of the snake lemma.
	The dotted arrows from the bottom to top layer are $R[1/p]\linear$ and were described in \eqref{eq:filgr_modmu_diagram} (except for the lower right corner).
	The $R[1/p]\linear$ map $Q_R \rightarrow Q_L$ is induced by the left commutative square involving the respective second rows of the top and bottom layers, and its injectivity follows from diagram \eqref{eq:filgr_modmu_diagram}: indeed, from the injectivity of the dotted arrow from the top right corner of the bottom layer to the top right corner of the top layer of \eqref{eq:filgr_modmu_diagram} note that we have
	\begin{equation*}
		Q_R \lisomorphic \gr^k M_R \hookrightarrow \gr^k M_L \isomorphic Q_L,
	\end{equation*}
	where the left (resp.\ right) isomorphism is $R[1/p]\linear$ (resp.\ $L\linear$), and the composition coincides with map $Q_R \rightarrow Q_L$.

	Note that by the definition of arrows in diagram \eqref{eq:filgr_modmu_diagram_s}, it is clear that each square in the top (resp.\ bottom) layer is commutative, and each square involving solid and dotted arrows (except the rightmost square) is commutative.
	But, the commutativity of the rest of the diagram implies that the rightmost square in \eqref{eq:filgr_modmu_diagram_s}, i.e.\ the following diagram, is also commutative:
	\begin{equation*}
		\begin{tikzcd}
			\gr^k(M_R/\mu) & \gr^k(M_L/\mu) \\
			Q_R & Q_L.
			\arrow[from=1-1, to=1-2]
			\arrow[hook, from=1-1, to=2-1]
			\arrow[hook, from=1-2, to=2-2]
			\arrow[hook, from=2-1, to=2-2]
		\end{tikzcd}
	\end{equation*}
	From the preceding diagram, we see that the top arrow is injective, and so the dotted arrow between the top right corner of the two layers in \eqref{eq:filgr_modmu_diagram_s} is injective.
	Therefore, we obtain that the dotted arrow in the lower right corner of the two layers in \eqref{eq:filgr_modmu_diagram} is injective, and thus the map in \eqref{eq:rational_filmodmu_induced} is bijective for $k+1$, proving the second claim.
	Finally, using induction, we conclude that \eqref{eq:fil_compatibility} holds for all $k \in \NN$.
\end{proof}

\begin{rem}\label{rem:fil_compatibility_ml_mlbreve}
	Let $N_L$ be a Wach module over $\AL^+$ and let $T$ denote the associated crystalline $\ZZ_p\textrm{-representation}$ of $G_L$ from \cite[Theorem 1.6]{abhinandan-imperfect-wach}.
	In \cite[Theorem 1.8 \& Corollary 3.16]{abhinandan-imperfect-wach}, we have shown that the natural isomorphism $(N_L/\mu)[1/p] \isomorphic \ODcrysL(V)$ is compatible with the respective filtrations, where the left-hand term is equipped with a filtration as described in Section \ref{subsubsec:nygaardfil_modmu} and the right-hand term is equipped with the natural Hodge filtration.
	We claim that this compatibility between filtrations may also be obtained by using the analogous result in the perfect residue field case from \cite[Th\'eor\`eme III.4.4]{berger-limites}.
	Indeed, consider the extension $\Lbreve/L$ with perfect residue field from Remark \ref{rem:nygaard_fil_nl_nlbreve}.
	Then, $\NLbreve \coloneq \ALbreve^+ \otimes_{\AL^+} N_L$ is a Wach module over $\ALbreve^+$ and $T$ is a $\ZZ_p\textrm{-representation}$ of $G_{\Lbreve}$.
	Set $M_L \coloneq N_L[1/p]$, $\MLbreve \coloneq \NLbreve[1/p]$ and $V \coloneq T[1/p]$ and consider the following diagram:
	\begin{equation}\label{eq:fil_compatibility_lbreve}
		\begin{tikzcd}[row sep=15pt]
			\ODcrysL(V) & \Lbreve \otimes_L \ODcrysL(V) & \DcrysLbreve(V) \\
			(M_L/\mu) & \Lbreve \otimes_L (M_L/\mu) & \MLbreve/\mu,
			\arrow[hook, from=1-1, to=1-2]
			\arrow["\wr"', from=1-1, to=2-1]
			\arrow["\sim", from=1-2, to=1-3]
			\arrow["\wr"', from=1-2, to=2-2]
			\arrow["\wr", from=1-3, to=2-3]
			\arrow[hook, from=2-1, to=2-2]
			\arrow["\sim", from=2-2, to=2-3]
		\end{tikzcd}
	\end{equation}
	where the top (resp.\ bottom) left horizontal arrow is the natural inclusion; the top right horizontal arrow is the natural isomorphism of filtered $\varphi\modules$ over $\Lbreve$ (where the source is equipped with the $\Lbreve\linear$ extension of the natural Hodge filtration on $\ODcrysL(V)$ and the target is equipped with the natural Hodge filtration, see \cite[Equation (2.5)]{abhinandan-imperfect-wach}); the bottom right horizontal arrow is the natural isomorphism; the right vertical arrow is the natural isomorphism of filtered $\varphi\modules$ over $\Lbreve$ from \cite[Th\'eor\`eme III.4.4]{berger-limites} (where the source is equipped with the natural Hodge filtration and the target is equipped with a filtration as in Remark \ref{rem:fil_gr_nlbreve_modmu}); the left vertical arrow is induced from \cite[Equation (4.7)]{abhinandan-imperfect-wach}; the middle vertical arrow is the extension along $L \rightarrow \Lbreve$ of the left vertical arrow and it coincides with the right vertical arrow of \eqref{eq:fil_compatibility_l} by \cite[Lemma 4.8, Lemma 4.11 (3), Equations (4.6), (4.7), (4.15) \& (4.19) and Corollary 4.27]{abhinandan-imperfect-wach}.
	The left square commutes by definition and the right square commutes by the compatibilty between the constructions of \cite{abhinandan-imperfect-wach} and \cite{berger-limites}.

	Now, note that similar to Proposition \ref{prop:fil_compatibility}, it is enough to show the claim for $N_L$ effective, so from now onwards we will work under this assumption.
	Then, to obtain the claim, it is enough to show that the left vertical arrow of \eqref{eq:fil_compatibility_lbreve} induces a map $\Fil^k \ODcrysL(V) \rightarrow \Fil^k (M_L/\mu)$, and that the induced map is bijective.
	The case $k=0$ is obvious and we shall proceed by induction on $k \geqslant 1$, i.e.\ we shall assume that the left vertical arrow of \eqref{eq:fil_compatibility_lbreve} induces an isomorphism $\Fil^k \ODcrysL(V) \isomorphic \Fil^k (M_L/\mu)$, for some $k$, and we shall show that the claim holds for $k+1$.
	To that end, let us observe that we have an injective natural $L\linear$ homomorphism $\gr^k (M_L/\mu) \rightarrow \Lbreve \otimes_L \gr^k (M_L/\mu) \isomorphic \gr^k (\MLbreve/\mu)$, where the first homomorphism is injective by definition and the isomorphism of $\Lbreve\textrm{-vector}$ spaces follows from Remark \ref{rem:nygaard_fil_nl_nlbreve_modmu} (after inverting $p$).
	So, it follows that we have the following isomorphism of $L\textrm{-vector}$ spaces:
	\begin{equation*}
		\Fil^{k+1} (M_L/\mu) \isomorphic \Fil^k (M_L/\mu) \cap \Fil^{k+1} (\MLbreve/\mu) \subset \MLbreve/\mu.
	\end{equation*}
	Thus, from the preceding isomorphism, the filtered isomorphisms from \eqref{eq:fil_compatibility_lbreve} (see the top right horizontal arrow and the right vertical arrow), and the induction assumption, it follows that
	\begin{align*}
		\Fil^{k+1} \ODcrysL(V) &\isomorphic \Fil^k \ODcrysL(V) \cap \Fil^{k+1} \DcrysLbreve(V)\\
			&\qquad\isomorphic \Fil^k (M_L/\mu) \cap \Fil^{k+1} (\MLbreve/\mu) \lisomorphic \Fil^{k+1} (M_L/\mu),
	\end{align*}
	where the intersection in second term is taken inside $\ODcrysL(V)$ via the top right filtered isomorphism in \eqref{eq:fil_compatibility_lbreve} and the first isomorphism follows because $\gr^k \ODcrysL(V) \hookrightarrow \gr^k \DcrysLbreve(V)$ using the top horizontal arrows of \eqref{eq:fil_compatibility_lbreve}).
	Hence, we conclude that the left vertical arrow of \eqref{eq:fil_compatibility_lbreve} is a filtered isomorphism.
\end{rem}

Let us note an interesting consequence of Theorem \ref{thm:qdeformation_dcrys}, where we keep the same notations as in the theorem and Proposition \ref{prop:fil_compatibility}:
\begin{cor}\label{cor:fil_nygaard_rational_proj}
	For each $k \in \ZZ$, the $\BR^+\module$ $\Fil^k M$ is finite projective and the $R[1/p]\module$ $\gr^k M$ is also finite projective.
	In particular, there exists (non-canonical) isomorphisms of $R[1/p]\textrm{-modules}$:
	\begin{equation*}
		\gr^k M \isomorphic \oplus_{i \leqslant k} \gr^i(M/\mu) \isomorphic \oplus_{i \leqslant k} \gr^i(\ODcrysR(V)),
	\end{equation*}
	and thus we have that 
	\begin{equation*}
		\rank_{R[1/p]}(\gr^k M) = \textstyle\sum_{i \leqslant k} \rank_{R[1/p]}(\gr^i(M/\mu)) = \textstyle\sum_{i \leqslant k} \rank_{R[1/p]}(\gr^i(\ODcrysR(V))).
	\end{equation*}
\end{cor}
\begin{proof}
	For $r \in \NN$ large enough, note that the Wach module $\mu^r N (-r)$ is always effective (the twist $(-r)$ denotes a Tate twist on which $\Gamma_R$ acts via $\chi^{-r}$, where $\chi$ is the $\padic$ cyclotomic character) and from Lemma \ref{lem:nygaard_fil_twist} (1), it is enough to show the claim for effective Wach modules.
	So, let us assume that $N$ is effective.
	Now, using the filtered isomorphism in \eqref{eq:qdeformation_dcrys} (also see \eqref{eq:fil_compatibility}) and taking the associated graded pieces, we obtain $R[1/p]\linear$ isomorphisms $\gr^k(M_R/\mu) \isomorphic \gr^k \ODcrysR(V)$.
	In particular, from \cite[Proposition 8.3.2]{brinon-relatif}, it follows that $\gr^k(M_R/\mu)$ is a finite projective $R[1/p]\module$, for each $k \in \ZZ$.
	As we assumed that $N_R$ is effective, therefore, by using \eqref{eq:grnmodmu} for $k = 0$ (after inverting $p$), we see that $\gr^0 M_R \isomorphic \gr^0 (M_R/\mu) \isomorphic \gr^0 \ODcrysR(V)$, which implies that $\gr^0 M_R$ is a finite projective $R[1/p]\module$.
	Moreover, using the exact sequence in \eqref{eq:grnmodmu} (after inverting $p$) and an easy induction on $k \geqslant 0$, yields that $\gr^k M_R$ is a finite projective $R[1/p]\module$.

	Next, note that we have $\textrm{Tor}_i^{\BR^+}(R[1/p], P) = 0$, for $i \geqslant 2$ and any $\BR^+\module$ $P$, because $\BR^+/\mu \isomorphic R[1/p]$.
	Since $\gr^k M_R$ is finite projective over $R[1/p]$, therefore, there exists a direct sum presentation $(\gr^k M_R) \oplus M' = R[1/p]^{\oplus n}$, for some finite projective $R[1/p]\module$ $M'$ and $n \in \NN$.
	Recall that tensor products commute with direct sums, so for any $\BR^+\module$ $P$ we have that
	\begin{equation*}
		\textrm{Tor}_i^{\BR^+}(\gr^k M_R, P) \oplus \textrm{Tor}_i^{\BR^+}(M', P) = \textrm{Tor}_i^{\BR^+}(R[1/p]^{\oplus n}, P),
	\end{equation*}
	where the right hand term vanishes for $i \geqslant 2$.
	So, it follows that $\textrm{Tor}_i^{\BR^+}(\gr^k M_R, P) = 0$, for $i \geqslant 2$.
	Now, observe that the following sequence of $\BR^+\modules$ is exact:
	\begin{equation*}
		0 \longrightarrow \Fil^{k+1} M_R \longrightarrow \Fil^k M_R \longrightarrow \gr^i M_R \longrightarrow 0.
	\end{equation*}	
	We shall show that $\Fil^{k+1} M_R$ is finite projective over $\BR^+$ by induction on $k$.
	Indeed, note that $\Fil^0 M_R = M_R$ is a finite projective $\BR^+\module$ and assume that $\Fil^k M_R$ is finite projective over $\BR^+$, for some $k \geqslant 0$.
	Then, from the discussion above and the long exact sequence in Tor, we get that $\textrm{Tor}_i^{\BR^+}(\Fil^{k+1} M_R, P) = 0$, for $i \geqslant 1$ and any $\BR^+\module$ $P$, i.e.\ $\Fil^{k+1} M_R$ is a flat $\BR^+\module$, hence, finite projective as $\Fil^{k+1} M_R$ is a finitely generated module over the noetherian ring $\BR^+$.

	Finally, from the discussion above, we note that after inverting $p$ in the exact sequence in \eqref{eq:grnmodmu}, it splits non-canonically and $R[1/p]\textrm{-linearly}$.
	Thus, it follows that we have non-canonical isomorphisms of $R[1/p]\textrm{-modules}$:
	\begin{equation*}
		\gr^k M_R \isomorphic \oplus_{i \leqslant k} \gr^k(M_R/\mu).
	\end{equation*}
	Consequently, we also get that $\rank_{R[1/p]}(\gr^k M_R) = \sum_{i \leqslant k} \rank_{R[1/p]}(\gr^i(M_R/\mu))$.
	This allows us to conclude.
\end{proof}

\begin{rem}\label{rem:qdeformation_dcrys_imperfect}
	The obvious variation of Theorem \ref{thm:qdeformation_dcrys} and Corollary \ref{cor:fil_nygaard_rational_proj} also hold true in the imperfect residue field case.
	Indeed, to show Theorem \ref{thm:qdeformation_dcrys} for $O_L$, note that all compatibilities except for the connection were already proven in \cite[Corollary 3.15]{abhinandan-imperfect-wach} (see Remark \ref{rem:fil_compatibility_ml_mlbreve} for another proof of compatibility between filtrations).
	To verify the compatibility of connections, similar to Proposition \ref{defi:wachmod_qconnection}, we can define a $\qconnection$ over a Wach module over $\AL^+$.
	Then, using the results of \cite[Section 3.3]{abhinandan-imperfect-wach}, one obtains an obvious variation of Remark \ref{rem:qconnection_arpipd} over $\ALpi^{\PD}$.
	Proceeding exactly as in the proof of Theorem \ref{thm:qdeformation_dcrys} (after replacing each object by the analogous object for $L$), we obtain the desired isomorphism of filtered $(\varphi, \partial)\modules$ over $L$.
	Finally, the claim analogous to Corollary \ref{cor:fil_nygaard_rational_proj} easily follows from \eqref{eq:grnmodmu} (for Wach modules over $\AL^+$).
\end{rem}

Let us summarise the relationship between various categories considered in \eqref{eq:odcrysr_func}, Corollary \ref{cor:crystalline_wach_rat_equivalence_relative} and Theorem \ref{thm:qdeformation_dcrys}.
Recall that $\Rep_{\QQ_p}^{\crys}(G_R)$ is the category of $\padic$ crystalline representations of $G_R$ and $\MF_R\ad(\varphi, \partial)$ denotes the essential image of the functor $\ODcrysR$ restricted to $\Rep_{\QQ_p}^{\crys}(G_R)$.
\begin{cor}\label{cor:cat_equiv_diagram}
	Functors in the following diagram induce exact equivalence of $\otimes\textrm{-categories}$:
	\begin{center}
		\begin{tikzcd}[row sep=36pt]
			\Rep_{\QQ_p}^{\crys}(G_R) \arrow[rr, "\NR", shift left=1mm] \arrow[rd, "\ODcrysR", shift left=1mm] & & (\varphi, \Gamma_R)\Mod_{\BR^+}^{[p]_q} \arrow[ll, "\VR", shift left=1mm] \arrow[ld, "q \mapsto 1"]\\
			& \MF_R\ad(\varphi, \partial) \arrow[ul, "\OVcrysR", shift left=1mm].
		\end{tikzcd}
	\end{center}
\end{cor}
\begin{proof}
	The exact equivalence induced by functors $\NR$ and $\VR$ follows from Corollary \ref{cor:crystalline_wach_rat_equivalence_relative} and the exact equivalence induced by $\ODcrysR$ and $\OVcrysR$ follows from \cite[Th\'eor\`eme 8.5.1]{brinon-relatif}.
	Moreover, from Theorem \ref{thm:qdeformation_dcrys}, note that for a Wach module $M$ over $\BR^+$ we have that $M/(q-1) = M/\mu \isomorphic \ODcrysR(\VR(M))$.
	Hence, from the preceding exact equivalence of $\otimes\textrm{-categories}$, it follows that the slanted arrow labelled ``$q \mapsto 1$'' is also an exact equivalence of $\otimes\textrm{-categories}$.
\end{proof}

\appendix
\section{Some technical lemmas}

\subsection{Commutative algebra}

We begin with an easy observation.
Let $A$ be a commutative ring and $a, b \in A$ nonzerodivisors.
\begin{lem}\label{lem:injectivity_modulo}
	Let $M \hookrightarrow N$ be an injective homomorphism of $a\textrm{-torsion}$ free $A\modules$.
	Then, we have that $M = M[1/a] \cap N \subset N[1/a]$ if and only if $aM = M \cap aN \subset N$ if and only if $M/a \hookrightarrow N/a$.
\end{lem}
\begin{proof}
	The first equivalence follows easily.
	For the second equivalence, consider the following diagram with exact rows and injective vertical arrows in the left and the middle:
	\begin{center}
		\begin{tikzcd}[row sep=14pt]
			0 \arrow[r] & M \arrow[r, "a"] \arrow[d] & M \arrow[r] \arrow[d] & M/a \arrow[r] \arrow[d] & 0\\
			0 \arrow[r] & N \arrow[r, "a"] & N \arrow[r] & N/a \arrow[r]& 0.
		\end{tikzcd}
	\end{center}
	An application of the snake lemma shows that the right vertical arrow is injective if and only if $aM = M \cap aN \subset N$.
\end{proof}

\begin{lem}\label{lem:ab_torsion}
	Assume that $M$ is $a\torsion$ free and $b\torsion$ free.
	Then, we have that $(M/a)[b] = (M/b)[a]$.
\end{lem}
\begin{proof}
	Multiplication by $b$ on the exact sequence $0 \rightarrow M \xrightarrow{\hspace{0.5mm}a\hspace{0.5mm}} M \rightarrow M/a \rightarrow 0$ and an application of the snake lemma yields the claim.
\end{proof}

\begin{defi}\label{defi:strict_regular_seq}
	The sequence $\{a, b\}$ in $A$ is said to be \textit{$M\regular$} if $M$ is $a\textrm{-torsion}$ free and $M/aM$ is $b\textrm{-torsion}$ free.
	The sequence $\{a, b\}$ in $A$ is said to be \textit{strictly $M\regular$} if both $\{a, b\}$ and $\{b, a\}$ are $M\regular$ sequences.
\end{defi}

\begin{rem}\label{rem:compsupp_coh}
	Assume that $A$ is noetherian.
	Let $i \geqslant 1$ be an integer and consider the following commutative diagram of complexes:
	\begin{equation}\label{eq:koszulcomp}
		\begin{tikzcd}[row sep=14pt, column sep=50pt]
			\calc^{\bullet}_i \colon \hspace{12pt} M \arrow[r, "(a^i{,} b^i)"] \arrow[d, shift left=16pt, equal] & M \oplus M \arrow[r, "(b^i{,} -a^i)"] \arrow[d, "(a{,} b)"] & M \arrow[d, "ab"]\\
			\hspace{-6pt}\calc^{\bullet}_{i+1} \colon \hspace{8pt} M \arrow[r, "(a^{i+1}{,} b^{i+1})"] & M \oplus M \arrow[r, "(b^{i+1}{,} -a^{i+1})"] & M,
		\end{tikzcd}
	\end{equation}
	where in the top row, the first map is given as $x \mapsto (a^ix, b^ix)$ and the second map is given as $(x, y) \mapsto b^ix - a^iy$, and similarly for the bottom row.
	Moreover, the middle vertical arrow in \eqref{eq:koszulcomp} is given as multiplication by $a$ and $b$ on the respective copy of $M$ and the right vertical arrow is given as multiplication by $ab$ on $M$.
	Then, the collection of complexes $\{\calc^{\bullet}_i\}_{i \geqslant 1}$ with morphisms $\calc^{\bullet}_i \rightarrow \calc^{\bullet}_{i+1}$ as above, form a direct system of complex of $A\textrm{-modules}$.
	By setting $I = (a, b)$ and $Z := V(I) \subset \Spec(A)$ as a closed subset, from \cite[\href{https://stacks.math.columbia.edu/tag/0956}{Tag 0956}, \href{https://stacks.math.columbia.edu/tag/0G6H}{Tag 0G6H}, \href{https://stacks.math.columbia.edu/tag/0913}{Tag 0913}]{stacks-project} and \cite[Theorem 4.6.8]{weibel}, we have that
	\begin{equation*}
		H^n_I(M) \isomorphic H^n_Z(M) \isomorphic H^n(\colim_i \calc^{\bullet}_i) \isomorphic \colim_i H^n(\calc^{\bullet}_i),
	\end{equation*}
	where the leftmost term denotes the local cohomology of $M$ with respect to $I$, the second term denotes the cohomology of $M$ with support in $Z$, the third term denotes the cohomology of the colimit complex and the rightmost term denotes the colimit of the cohomology of the complex $\calc^{\bullet}_i$, where the transition maps $H^n(\calc^{\bullet}_i) \rightarrow H^n(\calc^{\bullet}_{i+1})$ are induced from the transition maps $\calc^{\bullet}_i \rightarrow \calc^{\bullet}_{i+1}$.
\end{rem}

Assume that $A$ is noetherian, $(a,b)\textrm{-adically}$ complete and the sequence $\{a, b\}$ is strictly $A\regular$.
\begin{lem}\label{lem:strictreg_koszulcomp}
	Let $M$ be a finitely generated $A\module$, and consider the following complex:
	\begin{equation*}
		\calc^{\bullet} \colon M \xrightarrow{(a, b)} M \oplus M \xrightarrow{(b, -a)} M,
	\end{equation*}
	where the first map is given as $x \mapsto (ax, bx)$ and the second map is given as $(x, y) \mapsto bx - ay$.
	Then, the sequence $\{a, b\}$ is strictly $M\regular$ if and only if $H^1(\calc^{\bullet}) = 0$.
	Moreover, under these equivalent conditions we have that $H^0(\calc^{\bullet}) = 0$.
\end{lem}
\begin{proof}
	Let us first note that if $\{a, b\}$ is strictly $M\regular$, then $(M/a)[b] = (M/b)[a] = 0$.
	Therefore, we must have $H^0(\calc^{\bullet}) = H^1(\calc^{\bullet}) = 0$.
	For the converse, consider the following commutative diagram:
	\begin{equation}\label{eq:mmodab}
		\begin{tikzcd}[row sep=14pt]
			M[a, b] \arrow[r, hookrightarrow] \arrow[d, hookrightarrow] & M[a] \arrow[r, "b"] \arrow[d, hookrightarrow] & M[a] \arrow[r] \arrow[d, hookrightarrow] & (M/b)[a] \arrow[d, hookrightarrow]\\
			M[b] \arrow[r, hookrightarrow] \arrow[d, "a"] & M \arrow[r, "b"] \arrow[d, "a"] & M \arrow[r, twoheadrightarrow] \arrow[d, "a"] & M/b \arrow[d, "a"]\\
			M[b] \arrow[r, hookrightarrow] \arrow[d] & M \arrow[r, "b"] \arrow[d, twoheadrightarrow] & M \arrow[r, twoheadrightarrow] \arrow[d, twoheadrightarrow] & M/b \arrow[d, twoheadrightarrow]\\
			(M/a)[b] \arrow[r, hookrightarrow] & M/a \arrow[r, "b"] & M/a \arrow[r, twoheadrightarrow] & M/(a, b),
		\end{tikzcd}
	\end{equation}
	where the second, third and fourth rows (resp.\ columns) are exact.
	Using that $H^1(\calc^{\bullet}) = 0$, let us first show that the top right and the bottom left corners of \eqref{eq:mmodab} are zero, i.e.\ $(M/b)[a] = (M/a)[b] = 0$.
	Indeed, let $x$ be an element $M/a$ such that $bx = 0$.
	We take $y$ in $M$ to be a lift of $x$ such that $by = az$, for some $z$ in $M$.
	Then, we see that $(y, z)$ represents a class in $H^1(\calc^{\bullet}) = 0$, and thus there exists some $w$ in $M$ such that $y = aw$ and $z = bw$.
	But then we have that $x = y\textrm{ mod } a = 0$, hence, $(M/a)[b] = 0$.
	A similar argument starting with an $a\textrm{-torsion}$ element of $M/b$ shows that $(M/b)[a] = 0$.

	Next, from the first row (resp.\ column) of the diagram \eqref{eq:mmodab}, it follows that we have an injective homomorphism $M[a]/b \hookrightarrow (M/b)[a] = 0$ (resp.\ $M[b]/a \hookrightarrow (M/a)[b] = 0$).
	In particular, the topmost horizontal arrow from the second column to the third column (resp.\ the leftmost vertical arrow from the second row to the third row) is surjective.
	Now, let $x$ be an element of $M[a]$, then from the preceding discussion there exists $y$ in $M[a]$ such that $x = by$.
	Proceeding by induction on $n \geqslant 1$, it is easy to see that $x$ is an element of $b^n M[a] \subset b^n M$, for all $n \in \NN$.
	But, as $M$ is finitely generated over the $(a, b)\textrm{-adically}$ complete noetherian ring $A$, it is $(a, b)\textrm{-adically}$ complete, in particular, $M$ is $a\textrm{-adically}$ separated, therefore, we must have $x = 0$ and thus $M[a] = 0$.
	A similar argument shows that $M[b] = 0$.
	Hence, this proves the converse claim and we get that $H^0(\calc^{\bullet}) = M[a, b] = 0$.
\end{proof}

\begin{rem}\label{rem:strictreg_localcoh}
	From Remark \ref{rem:compsupp_coh}, we have that $H^n_I(M) \isomorphic \colim_i H^n(\calc^{\bullet}_i)$, for $I = (a, b)$.
	Therefore, in the setting of Lemma \ref{lem:strictreg_koszulcomp} and using that the sequence $\{a, b\}$ is strictly $M\regular$ if and only if the sequence $\{a^i, b^i\}$ is strictly $M\regular$ for any $i \geqslant 1$ (see \cite[\href{https://stacks.math.columbia.edu/tag/07DV}{Tag 07DV}]{stacks-project}), we conclude that the sequence $\{a, b\}$ is strictly $M\regular$ if and only if $H^1_I(M) = H^1_Z(M) = 0$.
\end{rem}

Next, we note a simple fact from \cite[Lemma IV.3.2.2]{abbes-gros-tsuji}.
Let $A$ be a $p\textrm{-adically}$ complete, i.e.\ $A \isomorphic \lim_n A/p^nA$, and $p\torsion$ free commutative ring.
Note that $A$ need not be noetherian.
\begin{lem}\label{lem:pcomplete_finprojrational}
	Let $M$ be a finitely generated $A\module$ such that $M[1/p]$ is finite projective over $A[1/p]$.
	Let $M[p^{\infty}] \subset M$ denote its $p^{\infty}\torsion$ submodule.
	Then, the following hold:
	\begin{enumerate}
		\item[(1)] There exists an integer $N \gg 0$ such that $M[p^{\infty}] = M[p^N]$.
		\item[(2)] The $A\module$ $M$ is $p\textrm{-adically}$ complete.
	\end{enumerate}
\end{lem}
\begin{proof}
	For completeness, we recall the proof from \cite[Lemma IV.3.2.2]{abbes-gros-tsuji}.
	To prove (1), let us choose a surjective homomorphism of $A\textrm{-modules}$ $f \colon A^{\oplus r} \twoheadrightarrow M$, for some $r \in \NN$.
	As $M[1/p]$ is a finite projective $A[1/p]\module$, therefore, there exists a decomposition $A[1/p]^{\oplus r} = M[1/p] \oplus (\kert f)[1/p]$ as $A[1/p]\textrm{-modules}$.
	Since $M[1/p]$ and $(\kert f)[1/p]$ are finitely generated $A[1/p]\textrm{-modules}$, there exist some $A\textrm{-submodules}$ $M_1 \subset M[1/p]$ and $M_2 \subset (\kert f)[1/p]$, such that $M_1 \oplus M_2 \subset A^{\oplus r}$ and $M_1[1/p] = M[1/p]$ and $M_2[1/p] = (\kert f)[1/p]$.
	So, it follows that there exist $N_1, N_2 \in \NN$ such that $p^{N_1} A^{\oplus r} \subset M_1 \oplus M_2$ and $p^{N_2} f(M_2) = 0$.
	Then, for any $x$ in $A^{\oplus r}$ such that $(f \otimes \QQ_p)(x) = 0$, we have that $p^{N_1} x$ is in $M_2$ and $p^{N_1+N_2}f(x)$ is in $p^{N_2}f(M_2) = 0$.
	This proves (1).

	For (2), set $\overline{M} := M/(M[p^{\infty}]) \subset M[1/p]$.
	Then, $\overline{M}$ is a finitely generated $A\module$ such that $\overline{M}[1/p] = M[1/p]$ is finite projective over $A[1/p]$.
	So, from (1), it follows that the following sequence is exact:
	\begin{equation*}
		0 \longrightarrow M[p^{\infty}] \longrightarrow \lim_n M/p^n M \longrightarrow \lim_n \overline{M}/p^n \overline{M} \longrightarrow 0.
	\end{equation*}
	In particular, we see that it is enough to show the claim for $p\torsion$ free $M$.
	In this case, there exists some finitely generated $p\torsion$ free $A\module$ $M'$ with $M'[1/p]$ finite projective over $A[1/p]$, and an isomorphism $M[1/p] \oplus M'[1/p] \isomorphic A^{\oplus r}$.
	By choosing an integer $N \gg 0$ such that $p^N A^{\oplus r} \hookrightarrow M \oplus M' \hookrightarrow p^{-N} A^{\oplus r}$, we see that $M$ is $p\textrm{-adically}$ complete.
	This completes the proof of (2).
\end{proof}

\subsection{Structure of \texorpdfstring{$\varphi$}{-}-modules}

We will use setup and notations from Section \ref{subsec:setup_nota} and the rings defined in Section \ref{subsec:ainf_relative}.
Let $q$ be an indeterminate and recall that we have a Frobenius-equivariant isomorphism $R\llbracket q-1 \rrbracket \isomorphic \AR^+$, via the map $X_i \mapsto [X_i^{\flat}]$ and $q \mapsto 1+\mu$.
We have the following structural result:
\begin{prop}\label{prop:finiteproj_torus}
	Let $N$ be a finitely generated $\AR^+\module$ and suppose that $N$ is equipped with a Frobenius-semilinear endomorphism $\varphi \colon N \rightarrow N$ such that $1 \otimes \varphi \colon \varphi^*(N)[1/[p]_q] \isomorphic N[1/[p_q]]$.
	Then, $N[1/p]$ is finite projective over $\BR^+$.
\end{prop}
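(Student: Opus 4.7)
The plan is to check finite projectivity of $N[1/p]$ over $\BR^+$ on a well-chosen Zariski cover of $\Spec\BR^+$. First I would record the relevant ring-theoretic features: since $\AR^+ \cong R\llbracket\mu\rrbracket$ is a regular noetherian ring, its localisation $\BR^+ = \AR^+[1/p]$ is also regular noetherian. For a finitely generated module over such a ring, being finite projective is equivalent to being flat, which can be checked Zariski-locally. The key geometric input is that $[p]_q \equiv p \pmod{\mu}$ and $p \in (\BR^+)^{\times}$, whence $(\mu,[p]_q) = \BR^+$. Thus $\Spec\BR^+ = U_\mu \cup U_{[p]_q}$, where $U_\mu := \Spec \BR^+[1/\mu]$ and $U_{[p]_q} := \Spec \BR^+[1/[p]_q]$, and it suffices to prove finite projectivity of $N[1/p]$ on each piece.

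Over $U_{[p]_q}$, the hypothesis gives $\varphi^{*}N[1/p,1/[p]_q] \isomorphic N[1/p,1/[p]_q]$. Combined with the fact (recalled in \S\ref{subsec:ainf_relative}) that the Frobenius $\varphi \colon \BR^+ \to \BR^+$ is finite faithfully flat of degree $p^{d+1}$ and restricts compatibly to $\BR^+[1/[p]_q]$ (using $\varphi([p]_q) = [p]_{q^p} \equiv p \pmod{\mu}$ to control primes), I would invoke a Frobenius-descent principle: if $M$ is a finitely generated module over a noetherian regular ring admitting an isomorphism with its pullback along a finite faithfully flat Frobenius, then $M$ is finite projective. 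This is typically proved prime-by-prime, applying Auslander--Buchsbaum and using the stability of depth under faithfully flat base change along $\varphi$ to force $\operatorname{pd}(M_\frakq) = 0$ at every prime.

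Over $U_\mu$, I would use étale $(\varphi,\Gamma)$-module theory. The $p$-adic completion of $\BR^+[1/\mu]$ is $\BR = \AR[1/p]$, and $\BR \otimes_{\AR^+} N$ is a finitely generated $p$-torsion-free étale $\varphi$-module over $\AR$ (étaleness being automatic since $[p]_q$ is a unit in $\AR$), hence finite projective over $\AR$ by \cite[Lemma 7.10]{andreatta-phigamma}; inverting $p$, the module $\BR \otimes_{\AR^+} N[1/p]$ is finite projective over $\BR$. A Beauville--Laszlo glueing using this together with the finite projectivity on the intersection $\Spec\BR^+[1/(\mu[p]_q)] \subset U_{[p]_q}$ from the previous step then descends to give finite projectivity of $N[1/p]|_{U_\mu}$. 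Assembling the two pieces via the Zariski cover yields the proposition. The main obstacle I anticipate is making the Frobenius-descent step over $U_{[p]_q}$ precise, since tracking how the finite faithfully flat $\varphi$ interacts with projective dimension at each prime requires care; a secondary subtlety is verifying the compatibilities for the Beauville--Laszlo descent over the non-$p$-adically-complete ring $\BR^+[1/\mu]$.
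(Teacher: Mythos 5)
Your reduction to the Zariski cover $\Spec \BR^+ = D(\mu) \cup D([p]_q)$ is legitimate, but both local steps break down, and the second chart is where the entire content of the proposition lives. The ``Frobenius-descent principle'' over $D([p]_q)$ is not available in the form you invoke it: the Frobenius does not restrict to an endomorphism of $\BR^+[1/[p]_q]$, because $\varphi([p]_q) = [p]_{q^p}$ vanishes on the locus $q = \zeta_{p^2}$, which lies inside $D([p]_q)$ (indeed $[p]_{\zeta_{p^2}} = (\zeta_p-1)/(\zeta_{p^2}-1) \neq 0$). On spectra, $\Spec\varphi$ carries $D([p]_{q^p})$ to $D([p]_q)$, so the hypothesis $1\otimes\varphi\colon \varphi^*(N)[1/[p]_q]\isomorphic N[1/[p]_q]$ compares the restrictions of $N$ to two \emph{different} open sets; there is no self-map of $D([p]_q)$ along which to descend. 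Moreover, even where $\varphi$ is an honest finite flat local self-map, the Auslander--Buchsbaum argument you sketch is vacuous: tensoring a minimal free resolution of $M_{\frakq}$ along the flat local map $\varphi$ yields a minimal free resolution of $(\varphi^*M)_{\frakq}$, so an isomorphism $\varphi^*M\cong M$ only returns $\operatorname{pd} M = \operatorname{pd} M$. Some global input is unavoidable.

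The glueing over $D(\mu)$ is also not over a cover. Since $[p]_q \equiv \mu^{p-1} \bmod p$ and $\mu$ is inverted, $[p]_q$ is a unit in $\AR = \AR^+[1/\mu]^{\wedge}$ and hence in $\BR$; consequently the image of $\Spec\BR \to \Spec\BR^+[1/\mu]$ lies inside $D([p]_q)$, and together with $D(\mu)\cap D([p]_q)$ it misses the nonempty divisor $V([p]_q)\cap D(\mu)$ (the locus $q=\zeta_p$), which is precisely where the finite-height hypothesis must be exploited. This is also why the paper deduces projectivity of $N[1/\mu]$ over $\AR^+[1/\mu]$ in Proposition \ref{prop:wachmod_proj_pmu} by \emph{using} the present proposition as an input, not the other way around. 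The actual proof is of a completely different nature: following \cite[Proposition 4.13 \& Lemma 4.12]{du-liu-moon-shimizu} and Lemma \ref{lem:finiteproj_regularlocal}, one reduces to $A = S\llbracket q-1\rrbracket$ with $S$ a power series ring over $W(k)$ and argues by induction on the number of variables via the smallest nonzero Fitting ideal $J$ of $N$. The relation $JA[1/[p]_q] = \varphi(J)A[1/[p]_q]$, combined with a valuation-theoretic analysis of the $\Kbar$-valued points of $V(J)$ that exploits the contracting dynamics of $\varphi$ on the coordinates $(u_1,\ldots,u_m,q-1)$, forces $JA[1/p] \subset (u_1,\ldots,u_m,q-1)A[1/p]$ while ruling out $JA[1/p]\subset (u_1,\ldots,u_m)A[1/p]$; reduction modulo $(u_1,\ldots,u_m)$ and the inductive hypothesis (the $m=0$ case being \cite[Lemma 2.14]{abhinandan-imperfect-wach}) then give the contradiction showing $JA[1/p]=A[1/p]$. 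Your proposal does not engage with this dynamical input, which is the real mathematical content of the statement.
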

\begin{proof}
	The proof is essentially the same as \cite[Proposition 4.13]{du-liu-moon-shimizu}.
	Compared to loc.\ cit., the Frobenius endomorphism on $\AR^+$ and finite height assumption on $N$ are different and we do not assume $N$ to be torsion free.
	However, one observes that torsion freeness of $N$ is not used in the proof of loc.\ cit.\ and one may use \cite[Lemma 2.14]{abhinandan-imperfect-wach} and Lemma \ref{lem:finiteproj_regularlocal} instead of \cite[Proposition 4.3]{bhatt-morrow-scholze-1} and \cite[Lemma 4.12]{du-liu-moon-shimizu}.
\end{proof}

\begin{lem}\label{lem:finiteproj_regularlocal}
	Let $k$ be a perfect field of characteristic $p$ and $S := W(k)\llbracket u_1, \ldots, u_m \rrbracket$ equipped with a Frobenius endomorphism $\varphi$ extending the Witt vector Frobenius on $W(k)$ such that $\varphi(u_i) \in S$ has zero constant term for each $1 \leqslant i \leqslant m$.
	Let $A := S \llbracket q-1 \rrbracket$ equipped with a Frobenius endomorphism extending the one on $S$ by $\varphi(q) = q^p$ and let $N$ be a finitely generated $A\module$ equipped with a Frobenius-semilinear endomorphism $\varphi \colon N \rightarrow N$ such that $1 \otimes \varphi \colon \varphi^*(N)[1/[p]_q] \isomorphic N[1/[p]_q]$.
	Then $N[1/p]$ is finite projective over $A[1/p]$.
\end{lem}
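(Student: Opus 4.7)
The plan is to exploit the regularity of $A[1/p]$ together with the Frobenius isomorphism to pass from pointwise freeness to projectivity. First I would observe that $A = W(k)\llbracket u_1, \ldots, u_m, q-1 \rrbracket$ is a complete regular local ring of Krull dimension $m+2$, with maximal ideal $(p, u_1, \ldots, u_m, q-1)$, so $A[1/p]$ is a regular Noetherian ring of dimension $m+1$ in which every localization at a prime ideal is itself a regular local ring. Since $N[1/p]$ is finitely generated over the Noetherian ring $A[1/p]$ it is finitely presented, so projectivity is equivalent to local freeness: it suffices to prove $(N[1/p])_{\frakq}$ is a free $A[1/p]_{\frakq}\module$ for every prime $\frakq \subset A[1/p]$.

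Next, the hypothesis that each $\varphi(u_i) \in S$ has zero constant term, combined with $\varphi(q) = q^p$ and the Witt vector Frobenius on $W(k)$, ensures that $\varphi$ sends the maximal ideal of $A$ into itself and induces the absolute Frobenius on the residue field $k$. In particular $\varphi \colon A \to A$ is finite and faithfully flat, and likewise after inverting $p$. The Frobenius isomorphism $1 \otimes \varphi \colon \varphi^* N[1/[p]_q] \isomorphic N[1/[p]_q]$ then shows, via faithfully flat descent along $\varphi$, that $N[1/p, 1/[p]_q]$ is finite projective over $A[1/p, 1/[p]_q]$; this handles all primes $\frakq$ not containing $[p]_q$.

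The substantive step is extending projectivity across the divisor $Z := V([p]_q) \subset \Spec A[1/p]$. For a prime $\frakq$ containing $[p]_q$, I reduce to the height-one generic point of $Z$, so that $A[1/p]_{\frakq}$ is a DVR with uniformizer (a unit multiple of) $[p]_q$; freeness of $(N[1/p])_{\frakq}$ is then equivalent to the absence of $[p]_q\torsion$. The key transversality statement is that $\varphi([p]_q)/[p]_q$ is a unit at $\frakq$: explicitly $\varphi([p]_q) = [p]_{q^p}$ cuts out the divisor $q^p = \zeta_p$, which is disjoint from $q = \zeta_p$ inside $\Spec A[1/p]$. Hence if $[p]_q \cdot x = 0$ for some $x \in (N[1/p])_{\frakq}$, applying $\varphi$ gives $\varphi([p]_q) \cdot \varphi(x) = 0$; the unit hypothesis forces $\varphi(x) = 0$, and faithful flatness of $\varphi$ propagates back to $x = 0$. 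Thus the localization is torsion-free, hence free over the DVR.

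The main obstacle I anticipate is establishing the transversality between $V([p]_q)$ and $\varphi^{-1}(V([p]_q))$ inside $\Spec A[1/p]$ cleanly, and ensuring that the torsion-killing argument extends uniformly to all primes above $Z$ (not just the generic points), possibly by an inductive depth argument via Auslander-Buchsbaum once one has shown height-one freeness. Assuming this, combining the two cases above yields local freeness of $N[1/p]$ at every prime of $A[1/p]$, and therefore the desired finite projectivity.
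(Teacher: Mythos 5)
Your overall strategy --- localize and use the Frobenius structure to control the non-projective locus --- is a reasonable instinct, but two of your three steps have genuine gaps. First, the claim that ``faithfully flat descent along $\varphi$'' yields projectivity of $N[1/p,1/[p]_q]$ is vacuous: descent along a faithfully flat map $f$ says that $M$ is projective if and only if $f^{*}M$ is, and here the isomorphism $\varphi^{*}N[1/[p]_q] \isomorphic N[1/[p]_q]$ identifies the pullback with the module itself, so no information is gained. (Note also that $\varphi([p]_q)$ vanishes where $q$ is a primitive $p^2$-th root of unity, hence is not a unit in $A[1/p,1/[p]_q]$, so $\varphi$ does not even induce an endomorphism of that ring.) The real content, already present in the one-variable case $m=0$, is an analysis of how $\varphi$ moves the zero locus of the smallest non-zero Fitting ideal of $N$ around $\Spec A[1/p]$; this is what \cite[Lemma 2.14]{abhinandan-imperfect-wach} does and what the proof here must generalise. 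Second, and more seriously, freeness at all height-one primes does not imply projectivity over a regular ring of dimension $\geq 2$: the maximal ideal of a two-dimensional regular local ring is free of rank one at every height-one prime yet is not projective. Auslander--Buchsbaum and depth arguments give reflexivity-type conclusions, not projectivity, so your ``extension across the divisor'' cannot be completed this way; the delicate primes are precisely the maximal ideals of $A[1/p]$ lying on $V([p]_q)$. Your torsion-killing step also has a circularity: injectivity of $\varphi$ on $N$ rests on injectivity of $1 \otimes \varphi \colon \varphi^{*}N \to N$, whose kernel is only controlled after inverting $[p]_q$ --- i.e.\ it is $[p]_q$-power torsion, which is exactly the kind of element you are trying to kill.

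For comparison, the paper's proof runs an induction on $m$ via Fitting ideals. One takes $J$ the smallest non-zero Fitting ideal of $N$ over $A$ and must show $JA[1/p]=A[1/p]$; the Frobenius isomorphism gives $JA[1/[p]_q]=\varphi(J)A[1/[p]_q]$. Assuming $JA[1/p]$ is proper, a valuation-theoretic analysis of the $\Kbar$-valued points of $V(J)$ and of its Frobenius preimage forces $JA[1/p] \subset (u_1,\dots,u_m,q-1)A[1/p]$ while $JA[1/p] \not\subset (u_1,\dots,u_m)A[1/p]$; reducing modulo $(u_1,\dots,u_m)$ then contradicts the base case $m=0$. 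This dynamical control of the non-projective locus is the step your proposal is missing.
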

\begin{proof}
	The proof is essentially the same as \cite[Lemma 4.12]{du-liu-moon-shimizu}, except for a few changes.
	One proceeds by induction on $m$.
	The case $m=0$ follows from \cite[Lemma 2.14]{abhinandan-imperfect-wach}, so let $m \geqslant 1$.
	Take $J$ to be the smallest non-zero Fitting ideal of $N$ over $A$.
	It suffices to show that $JA[1/p] = A[1/p]$.
	Compatibility of Fitting ideals under base change implies that $JA[1/[p]_q] = \varphi(J)A[1/[p]_q]$ as ideals of $A[1/[p]_q]$, therefore, $(A/J)[1/[p]_q] = (A/\varphi(J))[1/[p]_q]$.
	Let us assume that $JA[1/p] \neq A[1/p]$ and we will show a contradiction.

	In our setting, the Frobenius endomorphism on $A$ and the finite height condition are different from \cite[Lemma 4.12]{du-liu-moon-shimizu}.
	Therefore, we need some modifications in the arguments of loc. cit.; let us point out the differences in terms of their notations. 
	Let $K = W(k)[1/p]$, fix $\Kbar$ as an algebraic closure of $K$.
	Consider the $\Kbar\textrm{-valued}$ points of $\Spec(A[1/p]/J)$ and let $Z = \{(|u_1|, \ldots, |u_m|, |q-1|) \in \RR^{m+1}\}$ be the corresponding set of $(m+1)\textrm{-tuple}$ norms.
	Define the set $Z' = \{(|u_1|, \ldots, |u_m|, |q-1|) \in \RR^{m+1} \textrm{ such that } (|\varphi(u_1), \ldots |\varphi(u_m)|, |q^p-1|) \in Z\}$ and take $\zeta_p-1$ as the chosen uniformiser.
	Then, one proceeds as in loc.\ cit.\ to show that $JA[1/p] \subset (u_1, \ldots, u_m, q-1)A[1/p]$ and $JA[1/p] \not\subset IA[1/p]$, where $I = (u_1, \ldots, u_m) \subset A[1/p]$.

	Finally, consider the Frobenius-equivariant projection $A \rightarrow \Abar = A/I = W(k)\llbracket q-1 \rrbracket$ and let $\Jbar \subset \Abar$ denote the image of $J$.
	Since $JA[1/p] \not\subset IA[1/p]$, we get that $\Jbar \neq 0$.
	Moreover, $\Jbar \Abar [1/p] \neq \Abar[1/p]$ since $JA[1/p] \subset (u_1, \ldots, u_m, q-1)A[1/p]$.
	However, the equality $(A/J)[1/[p]_q] = (A/\varphi(J))[1/[p]_q]$ implies that $(\Abar/\Jbar)[1/[p]_q] = (\Abar/\varphi(\Jbar))[1/[p]_q]$, i.e.\ $\Jbar \Abar[1/p] = \Abar[1/p]$ by inductive hypothesis (see \cite[Lemma 2.14]{abhinandan-imperfect-wach}).
	This gives a contradiction.
	Hence, we must have $JA[1/p] = A[1/p]$, thus proving the lemma.
\end{proof}


\setstretch{0.95}
\phantomsection
\printbibliography[heading=bibintoc, title={References}]

\Addresses

\end{document}